\definecolor{skyblue}{rgb}{0.53, 0.81, 0.92}
\definecolor{limegreen}{rgb}{0.2, 0.8, 0.2}
\newtheorem{theorem}{Theorem}[section]
\newtheorem{remark}[theorem]{Remark}
\newtheorem{lemma}[theorem]{Lemma}
\newtheorem{example}[theorem]{Example}
 \newtheorem{proposition}[theorem]{Proposition}
 \newtheorem{definition}[theorem]{Definition}
\newtheorem{assumption}[theorem]{Assumption}
\newtheorem{proof}[theorem]{Proof}
\newtheorem{corollary}[theorem]{Corollary}
\newcommand{\be}{\begin{equation}}
\newcommand{\ee}{\end{equation}}
\newcommand{\bee}{\begin{equation*}}
\newcommand{\eee}{\end{equation*}}
\newcommand{\bea}{\begin{eqnarray}}
\newcommand{\eea}{\end{eqnarray}}
\newcommand{\beaa}{\begin{eqnarray*}}
\newcommand{\eeaa}{\end{eqnarray*}}
\newcommand{\R}{\mathbb{R}}
\newcommand{\cX}{\mathcal{X}}
\newcommand{\cL}{\mathcal{L}}
\newcommand{\dist}{\mathrm{dist}}
\newcommand{\st}{\mathrm{s.\,t.}\,\,}
\newcommand{\ACal}{\mathcal{A}}
\newcommand{\ICal}{\mathcal{I}}
\newcommand{\sign}{\mathrm{sign}}
\newcommand{\cI}{\mathcal{I}}
\newcommand{\prox}{\mathrm{prox}}
\newcommand{\X}{\mathcal{X}}
\newcommand{\Ecal}{{\mathcal{E}}}
\newcommand{\diag}{\mathrm{diag}}
\newcommand{\cK}{\mathcal{K}}
\newcommand{\iprod}[2]{\left\langle{#1},{#2}\right\rangle}
\newcommand{\Ical}{\mathcal{I}}
\newcommand{\Bb}{\mathbf{B}}
\newcommand{\bA}{\mathbf{A}}
\newcommand{\supgrad}{\bar\partial}
\numberwithin{equation}{section}
\definecolor{Gray}{rgb}{0.5,0.5,0.5}
\DeclareMathOperator*{\argmin}{argmin}
\definecolor{ochd}{RGB}{178,129,0} 	%
\newcommand{\revise}[1]{{#1}}
\title{  The Augmented Lagrangian Methods: Overview and Recent Advances }  
\author{Kangkang Deng\thanks{School of Science,  National University of Defense Technology, Changsha, 410073,
CHINA. E-mail: \texttt{\href{mailto:freedeng1208@gmail.com}{freedeng1208@gmail.com}}} \and 
Rui Wang\thanks{School of Mathematics, Southwestern University of Finance and Economics, Chengdu 611130, CHINA. E-mail: \texttt{\href{mailto:ruiwang@bicmr.pku.edu.cn}{ruiwang@bicmr.pku.edu.cn}}}\and
Zhenyuan Zhu\thanks{Beijing International Center for Mathematical Research, Peking University, Beijing 100871, CHINA. E-mail: \texttt{\href{mailto:zhenyuanzhu@pku.edu.cn}{zhenyuanzhu@pku.edu.cn}}}\and 
Junyu Zhang\thanks{Department of Industrial Systems Engineering and Management, National University of Singapore, Singapore. E-mail: \texttt{\href{mailto:junyuz@nus.edu.sg}{junyuz@nus.edu.sg}}} \and
Zaiwen Wen\thanks{corresponding author. Beijing International Center for Mathematical Research, Center for Machine Learning Research and Changsha Institute for Computing and Digital Economy, Peking University, Beijing 100871, CHINA. E-mail: \texttt{\href{mailto:wenzw@pku.edu.cn}{wenzw@pku.edu.cn}}}
}
\begin{document}

\maketitle
\vspace{-0.5cm}
\begin{abstract}

Large-scale constrained optimization is pivotal in modern scientific, engineering, and industrial computation, often involving complex systems with numerous variables and constraints. This paper provides a unified and comprehensive perspective on constructing augmented Lagrangian functions (based on Hestenes-Powell-Rockafellar augmented Lagrangian) for various optimization problems, including nonlinear programming and convex and nonconvex composite programming. We present the augmented Lagrangian method (ALM), covering its theoretical foundations in both convex and nonconvex cases, and discuss several successful examples and applications. Recent advancements have extended ALM's capabilities to handle nonconvex constraints and ensure global convergence to first and second-order stationary points. For nonsmooth convex problems, ALM utilizes proximal operations, preserving desirable properties such as locally linear  convergence rates. Furthermore, recent progress has refined the complexity analysis for ALM and tackled challenging integer programming instances. This review aims to offer a thorough understanding of ALM's benefits and limitations, exploring different ALM variants designed to enhance convergence and computational performance. We also illustrate effective algorithms for ALM subproblems across different types of optimization problems and highlight practical implementations in several fields.

\end{abstract}
\paragraph{AMS subject classifications.}
65K05, 90C30

\newpage
\tableofcontents
 \newpage
\section{Introduction}
    Large-scale constrained optimization plays a crucial role in modern scientific, engineering, and industrial computation. These problems often involve complex systems with {a large number of variables and constraints}. In particular,  
    many of them share the common general form of
\begin{equation}\label{general_opt_pro}
\min_{x\in \mathbb{R}^n}\ f(x)+h(x)\quad \text { s.t. }\quad\ c(x) \in \mathcal{Q}, x\in \mathcal{K},
\end{equation}
where $f:\mathbb{R}^n \rightarrow \mathbb{R}$ and $h:\mathbb{R}^n \rightarrow \mathbb{R}$ respectively represent the smooth and nonsmooth components of the objective function, 
$c(x):\mathbb{R}^n \rightarrow \mathbb{R}^m$ is a nonlinear mapping, and  
$\mathcal{Q}$ and $\mathcal{K}$ are two closed sets. Typical examples of formulation \eqref{general_opt_pro} include linear and nonlinear programming, sparse optimization, low-rank matrix recovery, image reconstruction, combinatorial optimization, etc. \cite{bertsekas1997nonlinear,rockafellar1997convex,donoho2006compressed,cortes1995support,beck2009fast,chan2013constrained,papadimitriou2013combinatorial}. Based on different problem backgrounds, the convexity, linearity, and even continuity conditions of the above mappings and sets may vary from case to case. In general, when either the objective function or the constraints are nonlinear, problem \eqref{general_opt_pro} is called nonlinear programming (NLP), for which a general discussion can be found in  \cite{bertsekas1997nonlinear,NocedalWright06,sun2006optimization}.  
Further developments in this area include the dual approach to solving nonlinear programming problems by Rockafellar \cite{rockafellar1973dual}.   When $f,h$ and $c$ are convex functions, 
$\mathcal{Q}$, $\mathcal{K}$ are convex sets, problem \eqref{general_opt_pro} becomes a convex optimization problem. It is significant due to its desirable properties such as the guarantee of finding a global minimum efficiently and appears frequently in machine learning, signal processing, and finance \cite{beck2017first,boyd2004convex,bertsekas2015convex}.  When the objective function or constraints are nonconvex, the problem can be regarded as a nonconvex optimization problem. It is more challenging due to the presence of multiple local minima but is highly relevant in fields such as deep learning, control theory, and engineering design \cite{bolte2014proximal,bolte2018nonconvex,li2021rate}.

In recent years, significant progress is being made in the field of optimization. Advanced algorithms and computational techniques are developed to address the challenges posed by large-scale and nonsmooth problems. Among these advancements, the augmented Lagrangian method (ALM) emerges as a particularly powerful tool. ALM combines the strengths of penalty methods and Lagrange multipliers, offering robust and efficient solutions for complex optimization problems. {The conceptual foundation of the ALM can be traced back to the seminal work of Arrow and Solow \cite{arrow1958gradient}, where they propose a framework based on differential equations to locate saddle points of the augmented Lagrangian function by simultaneously updating both the primal variables and the multipliers. The formal ALM is first established in the works of Hestenes \cite{hestenes1969multiplier} and Powell \cite{powell1969method} in the 1960s, where it is originally referred to as the “method of multipliers”, which is designed to solve equality constrained problems. The extension to inequality constraints is described by Rockafellar \cite{rockafellar1971duality,rockafellar1973multiplier}.}  {We briefly summarize several influential surveys and books on the ALM. Bertsekas \cite{bertsekas2014constrained} offers a systematic treatment of classical Lagrange multiplier methods and augmented Lagrangian frameworks for constrained optimization. Ito and Kunisch \cite{ito2008lagrange} focus on nonlinear variational problems in infinite-dimensional spaces, employing Lagrange multiplier theory to develop efficient convex constrained optimization algorithms based on augmented Lagrangian techniques, with applications in PDE-constrained optimal control, image analysis, mechanical contact problems, and related fields. Fortin and Glowinski \cite{fortin2000augmented} emphasize the theoretical foundations of ALM and demonstrate its applicability to a broad class of boundary value problems for partial differential equations, including Stokes and Navier--Stokes systems as well as elastoplasticity models, with applications in mathematical physics, continuum mechanics, and engineering sciences. Birgin and Martínez \cite{birgin2014practical} address the practical implementation of ALM, offering detailed algorithmic guidance, illustrative examples, and software such as the Algencan solver, along with MATLAB code examples. While these contributions are foundational, most existing surveys focus on specific problem classes or classical constrained optimization settings.}

Recent breakthroughs have extended the capabilities of ALM to handle nonconvex constraints \cite{gasimov2002augmented,birgin2005numerical, houska2016augmented,chen2017augmented,li2021augmented,luo2019solving,rockafellar2023convergence} and demonstrate the global convergence to first and second-order stationary points \cite{birgin2018augmented,sahin2019inexact}. For nonsmooth problems,  ALM serves as an efficient approach by utilizing proximal operations on the nonsmooth terms within its iterations \cite{bolte2014proximal,dhingra2018proximal,de2023constrained,zhu2017nonconvex}, which preserves desirable properties like superlinear local convergence rates \cite{fernandez2012local,rockafellar2023convergence,li2018highly}. Moreover, recent work has provided refined complexity analysis for ALM \cite{birgin2020complexity, lan2016iteration,xie2021complexity,li2021rate,kong2023accelerated} and tackled challenging integer programming instances \cite{feizollahi2017exact,gu2020exact,kannan1994augmented}. Therefore, despite being a classic method, these new theoretical and practical developments in the ALM field motivate us to provide a comprehensive review covering the latest advances and applications.
 In the implementation and analysis of the ALM algorithm, the two key steps are 
 \begin{itemize}
 \item[(1)] constructing an augmented Lagrangian (AL) function; 
 \item[(2)] designing an ALM algorithm by solving primal subproblem and making dual update.
 \end{itemize}

{ For the first step, this paper primarily focuses on the Hestenes-Powell-Rockafellar {(HPR)} AL function \cite{hestenes1969multiplier,powell1969method} based on the quadratic penalty, which remains the most prevalent in the literature. 
  When tackling convex composite problems—where the objective decomposes into multiple components—one common strategy is to introduce auxiliary variables that decouple these terms, then to build an AL function for the resulting reformulation. We offer a unified interpretation of all such constructions through the lens of the Moreau envelope. It provides a smooth approximation of any convex function and, when applied to an indicator function of a constraint, recovers precisely the quadratic penalty. By approximating both difficult constraint sets and nonsmooth terms in the objective via their Moreau envelopes, one arrives naturally at the AL function. This viewpoint applies to virtually any problem structure and clarifies why the resulting AL function is well suited to efficient numerical solution. Further details are provided in Section \ref{sec2:AL:sd}.  
  To provide rigorous theoretical support for the ALM algorithm, we establish a key property of the augmented Lagrangian function—its strong duality—which we hereafter denote as AL strong duality. In the convex setting, AL strong duality coincides with the classical Lagrangian strong duality. } However, it has been shown that the strong duality property holds even without convexity by regarding the penalty coefficient of the AL function as a dual variable \cite{rockafellar1976augmented,bertsekas2014constrained,huang2003unified}. Recently, Feizollahi et al. \cite{feizollahi2017exact}  generalized such result to integer programming problems, where continuity also fails.   { To give readers a comprehensive view, we expand beyond HPR to review several important alternative constructions—including the shift-penalty function, the generalized augmenting function via dualizing parameterization, and a variety of other variants motivated by smoothness considerations (e.g. modified barrier functions \cite{polyak1992modified}, exponential AL, Bregman-distance-based AL, etc.).}

{ For the second step, the ALM generates iterates  by alternating between a (possibly inexact) minimization in primal variable and a multiplier update. Recently literature starts to interpret the ALM algorithm as a dual ascent method \cite{bertsekas2014constrained,zhang2022global}. The key supporting reason for such a perspective initially comes from the AL strong duality.    Therefore, we introduce the ALM algorithm from the perspective of the AL dual problem. We show that the ALM algorithm can be regarded as a supergradient method applied to the dual problem \eqref{eqn:AL-dual}, owing to the strong duality established in Section \ref{sec2:AL:sd}.}
 The classic ALM typically solves the primal subproblems exactly, which can be computationally expensive, especially for large-scale or ill-conditioned problems \cite{bertsekas2014constrained,rockafellar1973dual}. Inexact strategies such as linearized and proximal ALM \cite{yang2013linearized,xu2021first,dhingra2018proximal,hermans2022qpalm}, reduce computational complexity by solving approximations of the subproblems.
Kanzow's spectral gradient approach \cite{jia2023augmented} enables handling nonconvex nonsmooth subproblems via gradient sampling.  Bolte's alternating linearization method \cite{bolte2014proximal} combines active-set prediction with ALM.
For problems satisfying certain block structures, the block coordinate descent (BCD) method can be employed to solve the subproblem by updating individual blocks iteratively \cite{de2023constrained,tosserams2008augmented,chen2019extended}, alleviating the need for full minimization. For problems where the AL function exhibits favorable properties like smoothness, second-order methods \cite{zhao2010newton,birgin2008structured,he2023newton} have been applied to solve ALM subproblems efficiently.
In addition, adaptive and inexact primal updates \cite{curtis2015adaptive,sujanani2023adaptive,fernandez2012local,xu2021iteration} have been introduced to handle large-scale and complex problems more effectively. The flexibility in reformulating the ALM subproblems while preserving theoretical guarantees has been an important research area, enabling the development of practical ALM-based solvers for challenging structured optimization problems.

{In addition to providing a unified perspective on the construction of the AL function and the design of ALM, this survey is also motivated by the need to present the most recent advances in ALM and to offer comprehensive guidance on how to tailor ALM to practical applications. Overall, this paper addresses the following two questions:

\begin{itemize}
    \item What are the most recent theoretical and algorithmic developments in ALM?
    \item How can one construct the AL function and design ALM for specific applications?
\end{itemize}
To answer the first question, we consider both convex and nonconvex settings and detail three key theoretical properties of ALM—global convergence, local convergence, and iteration complexity, which are given in Section \ref{sec:convex case} and Section \ref{sec:nonconvex_case}. We further review several ALM variants in Section \ref{sec:variants_ALM}, including the linearized ALM, the proximal ALM, the accelerated ALM, emphasizing the settings in which each variant demonstrates particular advantages. We also review two closely related algorithms, including the alternating direction method of multipliers (ADMM) and primal–dual methods.  With respect to the second question, we illustrate the application of ALM to an assortment of prototypical problem classes. In particular, we explore efficient strategies for solving the primal subproblem—often the dominant computational cost in ALM—in these contexts (refer to Section \ref{sec:examples}). Finally, we present numerical experiments that validate the practical performance and effectiveness of the proposed ALM designs in Section \ref{sec-numerical-experiments}. }

\subsection{Notation}
We use the following notations for different forms of the AL functions. Let $\mathcal{L}_\rho$ denote the classical AL function; 
$\mathbb{L}_\rho$ represents the AL function after eliminating slack variables, simplifying the expression;
 $L_\rho$ denotes a modified Lagrangian function used for specific problem types such as integer programming. Let $\mathbb{S}^n$ be the collection of all $n$-by-$n$ symmetric matrices. For any
 matrix $X \in \R^{n\times n}$, $\diag(X)$ denotes a column vector consisting of all diagonal entries of $X$.
 For any vector $x \in \R^n$, $\text{Diag}(x)$ is an $n$-by-$n$ diagonal matrix whose $i$-th diagonal entry is
 $x_i$. The signum function of a real number 
$a$ is a piecewise function which is defined as $\sign(a)=\left\{\begin{array}{ll}
   1, & a >0,\\
    0, &a =0,\\
 -1, & a <0.    
\end{array}\right.$
Given a set $\Omega$, we let $\Pi_\Omega(\cdot)$ and $\delta_\Omega(\cdot)$ denote the projection and the indicator function of the set $\Omega$, i.e.,
$\delta_\Omega(x)=\left\{\begin{array}{ll}
    +\infty, & x \notin \Omega,\\
    0, &x \in \Omega.
\end{array}\right.$ Define $\hat\Pi_\Omega(x)=x-\Pi_\Omega(x)$. Given $x \in \R^n$, we denote $J_c(x)$ as the Jacobi matrix of the function $c$ at $x$. The symbol $\preceq_{\cK}$ denotes the partial order induced by $\cK$, that is, 
$
  y \preceq_{\cK} z\quad \mathrm{if\ and \ only\ if}\quad z-y \in \cK$. 
Let $\mathrm{Tr}(A)$ denote the trace of a square matrix $A$, which is defined to be the sum of elements on the main diagonal of $A$. The notation $\|\cdot\|$ denotes the norm, calculated as the square root of the sum of the absolute squares of its elements, applicable to both matrices and vectors. The notation $\dist(x, \cK)$ represents the distance from the point $x$ to the set $\cK$, defined as: $\dist(x, \cK) = \min_{y \in \cK} \|x - y\|
$.  $a \circ b$ denotes the Hadamard product of $a \in \R^n$ and $b\in \R^n$, i.e., $a \circ b=(a_1b_1,a_2b_2,\cdots,a_nb_n)^{\top}$.

   \subsection{Organization} The rest of this paper is organized as follows. In Section \ref{sec2:AL:sd}, we discuss the AL function and strong duality. This includes the general form of constrained optimization problems, classic AL functions, and applications in convex and nonconvex optimization. Section \ref{section3} provides an overview of the ALM. We cover the augmented dual problem and a general ALM framework. Section \ref{sec:convex case} focuses on convex optimization problems, discussing global and local convergence, and iteration complexity. Section \ref{sec:nonconvex_case} provides a parallel discussion for nonconvex optimization problems. Section \ref{sec:variants_ALM} explores various ALM variants such as linearized, proximal,  accelerated ALMs, and ADMM.  Section \ref{sec:examples} presents applications of ALM in areas such as  nonconvex optimization, nonlinear programming, sparse optimization, semidefinite programming, manifold optimization, integer programming, and reinforcement learning. Numerical results on a few selected benchmark problems are shown in Section \ref{sec-numerical-experiments}. Finally, Section \ref{sec:summary} summarizes the main contributions and suggests directions for future research.

\section{Augmented Lagrangian function and strong duality}\label{sec2:AL:sd}

In this section, we present the augmented Lagrangian (AL) function as a critical component of the ALM for several classical problems, including general nonlinear programming, convex and nonconvex composite optimization, and discrete optimization. We will discuss constraint qualifications and optimality conditions within the framework of constrained optimization, establishing strong duality based on the AL function.  { Crucially, these strong duality results rely on the AL function rather than the standard Lagrangian function which fails to yield strong duality in nonconvex and discrete settings. By treating the penalty parameter in the AL function as an optimization decision variable, it is shown that strong duality persists even for nonconvex and discrete optimization problems. Moreover, since ALM can be interpreted as a supergradient‐ascent method applied to the AL dual problem (refer to Section \ref{sec:dualascent}); accordingly, the strong duality of the AL function provides the theoretical guarantees underpinning the convergence and performance of ALM.} %

\subsection{General form constrained optimization problem}\label{sec:general-opt}

To start the discussion of this section, let us first consider the following nonlinear optimization problem, which is the most fundamental formulation of the constrained optimization problems: 
\begin{equation}\label{prob}
    \begin{split}
    \min_{x\in\mathbb{R}^n}\ & f(x),\\
    \st\ & c_i(x)=0, i\in\mathcal{E},\\
    &c_i(x)\leq0, i\in\mathcal{I}.
    \end{split}
\end{equation}
In this formulation, $f(x):\mathbb{R}^n\rightarrow \mathbb{R}$ and $c_i(x):\mathbb{R}^n\rightarrow \mathbb{R}, i\in\mathcal{E}\cup\mathcal{I}$ are proper and lower semicontinuous (l.s.c.)  functions which can potentially be nonconvex and nonsmooth, with $\mathcal{E}$ and $\mathcal{I}$ denoting the index sets of equality and inequality constraints, respectively. 
Note that a function is l.s.c. if and only if it is closed \cite[Theorem 6]{rockafellar2009variational}, hence the two conditions are often assumed interchangeably in the literature. Next, to facilitate the form discussion, let us provide a few crucial definitions.
\begin{definition}[active set]\label{definition_Ax} 
Let $x$ be an arbitrary feasible solution to problem \eqref{prob}. A constraint is said to be active at $x$ if $c_i(x)=0$ for any $i\in \ICal\cup\Ecal$, and a constraint is said to be inactive at $x$ if the strict inequality $c_i(x)<0$ holds for any $i\in \ICal$. The {index} set of all active constraints at $x$ is called the active set at $x$, and we denote it {by} $\ACal(x):=\Ecal \cup \{i \in \ICal | c_i(x)=0\}$.  
\end{definition}

To establish the Karush-Kuhn-Tucker (KKT) conditions that provide guidance to algorithm design and solution sanity check, some regularity conditions, often referred to as constraint qualifications, are required to prevent degenerate behaviors at points of interest. 
Below we introduce a few {the} most popular constraint qualifications providing the continuous differentiability of $c$.

\begin{definition}[LICQ \cite{bertsekas1997nonlinear}]\label{LICQ}
Suppose $x$ is feasible to problem \eqref{prob} 
 and  $c_i$ is continuously differentiable for each $i\in\mathcal{E}\cup\mathcal{I}$. We say that the linear independence constraint qualification (LICQ) holds at $x$ if the gradients of all active constraints $\{\nabla c_i (x), i \in \ACal(x)\}$ are linearly independent.
\end{definition}

While the LICQ offers a straightforward and convenient sufficient condition for deriving the KKT system, it can be somewhat restrictive, as many relevant problems may not satisfy the LICQ even though the KKT conditions remain well-defined at local minima. Therefore, we introduce the Mangasarian-Fromovitz Constraint Qualification (MFCQ) as a popular alternative.

\begin{definition}[MFCQ \cite{mangasarian1967fritz}]\label{MFCQ}
Suppose $x$ is feasible to problem \eqref{prob} 
and  $c_i$ is continuously differentiable for each $i\in\mathcal{E}\cup\mathcal{I}$. We say that the Mangasarian-Fromovitz constraint qualification (MFCQ) holds at $x$ if there
exists a vector $w \in \R^n$ such that
\begin{equation*}
 \begin{split}
\nabla c_i(x)^\top w>0, & \quad \mathrm{for~all}~ i \in \ACal(x) \cap \ICal,\\
\nabla c_i(x)^\top w=0, & \quad \mathrm{for~all}~ i \in \Ecal,
\end{split}
\end{equation*}
and the gradients of equality constraints $\{\nabla c_i (x), i \in \Ecal\}$ are linearly independent.
\end{definition}

If problem \eqref{prob} is convex, that is, both the objective function $f(x)$ and the inequality constraint functions $c_i, i\in\Ical$ are convex and the equality constraints are affine, a widely adopted constraint qualification is Slater's condition. Denote the domain of \eqref{prob} as $\mathcal{D}:=\cap_{i\in\Ecal\cup\Ical}\mathrm{dom}(c_i)\cap\mathrm{dom}(f)$ and denote the relative interior of $\mathcal{D}$ as $\mathrm{relint}(\mathcal{D})$, then we state Slater's condition as below.

\begin{definition}[Slater's condition \cite{boyd2004convex}]
\label{defn:slater}
For problem \eqref{prob}, suppose $f$ and $c_j$ are convex for each $j\in\Ical$, and $c_i$ is affine for each $i\in\Ecal\cup\mathcal{J}$ with $\mathcal{J}\subset\Ical$ being a subset of $\Ical$ that can potentially be empty. Then we say problem \eqref{prob} satisfies Slater's condition if there exists a feasible solution $x\in\textrm{relint}(\mathcal{D})$  such that 
$$c_i(x)=0 \,\mbox{ for }\, i\in\mathcal{E}, \quad c_j(x)\leq0\,\mbox{ for }\, j\in\mathcal{J},\ \mbox{and}\ c_k(x)<0\,\,\mbox{ for }\,\,  k\in\mathcal{I}\backslash\mathcal{J}.$$
\end{definition}

As modern applications of problem \eqref{prob} often incorporate nonsmooth objective functions, we introduce the following concept of the limiting Fr\'{e}chet subdifferential to facilitate the discussion of such nonsmooth problems.

\begin{definition}[Limiting Fr\'{e}chet subdifferential \cite{Kruger2003}]  We say a vector $g$ is a Fr\'{e}chet subgradient of a l.s.c. function $h$ at point $x \in \operatorname{dom} h$ if
$$h(y) \geq h(x)+\langle g,y-x  \rangle   + o\left(\|y- x\|\right),\quad \forall y \in \operatorname{dom} h,$$
where  $\langle \cdot, \cdot\rangle$ represents the standard vector inner product.
The set of Fr\'{e}chet subgradient of $h$ at $x$ is called Fr\'{e}chet subdifferential of $h$ at $x$ and is denoted as $\hat{\partial}h(x)$. For all $x \notin \operatorname{dom} h$, we default 
$\hat{\partial}h(x) = \emptyset.$ The limiting Fr\'{e}chet subdifferential, noted as $\partial h(x)$, is defined by 
$$\partial h(x)=\{v:\mathrm{there~is}~ x^k \rightarrow x ~\mathrm{and}~ v_k \in \hat{\partial}h(x^k)~ \mathrm{such~that}~ v_k \rightarrow v\}.$$
\end{definition}
It is known that when $h$ is a convex function, the limiting Fr\'{e}chet subdifferential coincides with the set of convex subgradients, that is
$$
\partial h(x)=\{v: h(y)-h(x) \geq  \langle v, y-x\rangle \text { for all } y \in \operatorname{dom} h\}.
$$
Similarly, one can define the supergradient for the concave function. Let $h: \mathbb{R}^n \rightarrow \mathbb{R}$ be a concave function.
A vector $v$ is a supergradient of $h$ at the point $x$ if for every $y$ it satisfies
$$h(y)-h(x) \leq  \langle v, y-x\rangle.$$
The set of supergradients of $h$ at $x$ is called the superdifferential of $h$ at $x$, denoted as
$$
\bar{\partial} h(x)=\{v: h(y)-h(x) \leq  \langle v, y-x\rangle \text { for all } y \in \operatorname{dom} h\}.
$$
With the above concepts and notations, we are now ready to present the existence of {the} KKT condition and the Lagrangian duality result.

\subsubsection{KKT condition and Lagrangian duality} 
The KKT condition for problem \eqref{prob} provides a necessary condition for a point to be considered an optimal solution, playing a central role in the design and analysis of constrained optimization algorithms. To discuss the KKT conditions, we begin by introducing the standard Lagrangian function for problem \eqref{prob}. Let $\lambda_i (i \in \mathcal{E})$ represent the Lagrange multiplier associated with each equality constraint, and let $\mu_i \geq 0 (i \in \mathcal{I})$ denote the Lagrange multiplier for each inequality constraint. The Lagrangian function for \eqref{prob} is then defined as
\[
\mathcal{L}(x,\lambda,\mu) = f(x) + \sum_{i \in \mathcal{E}} \lambda_i c_i(x) + \sum_{i \in \mathcal{I}} \mu_i c_i(x).
\]
Based on this notation, the following theorem establishes the KKT conditions as the first-order necessary conditions for optimal solutions.

\begin{theorem}
   Suppose $f(x)$ and $c_i(x), i\in\mathcal{E}\cup\mathcal{I}$ are continuously differentiable and let $x^*$ be %
   a local minimum of problem \eqref{prob}. If %
   LICQ or MFCQ is satisfied at $x^*$, then there exist Lagrange multipliers $\lambda_i^*, i \in \mathcal{E}$ and $\mu_i^*\geq0,i\in\mathcal{I}$ such that \quad
\begin{equation}\label{kkt}
\begin{cases}
    \nabla_x\mathcal{L}(x^*,\lambda^*,\mu^*)=0, & \qquad\mbox{\textnormal{(Lagrangian  stationarity)}}\\
    c_i(x^*)=0, i\in\mathcal{E},\,\,\, c_i(x^*)\leq 0, i\in\mathcal{I}, & \qquad\mbox{\textnormal{(Primal  feasibility)}}\\
    \mu_i^*\geq 0, i\in\mathcal{I}, & \qquad\mbox{\textnormal{(Dual  feasibility)}}\\
    \mu_i^* c_i(x^*)=0, i\in\mathcal{I}. & \qquad\mbox{\textnormal{(Complementary  slackness)}}
\end{cases}
\end{equation} 
\end{theorem}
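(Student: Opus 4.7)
The plan is to derive the KKT system by reducing to Farkas' lemma applied at $x^*$: the goal is to show that $-\nabla f(x^*)$ lies in the convex cone generated by the equality-constraint gradients (signed) and the active inequality-constraint gradients (nonnegatively signed). The bridge between local optimality of $x^*$ and this cone membership is the equivalence of the tangent cone to the feasible set with the linearized feasible cone at $x^*$, which is precisely what LICQ and MFCQ are designed to guarantee.

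First, I would introduce the feasible set $\Omega=\{x: c_i(x)=0,\, i\in\mathcal{E};\; c_i(x)\leq 0,\, i\in\mathcal{I}\}$, its Bouligand tangent cone $T_\Omega(x^*)$, and the linearized feasible cone
\[ F(x^*)=\{d\in\mathbb{R}^n: \nabla c_i(x^*)^\top d = 0 \text{ for } i\in\mathcal{E},\; \nabla c_i(x^*)^\top d \leq 0 \text{ for } i\in\mathcal{A}(x^*)\cap\mathcal{I}\}. \]
From local optimality and a first-order Taylor expansion of $f$ along any feasible sequence $x^k \to x^*$, every $d\in T_\Omega(x^*)$ satisfies $\nabla f(x^*)^\top d \geq 0$. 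The inclusion $T_\Omega(x^*)\subseteq F(x^*)$ then follows directly by differentiating the active constraints along such a feasible sequence.

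Second, and this is the technical heart, I would establish the reverse inclusion $F(x^*)\subseteq T_\Omega(x^*)$. Under LICQ, given any $d\in F(x^*)$, linear independence of $\{\nabla c_i(x^*): i\in\mathcal{A}(x^*)\}$ implies the Jacobian of the active constraint map has full row rank, so the implicit function theorem produces a smooth feasible arc $x(t)$ with $x(0)=x^*$ and $x'(0)=d$, certifying $d\in T_\Omega(x^*)$. Under the weaker MFCQ, I would first use the full-row-rank Jacobian of the equalities to build a $C^1$ curve satisfying the equality constraints with tangent $d$, and then exploit the strict feasibility direction $w$ from Definition \ref{MFCQ} as a perturbation: replacing $d$ by $d+\varepsilon w$ produces strict decrease along each active inequality, giving feasibility for small $t>0$, and letting $\varepsilon\downarrow 0$ recovers $d$ in the closed tangent cone.

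Third, with $T_\Omega(x^*)=F(x^*)$, the first-order optimality inequality translates to $-\nabla f(x^*)\in F(x^*)^\circ$, the polar of $F(x^*)$. Farkas' lemma in its mixed equality/inequality form then yields multipliers $\lambda_i^*$ (free sign) for $i\in\mathcal{E}$ and $\mu_i^*\geq 0$ for $i\in\mathcal{A}(x^*)\cap\mathcal{I}$ such that $-\nabla f(x^*)=\sum_{i\in\mathcal{E}}\lambda_i^*\nabla c_i(x^*)+\sum_{i\in\mathcal{A}(x^*)\cap\mathcal{I}}\mu_i^*\nabla c_i(x^*)$. Extending $\mu_i^*=0$ for inactive indices $i\in\mathcal{I}\setminus\mathcal{A}(x^*)$ delivers complementary slackness, dual feasibility, and Lagrangian stationarity simultaneously; primal feasibility holds by assumption on $x^*$. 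The main obstacle is precisely the reverse inclusion $F(x^*)\subseteq T_\Omega(x^*)$ under MFCQ: LICQ furnishes it essentially for free via the implicit function theorem, while MFCQ demands a more delicate homotopy-style construction to produce a feasible arc tangent to the given direction without violating the nonlinear inequality constraints. Once this cone equivalence is in hand, the remainder is a direct application of a theorem of the alternative.
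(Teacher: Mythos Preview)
Your approach is correct and is precisely the standard textbook argument: establish the equality of the tangent cone and the linearized cone under LICQ or MFCQ (the Abadie constraint qualification), combine with the first-order necessary condition $\nabla f(x^*)^\top d\geq 0$ on $T_\Omega(x^*)$, and invoke Farkas' lemma to extract the multipliers. The sketch you give for the reverse inclusion $F(x^*)\subseteq T_\Omega(x^*)$ under each CQ is the right one.

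However, there is nothing to compare against: the paper is a survey on augmented Lagrangian methods and states this theorem purely as classical background without proof. It is not among the results proved in the appendix (which handles only the strong-duality theorems and the integer-programming lemma), and the surrounding text simply cites standard references such as \cite{bertsekas1997nonlinear} and \cite{NocedalWright06}. So your proposal is fine as a self-contained argument, but the paper itself offers no proof to benchmark it against.
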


{If} problem \eqref{prob} is convex,  the LICQ and MFCQ can be replaced {by} Slater's condition. In this case, the KKT condition is well-defined even without the smoothness of $f$ and $c_i$'s, and a strong duality result holds when viewing problem \eqref{prob} as a minimax saddle point problem with respect to the Lagrangian function $\mathcal{L}$ (see \cite[Theorem 28.4]{rockafellar1997convex}). 

\begin{theorem} 
\label{sec2:general:sd}
Suppose problem \eqref{prob} is convex and satisfies Slater's condition. Let $x^*$ be an optimal solution. If $f$ and $c_i$'s are continuously differentiable, then the KKT condition \eqref{kkt} holds for $x^*$. If any of $f$ and $c_i$'s are nonsmooth, then the KKT condition \eqref{kkt} still holds while replacing the Lagrangian stationarity condition with 
$0\in\partial \mathcal{L}(x^*,\lambda^*,\mu^*).$
In both cases, strong duality holds for the saddle point problem with respect to $\mathcal{L}$, that is,   
\begin{equation}
\label{eqn:Lag-saddle}
\min_x\max_{\mu\geq0,\lambda}\ \mathcal{L}(x,\lambda,\mu)=\max_{\mu\geq0,\lambda}\min_x\ \mathcal{L}(x,\lambda,\mu).
\end{equation}
\end{theorem}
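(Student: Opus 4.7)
The plan is to establish the strong duality identity \eqref{eqn:Lag-saddle} first by a perturbation/separation argument in the style of \cite{rockafellar1997convex}, and then derive the KKT conditions as an almost immediate consequence of the resulting saddle-point structure.

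First I would introduce the convex perturbation set
$$\mathcal{A} = \{(u, w, t) \in \R^{|\mathcal{I}|} \times \R^{|\mathcal{E}|} \times \R : \exists\, x \in \mathcal{D},\ c_i(x) \leq u_i\ \forall i \in \mathcal{I},\ c_i(x) = w_i\ \forall i \in \mathcal{E},\ f(x) \leq t\}.$$
Convexity of $\mathcal{A}$ follows from convexity of $f$, convexity of $c_i$ for $i \in \mathcal{I}$, and affinity of $c_i$ for $i \in \mathcal{E}$. Letting $p^* = f(x^*)$ denote the optimal value of \eqref{prob}, the point $(0,0,p^*)$ lies on the boundary of $\mathcal{A}$ while $\{(0,0,s) : s < p^*\}$ is disjoint from $\mathcal{A}$ by optimality of $x^*$.

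Next I would apply the supporting hyperplane theorem (after restricting to the affine hull containing the equality directions) to extract a nonzero triple $(\mu^*, \lambda^*, \tau)$ such that
$$\mu^{*\top} u + \lambda^{*\top} w + \tau\, t \;\geq\; \tau\, p^* \quad \text{for all } (u, w, t) \in \mathcal{A}.$$
Letting the $u_i$ and $t$ components tend to $+\infty$ forces $\mu^* \geq 0$ and $\tau \geq 0$. Slater's condition (Definition \ref{defn:slater}) is then invoked to rule out the degenerate case $\tau = 0$: evaluating at a Slater point $\tilde x \in \mathrm{relint}(\mathcal{D})$ and perturbing $w$ within the affine hull of the equality constraints yields a contradiction with the strict inequality $c_i(\tilde x) < 0$ for $i \in \mathcal{I}\setminus\mathcal{J}$. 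Normalizing $\tau = 1$ produces the claimed multipliers.

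From the inequality one obtains $\inf_x \mathcal{L}(x, \lambda^*, \mu^*) \geq p^*$, while weak duality supplies the reverse inequality; together with the identity $\sup_{\mu \geq 0, \lambda} \mathcal{L}(x, \lambda, \mu) = f(x)$ for feasible $x$ (and $+\infty$ otherwise), this yields \eqref{eqn:Lag-saddle}. Consequently $(x^*, \lambda^*, \mu^*)$ is a saddle point, so $x^* \in \argmin_x \mathcal{L}(x, \lambda^*, \mu^*)$; the first-order optimality of this unconstrained convex minimization gives $\nabla_x \mathcal{L}(x^*, \lambda^*, \mu^*) = 0$ in the smooth case and, via the convex sum rule, $0 \in \partial \mathcal{L}(x^*, \lambda^*, \mu^*)$ in the nonsmooth case. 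Equating $\mathcal{L}(x^*, \lambda^*, \mu^*) = f(x^*)$ with primal and dual feasibility in hand forces $\sum_{i \in \mathcal{I}} \mu_i^* c_i(x^*) = 0$, which combined with $\mu_i^* \geq 0$ and $c_i(x^*) \leq 0$ delivers complementary slackness termwise.

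The main obstacle will be the Slater step, where one must carefully separate the three regimes of strict inequality, nonstrict inequality (the $\mathcal{J}$ indices), and affine equality; the standard trick is to invoke proper separation relative to the affine hull of $\mathcal{D}$ rather than ordinary separation, ensuring that the multiplier on $f$ cannot vanish. Once the multipliers are secured, the remaining steps are a routine unpacking of the saddle-point property.
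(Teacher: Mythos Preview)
The paper does not supply its own proof of this theorem; it simply cites \cite[Theorem 28.4]{rockafellar1997convex} as the source. Your proposal is essentially the standard perturbation--separation argument that underlies that classical result (and the analogous treatment in \cite{boyd2004convex}), so there is no discrepancy to report: you are writing out what the paper merely references.

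Your outline is correct. The only place requiring care, as you already anticipate, is the handling of the refined Slater condition of Definition~\ref{defn:slater} in which affine constraints (the $\mathcal{J}$ indices and the equalities) need only be satisfied, not strictly. The ``restrict to the affine hull'' manoeuvre you describe is exactly the right device: one separates within the affine hull generated by the equality constraints, so that the normal vector component $\lambda^*$ is unrestricted in sign, while strict feasibility on the non-affine inequalities plus the relative-interior position of the Slater point force $\tau>0$. Once the saddle point is in hand, your derivation of stationarity via $x^*\in\argmin_x\mathcal{L}(x,\lambda^*,\mu^*)$ and of complementary slackness from $\sum_{i\in\mathcal{I}}\mu_i^*c_i(x^*)=0$ is routine and correct.
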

 
It is important to recognize that the Lagrangian function $\mathcal{L}$ does not always satisfy strong duality in the absence of convexity and appropriate constraint qualifications. Aside from convex problems, the Lagrangian function $\mathcal{L}$ is seldom employed directly to design algorithms for nonconvex constrained optimization problems, as the resulting updates often lack convergence guarantees.

\subsubsection{Classic augmented Lagrangian function}\label{sec:classical-AL}
To facilitate the method of multipliers for general convex and nonconvex constrained problems, which will be discussed in later sections, we will introduce the augmented Lagrangian (AL) function based on {the Powell–Hestenes–\revise{Rockafellar} \cite{powell1969method,hestenes1969multiplier,rockafellar1974augmented}} for problem \eqref{prob}.  We begin with a special case of problem \eqref{prob} that has no inequality constraints. With $\rho > 0$ denoting a positive penalty parameter, the AL function \cite{hestenes1969multiplier,powell1969method} for this problem is given by
\begin{equation}
\label{AL-eq}
\mathcal{L}_\rho(x,\lambda)=f(x)+\sum_{i\in\mathcal{E}}\lambda_i c_i(x)+\frac\rho 2 \sum_{i\in\mathcal{E}}c_i^2(x).
\end{equation}

When problem \eqref{prob} includes inequality constraints, we can construct the AL function by first introducing slack variables to obtain an equivalent formulation \cite{rockafellar1971duality,rockafellar1973multiplier}:
\begin{equation}
\label{prob2}
    \begin{split}
    \min_{x,s}\ &\, f(x),\\
    \mathrm{s.t. }\, & \,c_i(x)=0, \hspace{8mm} i\in\mathcal{E},\\
    &\,c_i(x)+s_i=0, \ i\in\mathcal{I},\\
    &\,s_i\geq 0, \hspace{1.3cm} i\in\mathcal{I}.
    \end{split}
\end{equation}
By keeping the non-negative constraint on \(s\) explicit, we can define the AL function for the above problem following the structure used in the equality‐constrained case:
\begin{equation}
\label{AL0}
\mathcal{L}_\rho(x,s,\lambda,\mu) = f(x) + \sum_{i\in\mathcal{E}} \lambda_i c_i(x) + \sum_{i\in\mathcal{I}} \mu_i (c_i(x) + s_i) + \frac{\rho}{2} \sum_{i\in\mathcal{E}} c_i^2(x) + \frac{\rho}{2} \sum_{i\in\mathcal{I}} \left(c_i(x) + s_i\right)^2.
\end{equation}

Note that there is an additional slack variable \(s\) compared to the original problem \eqref{prob}. A standard approach is to eliminate this variable by partially minimizing over \(s \geq 0\), ensuring that the AL function depends solely on the original variable \(x\):
\[
\min_s\ \mathcal{L}_\rho(x,s,\lambda,\mu) \quad \text{s.t. } \ s_i \geq 0, \ i \in \mathcal{I}.
\]
Defining \([\alpha]_+ := \max\{\alpha, 0\}\), the unique optimal solution of the above problem is given by

\begin{equation}\label{sopt}
    s_i^* = \left[-\frac{\mu_i}{\rho} - c_i(x)\right]_+, \ i\in\mathcal{I}.
\end{equation}
Substituting \(s^*\) into \eqref{AL0} yields
\begin{equation}\label{AL}
\mathbb{L}_\rho(x,\lambda,\mu) = f(x) + \sum_{i\in\mathcal{E}} \lambda_i c_i(x) + \frac{\rho}{2} \sum_{i\in\mathcal{E}} c_i^2(x) + \frac{\rho}{2} \sum_{i\in\mathcal{I}} \left(\left[\frac{\mu_i}{\rho} + c_i(x)\right]_+^2 - \frac{\mu_i^2}{\rho^2}\right),
\end{equation}
which represents the classic AL function for inequality-constrained optimization problems. %

{We note that the above definition is based on the Hestenes-Powell-Rockafellar AL function, in which the penalty term is defined using the squared Euclidean norm. This formulation represents the most common and widely adopted form of the AL function in the literature. Nevertheless, alternative constructions of AL functions have also been proposed. In the following, we introduce two such variants: the shift-penalty AL function and a generalized AL function based on dualizing parameterization.}

\subsubsection{Alternative AL construction {via shift penalty}}
Similar to the quadratic penalty for equality constraints  \cite{Courant1943VariationalMF}, a quadratic penalty for problem \eqref{prob} can be written as 
\begin{equation}\label{penalty_inequality}
\Phi_\rho(x) := f(x) + \frac{\rho}{2} \sum_{i\in\mathcal{E}} c_i^2(x) + \frac{\rho}{2} \sum_{i\in\mathcal{I}} [c_i(x)]_+^2,
\end{equation}
for some penalty factor \(\rho > 0\). Minimizing the above function for successive values of \(\rho\) constitutes the classical external penalty method. By increasing the value of \(\rho\), this penalty method aims to achieve satisfactory feasibility. Alternatively, rather than simply increasing \(\rho\), one can modify the origin with respect to which infeasibility is penalized, leading to the concept of shifted penalties \cite{powell1969method, hestenes1969multiplier}. Specifically, a shifted penalty function \cite{fletcher1975ideal} for the inequality constraint problem \eqref{prob} is
 \begin{equation}
 \phi(x,\xi,\theta,\rho):= f(x)+\frac\rho2\sum_{i\in\mathcal{E}}(c_i(x)+\xi_i)^2+\frac{\rho}{2}\sum_{i\in\mathcal{I}}[c_i(x)+\theta_i]_+^2,\nonumber
 \end{equation}
where $\xi_i,\theta_i$ are the origin shifts and $\rho > 0$ controls the  penalty level. Introducing 
$\lambda_i=\rho\xi_i$ for $ i\in\mathcal{E}$ and $\mu_i=\rho\theta_i$ for $ i\in\mathcal{I}$, then omitting the terms independent of $x$ yields
\begin{equation}%
\begin{split}
\phi(x,\lambda,\mu,\rho)= &f(x)+\sum_{i\in\mathcal{E}}\lambda_i c_i(x)+\frac\rho 2 \sum_{i\in\mathcal{E}}c_i^2(x)\\
&+\sum_{i\in\mathcal{I}}
\left\{\begin{array}{ll}
\mu_i c_i(x)+\rho c^2_i(x)/2, & \mathrm{if}~ c_i(x) \geq -\mu_i/ \rho,\\
\mu_i c_i(x), & \mathrm{otherwise},
\end{array}\right.
\end{split}\nonumber
\end{equation}
which coincides with the definition given in \eqref{AL}. Therefore,
a local minimizer can be made to minimize $\Phi$ for finite $\rho$ by changing the origin of the penalty term.

\subsubsection{{Alternative AL construction via general augmenting function}} \label{subsubsec:Gen-AL}{Although this survey primarily focuses on the AL function based on quadratic penalty, it is still worth mentioning the general form of AL function constructed by general dualizing parameterization and augmenting function, which contains the AL function \eqref{AL} as a special case, see \cite{rockafellar1973dual,rockafellar2009variational}.}

{Define $\psi(x):=f(x) + \delta_D(c(x))$, where $D$ is a closed convex set and $c$ is a vector mapping. Then minimizing $\psi(x)$ over $\R^n$ is a constrained minimization of $f(x)$ subject to $c(x)\in D$. In \cite{rockafellar1976augmented}, Rockafellar and Wets introduce a so-called \textit{dualizing parameterization} of $\psi$ as a representation $\psi(x) = \phi(x,0)$ in terms of a proper function $\phi(x,u)$ that is  l.s.c. and convex in $u$. Let $\sigma$ be an arbitrary proper, l.s.c., and convex \textit{augmenting function}, satisfying $\min_y \sigma(y) = 0$ and $\arg\min_y \sigma(y) = 0$. Then they introduce the general Lagrangian $\cL(x,\nu)$ and augmented Lagrangian functions $\mathbb{L}_\rho(x,\nu)$ as 
\begin{align*}
\cL(x,\nu) &= \min_u \phi(x,u) - u^\top\nu,\\
\mathbb{L}_\rho(x,\nu) &= \min_u \phi(x,u)+\rho\sigma(u) - u^\top\nu.
\end{align*}
One particular choice of the dualizing parameterization and augmenting function can be 
$$\phi(x,u):= f(x) + \delta_D(c(x)+u)\qquad\mbox{and}\qquad \sigma(y) = \|y\|^2/2.$$
A direct computation gives 
\begin{align*}
\cL(x,\nu) &= f(x) + \nu^\top c(x) - \delta_D^*(\nu),\\
\mathbb{L}_\rho(x,\nu) &= f(x) +  \frac{\rho}{2}\Big\|\hat{\Pi}_D\Big(\frac{\nu}{\rho} + c(x)\Big)\Big\|^2 - \frac{\|\nu\|^2}{2\rho},
\end{align*} 
see also \cite[Example 11.46 and Example 11.57]{rockafellar2009variational}. Then setting $D = -\R_+^{|\mathcal{I}|}\times\{0\}^{|\mathcal{E}|}$ and separating $\nu$ into $\lambda$ and $\mu$ correspondingly recovers the AL function defined in \eqref{AL}. We should note that different selection of $\sigma$ can indeed lead to different AL function. However, as this survey primarily focuses on the AL function based on quadratic penalty, we will mostly consider the quadratic augmenting function in latter discussion.    
}

{
\subsubsection{Other AL function}
In addition to the classical Hestenes–Powell–Rockafellar AL function in Section \ref{sec:classical-AL} (which is based on quadratic function, and also is the main focus in this survey), as well as the previously mentioned shift-penalty and generalized augmenting functions, there exist several other classes of augmented Lagrangian functions. For inequality-constrained problems, the classical HPR AL function is only once differentiable, even when both the objective and constraint functions are of higher smoothness. To address this limitation, a class of  modified barrier functions (MBFs) has been proposed  \cite{polyak1992modified}:
$$
F(x, u, k)= \begin{cases}f(x)-k^{-1} \sum_{i\in \mathcal{I}} u_i \ln \left(k c_i(x)+1\right), & \text { if } x \in \operatorname{int} \Omega_k, \\ \infty, & \text { if } x \notin \operatorname{int} \Omega_k,\end{cases}
$$
where $\Omega_k=\left\{x: k f_i(x)+1 \geqslant 0, i=1, \ldots, m\right\}$. 
These functions combine the advantages of classical Lagrangian functions and traditional barrier functions \cite{carroll1961created,frisch1955logarithmic}, while circumventing their inherent shortcomings. When the objective function is twice continuously differentiable, MBFs also retain twice differentiability, making them well-suited for the application of Newton-type methods in solving subproblems efficiently. Several further developments and generalizations based on MBFs have also been proposed in the literature; see, for example, \cite{ben1997penalty,goldfarb1999modified} for details. Other variants of AL functions have also been studied, including the exponential augmented Lagrangian function \cite{tseng1993convergence}, and the Bregman augmented Lagrangian function \cite{yan2020bregman} that leverages the geometry induced by Bregman divergences \cite{bregman1967relaxation}. 

}

\subsubsection{Saddle point formulation}
\label{sec:saddle-basic}
Recent developments in the optimization theory of saddle point problems have resulted in a significant body of literature addressing convex minimization problems \eqref{prob} as minimax problems based on the Lagrangian function \(\mathcal{L}\) \cite{zhu2008efficient,chambolle2011first,chambolle2016ergodic} and the AL function \(\mathbb{L}_\rho\) \cite{zhu2022unified}. {For convex problems, strong duality for the standard Lagrangian is established by Theorem \ref{sec2:general:sd}. In this section, we concentrate on establishing and exploiting strong duality for the saddle‐point formulation based on the augmented Lagrangian:} %
\begin{equation}
\label{minmax}
\min_x \max_{\lambda,\mu} \ \mathbb{L}_\rho(x,\lambda,\mu).
\end{equation}
{
Compared to the classical saddle‐point problem built upon $\mathcal{L}$, formulation \eqref{minmax} accommodates both convex and certain nonconvex problems. In fact, by treating the penalty parameter $\rho$ as an auxiliary variable, one can extend strong duality of the AL function to a broad class of nonconvex optimization problems \cite{rockafellar2009variational,gasimov2002augmented}. Moreover, since our primary interest lies in the ALM, it is important to observe (cf.\ Section \ref{sec:dualascent}) that ALM can be viewed as a dual‐ascent procedure applied to the augmented‐Lagrangian dual problem:
\[
\max_{\lambda,\mu}\,\min_{x}\;\mathbb{L}_\rho(x,\lambda,\mu).
\]
Accordingly, establishing strong duality for $\mathbb{L}_\rho$ plays a central role both in analyzing convergence of ALM iterations and in clarifying the precise relationship between primal feasibility and dual‐variable updates.}  First, we will initiate our discussion with the AL function associated with a convex optimization problem. The proof is referred to Appendix \ref{appen:proof-sec2}.

\begin{theorem}\label{theorem:convex-al-duality}
 Suppose problem \eqref{prob} is convex and Slater's condition is satisfied, then strong duality holds for the saddle point problem \eqref{minmax}. That is, 
\begin{equation}\label{strong-duality}
    \max_{\lambda,\mu}\min_x\ \mathbb{L}_\rho(x,\lambda,\mu)\,=\,\min_x\max_{\lambda,\mu}\ \mathbb{L}_\rho(x,\lambda,\mu).
\end{equation}
\end{theorem}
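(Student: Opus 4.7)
The plan is to reduce the AL saddle-point strong duality \eqref{strong-duality} to the classical Lagrangian strong duality of Theorem \ref{sec2:general:sd} by exhibiting a common saddle point that attains the primal optimum $f^*$ of \eqref{prob}. Weak duality $\max_{\lambda,\mu}\min_x \mathbb{L}_\rho \leq \min_x\max_{\lambda,\mu} \mathbb{L}_\rho$ is immediate from the standard max-min inequality. For the min-max side, I would analyze the inner maximum for fixed $x$: the $\lambda_i$-linear terms force any $c_i(x)\neq 0$ with $i\in\mathcal{E}$ to send the value to $+\infty$, while the piecewise term $\frac{\rho}{2}\bigl([\mu_i/\rho+c_i(x)]_+^2-\mu_i^2/\rho^2\bigr)$ is unbounded above in $\mu_i$ exactly when $c_i(x)>0$ and attains its finite maximum $0$ at $\mu_i=0$ when $c_i(x)\leq 0$. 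Hence $\max_{\lambda,\mu}\mathbb{L}_\rho(x,\lambda,\mu)$ equals $f(x)$ when $x$ is primal feasible and $+\infty$ otherwise, so $\min_x\max_{\lambda,\mu}\mathbb{L}_\rho=f^*$.

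Next, I would invoke Theorem \ref{sec2:general:sd}: convexity together with Slater's condition yields a KKT triple $(x^*,\lambda^*,\mu^*)$ with $\mu^*\geq 0$ that is a saddle point of the classical Lagrangian and satisfies $\min_x\mathcal{L}(x,\lambda^*,\mu^*)=f^*$. Substituting into $\mathbb{L}_\rho$ and using primal feasibility at $x^*$ together with complementary slackness, a direct computation shows every additional term of $\mathbb{L}_\rho$ beyond $f(x^*)$ vanishes, giving $\mathbb{L}_\rho(x^*,\lambda^*,\mu^*)=f^*$. Combined with the preceding paragraph this already yields $(\lambda^*,\mu^*)\in\argmax_{\lambda,\mu}\mathbb{L}_\rho(x^*,\cdot,\cdot)$.

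The remaining task, which is the main obstacle, is to verify $x^*\in\argmin_x\mathbb{L}_\rho(x,\lambda^*,\mu^*)$. I would establish the pointwise bound $\mathbb{L}_\rho(x,\lambda,\mu)\geq\mathcal{L}(x,\lambda,\mu)$ whenever $\mu\geq 0$ via a case split on the sign of $\mu_i/\rho+c_i(x)$: in the active branch the inequality-constraint term of $\mathbb{L}_\rho$ equals $\mu_i c_i(x)+\frac{\rho}{2}c_i^2(x)$, exceeding $\mu_i c_i(x)$ by the non-negative quantity $\frac{\rho}{2}c_i^2(x)$; in the inactive branch the difference simplifies to $-\mu_i\bigl(\mu_i/(2\rho)+c_i(x)\bigr)$, which is non-negative because the branch condition $c_i(x)<-\mu_i/\rho$ combined with $\mu_i\geq 0$ forces $\mu_i/(2\rho)+c_i(x)<-\mu_i/(2\rho)\leq 0$. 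Applying this bound at $(\lambda^*,\mu^*)$ together with $\min_x\mathcal{L}(x,\lambda^*,\mu^*)=f^*$ and the attained equality $\mathbb{L}_\rho(x^*,\lambda^*,\mu^*)=f^*$ shows that $x^*$ minimizes $\mathbb{L}_\rho(\cdot,\lambda^*,\mu^*)$ with value $f^*$. The saddle-point property of $(x^*,\lambda^*,\mu^*)$ then closes the chain $\min_x\max_{\lambda,\mu}\mathbb{L}_\rho=f^*=\max_{\lambda,\mu}\min_x\mathbb{L}_\rho$. A delicate point worth emphasizing is that \eqref{minmax} maximizes over unrestricted $\mu$, whereas the pointwise bound above crucially relies on $\mu^*\geq 0$ from dual feasibility; this is consistent because any AL dual optimizer effectively satisfies $\mu\geq 0$ by the structure of the $[\cdot]_+$ penalty.
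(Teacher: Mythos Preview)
Your proof is correct and takes a genuinely different route from the paper's. The paper does not construct an explicit AL saddle point. Instead, it introduces the auxiliary convex problem \eqref{prob:NLP*} (the original constraints, but with the quadratic penalty terms added to the objective), observes that its classical Lagrangian is precisely $\mathcal{L}_\rho(x,s,\lambda,\mu)$ from \eqref{AL0}, and applies Theorem~\ref{sec2:general:sd} to that reformulated problem to obtain $\max_{\lambda,\mu}\min_{s\geq 0,x}\mathcal{L}_\rho=\min_{s\geq 0,x}\max_{\lambda,\mu}\mathcal{L}_\rho$. Since $\mathbb{L}_\rho=\min_{s\geq 0}\mathcal{L}_\rho$ by construction, the left-hand sides match immediately, and a short feasibility/infeasibility split (essentially your Step~2) shows the right-hand sides match as well.

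Your argument works directly with $\mathbb{L}_\rho$: you pull the classical KKT point $(x^*,\lambda^*,\mu^*)$ from Theorem~\ref{sec2:general:sd} for the \emph{original} problem, establish the pointwise inequality $\mathbb{L}_\rho(x,\lambda,\mu)\geq\mathcal{L}(x,\lambda,\mu)$ whenever $\mu\geq 0$, and sandwich $\min_x\mathbb{L}_\rho(x,\lambda^*,\mu^*)$ between $f^*$ and $f^*$. The paper's reduction is slicker in that it avoids any branch-by-branch casework on the $[\cdot]_+$ term and never needs the pointwise domination or an explicit primal optimizer. Your route, on the other hand, yields two useful by-products: the inequality $\mathbb{L}_\rho\geq\mathcal{L}$ on $\{\mu\geq 0\}$, which is of independent interest, and the explicit statement that every classical KKT point is automatically a saddle point of $\mathbb{L}_\rho$. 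One minor remark: for the bare equality \eqref{strong-duality} you really only need $\min_x\mathbb{L}_\rho(\cdot,\lambda^*,\mu^*)\geq f^*$, which already follows from your pointwise bound and dual attainment; the primal optimizer $x^*$ is needed only if you also want the saddle-point conclusion.
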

  
Thus far, we have restricted our discussion to convex problems under Slater's condition. However, for any fixed \(\rho\), strong duality does not necessarily hold for general problems, particularly in nonconvex cases. To address this issue, it is essential to treat \(\rho\) as a decision variable and examine strong duality from the perspective of the penalty method. Let us first give the definition of level-bounded, which is crucial for the subsequent theorem. 
\begin{definition} 
 A function $f:\mathbb{R}^n \to \mathbb{R}$ is (lower) level-bounded if for every $\alpha \in \mathbb{R}$  the set $\{x \in \R^n:  f(x) \leq \alpha \}$  is bounded (possibly empty).
\end{definition} 

\begin{theorem}\label{sec1:nonconvex:strong_duality}
For problem \eqref{prob}, if the objective function $f$ is bounded from below, and { the quadratically penalized objective function $\Phi_\rho(x)=\mathbb{L}_\rho(x,0,0)$ is l.s.c. and level-bounded for sufficiently large $\rho$, see \eqref{penalty_inequality},}
then the strong duality holds in the sense that  
\begin{equation}\label{npnconvex_strong_duality}
    \mathop{\max}_{ \rho>0,\lambda,\mu}\min_x\ \mathbb{L}_\rho(x,\lambda,\mu)=\min_x\max_{ \rho>0,\lambda,\mu}\ \mathbb{L}_\rho(x,\lambda,\mu).
\end{equation}
\end{theorem}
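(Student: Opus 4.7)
The plan is to decompose \eqref{npnconvex_strong_duality} into the weak-duality inequality (automatic from the general minimax inequality $\max\min\le\min\max$) and a reverse inequality that requires honest work. I would identify the common value as the primal optimal value $f^\star:=\inf\{f(x):x\text{ feasible}\}$ by evaluating both nested optimizations separately. For the primal side $\min_x\max_{\rho,\lambda,\mu}\mathbb{L}_\rho(x,\lambda,\mu)$, I would fix $x$ and exploit the explicit formula \eqref{AL}: when $x$ is feasible, each constraint term vanishes at $\lambda=\mu=0$ and a direct case analysis (with $c_i(x)=0$ for $i\in\mathcal{E}$ and $c_i(x)\le 0$ for $i\in\mathcal{I}$) shows no choice of $(\rho,\lambda,\mu)$ exceeds $f(x)$; when an equality constraint is violated, sending $\lambda_i\to\pm\infty$ with the correct sign produces $+\infty$, and similarly $\mu_i\to+\infty$ when some $c_i(x)>0$ with $i\in\mathcal{I}$. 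Thus $\min_x\max_{\rho,\lambda,\mu}\mathbb{L}_\rho=f^\star$.

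For the dual side $\max_{\rho,\lambda,\mu}\min_x\mathbb{L}_\rho$, I would restrict attention to the slice $\lambda=\mu=0$, which recovers $\Phi_\rho(x)$ and gives the lower bound $\sup_{\rho>0}\min_x\Phi_\rho(x)$. The remaining task is to invoke the classical convergence theory for the quadratic penalty method and show that this supremum equals $f^\star$. Because $\Phi_\rho$ is pointwise non-decreasing in $\rho$ and equals $f$ at any feasible point, the bound $\min_x\Phi_\rho(x)\le f^\star$ is immediate; everything reduces to proving $\lim_{\rho\to\infty}\min_x\Phi_\rho(x)\ge f^\star$, which is the main obstacle.

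The penalty-method limit is where the hypotheses really come into play. For $\rho\ge\bar\rho$, lower semicontinuity and level-boundedness of $\Phi_\rho$ guarantee that the minimum is attained at some $x_\rho$ with $\Phi_\rho(x_\rho)\le f^\star$; monotonicity of $\Phi_\rho$ in $\rho$ then traps the whole family $\{x_\rho\}_{\rho\ge\bar\rho}$ inside the bounded sublevel set $\{x:\Phi_{\bar\rho}(x)\le f^\star\}$, yielding a convergent subsequence $x_{\rho_k}\to x^\star$. The inequality $f(x_{\rho_k})+\tfrac{\rho_k}{2}\bigl(\sum_{i\in\mathcal{E}}c_i^2(x_{\rho_k})+\sum_{i\in\mathcal{I}}[c_i(x_{\rho_k})]_+^2\bigr)\le f^\star$, combined with the lower bound on $f$, forces the penalty term to vanish as $\rho_k\to\infty$; lower semicontinuity of the $c_i$'s then makes $x^\star$ feasible, and lower semicontinuity of $f$ yields $\liminf_k\Phi_{\rho_k}(x_{\rho_k})\ge\liminf_k f(x_{\rho_k})\ge f(x^\star)\ge f^\star$. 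Chaining the two sides produces $f^\star\le\max_{\rho,\lambda,\mu}\min_x\mathbb{L}_\rho\le\min_x\max_{\rho,\lambda,\mu}\mathbb{L}_\rho=f^\star$ and closes the argument.
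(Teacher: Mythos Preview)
Your strategy matches the paper's proof exactly: weak duality for one inequality, identify $\min_x\max_{\rho,\lambda,\mu}\mathbb{L}_\rho=f^\star$ by case analysis on feasibility, then bound the dual side from below by specializing to $\lambda=\mu=0$ and invoking convergence of the quadratic penalty values to $f^\star$. The only difference is packaging: the paper observes that $\Phi_\rho\nearrow\bar f:=f+\sum_{i\in\mathcal{E}}\delta_{\{0\}}(c_i)+\sum_{i\in\mathcal{I}}\delta_{-\mathbb{R}_+}(c_i)$ and cites \cite[Theorem~7.33]{rockafellar2009variational} to conclude $\lim_{\rho\to\infty}\min_x\Phi_\rho(x)=\min_x\bar f(x)=f^\star$, whereas you unpack that epi-convergence argument by hand via a subsequence $x_{\rho_k}\to x^\star$.

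One small technical caveat in your direct version: the feasibility step ``lower semicontinuity of the $c_i$'s then makes $x^\star$ feasible'' is fragile for equality constraints, since $c_i(x_{\rho_k})\to 0$ together with l.s.c.\ of $c_i$ only gives $c_i(x^\star)\le 0$, not equality. The theorem's hypothesis is that $\Phi_\rho$ itself is l.s.c.\ (not $f$ and $c_i$ separately), and the clean way to close your argument under exactly the stated assumptions is to fix any $\bar\rho$, use l.s.c.\ of $\Phi_{\bar\rho}$ and monotonicity in $\rho$ to get $\liminf_k\Phi_{\rho_k}(x_{\rho_k})\ge\liminf_k\Phi_{\bar\rho}(x_{\rho_k})\ge\Phi_{\bar\rho}(x^\star)$, then let $\bar\rho\uparrow\infty$ to obtain $\bar f(x^\star)\ge f^\star$. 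This is precisely what the cited Rockafellar--Wets theorem encapsulates.
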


{It is worth mentioning that for a general AL function with any qualified dualizing parameterization and augmenting function described in Section \ref{subsubsec:Gen-AL}, \cite[Theorem 11.59]{rockafellar2009variational} suggests that the above strong duality result still holds if the dualizing parameterization satisfies proper conditions. In detail, if one adopts the specific parameterization 
$$\phi(x,u) = f(x) + \sum_{i\in\mathcal{E}} \delta_{\{0\}}(c_i(x)+u_i) + \sum_{i\in\cI}\delta_{-\R_+}(c_i(x)+u_i),$$
then \cite[Theorem 11.59]{rockafellar2009variational} requires $\phi$ to be l.s.c. and level-bounded in $x$ locally and uniformly in $u$. In Theorem \ref{sec1:nonconvex:strong_duality}, we adopt a slightly different assumption that the quadratically penalized function $\mathbb{L}_\rho(x,0,0)$ to be l.s.c. and level bounded when $\rho$ is large enough. We adopt this change because it is more natural for the main focus of this paper, the quadratic penalty based AL function. In addition, the main tone of the survey is to avoid the technical sophistication for the sake of a broader audience, we choose not present the results from the dualizing parameterization and general AL perspective. A simple and self-contained proof of the theorem is provided in Appendix~\ref{appen:proof-sec2}.}

The distinction between the strong duality in convex and nonconvex cases lies in whether the penalty parameter \(\rho\) is treated as a decision variable in the saddle point formulation based on the AL function. {For convex problems, strong duality holds regardless of the choice of $\rho\geq0$, while for nonconvex problems, as demonstrated in the proof of Theorem  \ref{sec1:nonconvex:strong_duality}, the penalty parameter $\rho$ must be driven to infinity to ensure this property. This  directly relates why most augmented Lagrangian methods, surveyed in latter sections, require the penalty parameter $\rho\to+\infty$, leading to an infinite sequence of subproblems with increasing numerical difficulties. In this situation, the study of exact penalty properties becomes crucial for constrained nonconvex optimization from both theoretical and practical perspectives.}

\subsubsection{{Exact penalty representation}} 
{Denote $F^*_{\text{NCOP}}$ and $\mathcal{X}^*_{\text{NCOP}}$ the optimal value and optimal solution set for the nonconvex constrained optimization problem \eqref{prob}.  Then, following the definition in \cite{rockafellar2009variational}, we introduce the concept of \textit{exact penalty representation}. 
\begin{definition}
We say the vector $(\bar{\lambda},\bar{\mu})$ supports an exact penalty representation if 
$$F^*_{\text{NCOP}}=\min_x \mathbb{L}_\rho(x, \bar{\lambda},\bar{\mu})  \quad\mbox{and}\quad \mathcal{X}^*_{\text{NCOP}}=\argmin_x \mathbb{L}_\rho(x, \bar{\lambda},\bar{\mu})$$
for all sufficiently large $\rho >0$. In particular, a value $\bar{\rho} > 0$ is called an adequate penalty threshold if this property is satisfied for all $\rho \geq \bar{\rho}$.  
\end{definition}}

This property is crucial in designing algorithms for nonconvex problems. It allows the use of unconstrained optimization techniques by ensuring that the penalized problem accurately represents the original constrained problem. Several studies have provided insights into the conditions required for the exact penalty property to hold in nonconvex settings. The seminal work of \cite{rockafellar1976augmented} laid the foundation for comprehending these conditions in convex optimization, which has been further extended to nonconvex scenarios by subsequent research, see \cite{dolgopolik2018augmented,dolgopolik2018unified}.  {In addition, this property actually has a deep connection to the strong duality of nonconvex problems. By \cite[Theorem 11.61]{rockafellar2009variational}, a sufficient and necessary condition for the existence of such $\bar{\lambda},\bar{\mu}$, and $\bar{\rho}$ is stated below from the saddle point perspective. 
\begin{theorem}  
Under the same condition of Theorem \ref{sec1:nonconvex:strong_duality}, $(\bar{\lambda},\bar{\mu})$ supports an exact penalty representation for problem \eqref{prob} if and only if there exists a finite $0<\bar{\rho}<+\infty$ such that 
$$(\bar{\lambda}, \bar{\mu}, \bar{\rho}) \in \arg\max_{\lambda,\mu,\rho} \min_x \mathbb{L}_\rho(x, \lambda, \mu).$$ 
\end{theorem}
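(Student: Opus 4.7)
The plan is to bridge the two characterizations via Theorem \ref{sec1:nonconvex:strong_duality}, which establishes the strong duality $F^*_{\text{NCOP}} = \max_{\rho>0,\lambda,\mu}\min_x \mathbb{L}_\rho(x,\lambda,\mu)$ under the stated hypotheses. Two elementary tools will be repeatedly used: weak duality, and monotonicity of $\rho\mapsto\mathbb{L}_\rho(x,\lambda,\mu)$ for fixed $(x,\lambda,\mu)$. Weak duality follows by substituting any feasible $x$ into $\mathcal{L}_\rho(x,s,\lambda,\mu)$ in \eqref{AL0} with slack $s = -c_\mathcal{I}(x)\ge 0$, which gives $\mathbb{L}_\rho(x,\lambda,\mu)\le f(x)$ and hence $\min_x\mathbb{L}_\rho(x,\lambda,\mu)\le F^*_{\text{NCOP}}$. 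Monotonicity follows by differentiating the quadratic term $\frac{\rho}{2}c_i^2(x)$ and the smoothed term $\frac{\rho}{2}[\mu_i/\rho+c_i(x)]_+^2 - \mu_i^2/(2\rho)$ in \eqref{AL} with respect to $\rho$; case analysis shows both partial derivatives are nonnegative.

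The forward direction $(\Rightarrow)$ is essentially immediate: exact penalty representation at threshold $\bar\rho$ says $\min_x\mathbb{L}_{\bar\rho}(x,\bar\lambda,\bar\mu)=F^*_{\text{NCOP}}$, and Theorem \ref{sec1:nonconvex:strong_duality} tells us this is the maximum value of the dual, so $(\bar\lambda,\bar\mu,\bar\rho)$ attains the maximum.

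For the reverse direction $(\Leftarrow)$, strong duality gives $\min_x\mathbb{L}_{\bar\rho}(x,\bar\lambda,\bar\mu)=F^*_{\text{NCOP}}$. Combining monotonicity (which yields $\min_x\mathbb{L}_\rho(x,\bar\lambda,\bar\mu)\ge F^*_{\text{NCOP}}$ for all $\rho\ge\bar\rho$) with weak duality produces the value equality $\min_x\mathbb{L}_\rho(x,\bar\lambda,\bar\mu) = F^*_{\text{NCOP}}$ throughout $\rho\ge\bar\rho$. One inclusion of the argmin identity then follows easily: any $x^*\in\mathcal{X}^*_{\text{NCOP}}$ is feasible, so $\mathbb{L}_\rho(x^*,\bar\lambda,\bar\mu)\le f(x^*) = F^*_{\text{NCOP}}$, and combined with the value equality this places $x^*$ in the argmin set.

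The principal obstacle is the reverse inclusion $\arg\min_x\mathbb{L}_\rho(x,\bar\lambda,\bar\mu)\subseteq\mathcal{X}^*_{\text{NCOP}}$. The approach is to exploit strict monotonicity in $\rho$: examining $\partial_\rho\mathbb{L}_\rho$ term by term reveals that its derivative is strictly positive unless $x$ is primal feasible and complementary slackness holds between $c_i(x)\le 0$ and $\bar\mu_i$, where the max condition implicitly enforces $\bar\mu\ge 0$ since any $\bar\mu_i<0$ would let one enlarge $\bar\mu_i$ toward zero while strictly increasing the dual value. For any $\hat x$ lying in the argmin with value $F^*_{\text{NCOP}}$ at some $\rho\ge\bar\rho$, the sandwich $F^*_{\text{NCOP}}\le\mathbb{L}_{\rho'}(\hat x,\bar\lambda,\bar\mu)\le F^*_{\text{NCOP}}$ for $\rho'\in[\bar\rho,\rho]$ forces the map $\rho'\mapsto\mathbb{L}_{\rho'}(\hat x,\bar\lambda,\bar\mu)$ to be constant, yielding feasibility and complementarity for $\hat x$. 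A direct evaluation then gives $\mathbb{L}_\rho(\hat x,\bar\lambda,\bar\mu) = f(\hat x) = F^*_{\text{NCOP}}$, so $\hat x\in\mathcal{X}^*_{\text{NCOP}}$.
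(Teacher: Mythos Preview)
The paper does not supply its own proof; it cites \cite[Theorem 11.61]{rockafellar2009variational}, which handles the general augmenting-function setting via conjugate duality. Your direct argument, specialized to the quadratic penalty, is essentially correct and more elementary than the cited reference. Two minor points to tighten. First, the side claim that the argmax condition forces $\bar\mu\ge 0$ via ``strictly increasing the dual value'' is not quite justified: pointwise strict monotonicity of $\mu_i\mapsto\mathbb{L}_\rho(x,\lambda,\mu)$ on $\mu_i<0$ only delivers nonstrict monotonicity of the infimum over $x$. Fortunately this detour is unnecessary, because your own term-by-term $\partial_\rho$ analysis already shows that constancy of $\rho'\mapsto\mathbb{L}_{\rho'}(\hat x,\bar\lambda,\bar\mu)$ on an interval forces, for each $i\in\mathcal{I}$, either $(\bar\mu_i=0,\,c_i(\hat x)\le 0)$ or $(c_i(\hat x)=0,\,\bar\mu_i>0)$; in particular $\bar\mu_i\ge 0$ falls out automatically without appealing to the argmax condition. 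Second, the sandwich on $[\bar\rho,\rho]$ is nondegenerate only when $\rho>\bar\rho$; at $\rho=\bar\rho$ you cannot conclude constancy from a single point. Since the definition of exact penalty representation only asks for the property to hold for all sufficiently large $\rho$, establishing the argmin identity for $\rho>\bar\rho$ suffices, so this is purely cosmetic.
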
   
Combined with the analysis of Theorem \ref{sec1:nonconvex:strong_duality} in Appendix \ref{appen:proof-sec2}, a direct consequence is that the parameter $\rho$ no longer needs to be driven to infinity to secure the strong duality property, and it will hold for any finite $\rho\geq\bar{\rho}$. 
\begin{corollary}
    Suppose there exists $(\bar{\lambda},\bar{\mu})$ that supports an exact penalty representation, and $\bar{\rho}>0$ is an adequate penalty threshold, then the strong duality holds \begin{equation}\label{npnconvex_strong_duality_exact}
    \mathop{\max}_{\lambda,\mu}\min_x\ \mathbb{L}_\rho(x,\lambda,\mu)=\min_x\max_{\lambda,\mu}\ \mathbb{L}_\rho(x,\lambda,\mu),
\end{equation}
for all the sufficiently large parameter $\rho\geq\bar{\rho}$.
\end{corollary}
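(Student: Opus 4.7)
The plan is to prove the equality by evaluating both sides of the minimax identity separately and showing that each equals the optimal value $F^*_{\text{NCOP}}$ of \eqref{prob}. Weak duality gives one direction for free, and the exact penalty hypothesis provides the other after computing the inner maximum of the AL function explicitly.

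First, I would use the exact penalty representation hypothesis directly. Since $(\bar\lambda,\bar\mu)$ supports exact penalty representation and $\rho\geq\bar\rho$, by definition we have $\min_x \mathbb{L}_\rho(x,\bar\lambda,\bar\mu) = F^*_{\text{NCOP}}$. This immediately yields
\[
\max_{\lambda,\mu}\,\min_x\, \mathbb{L}_\rho(x,\lambda,\mu) \;\geq\; \min_x\, \mathbb{L}_\rho(x,\bar\lambda,\bar\mu) \;=\; F^*_{\text{NCOP}}.
\]

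Second, I would compute the pointwise inner maximum $\max_{\lambda,\mu}\mathbb{L}_\rho(x,\lambda,\mu)$ for each fixed $x$, using the formula \eqref{AL}. For equality constraints, the term $\lambda_i c_i(x)$ is linear in $\lambda_i$, so its supremum is $+\infty$ unless $c_i(x)=0$. For inequality constraints, I would split on the sign of $\frac{\mu_i}{\rho}+c_i(x)$: on the branch where this is nonpositive the contribution is $-\frac{\mu_i^2}{2\rho}$, while on the branch where it is positive the contribution simplifies to $\mu_ic_i(x) + \frac{\rho}{2}c_i^2(x)$, which is linear and unbounded above in $\mu_i$ whenever $c_i(x)>0$. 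A short case analysis then shows that $\max_{\lambda,\mu}\mathbb{L}_\rho(x,\lambda,\mu) = +\infty$ if $x$ is infeasible, and equals $f(x)$ (attained at $\lambda=0,\mu=0$) when $x$ is feasible. Consequently,
\[
\min_x\,\max_{\lambda,\mu}\,\mathbb{L}_\rho(x,\lambda,\mu) \;=\; \min_{x\text{ feasible}} f(x) \;=\; F^*_{\text{NCOP}}.
\]

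Third, combining with the standard weak duality inequality $\max_{\lambda,\mu}\min_x \mathbb{L}_\rho \leq \min_x\max_{\lambda,\mu}\mathbb{L}_\rho$, the two bounds pinch together:
\[
F^*_{\text{NCOP}} \;\leq\; \max_{\lambda,\mu}\,\min_x\,\mathbb{L}_\rho(x,\lambda,\mu) \;\leq\; \min_x\,\max_{\lambda,\mu}\,\mathbb{L}_\rho(x,\lambda,\mu) \;=\; F^*_{\text{NCOP}},
\]
yielding the claimed strong duality.

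The only nonroutine step is the evaluation of the inner maximum for inequality constraints, because the piecewise quadratic structure of \eqref{AL} requires careful case splitting; the sign constraint on $\mu_i$ that is implicit from the slack elimination in \eqref{AL0}–\eqref{sopt} also needs to be handled with some care. However, no further appeal to convexity or constraint qualification is required: the argument is purely algebraic once the exact penalty representation is assumed, which is precisely why the threshold $\bar\rho$ can be taken finite here, in contrast to the infinite-$\rho$ limit needed in Theorem~\ref{sec1:nonconvex:strong_duality}.
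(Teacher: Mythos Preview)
Your proposal is correct and follows essentially the same route as the paper, which treats the corollary as an immediate consequence of the analysis of Theorem~\ref{sec1:nonconvex:strong_duality}: that analysis already establishes $\min_x\max_{\lambda,\mu}\mathbb{L}_\rho(x,\lambda,\mu)=F^*_{\text{NCOP}}$ for any fixed $\rho>0$ (your Step~2), and then the exact penalty hypothesis replaces the $\rho\to\infty$ limit by a direct lower bound at $(\bar\lambda,\bar\mu)$ (your Step~1). One minor remark: your comment about an ``implicit sign constraint on $\mu_i$'' is unnecessary---the paper explicitly treats $\mu$ as free in the saddle point formulation (see the discussion after~\eqref{eqn:nonconvexsaddle})---but your case analysis already covers all real $\mu_i$, so this does not affect correctness.
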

}

We should note that, unfortunately, the quadratic augmenting function often does not support a finite adequate penalty threshold. On the contrary, {sharp augmenting functions such as $\sigma(x) = \|x\|$ or $\sigma(x) = \|x\|_1$, often called exact penalty functions, allows exact penalty representation with $(\bar{\lambda},\bar{\mu})=0$ and a finite threshold $\bar{\rho}<+\infty$ under mild conditions, see \cite[Example 11.62]{rockafellar2009variational}. This difference ends up a trade-off between the smoothness of subproblem and the exactness of constraint enforcement.}  

{Instead of adopting sharp augmenting functions, an alternative approach to ensure exactness with finite penalty parameter $\rho$ is to adopt a so-called \textit{exact augmented Lagrangian function}, which modifies the AL function by incorporating smooth penalty terms on the first order necessary condition. We should note that though sharing a similar name with exact penalty representation, the exact AL function is a completely different approach. It was introduced by Di Pillo and Grippo for nonlinear programming (NLP) problems with equality constraints \cite{di1979new} and inequality constraints \cite{di1982new}, and were later  studied in \cite{lucidi1988new,bertsekas2014constrained,di2002augmented} for NLP problems with box constraints, and nonlinear semidefinite programming \cite{fukuda2018exact}. \revise{Several exact augmented Lagrangian functions have been proposed for cone constrained optimization problems; see, for example, \cite{dolgopolik2018augmented}. 
}  As this AL function significantly deviates from the Hestenes-Powell-Rockafellar AL function that this survey focuses on, we will not excessively expand the discussion on this topic.    }

\subsection{Convex composite optimization problem}\label{sec:convex-comp}

{The AL function can also be extended to structured optimization problems, such as nonsmooth problems, conic-constrained problems, and others. In this section, we consider a class of convex composite optimization problems, which involve minimizing the sum of a smooth convex function and a possibly nonsmooth but proximally tractable convex function. In practice, many instances of problem \eqref{prob} arise from reformulations of the convex composite optimization problem. These problems naturally arise in a wide range of applications, including signal processing \cite{combettes2011proximal}, statistical learning \cite{gaines2018algorithms}, and control \cite{de2020constrained}. In particular, we consider the formulation:} 
\begin{equation}\label{sec2:pro:composite1}
\min_{x} \psi(x): = f(x) + h(x)\quad\mathrm{s.t.} \quad \mathcal{A}(x) \in \mathcal{Q}, x\in \mathcal{K}. 
\end{equation}
Let $\mathcal{X}$ and $\mathcal{Y}$ be real finite-dimensional Euclidean spaces and let $\mathcal{K}\subset \mathcal{X}, \mathcal{Q}\subset\mathcal{Y}$ be closed convex sets that are typically two cones but not necessarily so. 
Then we require $\mathcal{A}:\mathcal{X} \rightarrow \mathcal{Y}$  to be  a linear map, $f:\mathcal{X}\rightarrow \R$ to be  a smooth convex function, and $h:\mathcal{X}\rightarrow (-\infty,+\infty]$ to be a proper and l.s.c. convex function whose proximal operator can be efficiently evaluated. 

{
For the convex problem \eqref{sec2:pro:composite}, under Slater's condition, one can establish the KKT condition as the first-order sufficient and necessary condition for optimal solutions.

\begin{theorem}
  Suppose that Slater's condition holds for problem \eqref{sec2:pro:composite1},  $x^*$ is an optimal solution of  \eqref{prob} if and only if there are Lagrange multipliers $\lambda^*\in N_{\mathcal{Q}}(\mathcal{A}x^*)$ and $\mu^*\in N_{\mathcal{K}}(x^*)$ such that $(x^*,\lambda^*,\mu^*)$ is a solution of the following KKT system:
\begin{equation}\label{convex-kkt}
\begin{cases}
  0\in \nabla f(x^*) + \partial h(x^*) +\mathcal{A}^*(\lambda^*) + \mu^*, \\
  \mathcal{A}x^* \in \mathcal{Q} ,~~x^*\in \mathcal{K}.
\end{cases}
\end{equation} 
\end{theorem}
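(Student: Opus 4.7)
The plan is to establish the equivalence by reformulating \eqref{sec2:pro:composite1} as an unconstrained convex minimization through indicator functions and then applying Fermat's rule together with subdifferential calculus, where Slater's condition supplies the constraint qualification needed to validate the sum and chain rules. Specifically, I would rewrite \eqref{sec2:pro:composite1} as
\begin{equation*}
\min_{x\in\mathcal{X}}\ \Psi(x) := f(x) + h(x) + \delta_{\mathcal{K}}(x) + (\delta_{\mathcal{Q}}\circ\mathcal{A})(x),
\end{equation*}
so that $x^*$ is an optimal solution if and only if $0 \in \partial\Psi(x^*)$ by Fermat's rule, since $\Psi$ is proper, l.s.c. and convex.

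Next, I would decompose $\partial\Psi(x^*)$ using the subdifferential sum rule and the chain rule for composition with a linear map. Since $f$ is smooth and convex, its subdifferential reduces to $\{\nabla f(x^*)\}$, and since $h$ has full domain in the proximal sense, the sum rule applied to $f+h$ is unconditional. The nontrivial step is to combine these with $\partial\delta_{\mathcal{K}}(x^*) = N_{\mathcal{K}}(x^*)$ and $\partial(\delta_{\mathcal{Q}}\circ\mathcal{A})(x^*) = \mathcal{A}^* N_{\mathcal{Q}}(\mathcal{A}x^*)$, which requires a qualification condition on the relative interiors of the domains. Here Slater's condition, namely the existence of a feasible $\bar{x}\in\mathrm{relint}(\mathrm{dom}\,h)\cap\mathrm{relint}(\mathcal{K})$ with $\mathcal{A}\bar{x}\in\mathrm{relint}(\mathcal{Q})$, is precisely what guarantees the validity of both the sum rule (à la Rockafellar, \cite[Theorem 23.8]{rockafellar1997convex}) and the chain rule for $\delta_{\mathcal{Q}}\circ\mathcal{A}$. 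This yields
\begin{equation*}
\partial\Psi(x^*) = \nabla f(x^*) + \partial h(x^*) + N_{\mathcal{K}}(x^*) + \mathcal{A}^* N_{\mathcal{Q}}(\mathcal{A}x^*),
\end{equation*}
so $0\in\partial\Psi(x^*)$ is equivalent to the existence of $\mu^*\in N_{\mathcal{K}}(x^*)$ and $\lambda^*\in N_{\mathcal{Q}}(\mathcal{A}x^*)$ satisfying the stationarity inclusion, while primal feasibility $\mathcal{A}x^*\in\mathcal{Q}$ and $x^*\in\mathcal{K}$ is automatic from $\Psi(x^*)<\infty$.

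For the converse (sufficiency), I would exploit convexity directly: if $(x^*,\lambda^*,\mu^*)$ satisfies \eqref{convex-kkt}, then pick any feasible $x$. By convexity of $f+h$ and the subgradient inequality applied to some $g\in\partial h(x^*)$ with $-\nabla f(x^*) - \mathcal{A}^*\lambda^* - \mu^* = g$, together with the normal cone inequalities $\langle\mu^*, x-x^*\rangle\leq 0$ and $\langle\lambda^*, \mathcal{A}x-\mathcal{A}x^*\rangle\leq 0$, one obtains $\psi(x)\geq\psi(x^*)$, establishing optimality.

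The main obstacle is the careful justification of the subdifferential calculus in the "only if" direction. In particular, verifying that Slater's condition provides the qualification $0\in\mathrm{relint}(\mathrm{dom}\,\delta_{\mathcal{Q}} - \mathcal{A}\,\mathrm{dom})$ and the analogous condition for $N_{\mathcal{K}}$, so that the sum rule splits the subdifferential additively rather than requiring a weak closure operation, is the delicate technical step. Once this is in place, the rest of the argument is a routine application of convex analysis identities.
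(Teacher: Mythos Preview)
Your approach is correct and is exactly the standard convex-analysis argument one would expect. However, the paper does not actually supply a proof of this theorem: it is stated as a known result, followed only by the remark that ``the KKT system is naturally associated with the Lagrangian function defined in \eqref{sec2:pro:lf},'' and the exposition then moves on to the reformulated KKT system \eqref{convex-kkt-1}. So there is no paper proof to compare against; your argument via Fermat's rule, the subdifferential sum rule, and the chain rule for $\delta_{\mathcal{Q}}\circ\mathcal{A}$ under Slater's condition is precisely the textbook justification the paper implicitly relies on.

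One small correction: your remark that ``$h$ has full domain in the proximal sense'' so that the sum rule for $f+h$ is unconditional is slightly off. The point is rather that $f$ is finite and continuous on all of $\mathcal{X}$, which makes the sum rule $\partial(f+h)=\nabla f+\partial h$ automatic regardless of $\mathrm{dom}\,h$; Slater's condition is then needed only to split off $\delta_{\mathcal{K}}$ and $\delta_{\mathcal{Q}}\circ\mathcal{A}$.
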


The KKT system is naturally associated with the Lagrangian function defined in \eqref{sec2:pro:lf}. Moreover, by introducing an auxiliary Lagrange multiplier 
$\nu^* \in \partial h(x^*)$, we can equivalently express the KKT conditions in the following form: 
\begin{equation}\label{convex-kkt-1}
\begin{cases}
  0 = \nabla f(x^*) + \nu^* +\mathcal{A}^*(\lambda^*) + \mu^*, \\
 \nu^*\in \partial h(x^*),~~\, \mathcal{A}x^* \in \mathcal{Q} ,~~\, x^*\in \mathcal{K}.
\end{cases}
\end{equation} 
This is associated with the Lagrangian function \eqref{sec2:pro:composite:auxilary-new}. 
}   
\subsubsection{Preliminaries}

To facilitate later discussion, let us first review the concepts and properties of the proximal operator and Moreau envelope.  
For a convex function $h:\mathcal{X}\rightarrow (-\infty,+\infty]$, at any $x\in\mathcal{X}$ and $t>0$, the Moreau envelope $e_{t}h(x)$ is defined as: 
\begin{equation}\label{moreau1}
    e_{t}h(x): = \min_{u\in\mathcal{X}}\,\,h(u)+ \frac{t}{2}\|u-x\|^2.
\end{equation}
According to \cite[Proposition 12.15]{bauschke2011convex}, the Moreau envelope $e_{t}h(\cdot)$ is real-valued, convex, continuous, and the minimum in \eqref{moreau1} is uniquely attained
for every $x\in\mathcal{X}$. The proximal operator of $h$ is defined as the unique minimizer of \eqref{moreau1}: 
\begin{equation}\label{def:prox}
    \prox_{h/t}(x) = \argmin_{u\in\mathcal{X}}\,\,h(u) + \frac{t}{2}\|  u - x\|^2.
\end{equation}
 It is shown that $e_{t}h(x)$ is differentiable in \cite[Proposition
12.29]{bauschke2011convex}, and its gradient 
\begin{equation}\label{me:gradient}
    \nabla e_{t}h(x) = t(x -  \prox_{h/t}(x))
\end{equation}
is $t$-Lipschitz continuous. Define $h^*(x): = \sup_{y}\{\left<x,y\right> - h(y)\}$ as the conjugate function of $h$. For any $x\in\mathcal{X}$, $t>0$ and any proper convex function  $h$, the Moreau decomposition theorem \cite{beck2017first} indicates that
\begin{equation}\label{moreau}
    \begin{aligned}
    {x}  = \prox_{h/t}({x})  +  \frac{1}{t} \prox_{t h^*}(t{x}).
    \end{aligned}
\end{equation}
When $h(\cdot) = \delta_{\mathcal{Q}}(\cdot)$ is the indicator function of a convex set $\mathcal{Q}$, the proximal operator is reduced to the projection operator $\Pi_{\mathcal{Q}}$:
\begin{equation*}
    \Pi_{\mathcal{Q}}({x}): = \argmin_{{y}\in\mathcal{Q}}\|x-y\|^2,
\end{equation*}
and the Moreau envelope $e_t \delta_{\mathcal{Q}}(x)$ reduces to the squared distance to $\mathcal{Q}$ scaled by $t/2$:
\begin{equation*}
    e_t \delta_{\mathcal{Q}}(x) = \frac{t}{2}\|x - \Pi_{\mathcal{Q}}({x})\|^2.
\end{equation*}
It follows from \eqref{me:gradient} that
$
    \nabla e_t \delta_{\mathcal{Q}}(x) = t(x - \Pi_{\mathcal{Q}}(x)). 
$
According to \eqref{moreau}, we denote the proximal operator of the conjugate function as
\begin{equation}\label{sec3:moreau:set}
    \hat{\Pi}_{\mathcal{Q}}(tx) :=\prox_{th^\star}(tx)= t(x-\Pi_{\mathcal{Q}}(x)),
\end{equation}
which is the residual of the projection operator scaled by $t$.

\subsubsection{AL functions for composite optimization}\label{sec:AL-composite}

Based on the above notations, we introduce the reformulation of problem \eqref{sec2:pro:composite1} as an instance of \eqref{prob} and hence derive its AL function. First, by incorporating the indicator functions of the convex sets $\mathcal{Q}$ and $\mathcal{K}$, problem \eqref{sec2:pro:composite1} can be equivalently written as follows:
\begin{equation}\label{sec2:pro:composite}
\min_{x} f(x) + h(x) + \delta_{\mathcal{Q}}(\mathcal{A}(x)) + \delta_{\mathcal{K}}(x). 
\end{equation}
Based on whether to decouple the smooth ($f$) and nonsmooth ($h$) terms, there are two possible ways to obtain the AL functions.  

\vspace{0.2cm}
\textbf{1) Classic AL function.} 
To decouple the nonsmooth terms in the objective function, a typical technique is to introduce auxiliary variables $w, v\in\mathcal{X}$ and rewrite problem \eqref{sec2:pro:composite} as

\begin{equation}\label{sec2:pro:composite:auxilary}
\begin{aligned}
    \min_{x,v,w} &~ f(x) + h(x)   + \delta_{\mathcal{Q}}(v) + \delta_{\mathcal{K}}(w) \\
    \mathrm{s.t. }\;&~ \mathcal{A}(x) = v, x = w.
\end{aligned}
\end{equation}
Then the Lagrangian function  of \eqref{sec2:pro:composite:auxilary} is 
\begin{equation}\label{sec2:pro:lf}
\begin{aligned}
     \ell(x,v,w;\lambda,\mu): =& f(x) + h(x) + \delta_{\mathcal{Q}}(v) + \delta_{\mathcal{K}}(w)    + \left<\lambda,\mathcal{A}(x) - v\right> + \left<\mu,x-w\right>,\!\!\!\!\!
\end{aligned}
\end{equation}
where $\lambda$ and $\mu$ are Lagrange multipliers associated with the equality constraints. 
Adding quadratic penalties of the linear equality constraints $\mathcal{A}(x) = v$ and $x = w$  with  a penalty parameter $\rho>0$, the standard AL function of \eqref{sec2:pro:composite:auxilary} is given by 
\begin{equation}\label{sec2:pro:composite:auxilary:al}
\begin{aligned}
    \mathcal{L}_{\rho}(x,v,w;\lambda,\mu):= \ell(x,v,w;\lambda,\mu) + \frac{\rho}{2} \|\mathcal{A}(x) - v \|^2 + \frac{\rho}{2}\| x - w\|^2.
\end{aligned}
\end{equation}
Note that the above AL function is associated with the reformulation \eqref{sec2:pro:composite:auxilary}, which involves the variable $x$ and the auxiliary variables $v, w$. However, for the original problem \eqref{sec2:pro:composite}, which only features the variable $x$, the corresponding AL function should solely include $x$ as well as the associated multiplies.
To introduce the AL function for the original problem \eqref{sec2:pro:composite}, a common approach is to eliminate the auxiliary variables in \eqref{sec2:pro:composite:auxilary:al}:
\begin{equation}\label{sec2:composite:al}
\begin{aligned}
    \mathbb{L}_{\rho}(x,\lambda,\mu): & = \min_{v,w} \mathcal{L}_{\rho}(x,v,w;\lambda,\mu).
\end{aligned}
\end{equation}
By incorporating the proximal operators, problem \eqref{sec2:composite:al} admits a closed-form solution:
\begin{equation}\label{eq:y-expression}
    v^* = \Pi_{\mathcal{Q}}(\mathcal{A}(x) + \lambda/\rho),\qquad  w^* = \Pi_{\mathcal{K}}(x + \mu/\rho) .
\end{equation} 
Substituting the solutions into \eqref{sec2:pro:composite:auxilary:al} yields 
\begin{equation}\label{eqn:AL-h}
\begin{split}
      \mathbb{L}_{\rho} (x,\lambda,\mu)
     =&   f(x)+h(x)  
       + \frac{\rho}{2} \left\|\mathcal{A}(x) +\frac\lambda \rho - \Pi_{\mathcal{Q}}\left(\mathcal{A}(x) 
     + \frac \lambda\rho\right)\right\|^2 \\&  + \frac{\rho}{2} \left\|x +  \frac \mu \rho - \Pi_{\mathcal{K}}\left(x +\frac \mu \rho\right)\right\|^2 - \frac{1}{2\rho}\left(\|\lambda\|^2  + \|\mu\|^2\right).
\end{split}
\end{equation}
In the above AL function, $\mathbb{L}_{\rho}$ only retains $x$ as the primal variable, while the Lagrange multipliers $\lambda$ and $\mu$ correspond to the constraints $\mathcal{A}(x)\in \mathcal{Q}$ and $x\in \mathcal{K}$, respectively. 
However, it can be observed that the function is nonsmooth due to the existence of $h(x)$. As a result, when we need to minimize $\mathbb{L}_{\rho} (x,\lambda,\mu)$ with respect to $x$, we have to stick to nonsmooth optimization methods such as the proximal gradient method, while efficient higher-order methods such as semi-smooth Newton method will not be applicable.

\vspace{0.2cm}
\textbf{2) An alternative smooth AL function.}
To obtain an alternative smooth AL function, the key point is to smooth $h(x)$ by the Moreau envelope. Specifically, we can introduce an extra variable $u$ based on \eqref{sec2:pro:composite:auxilary} to obtain
\begin{equation}
\label{sec2:pro:composite:auxilary-new}
\begin{aligned}
\min_{x,u,v,w} &~ f(x) + h(u) + \delta_{\mathcal{Q}}(v) + \delta_{\mathcal{K}}(w) \\
\mathrm{s.t. }\;&~ x=u,  \mathcal{A}(x) = v, x = w.
\end{aligned}
\end{equation}
Then the Lagrangian function  of \eqref{sec2:pro:composite:auxilary-new} is 
\begin{equation}\label{sec2:pro:lf-new}
\begin{aligned}
     \ell(x,u,v,w;\nu,\lambda,\mu): =& f(x) + h(u) + \delta_{\mathcal{Q}}(v) + \delta_{\mathcal{K}}(w)  + \left<\nu, x - u\right> \\
& + \left<\lambda,\mathcal{A}(x) - v\right> + \left<\mu,x-w\right>,
\end{aligned}
\end{equation}
where $\nu,\lambda$ and $\mu$ are Lagrange multipliers associated with the nonsmooth term $h$ and the equality constraints. 
Then the AL function defined by \eqref{sec2:pro:composite:auxilary:al} reduces to
\begin{equation*}
 \mathcal{L}_{\rho}(x,u,v,w,\nu,\lambda,\mu)\!:= \! \ell(x,u,v,w;\nu,\lambda,\mu) + \frac{\rho}{2} \|x - u\|^2  + \frac{\rho}{2} \|\mathcal{A}(x) - v \|^2 + \frac{\rho}{2}\| x - w\|^2.
\end{equation*}
Similarly, one can obtain the AL function by partially minimizing $\mathbb{L}_{\rho}$ over $u,v,w$:
\begin{equation}
     \mathbb{L}_{\rho} (x,\nu,\lambda,\mu): = \min_{u,v,w} \mathcal{L}_{\rho}(x,u,v,w,\nu,\lambda,\mu).
\end{equation}
The closed-form solutions of the above problem are given by
\begin{equation}\label{eq:y-expression-new}
    u^* = \prox_{h/\rho}\left(x + \nu/\rho\right),\quad  v^* = \Pi_{\mathcal{Q}}(\mathcal{A}(x) + \lambda/\rho),\quad w^* = \Pi_{\mathcal{K}}(x + \mu/\rho).
\end{equation} 
Hence, eliminating $u,v,w$ gives the following AL function
\begin{framed}
\begin{align}
     \mathbb{L}_{\rho} (x,\nu,\lambda,\mu)
     = &~ f(x) - \underbrace{\frac{1}{2\rho}(\|\nu\|^2 + \|\lambda\|^2  + \|\mu\|^2)}_{\text{dual variables}} \nonumber\\
     &+ \underbrace{\frac{\rho}{2} \left\|x + \frac \nu \rho-  \prox_{h/\rho}\left(x + \frac \nu \rho\right)  \right\|^2 + h\left(\prox_{h/\rho}\left(x + \frac{\nu}{\rho}\right)\right)}_{h(x): \quad e_{\rho}h\left(x + \frac{\nu}{\rho}\right)} \nonumber\\
     &+ \underbrace{\frac{\rho}{2} \left\|\mathcal{A}(x) + \frac{\lambda}{\rho} - \Pi_{\mathcal{Q}}\left(\mathcal{A}(x) + \frac{\lambda}{\rho}\right)\right\|^2}_{\mathcal{A}(x)\in\mathcal{Q}: \quad e_{\rho} \delta_{\mathcal{Q}}\left(\mathcal{A}(x) + \frac{\lambda}{\rho} \right)} \label{sec2:al:eliminate} \\
     & + \underbrace{\frac{\rho}{2}    \left\|x + \frac{\mu}{\rho} - \Pi_{\mathcal{K}}\left(x + \frac{\mu}{\rho}\right)\right\|^2}_{x\in\mathcal{K}:\quad e_{\rho} \delta_{\mathcal{K}} \left(x + \frac{\mu}{\rho}\right)}.\nonumber
\end{align}
\end{framed}
\noindent Based on the  Moreau envelope, \eqref{sec2:al:eliminate} can be rewritten in a more compact form:
\begin{equation}\label{eqn:AL-Moreau}
    \begin{split}
     \mathbb{L}_{\rho}(x,\nu,\lambda,\mu) 
   =& \,\, f(x) - \frac{1}{2\rho}(\|\nu\|^2 + \|\lambda\|^2  + \|\mu\|^2) \\
   & + e_{\rho}h\left(x + \frac{\nu}{\rho}\right) + e_{\rho} \delta_{\mathcal{Q}}\left(\mathcal{A}(x) + \frac{\lambda}{\rho} \right) + e_{\rho} \delta_{\mathcal{K}} \left(x + \frac{\mu}{\rho}\right).
   \end{split}
\end{equation}
{This formulation provides an alternative interpretation of the AL function. Intuitively, the AL function serves to render the inherently difficult components of the problem tractable via the Moreau envelope—specifically, the nonsmooth term $h$ and the constraints $\mathcal{A}(x)\in\mathcal{Q}$ and $x\in\mathcal{K}$ in \eqref{eqn:AL-Moreau}. Since the Moreau envelope of an indicator function is equivalent to the quadratic penalty over constraint, the AL function in \eqref{AL-eq} and \eqref{AL} can also be interpreted through the Moreau envelope. In principle, one seeks to construct an AL function that is readily solvable with respect to the primal variables—e.g., by ensuring that it is continuously differentiable. The smoothness of the AL function plays a central role in ALM algorithm design, since the primal subproblems are typically solved via iterative methods. Because the augmented Lagrangian is differentiable, one can readily apply first-order gradient schemes and, when appropriate, even second-order methods. We will examine these implications in detail in Section \ref{sec:examples}. Unless explicitly noted otherwise, all subsequent references to the AL function for convex composite problems pertain to the smooth form introduced in \eqref{eqn:AL-Moreau}.}

{ Of course, one may selectively retain certain terms: for instance, one can preserve the nonsmooth term $h$ (as in the AL function defined in \eqref{eqn:AL-h}), or maintain some constraints explicitly when they admit simple projection operators. Further details are provided in Section \ref{sec:examples}.}

\begin{remark}
{As shown in Section \ref{subsubsec:Gen-AL}, we can derive the AL function \eqref{eqn:AL-Moreau} by introducing a parameterization function \cite{rockafellar1973dual}. Let us choose the augmenting function  $\sigma(y) = \frac{1}{2}\|y\|^2$ with $y = (u,v,w)$ and the parameterization function $\phi$
\begin{equation*}
    \phi(x,u,v,w) = f(x) + h(x + u) + \delta_{\mathcal{Q}}(\mathcal{A}(x) + v) + \delta_{\mathcal{K}}(x + w). 
\end{equation*}
Then the AL function \eqref{eqn:AL-Moreau} is coincide with the following general AL function
$$
L_\rho(x,\nu,\lambda,\mu): = \inf_{u,v,w} \Big\{\phi(x,u,v,w) + \rho \sigma(y)     - \left<\nu,u\right>  - \left<\lambda,v\right> - \left<\mu,w\right>\Big\}. 
$$
Similarly, the AL function in \eqref{eqn:AL-h} can be derived in this manner by employing an alternative parameterization.
}
Compared to the parameterization function approach discussed here, the construction of the AL function $\mathbb{L}_{\rho}$ in  \eqref{sec2:composite:al} is more intuitive and easy to understand.
 
\end{remark}
\vspace{0.2cm}

\textbf{3) Extension to general multi-block composite problems.}
Note that the construction of the AL function \eqref{eqn:AL-Moreau} essentially preserves the smooth terms while transforming the nonsmooth and constraint indicator terms into the corresponding Moreau envelopes that couple with the respective Lagrange multipliers. Consequently, we can naturally formulate the AL function for the general multi-block composite optimization problem:
\begin{equation}\label{sec2:pro:composite-multiblock}
\min_{x}  f(x) + \sum_{i=1}^N h_i(\mathcal{B}_i(x_i)),\quad\mathrm{s.t. } \quad \mathcal{A}(x) \in \mathcal{Q},\ x\in \mathcal{K}, 
\end{equation}
where $f,\mathcal{A},\mathcal{Q},\mathcal{K}$ are the same as those in \eqref{sec2:pro:composite1} and $x = (x_1,\cdots,x_N)$ consists of $N$ block variables.  Let $\cX_i$ and $\mathcal{Y}_i$ be real finite-dimensional Euclidean spaces for each $i \in \{1,2,\cdots,N\}$, then we require $x_i\in \mathcal{X}_i$, $\mathcal{B}_i:\mathcal{X}_i\rightarrow \mathcal{Y}_i$ to be a linear map, and $h_i:\mathcal{Y}_i\rightarrow (-\infty,+\infty]$ to be a closed proper convex function. Then the AL function of \eqref{sec2:pro:composite-multiblock} can be written as
\begin{equation}\label{eqn:AL-Moreau:multi}
    \begin{split}
      \mathbb{L}_{\rho}(x,\nu,\lambda,\mu) 
   =&   f(x) - \frac{1}{2\rho}(\|\nu\|^2 + \|\lambda\|^2  + \|\mu\|^2)  + \sum_{i=1}^Ne_{\rho}h_i\left(\mathcal{B}_i(x_i) + \frac{\nu_i}{\rho}\right) \\
   & + e_{\rho} \delta_{\mathcal{Q}}\left(\mathcal{A}(x) + \frac{\lambda}{\rho} \right) + e_{\rho} \delta_{\mathcal{K}} \left(x + \frac{\mu}{\rho}\right),
   \end{split}
\end{equation}
where $\nu: = (\nu_1,\cdots,\nu_N),\lambda$, and $\mu$ are the Lagrange multipliers.

\subsubsection{Saddle point formulation}\label{convex_saddle_point}
Similar to Section \ref{sec:saddle-basic}, we will discuss the strong duality of the saddle point problem induced by the smooth AL function \eqref{eqn:AL-Moreau} for the composite optimization problem \eqref{sec2:pro:composite}:
\begin{equation}\label{sec2:composite:saddle}
\min_x\max_{\nu,\lambda,\mu}\ \mathbb{L}_{\rho}(x,\nu,\lambda,\mu).
\end{equation}
To see the equivalence of problem \eqref{sec2:composite:saddle} to the original problem \eqref{sec2:pro:composite}, it is sufficient to check that given an $x$, the optimal solution $(\nu,\lambda,\mu)$ of maximum problem in \eqref{sec2:composite:saddle}  satisfies 
$$x  = \prox_{h/\rho}\left( x + \nu/\rho \right) = \Pi_{\mathcal{K}}(x + \mu/\rho),\quad\mathcal{A}(x) = \Pi_{\mathcal{Q}}(\mathcal{A}(x) + \lambda/\rho).$$
Substituting it into  \eqref{sec2:composite:saddle} yields that 
\begin{equation}
\begin{aligned}
\min_x\max_{\nu,\lambda,\mu}\,\,\mathbb{L}_{\rho}(x,\nu,\lambda,\mu) & = \min_x f(x) + h(x) + \delta_{\mathcal{K}}(x) + \delta_{\mathcal{Q}}(\mathcal{A}(x)).\nonumber
\end{aligned}
\end{equation}
It is obvious that problem \eqref{sec2:composite:saddle} is equivalent to the original problem \eqref{sec2:pro:composite}, which explains the validity of considering saddle point formulation instead of the original problem.   Next let us establish the strong duality of the saddle point problem \eqref{sec2:composite:saddle}.  {The proof is referred to Appendix \ref{appen:proof-sec2}. } %

\begin{proposition}\label{sec2:composite:propo:duality}
Suppose the objective function of problem \eqref{sec2:pro:composite} has non-empty relative interior and solution set. Then the strong duality holds for $\mathbb{L}_{\rho}(x,\nu,\lambda,\mu)$, that is 
\begin{equation}\label{sec2:prop:strong-duality}
\max_{\nu,\lambda,\mu}\min_{x}\mathbb{L}_{\rho}(x,\nu,\lambda,\mu) = \min_{x}\max_{\nu,\lambda,\mu}\mathbb{L}_{\rho}(x,\nu,\lambda,\mu).
\end{equation}
\end{proposition}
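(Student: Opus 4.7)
The plan is to reduce the strong duality for the smooth AL function $\mathbb{L}_\rho$ to the classical Lagrangian strong duality for the reformulated problem \eqref{sec2:pro:composite:auxilary-new}, exploiting the partial minimization identity $\mathbb{L}_\rho(x,\nu,\lambda,\mu) = \min_{u,v,w}\mathcal{L}_\rho(x,u,v,w;\nu,\lambda,\mu)$. The key point is that $\mathcal{L}_\rho$ is jointly convex in $(x,u,v,w)$ (the quadratic penalty terms preserve convexity) and linear, hence concave, in $(\nu,\lambda,\mu)$, so one may invoke a classical convex--concave minimax theorem.

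First I would argue the ``easy'' direction. By fixing $x$ and maximizing $\mathbb{L}_\rho(x,\nu,\lambda,\mu)$ over $(\nu,\lambda,\mu)$, the stationarity conditions of the Moreau envelopes (via \eqref{eq:y-expression-new}) force $x = \prox_{h/\rho}(x+\nu/\rho) = \Pi_{\mathcal{K}}(x+\mu/\rho)$ and $\mathcal{A}(x) = \Pi_{\mathcal{Q}}(\mathcal{A}(x)+\lambda/\rho)$, yielding
\[
\min_x\max_{\nu,\lambda,\mu}\mathbb{L}_\rho(x,\nu,\lambda,\mu) \;=\; \min_x\bigl\{f(x)+h(x)+\delta_{\mathcal{Q}}(\mathcal{A}(x))+\delta_{\mathcal{K}}(x)\bigr\}\;=:\;p^\star,
\]
which is the primal value of \eqref{sec2:pro:composite}. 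This is essentially already recorded in the excerpt immediately preceding the proposition.

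Next I would handle the ``hard'' direction. Using $\mathbb{L}_\rho = \min_{u,v,w}\mathcal{L}_\rho$ and swapping the two outer minimizations, I obtain
\[
\max_{\nu,\lambda,\mu}\min_{x}\mathbb{L}_\rho(x,\nu,\lambda,\mu) \;=\; \max_{\nu,\lambda,\mu}\min_{x,u,v,w}\mathcal{L}_\rho(x,u,v,w;\nu,\lambda,\mu).
\]
On the other hand, for the inner max of $\mathcal{L}_\rho$ over $(\nu,\lambda,\mu)$, the linear multiplier terms blow up to $+\infty$ unless $x=u$, $\mathcal{A}(x)=v$, $x=w$, in which case the penalty terms vanish and the value reduces to $f(x)+h(x)+\delta_{\mathcal{Q}}(\mathcal{A}(x))+\delta_{\mathcal{K}}(x)$; hence $\min_{x,u,v,w}\max_{\nu,\lambda,\mu}\mathcal{L}_\rho = p^\star$. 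The remaining task is to interchange the min and max in this convex--concave saddle problem for $\mathcal{L}_\rho$. Invoking a classical minimax/strong duality theorem (e.g.\ \cite[Thm.~37.3]{rockafellar1997convex} or a Fenchel--Rockafellar duality for the reformulation \eqref{sec2:pro:composite:auxilary-new}) under the nonempty relative interior assumption on $\mathrm{dom}\,\psi$ and the existence of a primal solution, one obtains
\[
\min_{x,u,v,w}\max_{\nu,\lambda,\mu}\mathcal{L}_\rho \;=\; \max_{\nu,\lambda,\mu}\min_{x,u,v,w}\mathcal{L}_\rho.
\]
Chaining these equalities yields $\min_x\max_{\nu,\lambda,\mu}\mathbb{L}_\rho = \max_{\nu,\lambda,\mu}\min_x\mathbb{L}_\rho$, as desired.

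The main obstacle will be to verify the applicability of the minimax theorem to $\mathcal{L}_\rho$ rather than to the standard Lagrangian $\ell$ of \eqref{sec2:pro:lf-new}; the quadratic augmentation changes the domain structure (all variables now range over the ambient Euclidean space) and could a priori alter whether the constraint qualification transfers. The cleanest way to overcome this is to observe that the quadratic penalty terms do not affect the set of primal-dual saddle points: a Slater-type feasible pair for \eqref{sec2:pro:composite:auxilary-new} supplied by the non-empty relative interior hypothesis also witnesses the qualification for the augmented Lagrangian, since at feasibility the penalty contributions vanish. A secondary delicate point, handled in the same way, is that the existence of a dual maximizer (and hence attainment of the $\max$) follows from the assumed existence of a primal optimum combined with standard convex duality arguments. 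Once these qualifications are in place, the chain of identities above closes the proof.
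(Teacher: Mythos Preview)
Your argument is correct but proceeds differently from the paper. You reduce to a minimax theorem applied to the \emph{augmented} Lagrangian $\mathcal{L}_\rho$ in the slack variables $(u,v,w)$, observing that $\mathcal{L}_\rho$ is jointly convex in the primal block and linear in the multipliers, and that the Slater point furnished by the non-empty relative interior hypothesis survives the quadratic augmentation (since the penalties vanish at feasibility); this is exactly the technique the paper uses in its proof of Theorem~\ref{theorem:convex-al-duality} for the NLP case. The paper's proof of Proposition~\ref{sec2:composite:propo:duality}, by contrast, is direct and constructive: it fixes a primal optimum $x^*$, invokes the KKT system \eqref{convex-kkt-1} to obtain multipliers $(\nu^*,\lambda^*,\mu^*)$, then checks by hand that $x^*=\prox_{h/\rho}(x^*+\nu^*/\rho)$, etc., so that $\mathbb{L}_\rho(x^*,\nu^*,\lambda^*,\mu^*)=\psi^*$, and finally verifies $\nabla_x\mathbb{L}_\rho(x^*,\nu^*,\lambda^*,\mu^*)=0$ to conclude $\min_x\mathbb{L}_\rho(x,\nu^*,\lambda^*,\mu^*)=\psi^*$. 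Your route is cleaner and more in line with the general machinery; the paper's route is more explicit, exhibiting the saddle point and bypassing any black-box minimax theorem, which also makes the role of the KKT multipliers transparent.
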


\subsection{Nonconvex composite optimization}\label{sec:NCO}
In this section, we consider a nonconvex generalization of the composite problem \eqref{sec2:pro:composite1}:
\begin{equation}\label{nonconvex-composite}
\min_x\ f(x)+h(x)\quad \text { s.t. }\quad\ c(x) \in \mathcal{Q}, x\in \mathcal{K},
\end{equation}
where $f$ and $c$ are smooth functions, $h$ is a closed proper and lower bounded function, $\mathcal{Q}$ and $\mathcal{K}$ are both nonempty closed sets whose projection operators are easy to evaluate. The key differences from \eqref{sec2:pro:composite1} are that the  smooth function $f$, the nonsmooth function $h$ can be nonconvex and discontinuous, the mapping $c$ can be nonlinear, and the sets $\mathcal{Q},\mathcal{K}$ can be nonconvex and disconnected.

Due to the nonconvexity of $h$, the proximal mapping may not be unique. Therefore, we need to generalize the proximal operator to the nonconvex function. Specifically, when $h$ is closed, proper and lower bounded, it can be proved that the minimizers of \eqref{moreau1} consist a nonempty compact set. Then we define the proximal operator of nonconvex $h$ to be one of the minimizers. Without confusion, we utilize the same notation as in \eqref{def:prox}:
\[
    \prox_{h/t}(x) \in \argmin_{u}\left\{h(u) + \frac{t}{2}\|  u - x\|^2 \right\},
\]
and the projection operator can be defined similarly. Furthermore, the Moreau envelope $e_th(\cdot)$ of a nonconvex function is defined as
\[
    e_t h(x)=\inf_u \left\{h(u) + \frac{t}{2}\|  u - x\|^2 \right\}.
\]
\subsubsection{AL functions for nonconvex composite optimization}\label{sec:AL-nonconvex-composite}
In the convex case, we often deal with nonsmooth terms and constraints by using the Moreau envelope function, which is well-defined due to the original problem's convex nature. Due to the possible nonconvexity of the objective functions and the constraints, the construction of the AL function is different from the convex case. 

\vspace{0.2cm}
\textbf{1) Generalizations of AL functions in formal aspects.}
Similar to \eqref{sec2:pro:composite:auxilary-new}, we introduce extra variables $u,v,w$ and obtain
\begin{equation}
\label{prob:nc-slack}
\begin{aligned}
    \min_{x,u,v,w} &~ f(x) + h(u) + \delta_{\mathcal{Q}}(v) + \delta_{\mathcal{K}}(w) \\
    \mathrm{s.t. }\;&~ x=u,\, c(x) = v,\, x = w.
\end{aligned}\nonumber
\end{equation}
Then the corresponding AL function %
is given by
\begin{equation}\label{eqn:NC-AL}
\begin{aligned}
 \mathcal{L}_{\rho}(x,u,v,w,\nu,\lambda,\mu):=&  f(x) + h(u) + \delta_{\mathcal{Q}}(v) + \delta_{\mathcal{K}}(w)\\
&  + \left<\nu, x - u\right>  + \left<\lambda,c(x) - v\right> + \left<\mu,x-w\right> \\
& + \frac{\rho}{2} \|x - u\|^2  + \frac{\rho}{2} \|c(x) - v \|^2 + \frac{\rho}{2}\| x - w\|^2.
\end{aligned}
\end{equation}
When we follow from the last subsection, the AL function of \eqref{nonconvex-composite} can still be written as 
\begin{equation}\label{eqn:NC-comp-AL}
\begin{aligned}
     \mathbb{L}_{\rho}(x,\nu,\lambda,\mu) 
  = & \min_{u,v,w} \mathcal{L}_{\rho}(x,u,v,w,\nu,\lambda,\mu) \\
  = & f(x) - \frac{1}{2\rho}\left(\|\nu\|^2 + \|\lambda\|^2  + \|\mu\|^2\right) \\
   &+ e_{\rho}h\left(x + \frac{\nu}{\rho}\right) + e_{\rho} \delta_{\mathcal{Q}}\left(c(x) + \frac{\lambda}{\rho} \right) + e_{\rho} \delta_{\mathcal{K}} \left(x + \frac{\mu}{\rho}\right).
\end{aligned}
\end{equation}

While the AL functions in nonconvex scenarios formally resemble those in convex cases, their practical implications diverge significantly. Nonconvexity introduces challenges in algorithm design and {convergence} analysis. In particular, the Moreau envelope function is commonly employed to characterize convex functions. However, in the context of nonconvex functions, the gradient of their Moreau envelope is not well-defined. Consequently, this ambiguity poses substantial obstacles in solving sub-problems during the design of ALM algorithms. For example, if $\mathcal{Q}$ is nonconvex, then the Moreau envelope $e_{\rho} \delta_{\mathcal{Q}}\left(c(x) + \lambda/\rho \right)$ may be non-differentiable, even that the subgradient is not well-defined. Therefore, for the sake of algorithm design, the above AL function is only considered in the case that $h$ is a convex function and $\mathcal{Q},\mathcal{K}$ are two convex sets. Next, we will give another AL function when those three terms may be nonconvex.  

\vspace{0.3cm}
\textbf{2) Variants for different problems.} 
When tackling nonconvexity challenges, our goal shifts towards maintaining the differentiability of the Moreau envelope terms in the AL function. To achieve this target, we should leave the nonconvex nonsmooth functions or nonconvex constraints unchanged or introduce the auxiliary variable. For example, let us consider the case that $h$ is a convex function, $\mathcal{K}$ is a convex cone, but $\mathcal{Q}$ may be nonconvex. 
Due to the nonconvexity of $\mathcal{Q}$, we keep the constraint $v\in \mathcal{Q}$. For the convex constraint $x\in \mathcal{K}$ and the convex nonsmooth term $h$, we tackle them with Moreau envelopes. As a result, we end up with the following AL function where the auxiliary variable $v$ remains:
\begin{equation}\label{eqn:nonconvex-AL2}
\begin{split}
     \mathbb{L}_{\rho}(x,v,\nu,\lambda,\mu)   = &\min_{u,w} \mathcal{L}_{\rho}(x,u,v,w,\nu,\lambda,\mu) \\
   = & f(x) - \frac{1}{2\rho}(\|\nu\|^2 + \|\lambda\|^2  + \|\mu\|^2) \\
   &+ e_{\rho}h\left(x + \frac{\nu}{\rho}\right) + \frac{\rho}{2}\left\|c(x) - v + \frac{\lambda}{\rho} \right\|^2 + e_{\rho} \delta_{\mathcal{K}} \left(x + \frac{\mu}{\rho}\right).
\end{split}
\end{equation}
One can know that the AL function $ \mathbb{L}_{\rho}(x,v,\nu,\lambda,\mu)$ is continuously differentiable. Similar formulation can refer to \cite{de2023constrained}.

\subsubsection{Saddle point formulation}
As nonconvexity exists in the objective function as well as the constraints, the strong duality of the saddle point problem induced by the AL function \eqref{eqn:NC-comp-AL} relies on viewing the penalty factor $\rho$ as a decision variable: 
\begin{equation}\label{minmax-nonconvex-composite}
\min_x\max_{\nu,\lambda,\mu,\atop \rho>0}\,\mathbb{L}_{\rho}(x,\nu,\lambda,\mu).
\end{equation}
For nonconvex problems with conic $\mathcal{K}$ and $\mathcal{Q}$, the strong duality has been proved by Shapiro and Sun \cite{shapiro2004some}. In this part, we will illustrate the strong duality of problem \eqref{minmax-nonconvex-composite} under arbitrary closed and convex $\mathcal{K}$ and $\mathcal{Q}$. %
Denote $\phi(x) = f(x)+h(x)$ and consider the following parameterized problem associated with \eqref{nonconvex-composite}:
\begin{equation}\label{sec2:pro:composite1:par}
\min_{x} \,\, \phi(x)\quad\mathrm{s.t.} \quad c(x)+p \in \mathcal{Q}, x+q\in \mathcal{K}. 
\end{equation}
Denote by $v(p,q)$ the optimal value of problem \eqref{sec2:pro:composite1:par}, then it is obvious to see that $v(0,0)$ stands for the optimal value of problem \eqref{nonconvex-composite}.

\begin{proposition}\label{nonconvex_strong_dual}
{For problem \eqref{nonconvex-composite}, if the objective function $f+h$ is bounded from below, and  the quadratically penalized objective function $\Phi_\rho(x):=\mathbb{L}_\rho(x,0,0,0)$ in \eqref{eqn:NC-comp-AL} is l.s.c. and level-bounded for sufficiently large $\rho$, see \eqref{penalty_inequality},}
then the strong duality holds in the sense that  
\begin{equation}\label{eqn:nonconvex-strong}
\max_{\nu,\lambda,\mu,\atop \rho>0}\min_{x}\,\mathbb{L}_{\rho}(x,\nu,\lambda,\mu) \,=\, \min_{x}\max_{\nu,\lambda,\mu,\atop \rho>0}\mathbb{L}_{\rho}(x,\nu,\lambda,\mu).
\end{equation}
\end{proposition}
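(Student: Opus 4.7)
The plan is to establish the nontrivial direction $\min\max\le\max\min$ by mimicking the structure used for Theorem \ref{sec1:nonconvex:strong_duality}, since the weak-duality inequality $\max\min\le\min\max$ is immediate. First I would reduce the right-hand side of \eqref{eqn:nonconvex-strong} to the primal optimal value $v(0,0)$ of \eqref{nonconvex-composite}. The claim is that for each fixed $x$,
\[
\max_{\nu,\lambda,\mu,\rho>0}\mathbb{L}_\rho(x,\nu,\lambda,\mu)=\begin{cases}f(x)+h(x),&c(x)\in\mathcal{Q},\ x\in\mathcal{K},\ x\in\operatorname{dom}h,\\ +\infty,&\text{otherwise.}\end{cases}
\]
For infeasible $x$, one checks that the choice $\nu=\lambda=\mu=0$ with $\rho\uparrow\infty$ drives $\tfrac\rho2\dist^2(c(x),\mathcal{Q})$ or $\tfrac\rho2\dist^2(x,\mathcal{K})$ to $+\infty$ by closedness of $\mathcal{Q}$ and $\mathcal{K}$. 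For feasible $x$, the bounds $\dist(x+\mu/\rho,\mathcal{K})\le\|\mu\|/\rho$ and $\dist(c(x)+\lambda/\rho,\mathcal{Q})\le\|\lambda\|/\rho$ together with the Moreau-infimum upper bound $e_\rho h(x+\nu/\rho)\le h(x)+\|\nu\|^2/(2\rho)$ (obtained by taking $u=x$ inside the Moreau infimum) give $\mathbb{L}_\rho(x,\nu,\lambda,\mu)\le f(x)+h(x)$; meanwhile $\nu=\lambda=\mu=0$ and $\rho\to\infty$ attains this value in the limit via $e_\rho h(x)\nearrow h(x)$ for closed proper $h$. This identifies $\min_x\max_{\nu,\lambda,\mu,\rho>0}\mathbb{L}_\rho=v(0,0)$.

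Next, to bound the left-hand side of \eqref{eqn:nonconvex-strong} from below, I would restrict the outer maximization to $\nu=\lambda=\mu=0$ to obtain
\[
\max_{\nu,\lambda,\mu,\rho>0}\min_x\mathbb{L}_\rho(x,\nu,\lambda,\mu)\;\ge\;\sup_{\rho>0}\min_x\Phi_\rho(x),
\]
where $\Phi_\rho(x)=f(x)+e_\rho h(x)+\tfrac\rho2\dist^2(c(x),\mathcal{Q})+\tfrac\rho2\dist^2(x,\mathcal{K})$ is precisely the quadratically penalized objective assumed to be l.s.c.\ and level-bounded for $\rho$ sufficiently large. These two assumptions guarantee that $\min_x\Phi_\rho(x)$ is attained at some $x_\rho$, that $\rho\mapsto\min_x\Phi_\rho(x)$ is nondecreasing, and—by evaluating at any feasible test point—that $\min_x\Phi_\rho(x)\le v(0,0)$ for every $\rho>0$.

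The hard part will be the reverse inequality $\sup_{\rho>0}\min_x\Phi_\rho(x)\ge v(0,0)$, which is where the nonconvexity and possible discontinuity of $h$, the nonlinearity of $c$, and the nonconvexity of $\mathcal{Q},\mathcal{K}$ all interact. The plan is to extract a cluster point $\bar x$ of $\{x_\rho\}$ along a subsequence $\rho_k\uparrow\infty$ using level-boundedness; boundedness of $\Phi_{\rho_k}(x_{\rho_k})$ by $v(0,0)$ then forces $\dist(c(x_{\rho_k}),\mathcal{Q})\to0$ and $\dist(x_{\rho_k},\mathcal{K})\to0$, so closedness of $\mathcal{Q},\mathcal{K}$ and continuity of $c$ deliver $c(\bar x)\in\mathcal{Q}$ and $\bar x\in\mathcal{K}$. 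The delicate step will be taking the liminf of $f(x_{\rho_k})+e_{\rho_k}h(x_{\rho_k})$ and recovering $f(\bar x)+h(\bar x)$; this requires combining continuity of $f$, the monotone convergence $e_\rho h\nearrow h$ for closed proper lower bounded $h$, and the standing l.s.c.\ assumption on $\Phi_\rho$ to control the limit of a nonsmooth nonconvex term along a non-stationary sequence, an epiconvergence argument in the spirit of \cite[Theorem~11.59]{rockafellar2009variational}. Once that technical passage is secured, $\liminf_k\Phi_{\rho_k}(x_{\rho_k})\ge f(\bar x)+h(\bar x)\ge v(0,0)$ closes the loop and, together with weak duality, yields \eqref{eqn:nonconvex-strong}.
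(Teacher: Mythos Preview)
Your proposal is correct and follows essentially the same route the paper takes. The paper in fact omits the proof of this proposition entirely, stating that ``the analysis is similar to that of \cite{shapiro2004some,rockafellar1974augmented},'' so the relevant comparison is with the paper's proof of the analogous Theorem~\ref{sec1:nonconvex:strong_duality}, which you explicitly mimic: weak duality, identification of the $\min\max$ side with the primal value, and then the lower bound on the $\max\min$ side by restricting to zero multipliers and letting $\rho\uparrow\infty$.

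The one methodological difference is in the final step. The paper handles the limit $\lim_{\rho\to\infty}\min_x\Phi_\rho(x)=\min_x\bar f(x)$ by citing \cite[Theorem~7.33]{rockafellar2009variational} as a black box (monotone pointwise convergence of l.s.c.\ level-bounded functions implies convergence of infima). You instead plan to unpack this by extracting a cluster point $\bar x$ of the minimizers $x_{\rho_k}$ and arguing directly via lower semicontinuity. Your identification of the delicate passage---namely $\liminf_k e_{\rho_k}h(x_{\rho_k})\ge h(\bar x)$---is exactly right, and the standard way to close it is to interpose a fixed $\rho_0$: for $\rho_k\ge\rho_0$ one has $e_{\rho_k}h(x_{\rho_k})\ge e_{\rho_0}h(x_{\rho_k})$, take $\liminf$ using l.s.c.\ of $e_{\rho_0}h$, then send $\rho_0\uparrow\infty$. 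This is precisely the content of the epi-convergence theorem the paper invokes, so the two arguments are equivalent; yours is simply more self-contained.
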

Note that the l.s.c. property of $v(p,q)$ at $(0,0)$ can be induced by the l.s.c. of $\phi$, the continuity of $c$, and the so-called inf-compactness condition \cite{bonnans2013perturbation}, and is hence a reasonable assumption.   
The proof of the proposition is omitted here as the analysis is similar to that of \cite{shapiro2004some,rockafellar1974augmented}.

\subsubsection{{The existence of the Lagrange multiplier}} 
{We next investigate the existence of  Lagrange multipliers in nonconvex settings, which is a fundamental aspect in establishing strong duality and analyzing saddle-point-based methods.
In convex optimization, the existence of Lagrange multipliers is straightforward, benefiting from the convex, closed, and bounded feasible regions.  However, in nonconvex optimization,
more detailed conditions and constraint qualifications are often required to ensure their existence. }

For the nonlinear programming \eqref{prob}, Rockafellar \cite{rockafellar1974augmented} thoroughly investigated the theoretical properties of the augmented Lagrangian duality method and introduced the quadratic growth condition for the existence of optimal values in the augmented dual problem.
{To further investigate the properties of augmented Lagrangian duality for a broader class of nonconvex optimization problems, we consider the general nonconvex composite optimization problem \eqref{nonconvex-composite}. The Lagrangian function of \eqref{nonconvex-composite} is defined by
$$
L(x,\lambda,\mu) := f(x)+h(x) + \langle \lambda, c(x) \rangle + \langle \mu, x \rangle,
$$
where $\lambda \in \R^m$ and $\mu \in \R^n$ are the Lagrange multipliers.
The first-order optimality condition at the point $\bar{x}$ is given by}
\begin{equation*}
\partial_x L(\bar{x},\lambda,\mu)=0,\quad \lambda \in \mathcal{N}^{\lim}_\mathcal{Q}(c(\bar{x})),\quad \mu \in \mathcal{N}_{\mathcal{K}}^{\lim}(\bar{x}),
\end{equation*}
where the limiting normal cone to $\mathcal{K}$ at $x$ is defined as 
\begin{equation}
\mathcal{N}_{\mathcal{K}}^{\lim}(x):={\lim\sup}_{v \rightarrow x} \text{cone}(v-\Pi_{\mathcal{K}}(v)).
\end{equation}
Recall that if the set $\mathcal{Q}$ is a convex cone, then 
the condition 
$\lambda \in \mathcal{N}_\mathcal{Q}^{\lim}(c(\bar{x}))$ is equivalent to the conditions:
$$c(\bar{x})\in \mathcal{Q}, \quad \lambda \in \mathcal{Q}^\circ, \quad \text{and} \quad \langle \lambda,c(\bar{x})\rangle = 0,$$ 
where $\mathcal{Q}^\circ:=\{\alpha:\langle \alpha,x\rangle \leq 0,\ \forall x \in \mathcal{Q}\} $ is the polar cone of the $\mathcal{Q}$.

Let $\bar{x}$ be a feasible solution to \eqref{nonconvex-composite}. If $\mathcal{Q}$ and $\mathcal{K}$ are closed nonempty convex sets, then Robinson's constraint qualification (CQ) is as follows:
$$
0 \in \text{int}\{c(\bar{x}) + \text{Im}(J(c(\bar{x})) - \mathcal{Q}\}, \quad 0 \in \text{int}\{\bar{x} + \R^n- \mathcal{Q}\},
$$
where $J(c(\bar{x}))$ represents the {Jacobian} of the function $c$ at $\bar{x}$, {and $\text{Im}(\cdot)$ denotes the image of the input matrix}. For nonlinear programming \eqref{prob}, Robinson's CQ reduces to the MFCQ. Under this constraint qualification,  we can refer to \cite[Theorem 3.9]{bonnans2013perturbation} for the existence of Lagrange multipliers. 

For the nonconvex composite optimization problem \eqref{nonconvex-composite}, the standard KKT conditions may not be satisfied at a local minimum \cite[Page 28]{birgin2014practical}. {Therefore, classical optimality conditions may fail, and the generalized stationarity concepts such as $M$-stationarity and $AM$-stationarity \cite{de2023constrained} are often adopted.}
 Since the objective function $q(x):=f(x)+h(x)$ may not be continuous, then we define a $q$-attentive convergence of a sequence $\{x^k\}$:
$$ x^k\overset{q}{\to}\bar{x}\quad \Leftrightarrow\quad x^k\to\bar{x}\quad\mathrm{with}\quad q(x^k)\to q(\bar{x}).$$

\begin{definition}($M$-stationarity) Let $x\in \R^n$ be a feasible point for \eqref{nonconvex-composite}. Then, $x^*$ is called a Mordukhovich-stationary point of \eqref{nonconvex-composite} if there exists a multiplier $\lambda^*$ and $\mu^*$ such that
\begin{align*}
    -(\nabla c(x^*)^\top \lambda^{*}+\mu^*) &\in \partial (f(x^*)+h(x^*)),\\
    \lambda^* &\in \mathcal{N}_{\mathcal{Q}}^{\lim}(c(x^*)), \\
    \mu^* &\in \mathcal{N}_{\mathcal{K}}^{\lim}(x^*).
\end{align*}
Furthermore, $x^*$is called an asymptotically $M$-stationary point ($AM$-stationarity) of \eqref{nonconvex-composite} if there exist sequences $\{x^k\}, \{\eta^k\}, \{\zeta^k\} \subset \R^n$ and $\{\theta^k\} \subset \R^m$ such that $x^k \stackrel{q}{\rightarrow} x^*, \eta^k \rightarrow 0,\theta^k \rightarrow 0, \zeta^k \rightarrow 0$ and
\begin{align*}
    -(\nabla {c(x^k)}^\top \lambda^{k}+\mu^k) +\eta^k &\in \partial (f(x^k)+h(x^k)),\\
    \lambda^k &\in \mathcal{N}_{\mathcal{Q}}^{\lim}(c(x^k)+\theta^k), \\
    \mu^k &\in \mathcal{N}_{\mathcal{K}}^{\lim}(x^k+\zeta^k),
\end{align*}
for all $k \in \mathbb{N}$.
\end{definition}

Note that if  $x^*\in \R^n$ is an $AM$-stationary point for \eqref{nonconvex-composite}, then $x^*$ is an $M$-stationary point for \eqref{nonconvex-composite}, refer to \cite[Corollary 2.7]{de2023constrained}.
Following the same procedure in \cite{de2023constrained}, we can derive the existence of the multipliers of the ALM for problem \eqref{nonconvex-composite} under certain conditions.

Furthermore, Shapiro and Sun in \cite{shapiro2004some} discussed the second-order optimality conditions ensuring the existence of augmented Lagrange multipliers. They also studied the sensitivity of the augmented Lagrangian minimizers to perturbations of the Lagrange multipliers.  R{\"u}ckmann and Shapiro \cite{ruckmann2009augmented} also derived some necessary and sufficient conditions for the existence of augmented Lagrange multipliers for solving semi-infinite optimization problems. Note that the augmenting functions used in the above papers are special. In \cite{zhou2014existence}, the authors further developed the existence theory of augmented Lagrange multipliers for cone programming using an augmenting function with the valley-at-zero property (which may be neither differentiable nor convex) rather than quadratic functions. A   comprehensive theory of AL functions for cone constrained optimization problems was presented in \cite{dolgopolik2018augmented}. It focuses on the existence of global saddle points for these functions and establishes a general method called the localization principle to prove their existence.

\subsection{Discrete Optimization}\label{IP-strong-duality}
In this subsection, we explore the application of the ALM to discrete optimization problems, with a particular focus on integer programming. We show how ALM can be modified to tackle challenges presented by mixed (linear) integer programming (MIP) and pure integer programming (IP), emphasizing both theoretical foundations and practical implementations.

\subsubsection{{Mixed} integer programming}\label{sec:ip}
Beyond continuous optimization, the augmented Lagrangian approach can also be applied to MIP problems.
To start with, we consider a basic MIP problem with only equality constraints:
\begin{equation}\label{mip}
\min_{x\in \mathbb{R}^n}\ c^{\top} x  \quad \mathrm{s.t.} \quad A x =b,~ x \in \mathcal{K},
\end{equation}
where $\mathcal{K}$ is a mixed integer linear set given by $\mathcal{K} := \{x \in \mathbb{Z}^{n_1} \times \R^{n_2}: Bx \leq d\}$ for some $B \in \R^{q \times n}$, $d \in  \R^q$, $A \in \R^{m \times n}, b \in \R^{m}$, and $n = n_1+n_2$. Here, $Ax=b$ stands for the complicating constraint, and relaxing them makes the problem easier. 
Denote by $z^{\mathrm{IP}}$ the optimal value of problem \eqref{mip} and $z^{\mathrm{LP}}$ the optimal value of its linear programming (LP) relaxation as follows:
\begin{equation}\label{mip-lp-relax}
\min\ c^{\top} x  \quad \mathrm{s.t.} \quad A x =b,~ Bx \leq d.
\end{equation}

{To eliminate the duality gap in integer programming, it is necessary to introduce an augmented Lagrangian function. In particular, for problem \eqref{mip}, Rockafellar and Wets \cite{rockafellar2009variational} proposed a generalized augmented Lagrangian of the form:
\begin{equation}\label{def:inter:alfunction}
\cL_{\rho}(x,\lambda)= c^{\top} x+\lambda^{\top}(Ax-b)+\rho \sigma(Ax-b),
\end{equation}
where $\rho>0$ is a penalty parameter, and $\sigma(\cdot)$ is an augmenting function described in Section \ref{subsubsec:Gen-AL}.}
Then naturally, problem \eqref{mip} can be equivalently formulated as:
$$\min_{x\in\cK} \max_{\lambda\in\R^m} \cL_{\rho}(x,\lambda).$$
Exchanging the min and max, we can define the augmented Lagrangian  relaxation $z_\rho^{\mathrm{LR}+}\!(\lambda)$ and augmented Lagrangian dual $z_\rho^{\mathrm{LD}+}$ of \eqref{mip} as
\begin{equation}\label{MIP_ALR}
z_\rho^{\mathrm{LR}+}(\lambda):=\inf _{x \in \mathcal{K}} \cL_{\rho}(x,\lambda),\quad z_\rho^{\mathrm{LD}+} :=\sup _{\lambda \in \mathbb{R}^m} z_\rho^{\mathrm{LR}+}(\lambda).
\end{equation}
If $z_\rho^{\mathrm{LD}+}=z^{\mathrm{IP}}$ for some $\rho>0$, then the augmented Lagrangian relaxation \eqref{MIP_ALR} can recover a primal optimal solution for the MIP problem \eqref{mip}.

  \subsubsection{Strong duality}
For MIP problem \eqref{mip}, Boland and Eberhard \cite{boland2015augmented} showed that, under appropriate assumptions, the duality gap $z_\rho^{\mathrm{LD}+}-z^{\mathrm{IP}}$  converges to zero as the penalty parameter $\rho$ goes to infinity. If $\mathcal{K}$ is a finite set of discrete elements, then the penalty parameter $\rho$ does not need to go to infinity to close the duality gap.
Feizollahi et al. \cite{feizollahi2017exact}  proved that the augmented Lagrangian dual has an exact penalty representation for general MIPs when the augmenting function $\sigma$ is an arbitrary norm (not squared) under mild conditions. That is, the solution of the augmented Lagrangian subproblem coincides with the solution of the original constrained optimization problem under certain conditions. {Gu et al.\ \cite{gu2020exact} and Bhardwaj et al.\ \cite{bhardwaj2024exact} extended the exactness result of Feizollahi et al.\ \cite{feizollahi2017exact} to mixed‐integer quadratic programming problems. Cordova et al.\ \cite{cordova2022revisiting} recast the augmented Lagrangian function \eqref{def:inter:alfunction} within the framework of a classical Lagrangian by means of a tailored reformulation of the original problem. They establish a zero duality gap and propose an implementable primal–dual bundle method.}

When the problem is a pure integer programming, the zero duality gap and exact penalty representation for pure integer programming (IP) problems can be established under weaker assumptions on the augmenting functions and $\mathcal{K}$ can be infinite \cite{feizollahi2017exact}. We illustrate this result by considering the following pure IP problem with only inequality constraints: 
\begin{equation}\label{ip_inequality}
\min \, c^{\top} x \quad \mathrm{s.t.} \quad A x  \leq b,~ x \in \bar{\mathcal{K}},
\end{equation}
where $\bar{\mathcal{K}}$ is a pure integer linear set  $\bar{\mathcal{K}} = \{x \in \mathbb{Z}^n: Bx \leq d\}$ for some $B \in \R^{q \times n} $, $d \in  \R^q$, $A \in \R^{m \times n}$, and $b \in \R^{m}$. 
Then one may define the AL function \cite{rui2024} as
\begin{equation}\label{IP_alf_inequality}
L_{\rho}(x,\mu)=c^{\top} x +\mu^{\top}(A x  - b)+\frac{\rho}{2}\left\|(A x -b)_+\right\|^2.
\end{equation}
Define the AL relaxation $z_\rho^{\mathrm{LR}+}(\mu)$ and the AL dual $z_\rho^{\mathrm{LD}+}$ as  
\begin{equation*} %
z_\rho^{\mathrm{LR}+}(\mu) := \min_{x \in \bar{\mathcal{K}}} ~L_{\rho}(x,\mu),\quad z_\rho^{\mathrm{LD}+}:=\max_{\mu\in \R_{+}^m} z_\rho^{\mathrm{LR}+}(\mu),\nonumber
\end{equation*} 
and denote $a_i$ the $i$-th row of the matrix $A$ and $b_i$ by the $i$-th element of the vector $b$. We present the strong duality of problem \eqref{ip_inequality}, which aligns with the Lemma 2.1 established in \cite{rui2024}. We also provide a complete proof {in Appendix \ref{appen:proof-sec2}} for clarity and completeness.

\begin{lemma}[Strong Duality]\label{IP_strongd}
 Suppose problem \eqref{ip_inequality} is feasible and an optimal solution exists.  If there exists some $\delta>0$ such that for any $ i \in \{1,2,\cdots,m\} $,
 \begin{equation}\label{delta}
\min_{x \in \bar{\mathcal{K}}}\big\{(a_i x-b_i)^2:  a_i x> b_i\big\}\geq \delta,
\end{equation}
 then there is a finite constant $\rho^* \in  (0, +\infty)$ such that
$$z_{\rho^*}^{\mathrm{LD}+} = f^{\mathrm{IP}}, $$
where $f^{\mathrm{IP}}:=\mathop{\min}\left\{c^{\top}x: x \in \bar{\mathcal{K}},Ax \leq  b\right\}$ denotes the optimal value of primal problem.
\end{lemma}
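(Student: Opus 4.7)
The plan is to prove the strong duality result by exhibiting an explicit dual pair that attains the primal optimum. I would set $\mu^* = 0$ and show that some finite penalty level $\rho^* > 0$ already eliminates the integer duality gap, exploiting the hypothesis \eqref{delta} that any constraint violation over $\bar{\mathcal{K}}$ is bounded below by $\sqrt{\delta}$. Combined with weak duality this delivers $z_{\rho^*}^{\mathrm{LD}+} = f^{\mathrm{IP}}$.

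First I would establish weak duality. Taking any optimal $x^{\mathrm{IP}}$ of \eqref{ip_inequality}, its feasibility gives $(Ax^{\mathrm{IP}}-b)_+ = 0$ and $\mu^\top(Ax^{\mathrm{IP}} - b) \leq 0$ for all $\mu \geq 0$, so $L_\rho(x^{\mathrm{IP}},\mu) \leq c^\top x^{\mathrm{IP}} = f^{\mathrm{IP}}$. Hence $z_\rho^{\mathrm{LR}+}(\mu) \leq f^{\mathrm{IP}}$ for every admissible $(\mu,\rho)$, and taking the supremum over $\mu \geq 0$ yields $z_\rho^{\mathrm{LD}+} \leq f^{\mathrm{IP}}$.

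For the reverse inequality, I would split $\bar{\mathcal{K}}$ into its feasible part $\bar{\mathcal{K}}_F = \{x\in\bar{\mathcal{K}}: Ax\leq b\}$ and its infeasible part $\bar{\mathcal{K}}_I = \bar{\mathcal{K}}\setminus\bar{\mathcal{K}}_F$. On $\bar{\mathcal{K}}_F$, $L_\rho(x,0) = c^\top x \geq f^{\mathrm{IP}}$ by optimality. On $\bar{\mathcal{K}}_I$, at least one index $i$ satisfies $a_i x > b_i$, so \eqref{delta} forces $\|(Ax-b)_+\|^2 \geq (a_ix - b_i)^2 \geq \delta$, hence
\begin{equation*}
L_\rho(x,0) \;\geq\; c^\top x + \tfrac{\rho\delta}{2}.
\end{equation*}
It therefore suffices to secure a uniform lower bound $M := \inf_{x\in\bar{\mathcal{K}}_I} c^\top x > -\infty$, since then $\rho^* = \max\{1,\,2(f^{\mathrm{IP}} - M)/\delta\}$ ensures $L_{\rho^*}(x,0)\geq f^{\mathrm{IP}}$ everywhere on $\bar{\mathcal{K}}$, giving $z_{\rho^*}^{\mathrm{LR}+}(0)=f^{\mathrm{IP}}$ and thus $z_{\rho^*}^{\mathrm{LD}+}=f^{\mathrm{IP}}$.

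The main obstacle is this coercivity step, because $\bar{\mathcal{K}}_I$ is generally unbounded. I expect to close the gap via a recession-cone argument on the polyhedron $\{x: Bx\leq d\}$: any integer recession direction $\eta$ with $c^\top\eta < 0$ must satisfy $A\eta\not\leq 0$, for otherwise $x^{\mathrm{IP}} + t\eta$ would remain feasible for all $t \geq 0$ and drive $c^\top x$ to $-\infty$, contradicting the existence of an optimal solution. Consequently some coordinate of $A\eta$ is strictly positive, and by the rationality of the problem data it is bounded away from zero. Along such a direction the quadratic violation $(a_i x - b_i)_+^2$ grows quadratically while $c^\top x$ decreases only linearly, so $L_{\rho}(\cdot,0)$ is coercive on $\bar{\mathcal{K}}$ for every $\rho > 0$; its sublevel set $\{x\in\bar{\mathcal{K}}: L_\rho(x,0)\leq f^{\mathrm{IP}}\}$ is therefore finite, and the desired uniform lower bound $M$ exists by reducing to finitely many inequalities that can be enforced by choosing $\rho^*$ large enough.
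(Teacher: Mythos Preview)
Your weak-duality argument and the feasible/infeasible decomposition are correct, but the heart of the proof --- securing a uniform lower bound for $c^\top x$ on $\bar{\mathcal{K}}_I$ --- does not go through. The quantity $M = \inf_{x\in\bar{\mathcal{K}}_I} c^\top x$ is $-\infty$ in general: with $n=1$, $\bar{\mathcal{K}}=\mathbb{Z}$, $c=-1$ and the single constraint $x\le 0$, one has $f^{\mathrm{IP}}=0$ yet $c^\top x = -x\to-\infty$ on $\bar{\mathcal{K}}_I=\{1,2,\dots\}$. Your fallback via coercivity is also unsound as stated: the sublevel set $\{x\in\bar{\mathcal{K}}:L_\rho(x,0)\le f^{\mathrm{IP}}\}$ need not be finite (adjoin a second coordinate that appears in neither $c$ nor $A$ and the set becomes an infinite cylinder), and your recession argument only rules out directions with $c^\top\eta<0$, not the neutral ones with $c^\top\eta=0$ along which neither the objective nor the penalty grows. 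So the ``finitely many inequalities'' you hope to enforce are not actually finitely many.

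The paper sidesteps this entirely by taking $\mu^*=\mu^{\mathrm{LP}}$, an optimal dual vector of the LP relaxation, rather than $\mu^*=0$. LP duality over the polyhedron $\{x:Bx\le d\}\supseteq\bar{\mathcal{K}}$ yields the \emph{pointwise} inequality $c^\top x + (\mu^{\mathrm{LP}})^\top(Ax-b)\ge f^{\mathrm{LP}}$ for every $x\in\bar{\mathcal{K}}$, which plays exactly the role your missing $M$ was meant to play: for infeasible $x$ one then gets $L_\rho(x,\mu^{\mathrm{LP}})\ge f^{\mathrm{LP}}+\tfrac{\rho\delta}{2}$, and choosing $\rho^*=2(c^\top x^0-f^{\mathrm{LP}})/\delta$ for any feasible $x^0$ pushes this above $c^\top x^0\ge f^{\mathrm{IP}}$. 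Finiteness of $f^{\mathrm{LP}}$ --- which is really what your recession-cone discussion is probing --- follows from the assumed existence of an IP optimum via \cite{blair1977value}; but without the dual multiplier $\mu^{\mathrm{LP}}$ you cannot convert ``$f^{\mathrm{LP}}>-\infty$'' into the pointwise lower bound the argument needs.
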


\section{Overview of Augmented Lagrangian Methods}\label{section3}

In the previous section, we introduced several variants of AL functions under different problem settings, establishing strong duality and equivalence to the saddle {point} problems. Building on these conclusions, we present the augmented Lagrangian method (ALM) based on the AL function. This section first unifies the AL functions across various problem settings and subsequently introduces the ALM from a dual gradient ascent perspective. We will discuss practical algorithms and subproblem solvers, followed by an overview of other methods that leverage the AL function.

\subsection{Augmented dual problem}

\subsubsection{Saddle point problem}
 To illustrate the ALM in a unified way, we present these AL functions in the form $\mathbb{L}_\rho(\textbf{x},\Lambda)$, where $\textbf{x}$ and $\Lambda$ are the collection of all primal variables and dual variables (Lagrange multipliers), respectively. If the original problem is convex, then it is equivalent to solving the following saddle point problem:
\begin{equation}\label{eqn:convexsaddle}
\min_\textbf{x}\max_\Lambda\ \mathbb{L}_\rho(\textbf{x},\Lambda).
\end{equation}
If the original problem is nonconvex, then we also need to  treat $\rho$ as a variable and maximize $\mathbb{L}_\rho(\textbf{x},\Lambda)$ with respect to $\rho$, resulting in the following  equivalent saddle point problem
\begin{equation}\label{eqn:nonconvexsaddle}
\min_\textbf{x}\max_{\rho>0,\Lambda}\ \mathbb{L}_\rho(\textbf{x},\Lambda).
\end{equation}

Note that the dual variable \(\Lambda\) in \eqref{eqn:convexsaddle} and \eqref{eqn:nonconvexsaddle} is free, meaning there are no additional domain constraints imposed on it. In contrast, extra constraints may need to be applied to \(\Lambda\) for the saddle point problem based on the Lagrangian function; for instance, a non-negativity constraint \(\mu \geq 0\) exists in \eqref{eqn:Lag-saddle}. In fact, the optimal dual variable \(\Lambda^\star\) in equations \eqref{eqn:convexsaddle} and \eqref{eqn:nonconvexsaddle} inherently satisfies these extra constraints without necessitating their explicit incorporation in the saddle point formulation. This characteristic offers several advantages, including enhanced flexibility in algorithm design and simplified implementation of the algorithm.

We also uniformly denote the AL function without eliminating the auxiliary variables as \(\mathcal{L}_\rho(\mathbf{x}, \mathbf{y}, \Lambda)\), where \(\mathbf{y}\) represents all the introduced auxiliary variables. For example, \(\mathbf{y} = s\) for the general optimization problem \eqref{AL0}, and \(\mathbf{y} = (u, v, w)\) for the convex composite problem \eqref{sec2:pro:composite:auxilary:al} and the nonconvex composite problem \eqref{nonconvex-composite}. It is straightforward to verify that \(\mathcal{L}_\rho(\mathbf{x}, \mathbf{y}, \Lambda)\) is linear with respect to \(\Lambda\), which implies that \(\mathcal{L}_\rho(\mathbf{x}, \mathbf{y}, \Lambda)\) is concave with respect to \(\Lambda\) for any given \(\mathbf{x}\) and \(\mathbf{y}\). Consequently, \(\mathbb{L}_\rho(\mathbf{x}, \Lambda) = \min_{\mathbf{y}} \mathcal{L}_\rho(\mathbf{x}, \mathbf{y}, \Lambda)\) is also concave in \(\Lambda\) as it is the infimum of concave functions.

\subsubsection{Supergradient of dual problem}\label{sec:supergrad}
According to the discussion of strong duality for both convex and nonconvex problems in Section \ref{sec2:AL:sd}, it holds that
\[
\min_\textbf{x}\max_{\rho>0,\Lambda}\ \mathbb{L}_\rho(\textbf{x},\Lambda)=\max_{\rho>0,\Lambda}\min_\textbf{x}\ \mathbb{L}_\rho(\textbf{x},\Lambda).
\]
Note that the variable $\rho$ can be ignored for  convex problems.
We define the augmented dual function as $\Phi(\Lambda,\rho):= \min_\textbf{x} \mathbb{L}_\rho(\textbf{x},\Lambda)$. Then the original minimization problem is equivalent to the following dual problem:
\begin{equation}\label{eqn:AL-dual}
\max_{\rho>0,\Lambda}\ \Phi(\Lambda,\rho).
\end{equation}
Similar to the description in the previous section, it holds that $\Phi(\Lambda,\rho)$ is a concave function since $\mathbb{L}_\rho(\textbf{x},\Lambda)$ is concave with respect to $(\Lambda,\rho)$ and $\Phi(\Lambda,\rho)$ is the infimum of $\mathbb{L}_\rho(\textbf{x},\Lambda)$ with respect to $\textbf{x}$. The following theorem provides the supergradient of $\Phi(\Lambda, \rho)$. 
\begin{theorem}\label{danskin-Phi}
Suppose that $\mathbf{x}_\Lambda$ attains the infimum of the problem $\min_\mathbf{x} \mathbb{L}_\rho(\mathbf{x},\Lambda)$. Let $g_\Lambda$ and $g_\rho$ be supergradients of $\mathbb{L}_\rho(\mathbf{x}_\Lambda,\cdot)$ in $\Lambda$ and $\rho$, respectively. Then we have $g_\Lambda\in\supgrad_\Lambda\Phi(\Lambda,\rho)$ and $g_\rho\in\supgrad_\rho\Phi(\Lambda,\rho)$.
\end{theorem}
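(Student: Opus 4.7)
The plan is to exploit the standard envelope-type argument for concave functions defined as pointwise infima. I first note that, as observed just above the theorem statement, $\mathbb{L}_\rho(\mathbf{x},\Lambda)$ is concave in $(\Lambda,\rho)$ for every fixed $\mathbf{x}$, and hence $\Phi(\Lambda,\rho) = \inf_{\mathbf{x}} \mathbb{L}_\rho(\mathbf{x},\Lambda)$ is concave in $(\Lambda,\rho)$ as the infimum of concave functions. Thus the superdifferential $\supgrad \Phi(\Lambda,\rho)$ is well-defined, and to verify membership it suffices to check the defining supergradient inequality.

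Next I would prove $g_\Lambda \in \supgrad_\Lambda \Phi(\Lambda,\rho)$. Fix an arbitrary $\Lambda'$. By the definition of the infimum and the fact that $\mathbf{x}_\Lambda$ is merely feasible (not necessarily optimal) in the subproblem at the perturbed point,
\[
\Phi(\Lambda',\rho) \,=\, \inf_{\mathbf{x}} \mathbb{L}_\rho(\mathbf{x},\Lambda') \,\leq\, \mathbb{L}_\rho(\mathbf{x}_\Lambda,\Lambda').
\]
Since $\mathbf{x}_\Lambda$ attains the infimum at $\Lambda$, we also have $\Phi(\Lambda,\rho) = \mathbb{L}_\rho(\mathbf{x}_\Lambda,\Lambda)$. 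Combining these two identities and then invoking that $g_\Lambda$ is a supergradient of the concave function $\Lambda \mapsto \mathbb{L}_\rho(\mathbf{x}_\Lambda,\Lambda)$ at $\Lambda$, I obtain
\[
\Phi(\Lambda',\rho) - \Phi(\Lambda,\rho) \,\leq\, \mathbb{L}_\rho(\mathbf{x}_\Lambda,\Lambda') - \mathbb{L}_\rho(\mathbf{x}_\Lambda,\Lambda) \,\leq\, \langle g_\Lambda, \Lambda' - \Lambda\rangle,
\]
which is precisely the supergradient inequality defining $g_\Lambda \in \supgrad_\Lambda \Phi(\Lambda,\rho)$.

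The argument for $g_\rho \in \supgrad_\rho \Phi(\Lambda,\rho)$ is entirely analogous: for any admissible perturbation $\rho' > 0$, the chain $\Phi(\Lambda,\rho') \leq \mathbb{L}_{\rho'}(\mathbf{x}_\Lambda,\Lambda)$ together with $\Phi(\Lambda,\rho) = \mathbb{L}_\rho(\mathbf{x}_\Lambda,\Lambda)$ and the supergradient property of $g_\rho$ with respect to $\rho$ yields $\Phi(\Lambda,\rho') - \Phi(\Lambda,\rho) \leq g_\rho (\rho' - \rho)$. I do not foresee any genuine obstacle here, since the proof is a textbook envelope/Danskin-style manipulation; the only subtlety worth commenting on is that no uniqueness of the minimizer is required, because the two inequalities above only use $\mathbf{x}_\Lambda$ as a particular attaining point, and concavity in $(\Lambda,\rho)$ transfers automatically to the infimum.
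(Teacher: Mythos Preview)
Your proof is correct and matches the paper's own argument essentially line for line: the paper presents exactly the chain $\Phi(\Gamma,\rho)\leq \mathbb{L}_\rho(\mathbf{x}_\Lambda,\Gamma)\leq \mathbb{L}_\rho(\mathbf{x}_\Lambda,\Lambda)+\langle g_\Lambda,\Gamma-\Lambda\rangle=\Phi(\Lambda,\rho)+\langle g_\Lambda,\Gamma-\Lambda\rangle$ and then says the $\rho$ case follows the same procedure. The only cosmetic difference is that you spell out the concavity of $\Phi$ and the two inequalities separately, whereas the paper compresses them into a single displayed line.
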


\revise{In fact, under mild conditions, this theorem is essentially Danskin-Demyanov’s theorem \cite{danskin1966theory,dem1968differentiability,dem1968solution}, and its generalized variant \cite{bertsekas1971control}, yet for the AL function, it allows a simple one line argument.} Namely, for any $\Gamma$, it holds that 
$$
\Phi(\Gamma,\rho)=\min_\textbf{x}\mathbb{L}_\rho(\textbf{x},\Gamma)\leq \mathbb{L}_\rho(\textbf{x}_\Lambda,\Gamma)\leq \mathbb{L}_\rho(\textbf{x}_\Lambda,\Lambda)+\langle g_\Lambda, \Gamma-\Lambda\rangle=\Phi(\Lambda,\rho)+\langle g_\Lambda, \Gamma-\Lambda\rangle,
$$
indicating $g_\Lambda\in\supgrad_\Lambda\Phi(\Lambda,\rho)$. Following the same procedure, one obtains $g_\rho\in\supgrad_\rho\Phi(\Lambda,\rho)$.

It should be noted that for integer programming problems where the primal feasible region is discrete while the dual variable $\Lambda$ and the penalty factor $\rho$ are continuous, the supergradients with respect to $\Lambda$ and $\rho$ are still well-defined and Theorem \ref{danskin-Phi} remains valid.

\subsection{Prototype algorithm of ALM}
{The ALM was first formally established in the pioneering works of Hestenes \cite{hestenes1969multiplier} and Powell \cite{powell1969method}, which generate iterates $(\mathbf{x}^k,\mathbf{\Lambda}^k)$ by alternating between a (possibly inexact) minimization in $\mathbf{x}$ and a multiplier update in $\mathbf{\Lambda}$. In this section, we introduce the ALM algorithm from the perspective of the AL dual problem, which has been given in \cite{bertsekas2014constrained,zhang2022global}. We show that the ALM algorithm can be regarded as a supergradient method applied to the dual problem \eqref{eqn:AL-dual}, owing to the strong duality established in Section \ref{sec2:AL:sd} and the representation of the supergradient for the dual problem provided in Theorem \ref{danskin-Phi}. We begin by presenting a prototypical algorithm and then, for different problem classes, discuss how to construct ALM algorithms from this dual viewpoint.}
\subsubsection{ALM as dual ascent}\label{sec:dualascent}
The supergradient method is the most straightforward approach for solving problem \eqref{eqn:AL-dual}. Since \eqref{eqn:AL-dual} is the dual problem, it is aptly referred to as the dual ascent method. If the original problem is convex, the maximization with respect to $\rho$ is unnecessary and the update scheme is
\[
    \Lambda^{k+1}\in\Lambda^k+\sigma \supgrad_\Lambda \Phi(\Lambda^k,\rho),
\]
where $\sigma>0$ is a stepsize. According to Theorem \ref{danskin-Phi}, the supergradient of $\Phi$ can be calculated by the supergradient of $\mathbb{L}_\rho$. Specifically, we need to calculate the global minimizer of $\mathbb{L}_\rho$ with respect to $\textbf{x}$ first, and then calculate the supergradient of $\Phi$  with respect to the dual variable at the minimizer. Thus, the scheme can be written as
\begin{align}
\label{ALM:primal} \textbf{x}^{k+1}&\in\argmin_\textbf{x} \mathbb{L}_\rho(\textbf{x},\Lambda^k),\\
\label{ALM:dual} \Lambda^{k+1}&\in\Lambda^k+\sigma \supgrad_\Lambda \mathbb{L}_\rho(\textbf{x}^{k+1},\Lambda^k).
\end{align}
This is exactly the update scheme of the ALM.
Therefore, the ALM can be interpreted as performing the supergradient ascent to maximize the dual problem.  

For nonconvex problems, we also need to maximize $\Phi$ with respect to $\rho$.
In addition to the update of $\textbf{x}$ and $\Lambda$ in \eqref{ALM:primal} and \eqref{ALM:dual}, the penalty factor $\rho_k$ needs to be updated as
\begin{equation}\label{ALM:rho}
\rho_{k+1}\in\rho_k+\sigma\supgrad_\rho \Phi(\Lambda^{k+1},\rho_k).
\end{equation}
Similarly, by Theorem \ref{danskin-Phi}, the complete scheme can be written as 
\begin{align}
\label{prototype:NC-ALM-x} \textbf{x}^{k+1}&\in\argmin_\textbf{x} \mathbb{L}_{\rho_k}(\textbf{x},\Lambda^k),\\
\label{prototype:NC-ALM-Lambda} \Lambda^{k+1}&\in\Lambda^k+\sigma_k\supgrad_\Lambda \mathbb{L}_{\rho_k}(\textbf{x}^{k+1},\Lambda^k),\\
\label{prototype:NC-ALM-rho} \rho_{k+1}&\in\rho_k+\sigma_k\supgrad_\rho \mathbb{L}_{\rho_k}(\textbf{x}^{k+1},\Lambda^{k+1}).
\end{align}

The update rule of $\rho_k$ looks different from the popular rule, where $\rho_k$ increases by a constant multiple greater than one. In fact, both of the update rules make $\rho_k$ tend to $+\infty$, which will be proved later in specific cases. Although the two schemes lead to different growth rates of $\rho_k$, the fundamental essence is the same.

{In addition to the supergradient interpretation, the ALM can also be viewed through the lens of the proximal point algorithm (PPA) \cite{rockafellar1976augmented}. Both perspectives arise from dual formulations: the supergradient approach is based on the AL dual problem \eqref{eqn:AL-dual}, whereas the PPA viewpoint relies on the classical Lagrangian dual problem (such as the right-hand in \eqref{eqn:Lag-saddle}). However, classical Lagrangian strong duality holds only in convex settings, as discussed in Theorem \ref{sec2:general:sd},   which restricts the applicability of the PPA-based interpretation. Moreover, gradient-based methods tend to be more transparent and easier to implement than proximal-point schemes. Consequently, we henceforth adopt the supergradient perspective on the AL dual problem as a unifying framework for constructing ALM algorithms across a broad spectrum of optimization problems.}

\subsubsection{ALM for specific problems}
To explicitly present the ALM, we need to compute the supergradient of $\mathbb{L}_{\rho}$ with respect to $\Lambda$ and $\rho$. Additionally, to address the minimization problem, we must also determine the subgradient of $\mathbb{L}_{\rho}$ with respect to $\textbf{x}$. In this section, we derive these gradients and present explicit forms of ALM for various scenarios.

\vspace{0.3cm}
\textbf{1) General optimization problem.} 
We first consider the basic formulation \eqref{prob}. The corresponding AL function $\mathbb{L}_{\rho}(x,\lambda,\mu)$ is given in \eqref{AL}. Then the explicit gradient formulation of the AL function with respect to primal variable, Lagrange multipliers and penalty parameter  can be given as
{\begin{equation}\label{gradient_cx}
    \begin{split}
    \nabla_{\lambda_i}\mathbb{L}_{\rho}(x,\lambda,\mu)&=c_i(x), i\in\mathcal{E},\\
    \nabla_{\mu_i}\mathbb{L}_{\rho}(x,\lambda,\mu)&=\max\left\{-\frac{\mu_i}{\rho},c_i(x)\right\}, i\in\mathcal{I},\\
    \nabla_\rho\mathbb{L}_{\rho}(x,\lambda,\mu)&=\frac12 \sum_{i\in\mathcal{E}} c^2_i(x)+\frac12\sum_{i\in\mathcal{I}} \left(\max\left\{-\frac{\mu_i}{\rho},c_i(x)\right\}\right)^2.
    \end{split}
\end{equation}
Furthermore, if functions $f(x), c_i(x), i\in\mathcal{E}\cup\mathcal{I}$ are differentiable, it holds that
\begin{equation*}
    \begin{split}
    \nabla_x\mathbb{L}_{\rho}(x,\lambda,\mu)=&\nabla f(x)+\sum_{i\in\mathcal{E}}\lambda_i \nabla c_i(x)+\sum_{i\in\mathcal{I}}\mu_i \nabla c_i(x)\\
    &+\rho \sum_{i\in\mathcal{E}}c_i(x) \nabla c_i(x)+\rho\sum_{i\in\mathcal{I}}\left[\frac{\mu_i}{\rho}+ c_i(x)\right]_+ \nabla c_i(x).
    \end{split}
\end{equation*}}

The above conclusion can be obtained through a direct derivation. Having this {expression} at hand, we can substitute it into \eqref{prototype:NC-ALM-x}-\eqref{prototype:NC-ALM-rho} to give the explicit formulation. Let $\sigma_k=\rho_k$, then the update scheme can be written as
$$
\left\{
\begin{aligned}
    x^{k+1}&\in\argmin_x \mathbb{L}_{\rho_k}(x,\lambda^k,\mu^k),\nonumber\\
    \lambda_i^{k+1}&=\lambda_i^k+\rho_k c_i(x^{k+1}),i\in\mathcal{E},\nonumber\\
    \mu_i^{k+1}&=\max\left\{\mu_i^k+\rho_k c_i(x^{k+1}),0\right\},i\in\mathcal{I},\nonumber\\
    \rho_{k+1}&=\rho_k+\frac{\rho_k}2\left(\sum_{i\in\mathcal{E}} c^2_i(x^{k+1})+\sum_{i\in\mathcal{I}}\left( \max\left\{-\frac{\mu_i^{k+1}}{\rho_k}, c_i(x^{k+1})\right\}\right)^2\right). \nonumber
\end{aligned}
\right.
$$
We can  observed that the gradient with respect to $\rho$ is always non-negative, that is, $$\nabla_\rho\mathbb{L}_{\rho}(x,\lambda,\mu)\geq 0.$$ Thus, the sequence $\{\rho_k\}$ is monotonically increasing.

\vspace{0.3cm}
\textbf{2) Convex composite problem.} Similarly, we can also give the results on the convex composite problem discussed in Section \ref{sec:convex-comp}.  %
   { For the convex composite problem \eqref{sec2:pro:composite1}, the AL function $\mathbb{L}_{\rho}$ given in \eqref{sec2:al:eliminate} is differentiable and its gradient can be computed as;
    \begin{equation}\label{eq:comp-grad}
    \begin{aligned}
     \nabla_\lambda \mathbb{L}_{\rho}(x,\nu,\lambda,\mu)  &= \mathcal{A}(x) - \Pi_{\mathcal{Q}}(\mathcal{A}(x) + \lambda/\rho), \\
      \nabla_\mu  \mathbb{L}_{\rho}(x,\nu,\lambda,\mu)  &= x - \Pi_{\mathcal{K}}(x + \mu/\rho), \\
      \nabla_\nu \mathbb{L}_{\rho}(x,\nu,\lambda,\mu) &= x - \prox_{h/\rho}\left(x + \nu/\rho\right),\\
      \nabla_x \mathbb{L}_{\rho}(x,\nu,\lambda,\mu) &= \nabla f(x) + \prox_{\rho h^*}\left(\rho x + \nu\right)  \\
       &\quad + \rho \mathcal{A}^*(\hat{\Pi}_{\mathcal{Q}}(\mathcal{A}(x) + \lambda/\rho) ) + \rho \hat{\Pi}_{\mathcal{K}}(x + \mu/\rho).
    \end{aligned}
\end{equation}}
In fact, since the Moreau envelope function is continuously differentiable for the convex case, we can easily derive the following expressions for the gradients with respect to $x$ by combining with equation \eqref{me:gradient} and \eqref{moreau}:
\begin{align*}
    \nabla_x e_{\rho}h(x + \nu/\rho) &= \rho(  x + \nu/ \rho - \prox_{h/\rho}(x + \nu/\rho)) = \prox_{\rho h^*}\left(\rho x + \nu\right), \\
     \nabla_x e_{\rho} \delta_{\mathcal{K}} \left(x + \mu/\rho\right) & = \rho(x + \mu/\rho- \Pi_{\mathcal{K}}(x + \mu/\rho)) = \rho \hat{\Pi}_{\mathcal{K}}(x + \mu/\rho), \\
    \nabla_x e_\rho\delta_{\mathcal{Q}}\left(\mathcal{A}(x) + \lambda/\rho \right) & =  \rho \mathcal{A}^*(\mathcal{A}(x)  \!+\! \lambda/\rho - \Pi_{\mathcal{Q}}(\mathcal{A}(x) \!+\! \lambda/\rho) ) = \rho \mathcal{A}^*\hat{\Pi}_{\mathcal{Q}}(\mathcal{A}(x) \!+\! \lambda/\rho). 
\end{align*}
Given the differentiability of the Moreau envelope, the gradient of $\mathbb{L}_\rho(x,\nu,\lambda,\mu)$ with respect to $x$ can be easily derived. The gradients with respect to $\nu,\lambda,\mu$ can be obtained similarly.

It is important to mention that for the convex composite optimization problem \eqref{sec2:pro:composite1}, strong duality holds without needing to update 
$\rho$ as a variable, as discussed in Section \ref{convex_saddle_point}. Therefore, by setting the dual stepsize $\sigma = \rho$, the update scheme \eqref{ALM:primal}$\sim$\eqref{ALM:dual} can be rewritten as:
$$
\left\{
\begin{aligned}
\label{composite-x-subprob}x^{k+1} &= \argmin_x\mathbb{L}_{\rho}(x,\nu^k,\lambda^k,\mu^k),\\
\nonumber \nu^{k+1} &= \nu^k+\rho\left(x^{k+1} - \prox_{h/\rho}\left(x^{k+1} + \nu^k/\rho\right)\right),
\\
\nonumber \lambda^{k+1} &= \lambda^k+\rho\left(\mathcal{A}(x^{k+1})- \Pi_{\mathcal{Q}}(\mathcal{A}(x^{k+1}) + \lambda^k/\rho)\right), \\
\nonumber \mu^{k+1} &= \mu^k+\rho\left(x^{k+1} - \Pi_{\mathcal{K}}(x^{k+1} + \mu^k/\rho)\right).
\end{aligned}
\right.
$$
Since $\mathbb{L}_{\rho}(x,\nu,\lambda,\mu)$ is differentiable with respect to $x$, the subproblem of $x$ can be solved by the gradient-type methods.

Note that we define  an alternative formulation of the AL function $\mathbb{L}_{\rho}(x,\lambda,\mu)$ in \eqref{eqn:AL-h}, where the nonsmooth term is retained. The following update scheme can also be provided similarly.
$$
\left\{
\begin{aligned}
x^{k+1} &\in \argmin_x \mathbb{L}_{\rho}(x,\lambda^k,\mu^k),\\
\lambda^{k+1} &= \lambda^k+\rho\left(\mathcal{A}(x^{k+1})- \Pi_{\mathcal{Q}}(\mathcal{A}(x^{k+1}) + \lambda^k/\rho)\right), \\
\mu^{k+1} &= \mu^k+\rho\left(x^{k+1} - \Pi_{\mathcal{K}}(x^{k+1} + \mu^k/\rho)\right).
\end{aligned}
\right.
$$
In this case, $\mathbb{L}_{\rho}(x,\lambda,\mu)$ is nondifferentiable with respect to $x$ and the gradient descent method cannot be applied. However, since the proximal operator of $h$ is efficient to evaluate, we can utilize the proximal gradient method instead. 

\vspace{0.3cm}
\textbf{3) Nonconvex composite problem.}
For the nonconvex composite problem \eqref{nonconvex-composite}, the AL function $\mathbb{L}_{\rho}(\cdot)$ given in \eqref{eqn:NC-comp-AL} is concave with respect to the dual variables $\nu,\lambda,\mu$. Since the proximal mappings of nonconvex functions may not be unique for \eqref{nonconvex-composite}, $\mathbb{L}_{\rho}(x,\nu,\lambda,\mu)$ is nonsmooth with respect to both primal and dual variables. 
Similar to Theorem \ref{danskin-Phi}, we assume that 
\[
(u^*,v^*,w^*)\in\argmin_{u,v,w} \mathcal{L}_\rho(x,u,v,w,\nu,\lambda,\mu)
\]
is some optimal point such that  $u^* = \prox_{h/\rho}\left(x + \nu/\rho\right), v^* = \Pi_{\mathcal{Q}}(c(x) + \lambda/\rho)$ and $ w^* = \Pi_{\mathcal{K}}(x + \mu/\rho)$. Then  $\mathbb{L}_\rho(x,\nu,\lambda,\mu)$ is concave with respect to $\nu,\lambda,\mu, \rho$ and $$\supgrad_{\nu,\lambda,\mu,\rho} \mathcal{L}_\rho(x,u^*,v^*,w^*,\nu,\lambda,\mu)\in\supgrad_{\nu,\lambda,\mu,\rho} \mathbb{L}_\rho(x,\nu,\lambda,\mu).$$ 
 Hence, the supergradient of $\mathbb{L}_{\rho}(x,\nu,\lambda,\mu)$ of the dual variables can still be calculated as %
{\begin{equation}\label{eq:nonconvex-diff}
    \begin{aligned}
    \supgrad_\nu \mathbb{L}_{\rho}(x,\nu,\lambda,\mu)& \ni x - \prox_{h/\rho}\left(x + \nu/\rho\right),\\
    \supgrad_\lambda \mathbb{L}_{\rho}(x,\nu,\lambda,\mu) & \ni c(x) - \Pi_{\mathcal{Q}}(c(x) + \lambda/\rho), \\
    \supgrad_\mu  \mathbb{L}_{\rho}(x,\nu,\lambda,\mu) & \ni x - \Pi_{\mathcal{K}}(x + \mu/\rho), \\
    \supgrad_{\rho} \mathbb{L}_{\rho}(x,\nu,\lambda,\mu) & \ni\frac12\|x-\prox_{h/\rho}\left(x + \nu/\rho\right)\|^2+\frac12\|c(x)-\Pi_{\mathcal{Q}}(c(x) + \lambda/\rho)\|^2\\
    &\quad +\frac12\|x-\Pi_{\mathcal{K}}(x + \mu/\rho)\|^2.
    \end{aligned}
\end{equation}}

For the nonconvex composite optimization problem \eqref{nonconvex-composite}, let the dual stepsize be $\sigma_k = \rho_k$, then the update scheme \eqref{prototype:NC-ALM-x}$\sim$\eqref{prototype:NC-ALM-rho} can be rewritten as
$$
\left\{
\begin{aligned}
x^{k+1}&\in\argmin_{x} \mathbb{L}_{\rho_k}(x,\nu^k,\lambda^k,\mu^k),\\
\nu^{k+1} &= \nu^k+\rho_k\left(x^{k+1} - \prox_{h/\rho}\left(x^{k+1} + \nu^k/\rho_k\right)\right),
\\
\lambda^{k+1} &= \lambda^k+\rho_k\left(c(x^{k+1})- \Pi_{\mathcal{Q}}(c(x^{k+1}) + \lambda^k/\rho_k)\right), \\
\mu^{k+1} &= \mu^k+\rho_k\left(x^{k+1} - \Pi_{\mathcal{K}}(x^{k+1} + \mu^k/\rho_k)\right),\\
\rho_{k+1}&\in\rho_k+\partial_\rho \mathbb{L}_{\rho_k}(x^{k+1},\nu^{k+1},\lambda^{k+1},\mu^{k+1}).
\end{aligned}
\right.
$$
Note that the sequence $\{\rho_k\}$ is monotonically increasing. Thus, the update rule of $\rho_k$ can also be seen as multiplication by a factor. 

We also discuss the different variant of AL functions in Section \ref{sec:AL-nonconvex-composite}, which derives different formulation of ALM. Taking the AL function defined in \eqref{eqn:nonconvex-AL2} as an example, the corresponding ALM can be written as
$$
\left\{
\begin{aligned}
(x^{k+1}, v^{k+1})&\in\argmin_{v\in\mathcal{Q}, x} \mathbb{L}_{\rho_k}(x,v, \nu^k,\lambda^k,\mu^k),\\
\nu^{k+1} &= \nu^k+\rho_k\left(x^{k+1} - \prox_{h/\rho}\left(x^{k+1} + \nu^k/\rho_k\right)\right),
\\
\lambda^{k+1} &= \lambda^k+\rho_k\left(c(x^{k+1})- v^{k+1}\right), \\
\mu^{k+1} &= \mu^k+\rho_k\left(x^{k+1} - \Pi_{\mathcal{K}}(x^{k+1} + \mu^k/\rho_k)\right),\\
\rho_{k+1}&\in\rho_k+\partial_\rho \mathbb{L}_{\rho_k}(x^{k+1},v^{k+1}, z^{k+1},\lambda^{k+1},\mu^{k+1}).
\end{aligned}
\right.
$$
In this case, the subproblem can be alternately minimized with respect to $x$ and $v$, which is also known as BCD. Each block is easy to solve since the AL function is differentiable with respect to $x$ and the block of $v$ has closed-form solution.

\vspace{0.3cm}
\textbf{4) Integer programming.} %
 {   For the integer programming \eqref{ip_inequality}, the AL function defined in \eqref{IP_alf_inequality} is differentiable and its gradient can be given as follows:
    \begin{align*}
     \nabla_x \mathrm{L}_{\rho}(x,\mu)  &=c +A^{\top}\mu+\rho A^{\top}(A x -b)_+, \\
      \nabla_\mu  \mathrm{L}_{\rho}(x,\mu)  &= A x -b, \\
      \nabla_\rho \mathrm{L}_{\rho}(x,\mu) &= \frac{1}{2}\|(Ax-b)_+\|^2.
    \end{align*}}
Using the projected subgradient method to update parameters $(\mu, \rho)$  with the dual stepsize $\sigma_k = \rho_k$, the update scheme \eqref{prototype:NC-ALM-x}$\sim$\eqref{prototype:NC-ALM-rho} can be rewritten as
$$
\left\{
\begin{aligned}
     x^{k+1}&\in\argmin_{x \in \bar{\mathcal{K}}} \mathrm{L}_{\rho_k}(x,\mu^k),\\
 \mu^{k+1}&=\max\left\{\mu^k+\rho_k (Ax^{k+1}-b),0\right\},\\
 \rho_{k+1}&=\rho_k+ \frac{\rho_k}{2}\|(Ax^{k+1}-b)_+\|^2.
\end{aligned}
\right.
$$

\subsection{A general ALM framework}
In the ALM scheme, the $\textbf{x}$-subproblem \eqref{prototype:NC-ALM-x} often cannot be exactly solved in practice. Let $\Lambda^k,\rho_k$ be the latest variables in iterate $k$, then we can solve the $k$-th $\textbf{x}$-subproblem \eqref{prototype:NC-ALM-x} inexactly and the scheme {\eqref{prototype:NC-ALM-x}$\sim$\eqref{prototype:NC-ALM-rho}} becomes 
$$
\left\{
\begin{aligned}
\label{ALM_scheme_app_x} \textbf{x}^{k+1}&\approx\argmin_\textbf{x} \mathbb{L}_{\rho_k}(\textbf{x},\Lambda^k),\\
\label{ALM_scheme_app_l} \Lambda^{k+1}&\in\Lambda^k+\sigma_k\supgrad_\Lambda \mathbb{L}_{\rho_k}(\textbf{x}^{k+1},\Lambda^k),\\
\label{ALM_scheme_app_r} \rho_{k+1}&\in\rho_k+\sigma_k\supgrad_\rho \mathbb{L}_{\rho_k}(\textbf{x}^{k+1},\Lambda^{k+1}).
\end{aligned}
\right.
$$
Therefore, we need to formally define the optimality measure to quantify the level of inexactness in solving the subproblem. Generally, if the optimality measure is small enough, and the violation of the constraints is also controlled, then the iterate can be verified as an approximate solution to the original problem.

\subsubsection{Optimality measure}
According to our previous description, we establish two criteria for the approximate solution, targeting the various problems
mentioned above. For any variables $\textbf{x},\Lambda,\rho$, we introduce a criterion   $\varsigma(\textbf{x},\Lambda,\rho)$ to  measure the accuracy of solving  subproblems, and a second criterion $\vartheta(\textbf{x},\Lambda,\rho)$ to measure the constraint violation. Next, we present specific definitions of $\varsigma$ and $\vartheta$ in various problem formulations.

\vspace{0.3cm}
\textbf{1) General optimization problem.}
For the general problem \eqref{prob}, the $(\eta,\epsilon)$-stationary point is defined as
\begin{align}
 \label{sec3:general_stop_u}   \varsigma(\textbf{x},\Lambda,\rho) &:=\|\nabla_\textbf{x} \mathbb{L}_{\rho}(\textbf{x},\Lambda)\|\leq\eta, \\
 \label{sec3:general_stop_v}  \vartheta(\textbf{x},\Lambda, \rho) &:=\|\nabla_\Lambda \mathbb{L}_{\rho}(\textbf{x},\Lambda)\|\leq\epsilon,
\end{align}
where the definitions of the gradients have been given in \eqref{gradient_cx}.

For the inequality in \eqref{sec3:general_stop_u}, we can always make it hold approximately for given $\Lambda$ and $\rho$. Specifically, we can obtain an approximate solution to the $k$-th subproblem $\min_\textbf{x} \mathbb{L}_{\rho_k}(\textbf{x},\Lambda^k)$ that satisfies
\[
    \varsigma(\textbf{x}^{k+1},\Lambda^k,\rho_k)=\|\nabla_\textbf{x} \mathbb{L}_{\rho_k}(\textbf{x}^{k+1},\Lambda^k)\|\leq\eta_k.
\]
Therefore, it can serve as a stopping criterion of the subproblem.

The inequality in \eqref{sec3:general_stop_v} essentially bounds the constraint violation. We know that problem \eqref{prob} is equivalent to the equality constrained problem \eqref{prob2}, with the constraint violation defined by  
\[
\vartheta(\textbf{x},\Lambda,\rho)=\left(\sum_{i\in\mathcal{E}}c_i^2(x)+\sum_{i\in\mathcal{I}}(c_i(x)+s_i)^2\right)^{\frac12},
\]
where the auxiliary variable $\{s_i\}$ depends on $\Lambda$ and $\rho$ according to \eqref{sopt}.
Substituting \eqref{sopt} into it, we obtain the formal definition 
\[
\vartheta(\textbf{x},\Lambda,\rho)=\left(\sum_{i\in\mathcal{E}}c_i^2(x)+\sum_{i\in\mathcal{I}}\left(c_i(x)+\max\left\{-\frac{\mu_i}{\rho}-c_i(x),0\right\}\right)^2\right)^{\frac12}.
\]

\vspace{0.3cm}
\textbf{2) Convex composite problem.}
For the convex composite problem \eqref{sec2:pro:composite1}, the $(\eta,\epsilon)$-stationary point for the convex composite problem is defined the same as \eqref{sec3:general_stop_u}-\eqref{sec3:general_stop_v} and the corresponding gradients are defined in  \ref{eq:comp-grad}.

We can also use $\varsigma(\textbf{x}_{k+1},\Lambda_k,\rho_k)\leq\eta_k$ as the stopping criterion of the subproblem. The second inequality $\vartheta(\textbf{x},\Lambda,\rho)\leq\epsilon$ bounds the constraint violation. Specifically, problem \eqref{sec2:pro:composite1} is equivalent to the equality constrained problem \eqref{sec2:pro:composite:auxilary-new} whose constraint violation is 
\[
\vartheta(\textbf{x},\Lambda,\rho) = \left(\|x-u\|^2+\|\mathcal{A}(x)-v\|^2+\|x-w\|^2\right)^{\frac12}, 
\]
where $u,v,w$ depend on $\Lambda$ and $\rho$ according to \eqref{eq:y-expression-new}. Substituting \eqref{eq:y-expression-new} into it yields the formal definition
\begin{align*}
\vartheta(\textbf{x},\Lambda,\rho)=&\left(\left\|x - \prox_{h/\rho}\left(x + \frac{\nu}{\rho}\right)\right\|^2\right.\\
&\quad \left.+\left\|\mathcal{A}(x) - \Pi_{\mathcal{Q}}\left(\mathcal{A}(x) + \frac{\lambda}{\rho}\right)\right\|^2+\left\|x - \Pi_{\mathcal{K}}\left(x + \frac{\mu}{\rho}\right)\right\|^2\right)^{\frac12}.
\end{align*}

\vspace{0.3cm}
\textbf{3) Nonconvex composite problem.}
For the nonconvex composite problem \eqref{nonconvex-composite}, the AL function is not differentiable but it is concave with respect to $\Lambda$. Hence the supergradient $\bar \partial_\Lambda \mathbb{L}_{\rho_k}(\textbf{x},\Lambda)$ is well-defined. However, since $\mathbb{L}_{\rho_k}(\textbf{x},\Lambda)$ may be neither convex nor concave with respect to $\textbf{x}$, we need to use the Clarke subdifferential. The corresponding $(\eta,\epsilon)$-stationary point is defined as
\begin{align}
    &\label{nc-subopt1} \varsigma(\textbf{x},\Lambda,\rho) := \dist\left(0, \hat \partial_\textbf{x} \mathbb{L}_{\rho}(\textbf{x},\Lambda)\right)\leq\eta,\\
    &\label{nc-subopt2} \vartheta(\textbf{x},\Lambda,\rho):=\dist\left(0, \bar \partial_\Lambda \mathbb{L}_{\rho}(\textbf{x},\Lambda)\right)\leq\epsilon.
\end{align}

The inequality \eqref{nc-subopt1} can serve as the stopping criterion of the subproblem. However, the distance to the set may be difficult to evaluate. We have to combine the specific problem formulation to give the estimation of the distance. Alternatively, we can use the relative change of the iterates or the AL function value at two consecutive iteration points as the stopping criterion. We have to mention that alternative is practical but not rigorous. Let $\textbf{x}^{k,0}:=\textbf{x}^k$ and $\textbf{x}^{k,j}$ represent the $j$-th inner iterate when solving the $k$-th subproblem, then the stopping criterion can be
\begin{equation}\label{sec3:consecutive_stop}
\begin{aligned}
\varsigma(\textbf{x}^{k,j+1},\Lambda^k,\rho_k)&=\frac{\|\textbf{x}^{k,j+1}-\textbf{x}^{k,j}\|}{\max\{\|\textbf{x}^{k,j}\|,1\}} \leq\eta_k,\quad \text{or}\\
\varsigma(\textbf{x}^{k,j+1},\Lambda^k,\rho_k)&=\frac{|\mathbb{L}_{\rho_{k}}(\textbf{x}^{k,j+1},\Lambda^{k})-\mathbb{L}_{\rho_{k}}(\textbf{x}^{k,j},\Lambda^{k})|}{\max\{\mathbb{L}_{\rho_{k}}(\textbf{x}^{k,j},\Lambda^{k}),1\}}\leq\eta_k.
\end{aligned}
\end{equation}
That is, when either of the inequalities holds, we exit the inner loop and set the last inner iterate as the approximate solution of the subproblem.

Similarly, the criterion \eqref{nc-subopt2} also bounds the constraint violation. We can utilize the supergradient expression \eqref{eq:nonconvex-diff} for dual variables to obtain an element in the set and hence give an upper bound of the distance. Specifically, we can define $\bar \vartheta(\textbf{x},\Lambda,\rho)\geq \vartheta(\textbf{x},\Lambda,\rho)$ as a surrogate function where
\begin{align*}
\bar \vartheta(\textbf{x},\Lambda,\rho) =& \left\{\dist^2\left(0, x-\prox_{h/\rho}\left(x + \frac{\nu}{\rho}\right)\right)+ \dist^2\left(0, \mathcal{A}(x) - \Pi_{\mathcal{Q}}\left(\mathcal{A}(x) + \frac{\lambda}{\rho}\right)\right)\right.\\
&\left.+\dist^2\left(0, x - \Pi_{\mathcal{K}}\left(x + \frac{\mu}{\rho}\right)\right)\right\}^{\frac12}.
\end{align*}

\vspace{0.3cm}
\textbf{4) Integer programming.} For integer programming problem in the form of \eqref{ip_inequality}, the feasible set is discrete. Hence, employing a stopping criterion based on achieving a sufficiently small gradient norm of the AL function, analogous to \eqref{sec3:general_stop_u}, is generally impractical in the minimization of the AL function.  Then the stopping criterion for the subproblem generated by the ALM algorithm can be implemented as follows:
\begin{equation}\label{sec3:ux0}
  \varsigma(\textbf{x}^{k,j+1},\Lambda^k,\rho_k)=\|\textbf{x}^{k,j+1}-\textbf{x}^{k,j}\|=0.
 \end{equation}
If this condition is difficult to satisfy, we can use the stopping criterion shown in \eqref{sec3:consecutive_stop} instead. Note that the point that satisfies \eqref{sec3:ux0} is only a stationary point of the problem, which may not necessarily be a locally optimal solution.

For the outer iterations of the ALM,  the dual multiplier $\mu$ and the penalty parameter $\rho$ are updated via the subgradient method. One limitation of the subgradient method is the lack of a well-defined stopping criterion, which ideally would use the duality gap between the primal problem and the augmented dual problem \cite{burer2006solving}. However, calculating this duality gap is not straightforward.
Alternatively, we can implement the stopping criteria as that the primal iterates satisfy the constraint violation tolerance at a sufficiently small threshold $\epsilon$, i.e.,
\[
\vartheta(\textbf{x}^{k+1},\Lambda^k,\rho_k)=\left\|[A\textbf{x}^{k+1}-b]_+\right\|^2 \leq \epsilon.
\]
Furthermore, we note that for many integer programming problems in practice, if ALM obtains a feasible solution, the objective value of that solution typically serves as an upper bound (UB) on the optimal value. Once a lower bound (LB) on the optimal value is determined through some approach, we can also utilize the optimality gap, which is based on the relative difference between the upper and lower bounds, i.e., 
\[
\vartheta(\textbf{x}^{k+1},\Lambda^k,\rho_k)=\text{gap}=|\text{UB}-\text{LB}|/|\text{UB}|\leq \epsilon,
\]
as a stopping criterion. 
The benefit of utilizing the optimality gap as a stopping criterion is twofold: it enables assessing the quality of the obtained solution, and it provides the potential to find the global optimal solution.

\subsubsection{A practical algorithm}

Based on the preceding analysis, we introduce a practical algorithm (Algorithm \ref{alg: practical-ALM}) generalized from the traditional ALM \cite{NocedalWright06}. Different update schemes are adopted based on whether the constraint violation satisfies $\vartheta(\textbf{x}^{k+1}, \Lambda^k, \rho_k) \leq \epsilon_k$. If this condition is satisfied, the penalty parameter remains unchanged in the subsequent iteration since the current value of $\rho_k$ effectively controls the constraint violations. The Lagrange multipliers are updated via \eqref{ALM_scheme_app_l}, and the tolerances $\eta_{k+1}$ and $\epsilon_{k+1}$ are preemptively tightened in preparation for the next iteration. Conversely, if $\vartheta(\textbf{x}^{k+1}, \Lambda^k, \rho_k) \leq \epsilon_k$ is not satisfied, the penalty parameter is increased to ensure that the upcoming subproblem prioritizes the reduction of constraint violations. In this scenario, the Lagrange multiplier estimates remain unchanged, as the primary focus is on enhancing feasibility.

\begin{algorithm2e}[h]\label{alg: practical-ALM}
	\caption{Practical ALM}
	\textbf{Input}: Initial point $\textbf{x}^0$, initial multipliers $\Lambda^0$, initial penalty factor $\rho_0$, convergence tolerances $\epsilon$ and $\eta$, constant $0<\alpha\leq\beta\leq1$ and $\kappa>1$. Let $\eta_0=\frac{1}{\rho_0}, \epsilon_0=\frac1{\rho_0^\alpha}$.\\
	\For{$k=0,1,2,\dots$}
	{

	Find an $\eta_k$-stationary point $\textbf{x}^{k+1}$ of the subproblem \eqref{ALM_scheme_app_x} such that 
    $$ \varsigma(\textbf{x}^{k+1},\Lambda^k,\rho_k)\leq\eta_k.\vspace{-0.4cm}
    $$ \label{algo:subprob-line}\\
    \uIf{$\vartheta(\textbf{x}^{k+1},\Lambda^k,\rho_k)\leq\epsilon_k$}
    {\uIf{$\vartheta(\textbf{x}^{k+1},\Lambda^k,\rho_k)\leq\epsilon$ and $\varsigma(\textbf{x}^{k+1},\Lambda^k,\rho_k)\leq\eta$}
         {\textbf{stop} with approximate solution $\textbf{x}^{k+1}$.}
    Update the dual variables
    $$
        \Lambda^{k+1}\in \Lambda^k+\rho_k \partial_\Lambda \mathbb{L}_{\rho_k}(\textbf{x}^{k+1},\Lambda^k),
    $$\\
     and tighten the tolerances $\rho_{k+1}=\rho_k, \eta_{k+1}=\frac{\eta_k}{\rho_{k+1}}, \epsilon_{k+1}=\frac{\epsilon_k}{\rho_{k+1}^\beta}$.}
    \uElse{Increase the penalty parameter and tighten the tolerances
    $\Lambda^{k+1}=\Lambda^k, \rho_{k+1}=\kappa\rho_k, \eta_{k+1}=\frac1{\rho_{k+1}}, \epsilon_{k+1}=\frac1{\rho_{k+1}^\alpha}$.}
    }
\end{algorithm2e}

\subsubsection{Solving the ALM subproblem efficiently}
As depicted in Line \ref{algo:subprob-line} of Algorithm \ref{alg: practical-ALM}, when solving the subproblem \eqref{ALM_scheme_app_x}, the various properties of the AL function under different problem settings necessitate the application of distinct algorithms. 
Next, we will provide a few possible subproblem solvers that work under different smoothness and convexity conditions of the problems.

For the convex composite optimization problem \eqref{sec2:pro:composite1}, we present two different forms of AL functions in Section \ref{sec:convex-comp}. The first form defined in \eqref{eqn:AL-h} retains the nonsmooth convex function $h$, resulting in a nonsmooth AL function overall. However, since the sets $\mathcal{Q}, \mathcal{K}$ are closed and convex, this implies that apart from $h$, the other parts of AL are smooth. The proximal gradient method is the most common algorithm for solving such subproblems. For the second form defined in \eqref{sec2:al:eliminate}, we smooth the nonsmooth function $h$ using the Moreau envelope, resulting in the AL function being already smooth. Hence, we can directly apply simple gradient-based methods such as gradient descent, Nesterov's accelerated gradient method, etc. Moreover, by carefully exploring the smoothness of this function, we find it to be semi-smooth, allowing us to even use second-order algorithms, such as the semi-smooth Newton method, greatly enhancing the efficiency of solving subproblems. This is a significant advantage introduced by the AL function form \eqref{sec2:al:eliminate}. Refer to Section \ref{sec:nonsmooth} for further discussion on the semi-smooth method \cite{wang2023decomposition, zhou2022semismooth, hermans2022qpalm}.

For nonconvex composite optimization problem \eqref{nonconvex-composite}, the AL function defined in \eqref{eqn:nonconvex-AL2} is not necessarily differentiable with respect to $\textbf{x}$. Therefore, directly applying first-order or second-order methods may be impractical. One feasible approach is the block coordinate descent (BCD) method. Specifically, we can initially consider the AL function before elimination, i.e., $\mathcal{L}_\rho(\textbf{x},\textbf{y},\Lambda)$, and solve the subproblem \eqref{ALM_scheme_app_x} by minimizing $\min_{\textbf{x},\textbf{y}}\mathcal{L}_\rho(\textbf{x},\textbf{y},\Lambda)$. Then, the BCD method minimizes $\textbf{x}$ and $\textbf{y}$ alternatively until a global solution of the subproblem is found. This approach treats the primal variable as two separate blocks and solves them individually. Its advantage lies in preserving the original differentiability, thus facilitating a more efficient solution structure when solving BCD subproblems by effectively leveraging the inherent structure \cite{de2023constrained}.

\section{Convex Optimization Problems}\label{sec:convex case}
In this section, we will focus on the theoretical aspect of ALM for convex optimization problems, including the global convergence, local convergence and the iteration complexity. To avoid confusion, we suppose the AL functions considered in this section are all constructed by eliminating all the auxiliary variables, namely, the collection of all primal variables $\textbf{x}$ defined in the previous section only consists of the original decision variable $x$. Therefore, we will write $x$ instead of $\textbf{x}$ throughout the following discussion. In particular, our primary focus in this section is on problem \eqref{sec2:pro:composite1} and its corresponding  AL function defined in \eqref{eqn:AL-Moreau}.

\subsection{Global convergence}
The global convergence of the ALM for convex optimization problems has been extensively studied and well-established in \cite{bertsekas1997nonlinear,bertsekas2014constrained}. Under mild conditions, the ALM guarantees to find an optimal solution and multiplier, regardless of initialization. For convex inequality-constrained 
 nonlinear programming problems, Rockafellar \cite{rockafellar1976augmented} demonstrated that the convergence properties of the ALM can be determined by its relationship with the dual proximal point algorithm (PPA).

Inspired by \cite{rockafellar1976augmented}, we extend the global convergence results of ALM to a broader class of convex composite optimization problem \eqref{sec2:pro:composite1}. Since obtaining an explicit solution for solving the AL subproblem is not straightforward, we often use iterative algorithms to approximate it. Therefore, we begin by introducing several inexact conditions considered by Rockafellar \cite{rockafellar1976augmented} for employing the ALM to solve \eqref{sec2:pro:composite1}: 
\begin{align}
& \mathbb{L}_{\rho_k} (x^{k+1},\Lambda^k)  - \inf_x \mathbb{L}_{\rho_k} (x,\Lambda^k) \leq \frac{\epsilon_k^2}{2\rho_k},\quad  \epsilon_k \geq 0, \ \sum_{k=1}^\infty \epsilon_k < +\infty,\label{sec:inexact:condition1}\\
&\mathbb{L}_{\rho_k}(x^{k+1},\Lambda^k) - \inf_x \mathbb{L}_{\rho_k}(x,\Lambda^k) \leq \frac{\delta_k^2}{2\rho_k}\|\Lambda^{k+1} - \Lambda^k\|_2^2, \quad  \delta_k \geq 0, \ \sum_{k=1}^\infty \delta_k < +\infty, \label{sec:inexact:condition2}\\
&\dist(0, \partial_x \mathbb{L}_{\rho_k}(x^{k+1},\Lambda^k)) \leq \frac{\delta_k^\prime}{\rho_k}\|\Lambda^{k+1} - \Lambda^k\|_2, \quad   0 \leq  \delta_k^\prime \rightarrow 0, \label{sec:inexact:condition3} 
\end{align}
where the AL function is defined in \eqref{eqn:AL-Moreau} with $\Lambda^k=(\nu^k,\lambda^k,\mu^k)$, and  $\epsilon_k,\delta_k$ and $\delta_k^\prime$ are pre-determined parameters.

Since $\inf_x \mathbb{L}_{\rho_k}(x,\Lambda^k)$ is unknown, it is infeasible to verify the inexact conditions \eqref{sec:inexact:condition1} and \eqref{sec:inexact:condition2}. However, if $\mathbb{L}_{\rho_k}(x,\Lambda^k)$ is $\alpha$-strongly convex, then \cite{kort1976combined} established that: 
$$\mathbb{L}_{\rho_k}(x,\Lambda^k) - \inf_x \mathbb{L}_{\rho_k}(x,\Lambda^k) \leq \frac{1}{2\alpha} \mathrm{dist}^2(0, \partial_x \mathbb{L}_{\rho_k}(x,\Lambda^k)).$$
Therefore, we can further construct the following  inexact conditions:
\begin{align}
&\dist(0, \partial_x \mathbb{L}_{\rho_k}(x^{k+1},\Lambda^k)) \leq \sqrt{\frac{\alpha}{\rho_k}}\epsilon_k,\quad  \epsilon_k \geq 0, \ \sum_{k=1}^\infty \epsilon_k < +\infty,\label{sec:inexact:condition4}\\
&\dist(0, \partial_x \mathbb{L}_{\rho_k}(x^{k+1},\Lambda^k)) \leq \sqrt{\frac{\alpha}{\rho_k}}\delta_k\|\Lambda^{k+1} - \Lambda^k\|_2^2, \quad  \delta_k \geq 0, \ \sum_{k=1}^\infty \delta_k < +\infty, \label{sec:inexact:condition5}\\
&\dist(0, \partial_x \mathbb{L}_{\rho_k}(x^{k+1},\Lambda^k)) \leq \frac{\delta_k^\prime}{\rho_k}\| \Lambda^{k+1} - \Lambda^k\|_2, \quad   0 \geq \delta_k^\prime \rightarrow 0. \label{sec:inexact:condition6} 
\end{align}
By setting the inner stopping criterion $ \varsigma(x^{k+1},\Lambda^k,\rho_k)\leq\eta_k$ to either \eqref{sec:inexact:condition4}, \eqref{sec:inexact:condition5}, or \eqref{sec:inexact:condition6} in Algorithm \ref{alg: practical-ALM}, we obtain the framework of the inexact ALM.

\begin{remark}
We briefly summarize the role of the three stopping criteria \eqref{sec:inexact:condition1}-\eqref{sec:inexact:condition3} in the convergence analysis.
\begin{itemize}
\item[(i)] Criterion \eqref{sec:inexact:condition1} with $\epsilon_k \rightarrow 0$ can be used to show that $\{x^k\}$ asymptotically converges to an optimal solution of the primal problem \eqref{sec2:pro:composite1} if the sequence $\{\Lambda^k\}$ is bounded and converges to an optimal dual solution dual problem \cite{rockafellar1973dual}. 
\item[(ii)] Under certain appropriate conditions, the satisfaction of criterion \eqref{sec:inexact:condition2} will imply a linear convergence rate for $\{\Lambda^k\}$ to $\Lambda^*$.
\item[(iii)] Criterion \eqref{sec:inexact:condition3} with $\sum_{k=1}^\infty \delta_k <\infty$ implies \eqref{sec:inexact:condition2}  if the objective function $f$ is strongly convex. Moreover, using the criterion \eqref{sec:inexact:condition3} can help establish the relationship between $\|x^k-x^*\|$ and $\|\Lambda^k-\Lambda^*\|$.
\end{itemize}
\end{remark}

We now present the global convergence result under these stopping criteria as follows. 

\begin{proposition}\label{sec4:global-convergence-convex}
Suppose that the Lagrange multipliers exist. Let $\{(x^k,\Lambda^k)\}$ be the sequence
generated by the ALM for \eqref{sec2:pro:composite1}  under criterion \eqref{sec:inexact:condition1}. Then, the whole sequence $\{\Lambda^k\}$ converges to some $\Lambda^*$, where $\Lambda^*$ is the optimal solution to the dual problem of \eqref{sec2:pro:composite1}, and the sequence $\{x^k\}$ satisfies for all $k\geq 0$,
\begin{eqnarray}
&& \left\|x^{k+1} -   \prox_{h/\rho_k}\left(x^{k+1} + \frac{\nu^k}{\rho_k}\right) \right\| \leq \frac{1}{\rho_k} \|\nu^{k+1} - \nu^k\| \rightarrow 0, \label{sec4:convex:globalc0}\\
   &&\left\|\mathcal{A}(x^{k+1})-\Pi_\mathcal{Q}\left(\mathcal{A}(x^{k+1})+ \frac{\lambda^k}{\rho_k}\right)\right\| \leq \frac{1}{\rho_k}\|\lambda^{k+1}- \lambda^k\| \rightarrow 0, \label{sec4:convex:globalc1}\\
&& \left\|x^{k+1}- \Pi_\mathcal{K}\left(x^{k+1}+ \frac{\mu^k}{\rho_k}\right)\right\| \leq \frac{1}{\rho_k}\|\mu^{k+1}- \mu^k\| \rightarrow 0,\label{sec4:convex:globalc2}\\
&&\psi(x^{k+1})-\psi^* \leq \frac{1}{2\rho_k}(\epsilon_k^2+\|\Lambda^k\|^2-\|\Lambda^{k+1}\|^2), \label{sec4:convex:globalc3}
\end{eqnarray}
where $\psi^*$ is the optimal value of  \eqref{sec2:pro:composite1}.
 Moreover, if \eqref{sec2:pro:composite1} admits a nonempty and bounded solution set, then the sequence $\{x^k\}$ is also bounded, and all of its accumulation points are optimal solutions to \eqref{sec2:pro:composite1}.
\end{proposition}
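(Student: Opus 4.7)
The plan is to follow Rockafellar's classical strategy that interprets the ALM as an inexact proximal point algorithm (PPA) on the concave dual function $\Phi(\Lambda) := \inf_x \mathbb{L}_{\rho_k}(x,\Lambda)$. The foundation rests on two ingredients already established in the excerpt: strong duality for the AL saddle-point formulation (Proposition \ref{sec2:composite:propo:duality}), which guarantees $\max_\Lambda \Phi(\Lambda) = \psi^*$; and the Danskin-type identity (Theorem \ref{danskin-Phi}), which shows that for an exact minimizer $x^{k+1}$ of $\mathbb{L}_{\rho_k}(\cdot,\Lambda^k)$ one has $\supgrad_\Lambda \mathbb{L}_{\rho_k}(x^{k+1},\Lambda^k) \subseteq \supgrad \Phi(\Lambda^k)$. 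Consequently the dual update $\Lambda^{k+1} = \Lambda^k + \rho_k\,g$ with $g$ in this supergradient set coincides with the proximal step $\Lambda^{k+1} \approx \arg\max_\Lambda\{\Phi(\Lambda) - \tfrac{1}{2\rho_k}\|\Lambda - \Lambda^k\|^2\}$. Criterion \eqref{sec:inexact:condition1} with summable $\{\epsilon_k\}$ is precisely the condition under which Rockafellar's inexact PPA theory yields convergence $\Lambda^k \to \Lambda^*$ to a maximizer of $\Phi$, which by strong duality is an optimal dual solution.

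Given dual convergence, the pointwise bounds \eqref{sec4:convex:globalc0}--\eqref{sec4:convex:globalc2} follow almost mechanically from the closed-form expressions \eqref{eq:comp-grad} for $\supgrad_\Lambda \mathbb{L}_{\rho_k}$ along the three blocks $(\nu,\lambda,\mu)$. Decomposing $\Lambda^{k+1} - \Lambda^k = \rho_k\,g$ block by block identifies each component with $\rho_k$ times the corresponding residual on the left-hand side of \eqref{sec4:convex:globalc0}--\eqref{sec4:convex:globalc2}, up to an inexactness contribution of order $\epsilon_k/\sqrt{\rho_k}$ inherited from the approximate minimization. Since $\{\Lambda^k\}$ is Cauchy and $\sum_k \epsilon_k < \infty$, both the increment $\|\Lambda^{k+1}-\Lambda^k\|$ and the residual terms converge to zero, which is the desired conclusion.

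For the primal value bound \eqref{sec4:convex:globalc3}, I would evaluate $\mathbb{L}_{\rho_k}(x^{k+1},\Lambda^k)$ via the Moreau-envelope form \eqref{eqn:AL-Moreau} and use the Moreau identity together with the definitions of $\nu^{k+1},\lambda^{k+1},\mu^{k+1}$ to rewrite the three envelope terms as $\tfrac{1}{2\rho_k}\bigl(\|\Lambda^{k+1}\|^2 - \|\Lambda^k\|^2\bigr)$ plus a residual that is absorbed by the inequality $\psi(x^{k+1}) \le f(x^{k+1}) + h(x^{k+1})$ and the feasibility of $x^{k+1}$ relative to the smoothed constraints. Combining this identity with the weak-duality estimate $\mathbb{L}_{\rho_k}(x^{k+1},\Lambda^k) \le \Phi(\Lambda^k) + \epsilon_k^2/(2\rho_k) \le \psi^* + \epsilon_k^2/(2\rho_k)$ delivers \eqref{sec4:convex:globalc3} after rearrangement. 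For the final boundedness assertion, I would observe that $\{\Lambda^k\}$ converges and hence is bounded, so \eqref{sec4:convex:globalc3} bounds $\psi(x^{k+1})$ from above; combining this with asymptotic feasibility from \eqref{sec4:convex:globalc0}--\eqref{sec4:convex:globalc2} and the hypothesis of a nonempty bounded solution set (which, by standard convex-analysis arguments, implies that feasible sublevel sets of $\psi$ are bounded) confines $\{x^k\}$ to a bounded region. Any accumulation point is then feasible by closedness of $\mathcal{Q},\mathcal{K}$ and optimal by lower semicontinuity of $\psi$.

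The main obstacle will be the quantitative translation of the inexact condition \eqref{sec:inexact:condition1}, which controls only the suboptimality of $x^{k+1}$ in the AL value, into a PPA-type perturbation bound of size $\epsilon_k$ on the dual variable, and then propagating this error through both the constraint-violation bounds and the value bound without destroying summability. A secondary technical point is the boundedness-of-sublevel-sets step, which must invoke the fact that in convex programming a nonempty bounded solution set precludes the existence of recession directions of $\psi$ within the feasible cone, so that the upper bound on $\psi(x^{k+1})$ combined with asymptotic feasibility actually yields $\{x^k\}$ bounded.
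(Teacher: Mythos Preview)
Your proposal is essentially the paper's approach: the paper does not supply a self-contained proof but simply states that the result ``draws heavily from \cite[Theorem 4]{rockafellar1976augmented}'', i.e., exactly the PPA-on-the-dual interpretation you outline, with the displayed inequalities being the composite analogues of Rockafellar's (4.13)--(4.14).

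One small correction: your worry that the residual bounds \eqref{sec4:convex:globalc0}--\eqref{sec4:convex:globalc2} pick up an inexactness term ``of order $\epsilon_k/\sqrt{\rho_k}$'' is unfounded. The multiplier updates are \emph{defined} by $\nu^{k+1}-\nu^k=\rho_k\bigl(x^{k+1}-\prox_{h/\rho_k}(x^{k+1}+\nu^k/\rho_k)\bigr)$ etc., using whatever $x^{k+1}$ the inner solver returns, so those three relations are in fact equalities regardless of how accurately the subproblem was solved. The role of inexactness enters only in showing that the resulting $\Lambda^{k+1}$ is within $\epsilon_k$ of the exact dual proximal point (via the $1/\rho_k$-strong concavity of the prox objective, converting the value error $\epsilon_k^2/(2\rho_k)$ into a distance error $\epsilon_k$), which then feeds Rockafellar's summable-error PPA theorem to get $\Lambda^k\to\Lambda^*$ and hence $\|\Lambda^{k+1}-\Lambda^k\|\to 0$. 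For \eqref{sec4:convex:globalc3} your computation is on the right track: from the update formulas one gets $e_{\rho_k}\delta_{\mathcal{Q}}(\mathcal{A}x^{k+1}+\lambda^k/\rho_k)=\tfrac{1}{2\rho_k}\|\lambda^{k+1}\|^2$ and similarly for $\mu$ and $\nu$, so $\mathbb{L}_{\rho_k}(x^{k+1},\Lambda^k)=\psi(x^{k+1})+\tfrac{1}{2\rho_k}(\|\Lambda^{k+1}\|^2-\|\Lambda^k\|^2)$ (with $\psi$ read at the auxiliary point $\prox_{h/\rho_k}(x^{k+1}+\nu^k/\rho_k)$ for the $h$-part), and combining with $\mathbb{L}_{\rho_k}(x^{k+1},\Lambda^k)\le\psi^*+\epsilon_k^2/(2\rho_k)$ gives the claim directly.
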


The proposition mentioned above draws heavily from \cite[Theorem 4]{rockafellar1976augmented}. We can observe that the expressions \eqref{sec4:convex:globalc0}-\eqref{sec4:convex:globalc3} are slightly different from those presented in \cite[Theorem 4 (4.13) \& (4.14)]{rockafellar1976augmented} since we consider the convex composite problem. 
\begin{remark}
(a) Rockafellar \cite{rockafellar1976augmented} demonstrated that the iterates can be guaranteed to converge if the approximate solutions of the subproblems satisfy at least one of the three inexact conditions.
(b) If we consider employing an alternative AL function in \eqref{eqn:AL-h}, where the nonsmooth term $h(x)$ is retained within the AL function, then the above results including the stopping criteria and proposition still hold.
\end{remark}

Finally, we review several pieces of literature on the global convergence of convex optimization problems. If the augmented Lagrangian subproblem is solved exactly, the convergence result for the general optimization problem \eqref{prob} with inequality constraints can be found in \cite[Proposition 8]{rockafellar1976monotone}. For problems involving only equality constraints, global convergence is outlined in Proposition 4.2.1 of \cite{bertsekas1997nonlinear}. Specifically, if exact minimization is performed at each step for the augmented Lagrangian subproblem, then any limit point of the sequence \(\{x^k\}\) will be a global minimum of the original problem \eqref{prob} with equality constraints. 
In cases where the objective function in \eqref{prob} is non-Lipschitz, the convergence of the Augmented Lagrangian Method (ALM) for \eqref{prob} was demonstrated by \cite{chen2017augmented} under a relaxed constant positive linear dependence condition and a suitable basic qualification (BQ) condition, which addresses the non-Lipschitz nature of the objective function. For linearly constrained convex optimization problems, the convergence of the sequence of iterates generated by the ALM, without additional assumptions such as strong convexity, was established in \cite{boct2023fast}. For the convex composite problem, Cui et al. \cite{cui2019r} demonstrated $R$-superlinear convergence of the KKT residuals generated by the ALM under a quadratic growth condition imposed on the dual problem. Additionally, they introduced practical and straightforward stopping criteria, replacing conditions \eqref{sec:inexact:condition1}-\eqref{sec:inexact:condition3} for the augmented Lagrangian subproblems.

\subsection{Local convergence}\label{sec:convex:local}
For problem \eqref{sec2:pro:composite1}, we give the local convergence rate of ALM.   Denote $\psi(\nu,\lambda,\mu): = \inf_{x,u,v,w} \ell(x,u,v,w;\nu,\lambda,\mu)$, where the Lagrangian function $\ell$ is defined in \eqref{sec2:pro:lf-new}. Recall that the dual problem of \eqref{sec2:pro:composite1} is defined as
\begin{equation}\label{sec4:pro:composite:dual0}
      \max_{\nu,\lambda,\mu} \ \psi(\nu,\lambda,\mu).
\end{equation}
We define maps $T_{\psi}: = \partial \psi$ and $T_{\ell}:= (\partial_x \ell,\partial_u \ell,\partial_v \ell,\partial_w \ell,-\partial_\nu \ell,-\partial_\lambda \ell,-\partial_\mu  \ell)$. If $\psi \neq -\infty$, then $\psi$ is proper and $T_{\psi}$ is a maximal monotone operator such that the solutions to $0\in T_{\psi}(\nu,\lambda,\mu)$ are the optimal solution to  \eqref{sec4:pro:composite:dual0}.

The next proposition shows that one can obtain the Q-linear convergence rate of the dual sequence generated by the ALM applied to \eqref{sec2:pro:composite1}. The proof follows from Theorem 5 in \cite{rockafellar1976augmented}. It should be noted that the local convergence is established in the case that $k$ is sufficiently large. The penalty parameter $\rho_k$ satisfies $0<\rho_k \uparrow \rho_\infty \leq 0$.

\begin{proposition} 
Suppose the dual problem is feasible and let the ALM in \eqref{composite-x-subprob} be executed with stopping criterion \eqref{sec:inexact:condition2} applied to $\phi_k$. If $T_{\psi}^{-1}$ is Lipschitz continuous at the origin with modulus $a_g$ and $\{\Lambda^k\}$ is bounded, where  with  $\Lambda^k = (\nu^k,\lambda^k,\mu^k)$, then $\Lambda^k \rightarrow \bar{\Lambda}$, where $\bar{\Lambda}$ is the unique optimal solution to \eqref{sec4:pro:composite:dual0}, and 
\begin{equation}
    \begin{aligned}
  \|\Lambda^{k+1}-\bar{\Lambda}\| \leq \theta_k \|\Lambda^k-\bar{\Lambda}\|
    \\
    \end{aligned}
\end{equation} for all $k$ sufficiently large,
where
$
\theta_k=\left[a_g\left(a_g^2+\rho_k^2\right)^{-1 / 2}+\delta_k\right]\left(1-\delta_k\right)^{-1},
$
and it holds that $\lim_{k\rightarrow \infty} \theta_k=a_g\left(a_g^2+\rho_{\infty}^2\right)^{-1 / 2}<1$. 
Moreover, the conclusions of Proposition \ref{sec4:global-convergence-convex} about $\left\{x^k\right\}$ are valid in \eqref{sec4:convex:globalc3} with  $\epsilon_k=\delta_k\|\Lambda^{k+1}-\Lambda^k\|.$

If in addition to \eqref{sec:inexact:condition2} and the condition on $T_{\psi}^{-1}$ one has \eqref{sec:inexact:condition3} and the stronger condition that $T_\ell^{-1}$  is Lipschitz continuous at the origin with modulus $a_\ell \left(\geqslant a_g\right)$, then $x^k \rightarrow \bar{x}$, where $\bar{x}$ is the unique optimal solution to \eqref{sec2:pro:composite1}, and one has $$\|x^{k+1}-\bar{x}\| \leqslant \theta_k^{\prime}  \|\Lambda^{k+1}-\Lambda^k\|.$$ 
for all $k$ sufficiently large, 
where $\theta_k^{\prime}=a_\ell\left(1+\delta_k^{\prime}\right) / \rho_k \rightarrow \theta_{\infty}^{\prime}=a_\ell / \rho_{\infty}$.

\end{proposition}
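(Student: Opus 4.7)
The strategy is to exploit the well-known equivalence between the ALM applied to the primal problem and the proximal point algorithm (PPA) applied to the dual problem \eqref{sec4:pro:composite:dual0}, which underpins Rockafellar's original argument in \cite{rockafellar1976augmented}. First, I would verify that the ALM dual update is an inexact resolvent step for the maximal monotone operator $T_\psi = \partial\psi$: namely, $\Lambda^{k+1}$ approximately satisfies $\Lambda^{k+1} = (I + \rho_k T_\psi)^{-1}(\Lambda^k)$, where the inexactness induced by \eqref{sec:inexact:condition2} translates into an additive perturbation of the exact resolvent whose norm is at most $\delta_k\|\Lambda^{k+1}-\Lambda^k\|$. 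This step combines the first-order optimality condition of the augmented Lagrangian subproblem in $x$ with the closed-form dual update rule, and it is precisely the mechanism by which the summability condition on $\delta_k$ is needed to propagate errors without compromising the eventual contraction.

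Next, I would invoke the standard contraction property of the PPA under Lipschitz continuity of $T_\psi^{-1}$ at the origin: since $0\in T_\psi(\bar\Lambda)$ by dual optimality and $T_\psi^{-1}$ has Lipschitz modulus $a_g$ near $0$, the exact resolvent satisfies $\|(I+\rho_k T_\psi)^{-1}(\Lambda^k) - \bar\Lambda\| \leq (a_g/\sqrt{a_g^2+\rho_k^2})\,\|\Lambda^k-\bar\Lambda\|$ for $\Lambda^k$ in a neighborhood of $\bar\Lambda$. This is a direct consequence of the firm non-expansiveness of the resolvent combined with the Lipschitz property of $T_\psi^{-1}$, and it is the standard ingredient of PPA local analysis. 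Combining this exact contraction with the inexactness bound via the triangle inequality, and then controlling $\|\Lambda^{k+1}-\Lambda^k\|\leq \|\Lambda^{k+1}-\bar\Lambda\|+\|\Lambda^k-\bar\Lambda\|$, I can rearrange to obtain the factor $\theta_k = [a_g/\sqrt{a_g^2+\rho_k^2}+\delta_k](1-\delta_k)^{-1}$. Since $\delta_k\to 0$ and the leading term stays bounded away from $1$, eventually $\theta_k<1$. Boundedness of $\{\Lambda^k\}$ together with this Q-linear estimate then yields $\Lambda^k\to\bar\Lambda$, and uniqueness of $\bar\Lambda$ follows from the Lipschitz property of $T_\psi^{-1}$ at $0$. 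The conclusions about $\{x^k\}$ are inherited verbatim from Proposition \ref{sec4:global-convergence-convex} upon identifying $\epsilon_k = \delta_k\|\Lambda^{k+1}-\Lambda^k\|$, since \eqref{sec:inexact:condition2} implies \eqref{sec:inexact:condition1} with this choice.

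For the sharper primal estimate in the second part, the stronger Lipschitz hypothesis on $T_\ell^{-1}$ at the origin governs the primal and dual variables simultaneously. The key observation is that the full Lagrangian KKT residual at $(x^{k+1},\Lambda^{k+1})$ can be expressed entirely in terms of $(\Lambda^{k+1}-\Lambda^k)/\rho_k$: its dual components equal $(\Lambda^k-\Lambda^{k+1})/\rho_k$ directly from the ALM update rule, while its primal component is bounded by $\delta_k'\|\Lambda^{k+1}-\Lambda^k\|/\rho_k$ via \eqref{sec:inexact:condition3}. Consequently, the total residual has norm at most $(1+\delta_k')\|\Lambda^{k+1}-\Lambda^k\|/\rho_k$, and applying the Lipschitz property of $T_\ell^{-1}$ at $0$ gives $\|(x^{k+1},\Lambda^{k+1})-(\bar x,\bar\Lambda)\|\leq a_\ell(1+\delta_k')\|\Lambda^{k+1}-\Lambda^k\|/\rho_k$, from which the claimed bound on $\|x^{k+1}-\bar x\|$ follows at once, together with $x^k\to\bar x$ via the already established convergence of $\{\Lambda^k\}$. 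The principal technical obstacle throughout is a careful bookkeeping of the inexactness: one must confirm that the perturbations introduced by finite $\delta_k$ and $\delta_k'$ remain small enough, uniformly in $k$, for the iterates to stay within the neighborhoods on which the local Lipschitz estimates for $T_\psi^{-1}$ and $T_\ell^{-1}$ are active. This is precisely where the summability of $\{\delta_k\}$ and the decay of $\{\delta_k'\}$ in \eqref{sec:inexact:condition2} and \eqref{sec:inexact:condition3} enter in an essential way.
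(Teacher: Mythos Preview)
Your proposal is correct and follows essentially the same approach as the paper, which simply defers to Theorem~5 of \cite{rockafellar1976augmented}: the ALM dual iterates are identified with an inexact proximal point iteration for $T_\psi$, the Lipschitz property of $T_\psi^{-1}$ yields the contraction factor $a_g(a_g^2+\rho_k^2)^{-1/2}$ for the exact resolvent, and the inexactness bounds from \eqref{sec:inexact:condition2}--\eqref{sec:inexact:condition3} are absorbed into $\theta_k$ and $\theta_k'$ exactly as you describe. The only minor point worth tightening is the passage from boundedness of $\{\Lambda^k\}$ to entry into the local neighborhood where the Lipschitz estimate is active---Rockafellar handles this by first invoking the global convergence $\Lambda^k\to\bar\Lambda$ (which here requires Proposition~\ref{sec4:global-convergence-convex} and summability of $\delta_k$) rather than deducing it from the local contraction itself.
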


Finally, we review some related works for the local convergence of ALM for solving convex optimization problems. 
Apart from the condition proposed in \cite{rockafellar1976augmented},   more relaxed conditions were studied in \cite{luque1984asymptotic}.  In addition, an ergodic convergence rate result of the inexact ALM that uses constant penalty parameters or geometrically increasing penalty parameters was first established in \cite{xu2021iteration}.  
Aside from nonlinear programming, numerous studies have focused on convex composite programming. Liu et al. \cite{liu2019nonergodic} proposed an inexact ALM and analyzed its non-ergodic convergence rate for the linearly constrained composite convex optimization problem. Sabach and Teboulle \cite{sabach2022faster} developed a unified algorithmic framework for various Lagrangian-based methods and established a non-ergodic convergence rate of \(\mathcal{O}(1/k)\) as \(k \rightarrow \infty\) for both the feasibility measure and the objective function value. Notably, these convergence rates can be improved to \(\mathcal{O}(1/k^2)\) under strong convexity. Similar results can be found in \cite{lan2016iteration,liu2019nonergodic}. To enhance the convergence rate, Bo{\c{t}} et al. \cite{boct2023fast} achieved convergence rates of \(\mathcal{O}(1/k^2)\) for the primal-dual gap, feasibility measure, and objective function value, obtaining a sequence of iterates generated by a fast algorithm without requiring additional assumptions such as strong convexity. Li et al. \cite{li2018qsdpnal} established the convergence rate of a two-phase ALM under mild conditions for a convex quadratic semidefinite programming problem and demonstrated R-superlinear convergence of the KKT residuals. For a convex composite conic problem, R-superlinear convergence of the KKT residuals under only a mild quadratic growth condition on the dual problem was established in \cite{cui2022augmented,cui2019r}.

\subsection{Iteration complexity analysis}
To obtain a comprehensive understanding of the ALM's performance, we show its iterative complexity analysis. While convergence is concerned with how an algorithm reaches a solution, complexity analysis quantifies its performance by assessing the amount of computational resources it consumes, providing a more quantitative perspective on its efficiency and practicality.

When assessing the complexity of the ALM, the analysis typically entails a two-step approach. First, it involves defining the inexactness criteria for the subproblems and subsequently establishing the complexity of the outer loop, referred to as inexact ALM. 
Second, it necessitates the development of an efficient subproblem solver and the formulation of the solver's complexity in achieving an approximate solution that complies with the inexactness criteria. The amalgamation of these complexities, pertaining to both the outer and inner loops, yields a comprehensive assessment of the overall complexity.
In this section, we illustrate the complexity analysis approach via a composite convex programming problem with linear equality and nonlinear inequality constraints:
\begin{equation}\label{complexity:prob}
    \min_{x\in\mathcal{X}}\,\,f_0(x)= f(x)+h(x) \quad \st \ Ax=b,\ g(x)\leq 0.
\end{equation}
where $\mathcal{X}$ is a closed convex set, $h$ is a closed convex function, $f$ and $g$ are differentiable convex functions with Lipschitz continuous gradient.
Then the AL function is
\begin{equation}\label{AL:xyy}
\begin{split}
     \mathbb{L}_{\rho} (x,\nu,\lambda,\mu)
     = &~ f(x) - \frac{1}{2\rho}(\|\nu\|^2 + \|\lambda\|^2  + \|\mu\|^2)\\
     &+ \frac{\rho}{2} \left\|x + \frac \nu \rho-  \prox_{h/\rho}\left(x + \frac \nu \rho\right)  \right\|^2 + h\left(\prox_{h/\rho}\left(x + \frac{\nu}{\rho}\right)\right)\\
     &+ \frac{\rho}{2} \left\|\left(g(x) + \frac{\mu}{\rho}\right)_+\right\|^2 + \frac{\rho}{2} \left\|{A}x-b + \frac{\lambda}{\rho}\right\|^2.
\end{split}
\end{equation}
Next, let us provide the definition of the approximate solution.
\begin{definition}[primal $\epsilon$-solution]
    Let $f_0^\star$ be the optimal value of \eqref{complexity:prob}. Given any $\epsilon\geq0$, a point $x\in\mathcal{X}$ is called a primal $\epsilon$-solution to \eqref{complexity:prob} if
    \[
        |f_0(x)-f_0^\star|\leq\epsilon,\qquad \|Ax-b\|+\|(g(x))_+\|\leq\epsilon.
    \]
\end{definition}

Now we can utilize the inexact ALM to obtain a primal $\epsilon$-solution. Since the AL function \eqref{AL:xyy} is gradient Lipschitz continuous, the Nesterov's accelerated gradient method (NAG) \cite{nesterov2013introductory} is a natural choice for solving the subproblem. The remaining task is to determine how to set the precision of solving the subproblem. The following theorem presents the different complexities under different subproblem precisions.
\begin{theorem}\textnormal{(\cite[Theorem 1]{xu2021iteration}).\,\,}
For a given $\epsilon>0$,  choose a positive integer $K$ and constants $C_1, C_2 > 0$. In each iteration, we apply NAG to solve the AL subproblem such that the following condition holds $$\mathbb{L}_{\rho_k}(x^{k+1},\nu^{k},\lambda^k,\mu^k)\leq \min_{x\in\mathcal{X}}\mathbb{L}_{\rho_k}(x,\nu^{k},\lambda^k, \mu^k)+\epsilon_k.$$
Let ${(x^k, \nu^k, \lambda^k, \mu^k)}_{k=0}^K$ be the iterates with parameters set as one of the following:
\begin{enumerate}
    \item $\rho_k = \frac{C_1}{K\epsilon}, \epsilon_k=\frac\epsilon2\frac{C_2}{C_1}, \forall k$.
    \item $\rho_k=\rho_0\sigma^k, \forall k$ for certain $\rho_0>0$ and $\sigma>1$ such that $\sum_{k=0}^{K-1}\rho_k=\frac{C_1}{\epsilon}$ and $\epsilon_k=\frac\epsilon2\frac{C_2}{C_1}, \forall k$.
    \item $\rho_k=\rho_0\sigma^k, \forall k$ for certain $\rho_0>0$ and $\sigma>1$ such that $\sum_{k=0}^{K-1}\rho_k=\frac{C_1}{\epsilon}$. If $f_0$ is convex, let $\epsilon_k = \frac{C_2}{2\rho_k^{\frac13}}\frac{1}{\sum_{t=0}^{K-1}\rho_t^{\frac23}}, \forall k$, and if $f_0$ is strongly convex, let $\epsilon_k=\frac{C_2}{2\rho_k^{\frac12}}\frac1{\sum_{t=0}^{K-1}\rho_t^{\frac12}}, \forall k$.
\end{enumerate}
Then we have the following results:
\begin{enumerate}
    \item For each setting, the average iterate $\bar{{x}}^K := \sum_{k=0}^{K-1} \frac{\rho_k{x}^{k+1}}{\sum_{t=0}^{K-1}\rho_t}$ is a primal 
 $\epsilon$-solution, %
    where the hidden constant relies on $C_1, C_2$ and dual solution $(\nu^\star, \lambda^\star, \mu^\star)$.
    \item For the second and third settings, the actual point $x^K$ is also a primal $\epsilon$-solution.
    \item For each setting, the total number of evaluations on $\nabla f$ and $\nabla g$ is $O(\sqrt{K}\epsilon^{-1}+K\epsilon^{-\frac12})$ if $f_0$ is convex and $O(K+\sqrt{K}\epsilon^{-\frac12}|\log\epsilon|)$ if $f_0$ is strongly convex.
\end{enumerate}
\end{theorem}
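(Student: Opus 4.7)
My plan is to split the argument into an outer-loop analysis that tracks the dual iterates $\Lambda^k=(\nu^k,\lambda^k,\mu^k)$ and an inner-loop analysis that counts NAG gradient evaluations. The outer loop controls primal feasibility and objective accuracy for a weighted average iterate, while the inner loop converts each subproblem tolerance $\epsilon_k$ into a gradient count that depends on the Lipschitz constant of $\nabla_x \mathbb{L}_{\rho_k}(\cdot,\Lambda^k)$, which scales linearly in $\rho_k$.

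For the outer loop, I would first combine the inexact optimality with the convexity of $\mathbb{L}_{\rho_k}(\cdot,\Lambda^k)$ and the closed-form dual updates
\begin{equation*}
\lambda^{k+1}=\lambda^k+\rho_k(Ax^{k+1}-b), \quad \mu^{k+1}=\bigl(\mu^k+\rho_k g(x^{k+1})\bigr)_+, \quad \nu^{k+1}=\nu^k+\rho_k\bigl(x^{k+1}-\prox_{h/\rho_k}(x^{k+1}+\nu^k/\rho_k)\bigr)
\end{equation*}
to derive the standard ALM recursion
\begin{equation*}
\tfrac{1}{2\rho_k}\bigl(\|\Lambda^{k+1}-\Lambda^\star\|^2-\|\Lambda^k-\Lambda^\star\|^2\bigr) + \bigl(f_0(x^{k+1})-f_0^\star\bigr) \le \epsilon_k + \langle\Lambda^\star,\mathrm{res}(x^{k+1})\rangle,
\end{equation*}
where $\mathrm{res}(x)=(Ax-b,(g(x))_+,x-\prox_h(x))$ collects the three primal residuals. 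Multiplying by $\rho_k$, telescoping over $k=0,\dots,K-1$, and applying Jensen's inequality to the convex maps $f_0$, $\|A\cdot-b\|$ and $\|(g(\cdot))_+\|$ yields simultaneous bounds of order $\epsilon$ on $|f_0(\bar x^K)-f_0^\star|$ and on the feasibility $\|A\bar x^K-b\|+\|(g(\bar x^K))_+\|$ once $\sum_k\rho_k=C_1/\epsilon$ and $\sum_k\epsilon_k\lesssim C_2$, which gives claim~1 in all three settings. The matching lower estimate $f_0(x^{k+1})-f_0^\star\ge -\|\Lambda^\star\|\,\|\mathrm{res}(x^{k+1})\|$ from the saddle-point inequality closes the two-sided bound. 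In settings~2 and~3 the geometric growth $\rho_k=\rho_0\sigma^k$ makes $\rho_{K-1}$ dominate $\sum_k\rho_k$ up to a constant, so the weighted average reduces to the last iterate up to a factor, and with the sharper schedule for $\epsilon_k$ in setting~3 the same guarantee transfers from $\bar x^K$ to $x^K$, proving claim~2.

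For the inner loop, the gradient of $\mathbb{L}_{\rho_k}(\cdot,\Lambda^k)$ on the feasible set $\mathcal{X}$ is Lipschitz with constant $L_k=O\bigl(L_f+\rho_k(1+\|A\|^2+L_g^2)\bigr)$: the Moreau envelope term $e_{\rho_k}h(x+\nu^k/\rho_k)$ contributes a $\rho_k$-Lipschitz gradient by \eqref{me:gradient}, and each quadratic penalty contributes $\rho_k$ times an operator norm. Projected NAG on $\mathcal{X}$ therefore reaches the inexact criterion in $O\bigl(\sqrt{L_k/\epsilon_k}\bigr)$ iterations in the convex case and $O\bigl(\sqrt{L_k/\alpha}\,\log(1/\epsilon_k)\bigr)$ in the $\alpha$-strongly convex case. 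Substituting the three parameter schedules and summing $\sum_{k=0}^{K-1}\sqrt{\rho_k/\epsilon_k}$ or $\sum_{k=0}^{K-1}\sqrt{\rho_k}\,|\log\epsilon_k|$ — geometric in settings~2 and~3, flat in setting~1 — produces the advertised totals $O(\sqrt{K}\epsilon^{-1}+K\epsilon^{-1/2})$ and $O(K+\sqrt{K}\epsilon^{-1/2}|\log\epsilon|)$, which is claim~3.

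The main obstacle I anticipate is maintaining constants uniformly in $k$. The outer recursion rests on the a priori boundedness of $\{\Lambda^k\}$, which must be extracted from the telescoped inequality rather than assumed, while the NAG count requires a bound on the warm-start gap $\mathbb{L}_{\rho_k}(x^k,\Lambda^k)-\min_x\mathbb{L}_{\rho_k}(x,\Lambda^k)$ that does not blow up with $\rho_k$; both rely on controlling $\|x^k-x^\star_k\|$ via the previous inexact solution. The most delicate bookkeeping step is calibrating the third setting's schedule $\epsilon_k\propto\rho_k^{-1/3}\bigl(\sum_t\rho_t^{2/3}\bigr)^{-1}$ (respectively $\rho_k^{-1/2}(\sum_t\rho_t^{1/2})^{-1}$ in the strongly convex case) so that the last-iterate feasibility and objective bounds retain the optimal $\epsilon$ dependence while $\sum_k\sqrt{L_k/\epsilon_k}$ still matches the stated total.
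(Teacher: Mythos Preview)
The paper does not supply its own proof of this theorem: it states the result, attributes it to \cite[Theorem~1]{xu2021iteration}, and immediately writes ``For more details, please refer to the ergodic and nonergodic convergence rates established in \cite{xu2021iteration}.'' So there is no in-paper argument to compare against beyond the pointer to Xu's paper.

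Your outline is broadly the standard one and matches the methodology of that reference: the one-step ALM inequality, telescoping with weights $\rho_k$, Jensen for the ergodic iterate, plus the NAG inner-loop count driven by $L_k=O(\rho_k)$. The accounting for claim~1 and claim~3 is the right shape.

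One point is imprecise and would not go through as written. For claim~2 you argue that under $\rho_k=\rho_0\sigma^k$ the last weight dominates, ``so the weighted average reduces to the last iterate up to a factor.'' That inference runs the wrong way: $\bar x^K$ being $\epsilon$-good and $\bar x^K\approx x^K$ does not by itself give $\epsilon$-goodness of $x^K$ for either the objective gap or $\|(g(\cdot))_+\|$, since neither is linear. The actual nonergodic argument (and the one used in \cite{xu2021iteration}) is to read the feasibility directly from the dual update, $\|Ax^{K}-b\|=\|\lambda^{K}-\lambda^{K-1}\|/\rho_{K-1}$ and similarly for the $\mu$- and $\nu$-blocks, then bound the numerator using the a priori boundedness of $\{\Lambda^k\}$ that falls out of the telescoped inequality; the objective bound at $x^K$ comes from the \emph{single-step} recursion at $k=K-1$ rather than from the summed one. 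With geometric $\rho_k$ one has $\rho_{K-1}\asymp\sum_k\rho_k=C_1/\epsilon$, which yields the $O(\epsilon)$ rate. Your ``obstacles'' paragraph already flags the boundedness of $\{\Lambda^k\}$ as the key ingredient, so you have the right tool in hand; just apply it at step $K-1$ rather than trying to pass through the averaged iterate.
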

For more details, please refer to the ergodic and nonergodic convergence rates established in \cite{xu2021iteration}.

\subsubsection{Strongly convex case}

To illustrate how to use the ALM framework to design efficient algorithms under strong convexity, we take \cite{zhu2023optimal} as an example, where the following strongly convex problem with linear inequality constraint is considered:
\[
\min_x\ f(x) \quad \st \ Ax\leq b,
\]
where $f(x)$ is $\mu_f$-strongly convex and $L_f$-smooth. We assume that the matrix $A$ is full row rank and satisfies $\|A\|\leq L_A$ and the minimum singular value of $A$ is no smaller than $\mu_A$.
The AL function can be written as
\[
\mathbb{L}_\rho(x,\lambda) = f(x)+\frac\rho2\left\|\left[Ax-b+\frac\lambda\rho\right]_+\right\|^2-\frac{\|\lambda\|^2}{2\rho}.
\]
The ALM is
\begin{align}
\label{SC:ALM1} x_{k+1}&=\argmin_x \mathbb{L}_\rho(x,\lambda_k),\\
\label{SC:ALM2} \lambda_{k+1}&=\left[\lambda_{k}+\rho(Ax_{k+1}-b)\right]_+.
\end{align}
It can be derived that $\mathbb{L}_\rho(x,\lambda)$ is $\mu_f$-strongly convex and $\left(L_f+\rho L_A^2\right)$-smooth with respect to $x$. Hence when solving the subproblem \eqref{SC:ALM1}, it is efficient to apply NAG on it, which leads to the linear convergence rate. The algorithm framework is present in Algorithm \ref{algo:SC-APPA}.

\begin{algorithm2e}[H]\label{algo:SC-APPA}
    \caption{Inexact ALM for strongly convex problems}
    \textbf{Input:} initial point $x_0,\lambda_0$, smoothness parameter $L_f$, strong convexity parameter $\mu_f$, minimal singular value $\mu_A$, penalty parameter $\rho$, radius parameter $D$.\\
 \textbf{Initialize:} $\sigma=\frac{\mu_A^2\rho}{12L_f}, \delta_k=(1-\sigma)^{\frac k2}D$. \\
	\For{$k=1,\dots, T$}
	{
        Starting from the last iterate $x_k$, an approximate solution $x_{t+1}$ of \eqref{SC:ALM1} is computed by NAG, satisfying $\|x_k-x_k^*\|\leq \delta_k$ where $x_k^*$ is the exact solution.\\
        Update the dual variable by \eqref{SC:ALM2}.
	}
    \textbf{Output:} $x_T$.
\end{algorithm2e}

The design of Algorithm \ref{algo:SC-APPA} includes a precise error tolerance for subproblems, ensuring exponential error reduction. The complexity results of the algorithm are as follows.
\begin{theorem}\textnormal{(\cite[Theorem 1, Corollary 1]{zhu2023optimal}).\,\,}\label{thm:SC-upper}
Let $\rho\leq\frac{L_f}{\mu_A^2}$ and $\|x_0-x^*\|\leq D$, in each iteration of NAG, the number of inner iteration $T_k$ can be upper bound by
\begin{align*}
    T_k\leq 8\sqrt{\frac{L_f+\rho L_A^2}{\mu_f}}\log\left(\frac{10\kappa_f\kappa_A D_*}{D}\right),
\end{align*}
where $D_*:= \|x_0-x^*\|+\frac{L_A}{L_f}\|\lambda_0-\lambda^*\|$, $\kappa_A=\frac{\|A\|}{\mu_A}$, and $\kappa_f=\frac{L_f}{\mu_f}$. Furthermore, for Algorithm \ref{algo:SC-APPA} to find an approximate solution $x_T$ satisfying $\|x_T-x^*\|\leq \epsilon$, the number of outer iterations is
\begin{align*}
    T\leq \frac{12L_f}{\mu_A^2\rho}\log\left(100\kappa_f\kappa_A\frac{D}{\epsilon}\right).
\end{align*}
Suppose that $D_\star\leq D$. In order to find an approximate solution $x_T$ satisfying $\|x_T-x^*\|\leq\epsilon$, the total number of gradient evaluations for Algorithm \ref{algo:SC-APPA} is bounded by
    \[
    \tilde O\left(\kappa_A\sqrt{\kappa_f}\log(D/\epsilon)\right).
    \]
\end{theorem}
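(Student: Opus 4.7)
The plan is to separate the analysis into (i) a per–outer–iteration count of NAG steps needed to reach the geometric tolerance $\delta_k$, (ii) a contraction estimate for the inexact outer ALM recursion, and (iii) a multiplication of the two bounds with the admissible penalty choice $\rho \leq L_f/\mu_A^2$.

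For the inner bound $T_k$, I would exploit that the subproblem \eqref{SC:ALM1} is $\mu_f$-strongly convex in $x$ (from $f$) and $(L_f + \rho L_A^2)$-smooth (from the squared penalty on $Ax-b$), so NAG warm–started at $x_k$ reduces $\|x^{(t)} - x_{k+1}^*\|$ at linear rate $1-\sqrt{\mu_f/(L_f + \rho L_A^2)}$. The warm–start gap is bounded via $\|x_k - x_{k+1}^*\| \leq \|x_k - x^*\| + \|x^* - x_{k+1}^*\|$, where each summand is controlled by $D_*$ through the primal–dual Lipschitz estimate $\|x_\lambda - x_{\lambda'}\| \leq (\|A\|/\mu_f)\|\lambda - \lambda'\|$ applied to $\lambda_k$ and $\lambda^*$, both of which remain within $O(D_*)$ of $\lambda^*$ by the outer contraction. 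Dividing by $\delta_k$ and taking a logarithm yields the claimed bound on $T_k$; the factor $(1-\sigma)^{k/2}$ appearing in $\delta_k$ reappears multiplicatively in the outer count, so it is not charged twice.

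For the outer bound $T$, I would adopt the dual–ascent viewpoint of Section~\ref{sec:dualascent}: the recursion \eqref{SC:ALM1}--\eqref{SC:ALM2} is gradient ascent with stepsize $\rho$ on $\Phi(\lambda) = \min_x \mathbb{L}_\rho(x,\lambda)$. Strong convexity of $f$ and full row rank of $A$ give $\Phi$ an $(\|A\|^2/\mu_f)$–Lipschitz gradient and a $(\mu_A^2/L_f)$–strong concavity modulus on the active face. With $\rho \leq L_f/\mu_A^2$, a standard contraction estimate for gradient ascent yields an exact per–step factor $1-\sigma$ with $\sigma = \mu_A^2 \rho/(12 L_f)$; the constant $12$ absorbs slack from converting primal error to dual error through the same Lipschitz map. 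Feeding the inexactness budget $\delta_k = (1-\sigma)^{k/2}D$ into a triangle–inequality induction preserves a contraction at the square–root rate $\|x_k - x^*\| \leq C(1-\sigma)^{k/2}D$ with $C = O(\kappa_f \kappa_A)$. Solving $C(1-\sigma)^{T/2}D \leq \epsilon$ and using $\log(1/(1-\sigma)) \geq \sigma$ then produces $T \leq (12 L_f)/(\mu_A^2\rho)\,\log(100\kappa_f\kappa_A D/\epsilon)$.

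Summing $\sum_{k=1}^T T_k \leq T \cdot \max_k T_k$ with $\rho$ saturated at $L_f/\mu_A^2$ gives $(L_f+\rho L_A^2)/\mu_f = \tilde{\Theta}(\kappa_f \kappa_A^2)$ and $12 L_f/(\mu_A^2 \rho) = 12$, hence the advertised $\tilde{O}(\kappa_A\sqrt{\kappa_f}\log(D/\epsilon))$ total gradient count. The hardest step will be the inexact outer contraction: a naive propagation of $\delta_k$ would lose one factor of $(1-\sigma)$ per iteration, so the induction hypothesis must be coupled with the exact rate $1-\sigma$ while the tolerance is set to its square root $(1-\sigma)^{k/2}D$, so that accumulated perturbations remain strictly dominated by the exact contraction. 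A secondary issue is that the primal–dual Lipschitz factor $\|A\|/\mu_f$, which reappears whenever one converts between $\|x_k-x^*\|$ and $\|\lambda_k-\lambda^*\|$, must be absorbed into the polynomial prefactor $100\kappa_f\kappa_A$ inside the logarithm rather than into the contraction rate itself.
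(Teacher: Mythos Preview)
The paper itself does not prove this theorem: it is quoted verbatim from \cite[Theorem~1, Corollary~1]{zhu2023optimal} as a stated result in a survey, with no argument supplied. There is therefore no in-paper proof to compare your proposal against.

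Your three-step outline (NAG inner complexity on a $\mu_f$-strongly convex, $(L_f+\rho L_A^2)$-smooth subproblem; outer contraction via the dual-ascent viewpoint; multiplication with $\rho$ saturated at $L_f/\mu_A^2$) is the standard architecture and is what one expects the cited reference to do. The arithmetic in your final step is correct: at $\rho=L_f/\mu_A^2$ one has $\sqrt{(L_f+\rho L_A^2)/\mu_f}=\Theta(\kappa_A\sqrt{\kappa_f})$ and $12L_f/(\mu_A^2\rho)=12$, so the product is $\tilde O(\kappa_A\sqrt{\kappa_f}\log(D/\epsilon))$.

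The one place your sketch is thin is the outer contraction. The problem has \emph{inequality} constraints $Ax\le b$, so the dual update $\lambda_{k+1}=[\lambda_k+\rho(Ax_{k+1}-b)]_+$ includes a projection, and the dual function $\Phi$ is not globally strongly concave---only relative to the active face. Invoking ``$(\mu_A^2/L_f)$-strong concavity on the active face'' does not by itself give a global linear rate unless you also argue that the active set stabilizes, or---more robustly---bypass the dual-function argument entirely and derive the contraction directly from the nonexpansiveness of $[\cdot]_+$ together with the primal error bound $\mu_A\|x_{k+1}^*-x^*\|\le\|A(x_{k+1}^*-x^*)\|$ and the relation between $Ax_{k+1}^*-b$ and $\lambda_k-\lambda^*$. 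This is the step where the constants $12$ and $100$ actually come from, and your sketch would need to make it explicit to be complete.
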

The theorem demonstrates that both the outer and inner loop of ALM converge in the linear rate, making the entire algorithm converge linearly.

\subsubsection{Convex case}
For any given $x_0$ and $\epsilon>0$, we construct the following auxiliary problem:
\begin{equation}\label{eqn:C-obj}
\min_x\ f(x)+h(Ax-b)+\frac{\epsilon}{2D^2}\|x-x_0\|^2.
\end{equation}
The smooth part $f(x)+\frac{\epsilon}{2D^2}\|x-x_0\|^2$ is strongly convex and hence we can apply Algorithm \ref{algo:SC-APPA} to solve the problem. The following corollary illustrates that the approximate solution of \eqref{eqn:C-obj} is also an approximate solution of the original convex problem and the overall complexity is also optimal.

\begin{corollary}\textnormal{(\cite[Corollary 3]{zhu2023optimal}).\,\,}\label{coro:C-upper}
    Suppose that $f(x)$ is convex and $\|x_0-x^*\|\leq D$. For any given $0<\sigma<+\infty$, Algorithm \ref{algo:SC-APPA} can be applied on problem \eqref{eqn:C-obj} and output an approximate solution $x_T$ satisfying $|f(x_T)-f(x^*)|\leq\epsilon$ and $\|[Ax_T-b]_+\|\leq\frac\epsilon\sigma$, such that the total number of gradient evaluations is bounded by
    \[
    \tilde O\left(\frac{\kappa_A\sqrt{L_f}D}{\sqrt{\epsilon}}\right),
    \]
    where $\tilde O$ also hides the logarithmic dependence on $\sigma$.
\end{corollary}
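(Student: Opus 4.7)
The plan is to invoke Theorem \ref{thm:SC-upper} on the regularized problem \eqref{eqn:C-obj}, which is strongly convex by construction, and then to translate the primal iterate bound into the required objective-value and constraint-violation bounds. First I would set $\tilde f(x) := f(x)+\tfrac{\epsilon}{2D^2}\|x-x_0\|^2$ and observe that its strong convexity modulus is $\tilde\mu = \epsilon/D^2$ while its smoothness modulus is $\tilde L = L_f + \epsilon/D^2$. Without loss of generality I would assume $\epsilon \le L_f D^2$, in which case $\tilde L \le 2L_f$; the remaining regime $\epsilon > L_f D^2$ is trivial since the unregularized objective is already accurate to within $\epsilon$ at $x_0$. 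The feasible set $\{x:Ax\le b\}$ is untouched by the proximal term, so Algorithm \ref{algo:SC-APPA} applies verbatim with the problem data $(\tilde f, A, b)$, and the radius parameter can be kept as $D$ since $\|x_0 - \tilde x^*\|\le D$ follows from $\|x_0-x^*\|\le D$ together with the strong convexity comparison.

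Next I would apply Theorem \ref{thm:SC-upper} with $\mu_f \leftarrow \tilde\mu$ and $L_f \leftarrow \tilde L$ to produce an iterate $x_T$ with $\|x_T - \tilde x^*\|\le \tilde\epsilon$, where $\tilde x^*$ is the unique minimizer of \eqref{eqn:C-obj}. The gradient complexity is
\begin{equation*}
\tilde O\!\left(\kappa_A\sqrt{\tilde L/\tilde\mu}\,\log(D/\tilde\epsilon)\right)
\;=\; \tilde O\!\left(\kappa_A\sqrt{L_f}\,D/\sqrt{\epsilon}\right),
\end{equation*}
provided $\tilde\epsilon$ is chosen polynomially in the problem data, since Theorem \ref{thm:SC-upper} absorbs such logarithmic factors into $\tilde O$.

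Finally I would translate $\|x_T - \tilde x^*\|\le \tilde\epsilon$ into the two desired bounds. For the objective gap, optimality of $\tilde x^*$ in \eqref{eqn:C-obj} and $\|x^* - x_0\|\le D$ give $f(\tilde x^*) \le f(x^*) + \tfrac{\epsilon}{2}$, while a local Lipschitz estimate for $f$ (whose constant $G$ is controlled by $\|\nabla f(x^*)\| + L_f D$) gives $|f(x_T) - f(\tilde x^*)| \le G\tilde\epsilon + \tfrac{L_f}{2}\tilde\epsilon^2$; a symmetric argument for the lower bound uses convexity and nonnegativity of the regularizer. For feasibility, the key observation is that $\tilde x^*$ is itself feasible for the original constraint, so $[A\tilde x^* - b]_+ = 0$, and nonexpansivity of the positive part combined with $\|A\|\le L_A$ yields
\begin{equation*}
\big\|[Ax_T - b]_+\big\| \;\le\; \|A(x_T - \tilde x^*)\| \;\le\; L_A\,\tilde\epsilon.
\end{equation*}
Choosing $\tilde\epsilon = \min\{\epsilon/(2G+L_f),\, \epsilon/(\sigma L_A)\}$ then simultaneously certifies $|f(x_T)-f(x^*)|\le \epsilon$ and $\|[Ax_T-b]_+\|\le \epsilon/\sigma$, and since $\log(1/\tilde\epsilon)$ depends only polylogarithmically on $\sigma, L_A, L_f, D, 1/\epsilon$, the complexity bound is unchanged up to the $\tilde O$ notation.

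The main obstacle I anticipate is the feasibility bound rather than the complexity count: a naive approach through the dual iterates would require extra arguments on the stability of the multiplier of \eqref{eqn:C-obj}. The clean resolution is to exploit the fact that regularization preserves the feasible set, so $\tilde x^*$ is strictly admissible, reducing feasibility control at $x_T$ to the Lipschitz bound above. A secondary subtlety is ensuring that the radius $D$ passed to Algorithm \ref{algo:SC-APPA} remains valid after regularization; this follows because $\tilde x^*$ lies in the half-space of strict improvement over $x^*$ for $\tilde f$, which can be used to bound $\|\tilde x^* - x_0\|$ in terms of $\|x^* - x_0\|\le D$.
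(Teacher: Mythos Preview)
Your proposal is correct and follows precisely the reduction the paper sketches in the sentence preceding the corollary (``apply Algorithm~\ref{algo:SC-APPA} to the regularized problem and show the output is an approximate solution of the original''); the paper itself gives no proof beyond citing \cite{zhu2023optimal}. Your handling of the strong-convexity and smoothness moduli, the resulting $\sqrt{\tilde\kappa_f}=O(\sqrt{L_f}\,D/\sqrt{\epsilon})$ computation, the objective comparison $f(x^*)\le f(\tilde x^*)\le f(x^*)+\epsilon/2$, and the feasibility bound via $[A\tilde x^*-b]_+=0$ are all the standard ingredients. One small point to tidy up: Theorem~\ref{thm:SC-upper}'s final complexity statement assumes $D_*\le D$ with $D_*$ involving the dual distance $\|\lambda_0-\tilde\lambda^*\|$, so strictly speaking you also need a (logarithmic) bound on the regularized optimal multiplier $\tilde\lambda^*$; this is easily absorbed into $\tilde O$ in the same way you handle $G$ and $\sigma$.
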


There has been a lot of research on the complexity of ALM, and we summarize some of them in Table \ref{convex:comparison:complexity}.
The basis pursuit problem is considered in \cite{aybat2012first}. The derived ALM involves a sequence of $\ell_1$-regularized least squares subproblems which are solved using an accelerated proximal gradient algorithm. It is proved that the outer loop converges linearly and the entire algorithm converges with the complexity of $O(\epsilon^{-1})$.
Lan et al. \cite{lan2016iteration} expand the problem to scenarios where the objective function is gradient Lipschitz continuous and the domain is a compact convex set. The inexact condition for subproblems is defined by the gap of the AL function values, expressed as:
\begin{equation}\label{eqn:inexact-cond1}
\mathbb{L}_{\rho_k}(x^{k+1},\mu^k) - \inf_x \mathbb{L}_{\rho_k}(x,\mu^k) \leq \eta_k.
\end{equation}
Here, the tolerance $\eta_k$ is a constant that depends on the scale of the optimal Lagrange multipliers which is approximated by the ``guess-and-check'' procedure. The algorithm consists of two stages: a primary stage and a post-processing stage where the post-processing stage includes a single inexact augmented Lagrangian step with stricter termination conditions. This two-stage design effectively reduces computational complexity.
Patrascu et al. \cite{patrascu2017adaptive} relax the tightness of the domain by setting the penalty factor $\rho$ sufficiently large, thereby controlling the number of outer iterations to $O(1)$, while the complexity of the inner iterations is $O(\epsilon^{-1})$.
Liu et al. \cite{liu2019nonergodic} consider the composite objective function and design a termination condition that is weaker and (potentially) easier to check than \eqref{eqn:inexact-cond1}, that is,
\[
\max _{x \in \mathbb{R}^n}\left\{\left\langle\nabla \hat{f}_\rho\left(x^{k+1},\lambda^k\right), x^{k+1}-x\right\rangle+h\left(x^{k+1}\right)-h(x)\right\} \leq \eta_k,
\]
where $\hat{f}_\rho\left(x,\lambda\right)=f(x)+\langle \lambda, Ax-b\rangle+\frac\rho2\|Ax-b\|^2$ is the smooth part of the AL function. 
Xu et al. \cite{xu2017accelerated} consider the same problem with \cite{liu2019nonergodic} and the proposed algorithm allows linearization to the differentiable function as well as the augmented term. Hence the subproblem can be solved exactly since it has a closed-form solution. The authors also combine the acceleration technique to obtain the complexity of $O(1/\epsilon)$.

\begin{table}[ht]
\centering
\footnotesize
\caption{Comparison of the complexity results of several methods to produce an $\epsilon$- solution. $\mathbb{L}_\rho$ gives the formulations of the AL functions and we omit the terms that only depend on the dual variable for the sake of brevity. For the subproblem solvers, Nesterov's accelerated gradient method is shortened as NAG, Nesterov's accelerated proximal gradient method is shortened as NAPG, the fast iterative shrinkage-thresholding algorithm is shortened as FISTA. The complexities are based on the number of gradient evaluations.}
  \setlength{\tabcolsep}{0.8mm}{
  \begin{tabularx}{\textwidth}{ >{\centering\arraybackslash}m{3cm}| >{\centering\arraybackslash}X| > {\centering\arraybackslash}X| >{\centering\arraybackslash}X| >{\centering\arraybackslash}X| >{\centering\arraybackslash}X}
\hline
Problem & $\mathbb{L}_\rho$ & Bounded domain & Subproblem solver & Complexity of outer loop & Overall complexity \\\hline
$\begin{array}{ll}
\min & \|x\|_1,\\
\st\ & Ax=b
\end{array}$ & $\|x\|_1+e_\rho\delta_{\{b\}}(Ax+\frac y\rho)$ &
 &  NAPG  & ${O}(\log(\epsilon^{-1}))$ &${O}(\epsilon^{-1})$ \cite{aybat2012first} \\\hline
 
$\begin{array}{ll}
&\min\limits_{x \in \X}\ f(x), \\
&\st\ \ Ax=0
\end{array}$ & $f(x)+e_\rho\delta_{\{0\}}(Ax+\frac y\rho)$ & \checkmark
 & NAG  & ${O}(\log(\epsilon^{-1}))$ & ${O}(\epsilon^{-\frac{7}{4}})$ \cite{lan2016iteration} \\\hline
 
$\begin{array}{ll}
\min & f(x), \\
\st\ & Ax=b
\end{array}$ & $f(x)+e_\rho\delta_{\{b\}}(Ax+\frac y\rho)$ & 
 & NAG  & $O(1)$ & ${O}(\epsilon^{-1})$ \cite{patrascu2017adaptive} \\\hline

$\begin{array}{ll}
\min & f(x)+h(x), \\
\st\ & Ax=b
\end{array}$ & $f(x)+h(x)+e_\rho\delta_{\{b\}}(Ax+\frac y\rho)$ & \checkmark
 & FISTA  & $O(\epsilon^{-\frac12})$ & ${O}(\epsilon^{-2})$ \cite{liu2019nonergodic}\\\hline

$\begin{array}{ll}
\min & f(x)+h(x), \\
\st\ & Ax=b
\end{array}$ & $f(x)+h(x)+e_\rho\delta_{\{b\}}(Ax+\frac y\rho)$ & 
& explicit & ${O}(\epsilon^{-1})$ & ${O}(\epsilon^{-1})$ \cite{xu2017accelerated} \\\hline

$\min f(x)+h(Ax-b)$ & $f(x)+h(y)+e_\rho\delta_{\{b\}}(Ax-y+\frac z\rho)$ &
& NAG  & ${O}\left(\log(\epsilon^{-1})\right)$ & ${O}(\epsilon^{-\frac12})$ \cite{zhu2023optimal} \\\hline
\end{tabularx}}
\label{convex:comparison:complexity}
\end{table}

\section{Nonconvex Optimization problems}\label{sec:nonconvex_case}
The application of the ALM to nonconvex optimization problems presents unique challenges and opportunities that {are} different from the convex {cases}. %
{In this section, we 
present a few representative global and local convergence results}, showing the theoretical guarantees that can be achieved despite the lack of convexity. We also explore recent advancements in iteration complexity analysis for certain classes of nonconvex problems. Throughout this section, we emphasize the theoretical foundations and practical considerations that make ALM a powerful tool for tackling nonconvex optimization challenges.

\subsection{Global convergence} The ALM has been widely studied and applied to solve nonconvex optimization problems due to its good practical performance. However,  establishing global convergence guarantees for ALM when applied to nonconvex problems is quite challenging compared to the convex setting. The global convergence for the nonconvex optimization typically refers to the convergence of the sequence of iterates generated by the algorithm to a stationary point of the problem, which is a point satisfying the first-order optimality conditions. For the nonconvex composite optimization problem, some convergence results of the ALM were derived in \cite{de2023constrained} recently. To analyze the convergence property of the ALM for the general nonconvex problem \eqref{nonconvex-composite}, we first give some assumptions for \eqref{nonconvex-composite}. 

\begin{assumption}\label{nonconvex_condition}
 Let $f$ and $c$ be continuously differentiable with locally Lipschitz continuous derivatives,  $h$ be proper, lower semicontinuous and prox-bounded, $\mathcal{Q}$ and $\mathcal{K}$ be nonempty and closed sets.  
\end{assumption}
 
When each AL subproblem \eqref{ALM_scheme_app_x} admits an approximate stationary point, any feasible accumulation point generated by the ALM is an approximate stationary point under subsequential attentive convergence. Before proceeding with the global convergence, we show the boundness of the sequence $\{\Lambda^k/\rho^k\}$. {The proof is referred to Appendix \ref{appen:sec5}.}

\begin{proposition} \label{prop:to0}
    If $\rho_k\to\infty$ as $k \to \infty$ in Algorithm \ref{alg: practical-ALM}, then we have $\frac{\Lambda^k}{\rho_k}\to 0$.
\end{proposition}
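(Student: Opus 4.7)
The plan is to exploit the binary branching in Algorithm \ref{alg: practical-ALM}: at each iteration $k$, either the feasibility test $\vartheta(\mathbf{x}^{k+1},\Lambda^k,\rho_k)\le\epsilon_k$ succeeds (the multiplier updates, $\rho$ is frozen), or it fails ($\rho$ is multiplied by $\kappa$, multiplier is frozen). Denote the two index sets by $K_1$ and $K_2$. Because $\rho_k\to\infty$ by hypothesis, $K_2$ must be infinite; enumerating $K_2=\{j_1<j_2<\cdots\}$, the penalty is constant on each block $[j_\ell+1,j_{\ell+1}]$ with value $\rho_{j_\ell+1}=\kappa\rho_{j_\ell}=\kappa^\ell\rho_0$. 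The proof thus reduces to bounding the $\Lambda$-displacement accumulated on each such block.

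First I would establish the pointwise bound
\[
\|\Lambda^{k+1}-\Lambda^k\|\le\rho_k\,\epsilon_k\qquad\text{for every }k\in K_1,
\]
using that the set-valued update $\Lambda^{k+1}\in\Lambda^k+\rho_k\partial_\Lambda\mathbb{L}_{\rho_k}(\mathbf{x}^{k+1},\Lambda^k)$ selects a minimum-norm supergradient, whose norm equals $\vartheta(\mathbf{x}^{k+1},\Lambda^k,\rho_k)\le\epsilon_k$. Combining this with the two tolerance rules $\epsilon_{k+1}=\epsilon_k/\rho_{k+1}^\beta$ inside $K_1$ and $\epsilon_{j_\ell+1}=\rho_{j_\ell+1}^{-\alpha}$ at each $K_2$-step, an induction on the number $m$ of consecutive $K_1$-iterations following $j_\ell$ gives $\epsilon_{j_\ell+1+m}=\rho_{j_\ell+1}^{-(\alpha+m\beta)}$, and hence
\[
\|\Lambda^{j_\ell+2+m}-\Lambda^{j_\ell+1+m}\|\le\rho_{j_\ell+1}^{\,1-\alpha-m\beta}.
\]

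Summing the resulting geometric-in-$m$ tail, together with the identity $\Lambda^{j_\ell+1}=\Lambda^{j_\ell}$, produces the block-displacement bound
\[
\|\Lambda^{j_{\ell+1}}-\Lambda^{j_\ell}\|\le C\,\rho_{j_\ell+1}^{\,1-\alpha},
\]
valid uniformly in $\ell$ once $\rho_{j_\ell+1}>1$. Chaining along $\ell$ and using $\rho_{j_i+1}=\kappa^i\rho_0$, another geometric-series computation (ratio $\kappa^{1-\alpha}<\kappa$) gives $\|\Lambda^{j_\ell}\|=O\!\bigl(\rho_{j_\ell}^{\,1-\alpha}\bigr)$. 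For a general index $k$ with $j_\ell<k\le j_{\ell+1}$, the same block estimate yields $\|\Lambda^k\|\le\|\Lambda^{j_\ell}\|+C\rho_k^{\,1-\alpha}$, and since $\rho_k=\rho_{j_\ell+1}$ grows geometrically,
\[
\frac{\|\Lambda^k\|}{\rho_k}=O\!\bigl(\rho_k^{-\alpha}\bigr)\longrightarrow 0,
\]
because $\alpha>0$, which is the desired conclusion.

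The principal obstacle I anticipate is the passage from the set-valued update to the pointwise inequality $\|\Lambda^{k+1}-\Lambda^k\|\le\rho_k\epsilon_k$: the algorithm only specifies $\Lambda^{k+1}\in\Lambda^k+\rho_k\partial_\Lambda\mathbb{L}_{\rho_k}$, whereas $\vartheta$ measures only the distance from $0$ to this set. A clean argument therefore requires either the standing convention that the minimum-norm supergradient is picked, or a smoothness assumption making $\partial_\Lambda\mathbb{L}_{\rho_k}$ single-valued (so that $\|\cdot\|=\vartheta$ automatically holds). Once this link is secured, the rest is pure geometric-series bookkeeping governed by $\kappa>1$ and $0<\alpha\le\beta\le 1$; the edge case $\alpha=1$ still yields the same conclusion, since the outer sum then grows only linearly in $\ell$ while $\rho_{j_\ell}$ grows exponentially.
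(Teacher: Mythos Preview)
Your proposal is correct and follows essentially the same route as the paper's proof: decompose the iteration index according to whether line~8 or line~10 of Algorithm~\ref{alg: practical-ALM} is executed, use $\|\Lambda^{k+1}-\Lambda^k\|\le\rho_k\epsilon_k$ on each ``success'' block, sum the resulting geometric progression in the inner tolerance, then chain the block estimates to obtain $\|\Lambda^k\|=O(\rho_k^{1-\alpha})$. The paper differs only cosmetically, introducing an auxiliary exponent $\alpha_0\in(1-\alpha,1)$ and showing $\|\Lambda^k\|/\rho_k^{\alpha_0}$ is bounded rather than working directly with $\rho_k^{1-\alpha}$; your bound is in fact slightly sharper. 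The subtlety you flag---that $\vartheta$ controls only the distance from $0$ to $\bar\partial_\Lambda\mathbb{L}_{\rho_k}$, not the norm of the particular element chosen in the update---is real, and the paper's proof glosses over it in exactly the same way (writing ``$\|\partial_\Lambda\mathbb{L}_{\rho_k}\|\le\vartheta$'' without comment). In the smooth settings covered by \eqref{gradient_cx} and \eqref{eq:comp-grad} the supergradient is single-valued and the issue disappears; in the general nonconvex composite case one does need the minimum-norm selection convention you invoke.
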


We then generalize the feasibility of limit points and global convergence property in \cite[Proposition 3.2, Theorem 3.3]{de2023constrained} to our Algorithm \ref{alg: practical-ALM} in the following proposition. {The proof is referred to Appendix \ref{appen:sec5}.}

\begin{theorem}\label{nonconvex_convergence}
Suppose Assumption \ref{nonconvex_condition} holds. A sequence $\{x_k\}$ of iterates is generated by Algorithm \ref{alg: practical-ALM} with $\epsilon\rightarrow 0$. Then each accumulation point $x^*$ of $\{x_k\}$ is feasible for \eqref{nonconvex-composite} if one of the following conditions holds:
\begin{itemize}
\item[(i).]  $\{\rho_k \}$ is bounded;
\item[(ii).] there exists some $C \in \R$ such that $\mathbb{L}_{\rho_k}(x^k;\Lambda^k) \leq C$ for all $k$.
\end{itemize}
Furthermore, we let $\{x^k\}_K$ be a subsequence such that $x^k \stackrel{q}{\rightarrow}_K x^*$, where the notation $\rightarrow_K$ denotes convergence along a subsequence indexed by 
$K$. Then, $x^*$ is an AM-stationary point for \eqref{nonconvex-composite}.
 \end{theorem}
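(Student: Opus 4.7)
The plan is to split the claim into its two halves, feasibility and AM‑stationarity, and treat them in that order. Throughout, I introduce the ``would‑be'' dual updates at iteration $k$, namely
\[
\tilde\nu^{k+1}:=\rho_k\bigl(x^{k+1}+\tfrac{\nu^k}{\rho_k}-\hat u^{k+1}\bigr),\ \tilde\lambda^{k+1}:=\rho_k\bigl(c(x^{k+1})+\tfrac{\lambda^k}{\rho_k}-\hat v^{k+1}\bigr),\ \tilde\mu^{k+1}:=\rho_k\bigl(x^{k+1}+\tfrac{\mu^k}{\rho_k}-\hat w^{k+1}\bigr),
\]
with prox/projection anchors $\hat u^{k+1}\in\prox_{h/\rho_k}(x^{k+1}+\nu^k/\rho_k)$, $\hat v^{k+1}\in\Pi_{\mathcal{Q}}(c(x^{k+1})+\lambda^k/\rho_k)$, $\hat w^{k+1}\in\Pi_{\mathcal{K}}(x^{k+1}+\mu^k/\rho_k)$. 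These coincide with the actual $\Lambda^{k+1}$ in the dual‑update branch of Algorithm \ref{alg: practical-ALM}, and Fermat's rule for the prox and projections immediately gives $\tilde\nu^{k+1}\in\partial h(\hat u^{k+1})$, $\tilde\lambda^{k+1}\in\mathcal{N}_{\mathcal{Q}}^{\lim}(\hat v^{k+1})$, and $\tilde\mu^{k+1}\in\mathcal{N}_{\mathcal{K}}^{\lim}(\hat w^{k+1})$.

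\textbf{Feasibility.} Under (i), the algorithmic rule ``$\rho_{k+1}=\kappa\rho_k$ only when the feasibility test fails'' forces $\rho_k$ to be enlarged only finitely often, so for all large $k$ the dual‑update branch fires and $\vartheta(x^{k+1},\Lambda^k,\rho_k)\leq\epsilon_k\to 0$. Unpacking $\vartheta$ via the supergradient formulas in \eqref{eq:nonconvex-diff} gives $\|x^{k+1}-\hat u^{k+1}\|,\,\|c(x^{k+1})-\hat v^{k+1}\|,\,\|x^{k+1}-\hat w^{k+1}\|\to 0$; passing to the limit along any convergent subsequence and using closedness of $\mathcal{Q},\mathcal{K}$ and continuity of $c$ yields $c(x^*)\in\mathcal{Q}$ and $x^*\in\mathcal{K}$. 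Under (ii), if $\{\rho_k\}$ is bounded we are back in (i), so assume $\rho_k\to\infty$. Proposition \ref{prop:to0} then delivers $\Lambda^k/\rho_k\to 0$, and the bound $\mathbb{L}_{\rho_k}(x^k,\Lambda^k)\leq C$ combined with non‑negativity of the three Moreau‑envelope penalties, prox‑boundedness of $h$, and local lower boundedness of $f$ at $x^*$ yields $\tfrac{\rho_k}{2}\|r^k_{\bullet}\|^2\leq C'+\tfrac{1}{2\rho_k}\|\Lambda^k\|^2$ for $\bullet\in\{h,\mathcal{Q},\mathcal{K}\}$. Dividing by $\rho_k$ sends all three residuals to zero, and feasibility follows as before.

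\textbf{AM‑stationarity and main obstacle.} Along the prescribed $q$‑attentive subsequence, I choose as AM‑witnesses the anchor $\hat x^k:=\hat u^{k+1}$ together with $\lambda^k_{\mathrm{AM}}:=\tilde\lambda^{k+1}$, $\mu^k_{\mathrm{AM}}:=\tilde\mu^{k+1}$, $\theta^k:=\hat v^{k+1}-c(\hat x^k)$, $\zeta^k:=\hat w^{k+1}-\hat x^k$. The inner stopping rule supplies $\xi^{k+1}\in\hat\partial_x\mathbb{L}_{\rho_k}(x^{k+1},\Lambda^k)$ with $\|\xi^{k+1}\|\leq\eta_k\to 0$, and the chain rule for the Moreau‑envelope and projection terms produces the identity
\[
\xi^{k+1}=\nabla f(x^{k+1})+\tilde\nu^{k+1}+\nabla c(x^{k+1})^{\top}\tilde\lambda^{k+1}+\tilde\mu^{k+1}.
\]
Substituting into the AM‑stationarity inclusion at $\hat x^k$ and using $\nabla f(\hat x^k)+\tilde\nu^{k+1}\in\partial(f+h)(\hat x^k)$ leaves the residual
\[
\eta^{k+1}=\xi^{k+1}+\bigl(\nabla f(\hat x^k)-\nabla f(x^{k+1})\bigr)+\bigl(\nabla c(\hat x^k)-\nabla c(x^{k+1})\bigr)^{\top}\tilde\lambda^{k+1}.
\]
The feasibility estimates force $\theta^k,\zeta^k\to 0$ and $\|\hat x^k-x^{k+1}\|\to 0$, while a sandwich argument based on the proximal inequality plus l.s.c.\ of $h$ upgrades $\hat x^k\to x^*$ to $q$‑attentive convergence. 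The delicate step is proving $\eta^{k+1}\to 0$: the third summand involves $\|\hat x^k-x^{k+1}\|\cdot\|\tilde\lambda^{k+1}\|$, and in the regime $\rho_k\to\infty$ one has $\|\tilde\lambda^{k+1}\|=\rho_k\|r^k_{\mathcal{Q}}\|=O(\sqrt{\rho_k})$ against $\|\hat x^k-x^{k+1}\|=O(1/\sqrt{\rho_k})$, yielding only an $O(1)$ a priori bound. My resolution is to restrict attention to the subsequence on which the dual‑update branch actually fires -- this subsequence is infinite under either hypothesis, since perpetual enlargement of $\rho_k$ would push $\mathbb{L}_{\rho_k}\to\infty$, violating (ii), and is trivially forced under (i). On this subsequence the algorithm enforces $\|r^k_{\bullet}\|\leq\epsilon_k$ with $\epsilon_k$ shrinking fast enough (geometrically in $\rho_k^{-\beta}$) relative to $\rho_k$ to drive the problematic product to zero, completing the verification of AM‑stationarity at $x^*$.
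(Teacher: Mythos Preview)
Your feasibility argument is essentially the paper's: the case split, the use of the dual-update branch under (i), and the penalty-blowup estimate under (ii) via Proposition~\ref{prop:to0} all match. One minor correction: for nonconvex $h$ and nonconvex $\mathcal{Q},\mathcal{K}$, the ``identity'' $\xi^{k+1}=\nabla f(x^{k+1})+\tilde\nu^{k+1}+\nabla c(x^{k+1})^{\top}\tilde\lambda^{k+1}+\tilde\mu^{k+1}$ should be stated as a subdifferential inclusion, since the Moreau envelope of a nonconvex function need not be differentiable (the paper makes this point explicitly in Section~\ref{sec:AL-nonconvex-composite}).

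The AM-stationarity part has a genuine gap. First, the ``Furthermore'' clause of the theorem does not assume (i) or (ii); only $q$-attentive subsequential convergence is hypothesized, in line with \cite[Theorem~3.3]{de2023constrained}. Your resolution of the cross-term, however, invokes (i)/(ii) to manufacture an infinite dual-update subsequence. Second, even granting (ii), the key step --- that perpetual penalty enlargement would force $\mathbb{L}_{\rho_k}\to\infty$ --- is false: the very bound you derived for feasibility under (ii) gives $\rho_k\|r^k_{\bullet}\|^2\leq 2(C-F)$ with $F$ bounded below, so $\mathbb{L}_{\rho_k}$ stays $\leq C$ by hypothesis even if the dual update never fires again. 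Third, the rate claim that ``$\epsilon_k$ shrinks fast enough relative to $\rho_k$'' is not robust: immediately after a penalty increase one has $\epsilon_k=\rho_k^{-\alpha}$, so $\epsilon_k\cdot\rho_k\epsilon_k=\rho_k^{1-2\alpha}$, which does not vanish when $\alpha\leq 1/2$ (the algorithm only requires $0<\alpha\leq\beta\leq 1$).

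The paper sidesteps all of this by deferring to \cite{de2023constrained}, whose augmented Lagrangian keeps $h(x)$ explicitly in the subproblem rather than Moreau-enveloping it (cf.\ \eqref{sec7:constrained_pro}). In that formulation the inner stopping criterion yields directly an element of $\partial(f+h)(x^{k+1})$, so the AM witness point is $x^{k+1}$ itself and no $\nabla c$-transport term appears --- the product $\|\hat u^{k+1}-x^{k+1}\|\cdot\|\tilde\lambda^{k+1}\|$ is never created. Your obstacle is thus an artifact of pairing the Moreau-enveloped AL \eqref{eqn:NC-comp-AL} with the anchor $\hat u^{k+1}$. To close the gap you should either switch to the formulation with $h$ explicit (so the subproblem stationarity lands at $x^{k+1}$), or find an independent way to control $\|\tilde\lambda^{k+1}\|\cdot\|\hat u^{k+1}-x^{k+1}\|$ that does not rely on the dual-update branch firing.
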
 

The aforementioned theorem shows the conditions for the feasibility of the accumulation point. In cases where these conditions are not met, we can demonstrate that the accumulation point obtained by the algorithm is an $M$-stationary point of an infeasibility measure. {The proof is referred to Appendix \ref{appen:sec5}.}

\begin{proposition} \label{prop:infeas}
Suppose Assumption \ref{nonconvex_condition} holds. A sequence $\{x_k\}$ of iterates is generated by Algorithm \ref{alg: practical-ALM} with $\epsilon\rightarrow 0$. Then each accumulation point $x^*$ of $\{x_k\}$ is an M-stationary point of the feasibility problem
\begin{equation} \label{feas}
	\min_x \ \|\hat{\Pi}_{\mathcal{Q}}(c(x))\|^2+\|\hat{\Pi}_{\mathcal{K}}(x)\|^2.
\end{equation}
\end{proposition}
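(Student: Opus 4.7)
The plan is to distinguish two cases depending on the boundedness of $\{\rho_k\}$. If $\{\rho_k\}$ remains bounded, Theorem \ref{nonconvex_convergence} forces $x^*$ to be feasible for \eqref{nonconvex-composite}, so both $\hat{\Pi}_{\mathcal{Q}}(c(x^*))$ and $\hat{\Pi}_{\mathcal{K}}(x^*)$ vanish. The objective of \eqref{feas} is nonnegative and equals zero at $x^*$, so $x^*$ is a global minimizer and trivially an M-stationary point. Thus the real work is the case $\rho_k\to+\infty$, which I treat next.

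In the unbounded case, the strategy is to exploit the approximate subproblem stationarity $\dist(0, \hat{\partial}_x \mathbb{L}_{\rho_k}(x^{k+1}, \Lambda^k)) \leq \eta_k$ together with the explicit multiplier updates, then rescale by $\rho_k^{-1}$ and pass to the limit. Using the auxiliary-variable representation $\mathcal{L}_\rho$ in \eqref{eqn:NC-AL}, the $x$-subdifferential of $\mathbb{L}_{\rho_k}$ at $x^{k+1}$, evaluated at the partial minimizers $(u^{k+1}, v^{k+1}, w^{k+1})$ corresponding to $\prox_{h/\rho_k}$, $\Pi_{\mathcal{Q}}$ and $\Pi_{\mathcal{K}}$, takes the tidy form
\[
\nabla f(x^{k+1}) + \nu^{k+1} + \nabla c(x^{k+1})^\top \lambda^{k+1} + \mu^{k+1}
\]
after absorbing the updates $\nu^{k+1}=\nu^k+\rho_k(x^{k+1}-u^{k+1})$, $\lambda^{k+1}=\lambda^k+\rho_k(c(x^{k+1})-v^{k+1})$ and $\mu^{k+1}=\mu^k+\rho_k(x^{k+1}-w^{k+1})$. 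Hence there exists $g^{k+1}$ in this set with $\|g^{k+1}\|\leq\eta_k$, and dividing by $\rho_k$ gives the central identity
\[
\frac{g^{k+1}}{\rho_k} = \frac{\nabla f(x^{k+1})}{\rho_k} + \frac{\nu^{k+1}}{\rho_k} + \nabla c(x^{k+1})^\top \frac{\lambda^{k+1}}{\rho_k} + \frac{\mu^{k+1}}{\rho_k}.
\]

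I would then take a limit along a $q$-attentive subsequence $x^{k_j}\stackrel{q}{\to}x^*$. Since $\eta_k\to 0$ and $\rho_k\to\infty$, the left-hand side and $\nabla f(x^{k+1})/\rho_k$ vanish, and by Proposition \ref{prop:to0}, $\Lambda^k/\rho_k\to 0$. Combined with
\[
\frac{\lambda^{k+1}}{\rho_k} = \frac{\lambda^k}{\rho_k} + c(x^{k+1}) - v^{k+1}, \qquad \frac{\mu^{k+1}}{\rho_k} = \frac{\mu^k}{\rho_k} + x^{k+1} - w^{k+1},
\]
and extracting a further subsequence, outer semicontinuity of the projection multifunction on closed sets yields $v^{k+1}\to v^*\in\Pi_{\mathcal{Q}}(c(x^*))$ and $w^{k+1}\to w^*\in\Pi_{\mathcal{K}}(x^*)$. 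For the prox-term, comparing $u^{k+1}$ with the candidate $u=x^{k+1}$ in the definition of the proximal mapping, together with $\nu^k/\rho_k\to 0$ and the prox-boundedness of $h$, forces $\|u^{k+1}-x^{k+1}\|\to 0$, so $\nu^{k+1}/\rho_k\to 0$. Passing to the limit in the central identity produces
\[
0 = \nabla c(x^*)^\top (c(x^*) - v^*) + (x^* - w^*),
\]
which, by the standard subdifferential calculus for squared-distance functions (the vector on the right lies in the Fr\'echet subdifferential of $\tfrac{1}{2}\bigl(\|\hat{\Pi}_{\mathcal{Q}}(c(\cdot))\|^2+\|\hat{\Pi}_{\mathcal{K}}(\cdot)\|^2\bigr)$ at $x^*$), establishes the M-stationary condition for \eqref{feas}.

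The hardest part will be the nonconvexity of $\mathcal{Q}$ and $\mathcal{K}$: since the projections are set-valued and generally discontinuous, I cannot push the limit through $\Pi_{\mathcal{Q}}$ and $\Pi_{\mathcal{K}}$ directly, and must instead invoke outer semicontinuity with carefully chosen projection selectors so that the limits $v^*, w^*$ still belong to the projection at $x^*$. A secondary subtlety is verifying $u^{k+1}\to x^*$ with $h$ only assumed proper, lower semicontinuous, and prox-bounded: this requires combining the envelope inequality $h(u^{k+1})+(\rho_k/2)\|u^{k+1}-x^{k+1}-\nu^k/\rho_k\|^2\leq h(x^{k+1})+\|\nu^k\|^2/(2\rho_k)$ with the uniform lower bound from prox-boundedness to deduce $u^{k+1}-x^{k+1}\to 0$, thereby ensuring the $h$-contribution disappears in the rescaled limit.
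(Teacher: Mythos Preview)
Your proposal follows essentially the same route as the paper: split on whether $\{\rho_k\}$ is bounded, dispose of the bounded case via feasibility (Theorem \ref{nonconvex_convergence}), and in the unbounded case divide the subproblem stationarity condition by $\rho_k$ and pass to the limit using $\Lambda^k/\rho_k\to 0$ from Proposition \ref{prop:to0} to reach $\nabla c(x^*)^\top\hat\Pi_{\mathcal{Q}}(c(x^*)) + \hat\Pi_{\mathcal{K}}(x^*) = 0$. The paper writes the divided gradient directly in Moreau-envelope form and simply asserts the limit, whereas you unwind it through the auxiliary variables $(u^{k+1},v^{k+1},w^{k+1})$ and the multiplier updates; the two are algebraically identical, and your outer-semicontinuity treatment of the nonconvex projections is in fact more careful than the paper, which passes $\hat\Pi_{\mathcal{Q}}$ and $\hat\Pi_{\mathcal{K}}$ through the limit without comment.

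One small point: the $q$-attentive convergence you invoke is not part of the proposition's hypothesis---the paper's proof uses only a plain subsequence $x^{k+1}\to_K x^*$. On the other hand, your envelope inequality for the $h$-term, obtained by comparing $u^{k+1}$ with the candidate $u=x^{k+1}$, requires $h(x^{k+1})<\infty$, which is precisely what $q$-attentive convergence would supply and which is not otherwise guaranteed (the AL function \eqref{eqn:NC-comp-AL} is finite for all $x$). The paper's proof does not address why the $h$-contribution vanishes either, so your proposal is no less rigorous than the original on this point.
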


 More recent work has focused on providing stronger guarantees by making additional assumptions on the problem structure or the algorithmic parameters. We now review global convergence analysis for some specific nonconvex optimization problems.  For the nonlinear programming \eqref{prob}, Bertsekas \cite{bertsekas2014constrained} established the convergence property of the ALM for both the primal sequence and the corresponding dual sequence. 
For the nonlinear programming \eqref{prob} with box constraints, Birgin and Mart{\'\i}nez \cite{birgin2014practical,birgin2020complexity} showed the global convergence of the ALM under very weak constraint qualifications. { Furthermore, if each AL subproblem is solved to an $\epsilon_k$‐global minimum, with $\epsilon_k \to \epsilon$, then it has been shown that the ALM converges to an $\epsilon$‐global minimizer of the original problem \cite{birgin2010global,birgin2014augmented,birgin2015optimality}. }

 For a nonconvex quadratic programming:
$$ \min \ \frac{1}{2}x^\top Q x + q^\top x, \quad \st\quad \ell \le Ax \le u,
$$
where $Q \in \R^{n\times n}$ is symmetric, $A \in \R^{m\times n}$, and  $\ell, u \in \R^m$, Hermans et al. \cite{hermans2022qpalm} proposed a proximal augmented Lagrangian method and showed its convergence to a stationary point. For a nonlinearly constrained nonconvex and nonsmooth problem:
$$  \min \ G(u, v) + H(v), \quad \st\quad \Phi(u) + Bv = 0,$$
where $G$ and $H$ are differentiable functions, possibly nonconvex, $\Phi(u)$ is a nonsmooth nonlinear mapping and $B$ is a full-column-rank matrix, the global convergence of a first-order primal-dual method based on the augmented Lagrangian is established in  \cite{zhu2023first} under an error bound condition. For the nonconvex constrained optimization problem: 
$$\min\ f(x),\quad \st\quad c(x) = 0, x \in X,$$ where  $X \subseteq \R^n$ is a compact set and functions $f$ and $c$ are continuous, Bagirov et al. \cite{bagirov2019sharp} developed a sharp AL-based method and established its global convergence.
For a nonconvex and nonsmooth composite optimization problem:
$$\min\ q(x):= f(x) + h(F(x)),$$ 
where $f$ is a continuously differentiable function, $F$ is a continuously differentiable mapping and $h$ is a proper and l.s.c. function,
Bolte et al. \cite{bolte2018nonconvex} considered global convergence guarantees under a broad scheme of augmented Lagrangian approach, relying on the nonsmooth Kurdyka-\L{}ojasiewicz (KL) property. When $F$ is a linear mapping, $h$ is a proper closed function and $f$ is twice
continuously differentiable with a bounded Hessian, Li and Pong \cite{li2015global} showed that if the penalty parameter is sufficiently increased at each iteration, then the entire sequence of iterates converges to a stationary point for problems satisfying the constraint qualification and some regularity conditions.

\subsection{Local convergence}
In the convex case, local convergence analysis is typically conducted using the proximal point algorithm applied to the dual form of the augmented Lagrangian problem, as shown in Section \ref{sec:convex:local}. This analysis ensures Q-linear convergence of the dual sequence and R-linear convergence of the primal sequence within the ALM. However, this approach is not applicable to nonconvex settings due to its reliance on duality, which is not available in nonconvex scenarios. Therefore, there exists
many different approaches to establish the local convergence of this method. 

Let us first consider the general NLP in the form \eqref{prob}. 
Recall that the problem is given as follows:
\begin{equation}\label{sec5:NLP-prob}
    \begin{split}
    \min_{x\in\mathbb{R}^n}\ & f(x),\\
    \st\ & c_i(x)=0, i\in\mathcal{E},\\
    &c_i(x)\leq0, i\in\mathcal{I}.
    \end{split}
\end{equation}
We denote by $\mathcal{M}(\bar{x})$ the set of Lagrange multipliers of problem \eqref{sec5:NLP-prob} associated with a given stationary point $\bar{x}$, i.e., $(\lambda,\mu) \in \mathcal{M}(\bar{x})$ if and only if $(\bar{x}, \lambda,\mu)$ satisfies the KKT system \eqref{kkt}. Let
$$
\mathcal{A}=\mathcal{A}(\bar{x})=\left\{i \in \mathcal{I} \cup \mathcal{E} : c_i(\bar{x})=0\right\}, \quad \mathcal{B}=\mathcal{B}(\bar{x})=\mathcal{I} \cup \mathcal{E}\backslash \mathcal{A}
$$
be the sets of indices of active and inactive constraints at $\bar{x}$, respectively. For each $(\lambda,\mu) \in \mathcal{M}(\bar{x})$, we introduce the following standard partition of the set $\mathcal{A}$ :
$$
\mathcal{A}_1(\lambda,\mu)=\mathcal{E} \cup\left\{i \in \mathcal{I} : \mu_i>0\right\}, \quad \mathcal{A}_0(\lambda,\mu)=\mathcal{A} \backslash \mathcal{A}_1(\lambda, \mu).
$$
The critical cone of problem \eqref{sec5:NLP-prob} at its stationary point $\bar{x}$ is given by
$$
\begin{aligned}
\mathcal{C}(\bar{x}) & =\left\{u \in \mathbb{R}^n : \, \begin{array}{l}
\left\langle \nabla f(\bar{x}), u\right\rangle=0,\left\langle \nabla c_i(\bar{x}), u\right\rangle=0 \text { for } i \in\mathcal{E} \\
\left\langle \nabla  c_i(\bar{x}), u\right\rangle \leq 0 \,\text { for }\, i \in \mathcal{I} \,\text { with }\, c_i(\bar{x})=0
\end{array}\right\} \\
& =\left\{u \in \mathbb{R}^n : \nabla c_{\mathcal{A}_1(\hat{\mu})}(\bar{x}) u=0, \nabla c_{\mathcal{A}_0(\hat{\mu})}(\bar{x}) u \leq 0\right\},
\end{aligned}
$$
where the second equality is independent of the choice of $(\hat{\lambda},\hat{\mu}) \in \mathcal{M}(\bar{x})$.
We say that the second-order sufficient optimality condition (SOSC) is satisfied at $(\bar{x}, \bar{\lambda}, \bar{\mu})$ with $(\bar{\lambda},\bar{\mu}) \in \mathcal{M}(\bar{x})$ if
\begin{equation}\label{sec5:sosc}
    \left\langle \nabla^2 \mathbb{L}_{\rho}(\bar{x}, \bar{\lambda}, \bar{\mu}) u, u\right\rangle>0 \quad \forall u \in \mathcal{C}(\bar{x}) \backslash\{0\}.
\end{equation}

The usual convergence rate statements for the ALM assume the linear independence constraint qualification, i.e., 
$$\left\{\nabla c_i(\bar{x}): i \in \mathcal{A}\right\}$$ are linearly independent, and hence the multiplier $\bar{\mu} \in \mathcal{M}(\bar{x})$ is unique. Other conditions include SOSC \eqref{sec5:sosc} and strict complementarity, i.e., $\bar{\mu}_i>0$ for all $i \in \mathcal{A}$.  
Bertsekas \cite{bertsekas2014constrained} established that the generated dual sequence converges Q-linearly and the corresponding primal sequence converges $R$-linearly under SOSC, LICQ and the strict complementarity for NLP.  Next, we give a specific example to show the local convergence analysis of ALM under the SOSC condition \eqref{sec5:sosc}. In particular, we focus on the result in \cite{fernandez2012local}. The ALM for solving \eqref{sec5:NLP-prob} is given as follows: 
\begin{equation}
\left\{
    \begin{aligned}
        x^{k+1} & \approx \arg\min_{x} \mathbb{L}_{\rho_k}(x,\lambda^k,\mu^k), \\
        \lambda_i^{k+1} & = \lambda_i^k + \rho_kc_i(x^{k+1}),~i\in \mathcal{E},\\
        \mu_i^{k+1} &= \max\{0, \mu_i^k + \rho_kc_i(x^{k+1})  \},~i \in \mathcal{I}.
    \end{aligned}
    \right.
\end{equation}
Before presenting the main result, we give the following stopping criteria for $x$-subproblem:
\begin{equation}\label{sec5:nonconvex:local:stop}
    \begin{aligned}
        \| \nabla_x \mathbb{L}_{\rho_k}(x^{k+1}, \lambda^k,\mu^k)  \| & \leq \epsilon_k,\\ 
        \left\|  \left[  \begin{array}{c}
      x^{k+1} - x^k \\
    \rho_k  c_{\mathcal{E}}(x^{k+1}) \\
    (\mu^k + \rho_k c_{\mathcal{I}}(x^{k+1}) )_+ - \mu^k
      \end{array}\right] \right\| & \leq \hat{\delta} \sigma(x^k,\lambda^k,\mu^k),
    \end{aligned}
\end{equation}
where $\hat{\delta}\in (0,+\infty)$ is a given constant and
\begin{equation}
    \sigma(x,\lambda,\mu) = \left\|  \left[ \begin{array}{c}
    \nabla f(x) + \nabla c_{\mathcal{E}}(x)^\top \lambda + \nabla c_{\mathcal{I}}(x)^\top \mu \\
         c_{\mathcal{E}}(x)\\
         (\mu +  c_{\mathcal{I}}(x) )_+ - \mu
    \end{array}  \right] \right\|.
\end{equation}
\begin{theorem}[\cite{fernandez2012local}, Theorem 3.4] Let $(\bar{x}, \bar{\lambda}, \bar{\mu})$ satisfy the SOSC condition \eqref{sec5:sosc} and $B$ be a closed unit ball. 
Then there exist $\bar{\epsilon}, \bar{\rho}>0$ such that if $\left(x^0, \lambda^0,\mu^0\right) \in((\bar{x},\bar{\lambda}, \bar{\mu})+\bar{\epsilon} B)$ and the sequence $\left\{\left(x^k, \lambda^k, \mu^k\right)\right\}$ is generated according to \eqref{sec5:nonconvex:local:stop} with $\rho_k \geq \bar{\rho}$ for all $k$ and $\epsilon_k=o\left(\sigma\left(x^k, \lambda^k, \mu^k\right)\right)$, the following assertions hold: 
\begin{itemize}
    \item The sequence $\left\{\left(x^k,\lambda^k, \mu^k\right)\right\}$ is well-defined and converges to $(\bar{x},\hat{\lambda}, \hat{\mu})$, with some $(\hat{\lambda},\hat{\mu}) \in \mathcal{M}(\bar{x})$. %
    \item For any $q \in(0,1)$ there exists $\bar{\rho}_q$ such that if $\rho_k \geq \bar{\rho}_q$ for all $k$, then the convergence rate of $\left\{\left(x^k, \lambda^k, \mu^k\right)\right\}$ to $(\bar{x}, \hat{\lambda},  \hat{\mu})$ is $Q$-linear with quotient $q$. 
    \item If $\rho_k \rightarrow+\infty$, the convergence rate is $Q$-superlinear.
\item If $\epsilon_k \leq t_1 \sigma\left(x^k, \lambda^k,  \mu^k\right)^s$ and $\rho_k \geq t_2 / \sigma\left(x^k, \lambda^k, \mu^k\right)^{s-1}$, where $t_1, t_2>0$ and $s>1$, then the convergence rate is Q-superlinear with quotient $s$. %
\end{itemize} 
\end{theorem}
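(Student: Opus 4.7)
The plan is to reduce the analysis of the ALM iteration to a perturbation result for a generalized equation whose solution set is $\{\bar x\}\times\mathcal{M}(\bar x)$, exploiting the fact that SOSC \eqref{sec5:sosc} yields a local error bound even when LICQ fails and $\mathcal{M}(\bar x)$ is not a singleton. Concretely, the first step is to establish the error bound
\[
 \|x-\bar x\|+\mathrm{dist}\bigl((\lambda,\mu),\mathcal{M}(\bar x)\bigr)\le C\,\sigma(x,\lambda,\mu)
\]
for all $(x,\lambda,\mu)$ in a neighborhood of $(\bar x,\bar\lambda,\bar\mu)$. This follows from SOSC via the standard argument that under SOSC the KKT system behaves as a strongly regular generalized equation up to the ambiguity in the multiplier direction; the distance to $\mathcal{M}(\bar x)$ absorbs exactly this ambiguity.

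Next, I would reinterpret one step of ALM as an inexact solution of a parametric generalized equation. For fixed $(\lambda^k,\mu^k,\rho_k)$, a stationary point of $\mathbb{L}_{\rho_k}(\cdot,\lambda^k,\mu^k)$ together with the multiplier update rule corresponds to a KKT-like system for a perturbation of \eqref{sec5:NLP-prob}, where the perturbation is governed by $(\lambda^k,\mu^k)/\rho_k$ and by the inexactness residual controlled by $\epsilon_k$ and the second condition in \eqref{sec5:nonconvex:local:stop}. Using the SOSC-based error bound together with Robinson-type stability, one shows that for $\rho_k$ large enough and $(x^0,\lambda^0,\mu^0)$ sufficiently close to $(\bar x,\bar\lambda,\bar\mu)$, the next iterate $(x^{k+1},\lambda^{k+1},\mu^{k+1})$ is well-defined and satisfies
\[
 \mathrm{dist}\bigl((x^{k+1},\lambda^{k+1},\mu^{k+1}),\{\bar x\}\times\mathcal{M}(\bar x)\bigr)
 \le \Bigl(\tfrac{C_1}{\rho_k}+C_2\tfrac{\epsilon_k}{\sigma(x^k,\lambda^k,\mu^k)}\Bigr)\,
 \mathrm{dist}\bigl((x^k,\lambda^k,\mu^k),\{\bar x\}\times\mathcal{M}(\bar x)\bigr).
\]
The second stopping test in \eqref{sec5:nonconvex:local:stop} is what ensures that the inexact primal step produces multipliers consistent with the residual scale $\sigma(x^k,\lambda^k,\mu^k)$, which is crucial here.

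With this master inequality at hand, the four conclusions follow in turn. Well-definedness and convergence to some $(\bar x,\hat\lambda,\hat\mu)$ with $(\hat\lambda,\hat\mu)\in\mathcal{M}(\bar x)$ comes from a Cauchy argument on the distance sequence, combined with the fact that the per-step contraction drives $(\lambda^k,\mu^k)$ to settle on a unique element of $\mathcal{M}(\bar x)$; the specific limit is identified through the increments produced by the update formulas. Q-linear convergence with an arbitrary quotient $q\in(0,1)$ is then obtained by choosing $\bar\rho_q$ so that $C_1/\rho_k<q/2$ and using $\epsilon_k=o(\sigma(\cdot))$ to absorb the second term. If $\rho_k\to+\infty$, the coefficient $C_1/\rho_k+C_2\epsilon_k/\sigma\to 0$, yielding Q-superlinear convergence. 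Finally, plugging in $\epsilon_k\le t_1\sigma^s$ and $\rho_k\ge t_2/\sigma^{s-1}$ makes both terms of order $\sigma^{s-1}$, so the distance contracts at rate $s$, giving Q-superlinear convergence of order $s$.

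The main obstacle, and the place where the argument is most delicate, is the second step: converting SOSC into a stability statement for the ALM iteration when the multiplier set is a nontrivial face of a polyhedron. Without LICQ one cannot invoke the classical implicit function theorem on a single Lagrange multiplier; instead one must work with the distance to $\mathcal{M}(\bar x)$, verify that the inexactness allowed by \eqref{sec5:nonconvex:local:stop} is compatible with the error bound, and show that the augmented-Lagrangian stationarity residual, once shifted by $(\lambda^k,\mu^k)/\rho_k$, fits into the perturbation framework that the error bound controls. Once this bridge is built, the rate estimates themselves are straightforward bookkeeping on the contraction factors.
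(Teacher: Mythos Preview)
The survey paper does not supply its own proof of this theorem; it is quoted verbatim as \cite[Theorem 3.4]{fernandez2012local} and the reader is referred to that source. Your proposal is in fact an accurate outline of the Fern\'andez--Solodov argument: the SOSC-based local error bound in the distance to $\{\bar x\}\times\mathcal{M}(\bar x)$, the reinterpretation of one ALM step as an inexact solution of a perturbed KKT system (which the survey itself flags as the key idea, writing that ``the primal-dual iterates of the ALM are indeed a solution to a particular perturbation of the KKT system of the original problem''), and the resulting contraction inequality from which the four rate statements follow. So there is nothing to compare against in the survey, and your sketch matches the cited reference.
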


 Conn et al. \cite{conn1991globally}, Contesse-Becker \cite{contesse1993extended}, and Ito and Kunisch \cite{ito1990augmented} derived linear convergence rate for the ALM of
general NLP without the strict complementarity condition. 
Under SOSC condition \eqref{sec5:sosc},  the Q-linear convergence of the primal-dual
sequence constructed by the ALM was presented in \cite{fernandez2012local}. Notably, this analysis does not require a constraint qualification or uniqueness of Lagrange multipliers. Instead of looking for duality in the augmented Lagrangian problem, they show that the primal-dual iterates of the ALM are indeed a solution to a particular perturbation of the KKT system of the original problem.  Recently Hang and Sarabi \cite{hang2021local} established
the local convergence for piecewise linear quadratic composite optimization problems under merely
SOSC, which relies on the validity of upper Lipschitz continuous of KKT solution mapping when SOSC is satisfied. 
 A new second-order variational property, called the semi-stability of second subderivatives was presented  in \cite{hang2023convergence}. Under the condition and a certain SOSC, they established Q-linear convergence of the primal-dual sequence for an inexact version of the ALM for composite programs.  In \cite{wang2024local},  the linear rate of ALM was provided for nonlinear semidefinite programming under SOSC and semi-isolated calmness of the KKT pair without requiring the multiplier to be unique.

Another possible approach to conduct local convergence analysis of the ALM for nonconvex optimization problems was recently developed by \cite{rockafellar2023convergence}, where he extended his
original idea in \cite{rockafellar1976augmented} of using duality and the proximal point algorithm to obtain convergence of the dual sequence in the ALM. The principal idea therein was to assume the strong variational
convexity of the AL function. Following this idea, \cite{wang2023strong} proved the equivalence between the strong variational sufficiency and the strong SOSC for nonlinear semidefinite programming, without requiring the uniqueness of multiplier or any other constraint qualifications.

\subsection{Iteration complexity analysis} Given the limited research on complexity analysis of the ALM algorithm for fully nonconvex composite problems \eqref{nonconvex-composite}, we instead review existing results on nonconvex problems under additional convexity assumptions.

To incorporate recent advances in this area, we focus on the following cone convex constrained nonconvex composite optimization problem from \cite{kong2023iteration2}: 
\begin{equation}\label{sec5:kong2023}
\min \ \phi(x):=f(x)+h(x), \quad \st \ g(x) \preceq_\mathcal{K} 0,
\end{equation}
where $\mathcal{K}$ is a closed convex cone, $g$ is a differentiable convex function with a Lipschitz continuous gradient; $h$ is a proper closed convex function with compact domain; $f$ is a nonconvex differentiable function on the domain of $h$ with a Lipschitz continuous gradient. This model covers many previously studied models, making it a comprehensive framework for our discussion. 

Before proceeding with the summary of recent complexity analysis, we provide some definitions and assumptions relevant to this section. For a given tolerance pair $(\eta_1, \eta_2)$, if there exist $(x, \lambda, w, q)$ satisfying
\begin{equation}\label{sec5:complexity:sta}
    \begin{aligned}
 & w \in \nabla f(x)+\partial h(x)+\nabla g(x)\lambda,\quad \langle g(x)+q,\lambda\rangle=0,\\
 &g(x)+q\succeq_\mathcal{K} 0,\quad p\preceq_\mathcal{K^*} 0,\quad
  \|w\| \leq \eta_1,\quad \|q\| \leq \eta_2, 
\end{aligned}
\end{equation}
then $x$ is called an approximate stationary point of \eqref{sec5:kong2023}.
Note
 that a necessary condition for a point $x$ to be a local minimum of \eqref{sec5:kong2023} is that there exists a multiplier $\lambda \in \mathcal{K}^*$ such that $(x, \lambda, w, q) =(x, \lambda, 0, 0) $ satisfies \eqref{sec5:complexity:sta} under a certain regularity condition provided as follows.

\begin{assumption}\label{sec4:inexact:rc} 
To
ensure near feasibility of an approximate stationary point to the
AL function, we show the following conditions \cite{kong2023iteration2}.
\begin{itemize}
\item[(\uppercase\expandafter{\romannumeral 1})]\textbf{Boundedness of the Domain:}
 The quantity $\sup_{x\in \mathrm{dom} h} |\phi(x)|$ is finite, and/or $\mathrm{dom} h$ is bounded, and/or the feasible set is bounded.
\item[(\uppercase\expandafter{\romannumeral 2})]\textbf{Full Rank Jacobian:}  The constraint Jacobian is full rank on a certain bounded level set. For example, if the constraint is in the form of $Ax = b$, then $A$ has full row rank.
\item[(\uppercase\expandafter{\romannumeral 3})]\textbf{Regularity Condition:} There exists some $\nu >0$ such that $$\nu \|g(x^k)\| \leq \mathrm{dist}(0, \nabla g(x^k) g(x^k)+\rho_k \partial h(x^k))$$ for generated sequences $\{x^k\}$ and $\{\rho_k\}$. 
\item[(\uppercase\expandafter{\romannumeral 4})]\textbf{Lipschitz Continuity:} The function $h$ restricted to its domain is $r$-Lipschitz continuous. 
\item[(\uppercase\expandafter{\romannumeral 5})] \textbf{Slater's Condition:} If $g(x) \preceq_{\mathcal{K}} 0$ can be divided into $g_e(x) = 0$ and $g_l(x) \preceq_{\mathcal{J}} 0$ for some closed convex cone $\mathcal{J}$, then there exists $\bar{x} \in \mathrm{int}(\mathrm{dom} h)$ such that $g_e(\bar{x}) = 0$ and $g_l(\bar{x}) \prec_{\mathcal{J}} 0$. 
\end{itemize}
\end{assumption}
\begin{remark}
    Note that without any conditions in Assumption \ref{sec4:inexact:rc} on the
nonconvex constraints, one cannot even achieve feasibility. We focus on explaining the third regularity condition, as the other conditions are more commonly known.
It remains unclear whether Assumption  \ref{sec4:inexact:rc} (\uppercase\expandafter{\romannumeral 3}) is stronger or weaker than other common regularity conditions, such as Slater’s condition and the MFCQ condition \cite{li2021rate}.
However, this condition is particularly useful because it simplifies to more straightforward forms in specific scenarios, such as when $h(x)=0$
, making it closely related to well-known regularity conditions like the Polyak-{\L}ojasiewicz condition \cite{karimi2016linear}. It implies a faster-than-quadratic growth of the gradient as we move away from the optimal solution, thus ensuring quicker convergence. Furthermore, when 
$h$ is an indicator function for a convex set, condition (\uppercase\expandafter{\romannumeral 3}) reflects the basic constraint qualification in \cite{rockafellar1993lagrange}, which is a generalization of the Mangasararian-Fromovitz condition. This condition can be seen as a local regularity requirement, applicable near the constraint set, providing more flexibility compared to global conditions. For further details, see \cite{sahin2019inexact}.
\end{remark}

To review the recent work on the complexity results, we summarize them in Table \ref{table:comparison:complexity}. We can see that several algorithmic strategies have been developed to improve the iteration complexity of ALM for nonconvex problems:
\begin{itemize}
    \item[(i)] Adaptive Penalty Update: Introducing adaptive penalty parameters that adjust according to the progress of the optimization can help in achieving better convergence rates. This involves dynamically updating the penalty parameter $\rho_k$
  based on the violation of the constraints and the reduction in the objective function.
  \item[(ii)] Proximal Terms: Adding proximal terms to the ALM can help in stabilizing the iterates and improving the convergence rates. This approach modifies the AL function to include proximal regularization, which helps in handling nonconvexity and ensuring that the iterates stay within a reasonable region.
  \item[(iii)] Second-Order Methods: Utilizing second-order information, such as Hessians or their approximations, can significantly enhance the convergence of ALM. Methods like the Newton-conjugate-gradient algorithm, when applied to solve the AL subproblems, can lead to faster convergence rates for certain classes of problems. 
\end{itemize}

\begin{table}[h]
\centering
\footnotesize
\caption{Comparison of the complexity results of several methods to produce a stationary point. $\mathbb{L}_\rho$ gives the formulations of the AL functions. For the subproblem solvers, accelerated composite gradient method is shortened as ACG, accelerated proximal gradient method is shortened as APG, inexact proximal point method is shortened as iPPM, Newton-conjugate-gradient algorithm is shortened as NCG. The complexities are based on the number of gradient evaluations. A(\uppercase\expandafter{\romannumeral1},\uppercase\expandafter{\romannumeral 2}), A(\uppercase\expandafter{\romannumeral1},\uppercase\expandafter{\romannumeral 3}), etc. refer to specific conditions in Assumption \ref{sec4:inexact:rc}, where A(\uppercase\expandafter{\romannumeral1}) is Assumption \ref{sec4:inexact:rc} (\uppercase\expandafter{\romannumeral1}), A(\uppercase\expandafter{\romannumeral2}) is Assumption \ref{sec4:inexact:rc} (\uppercase\expandafter{\romannumeral2}), and so forth. Let  $\epsilon = \min\{\eta_1, \eta_2\}$. The complexity column lists the iteration complexity under different conditions. An asterisk $(*)$ indicates that the listed complexity corresponds to specific conditions as described in the table.
}
  \setlength{\tabcolsep}{0.8mm}{
  \begin{tabular}
  { >{\centering\arraybackslash}m{3.0cm}| >{\centering\arraybackslash}m{2.5cm}| > {\centering\arraybackslash}m{3.0cm}| >{\centering\arraybackslash}m{2.0cm}| >{\centering\arraybackslash\hsize=.6\hsize\linewidth=\hsize}m{2.1cm}| >{\centering\arraybackslash\hsize=.8\hsize\linewidth=\hsize}m{2.5cm}}
\hline
Problem & Property  & $\mathbb{L}_\rho$ &\makecell[c]{Subproblem\\ solver }  & Regularity & Complexity \\
\hline
 $\begin{array}{ll}
\min  f(x)\\
\st c(x)=0
\end{array}$   & \makecell[c]{$f$ is nonconvex\\ $c$ is nonlinear} &$\begin{array}{ll}
f(x)
+\\e_\rho\delta_{\{0\}}(c(x)+\frac \lambda \rho)
\end{array}$ & NCG  &\makecell[c]{A(\uppercase\expandafter{\romannumeral1},\uppercase\expandafter{\romannumeral 2})} & \makecell[c]{$O(\epsilon^{-2})$ \\\cite{xie2021complexity}} \\\hline
  $\begin{array}{ll}
\min  f(x)+h(x)\\
\st  Ax=b
\end{array}$  & \makecell[c]{$f$ is nonconvex\\ $h$ is convex} &$\begin{array}{ll}
f(x)+h(x)+\\
e_\rho\delta_{\{b\}}(Ax+\frac \lambda\rho)
\end{array}$  &ACG  &\makecell[c]{A(\uppercase\expandafter{\romannumeral1},\uppercase\expandafter{\romannumeral4},\uppercase\expandafter{\romannumeral 5})} & \makecell[c]{$O(\epsilon^{-3})$ \\ \cite{kong2023iteration}} \\\hline
 $\begin{array}{ll}
\min  f(x)+h(x)\\
\st  Ax=b, x\!\in\! \mathcal{K}\!
\end{array}$  & \makecell[c]{$f$ is nonconvex\\ $h,\mathcal{K}$ is convex} 
 &$\begin{array}{ll}
f(x)+h(x)+\\
e_\rho\delta_{\{b\}}(Ax+\frac \lambda\rho)
\end{array}$  &PPM  & A(\uppercase\expandafter{\romannumeral1},\uppercase\expandafter{\romannumeral3})  & \makecell[c]{$O(\epsilon^{-3})$ \\ \cite{hajinezhad2019perturbed}} \\\hline
 $\begin{array}{ll}
\min  f(x)+h(x),\\
\st  Ax=b
\end{array}$  & \makecell[c]{$f$ is nonconvex\\ $h$ is convex} 
&$\begin{array}{ll}
f(x)+\theta\langle\lambda, Ax-b\rangle\\
+h(x)+\!\frac{\rho}{2}\|Ax-b\|^2\!
\end{array}$\!   &ACG   & A(\uppercase\expandafter{\romannumeral4},\uppercase\expandafter{\romannumeral5})  & \makecell[c]{$O(\epsilon^{-2.5})$ \\ \cite{melo2023proximal} }\\\hline
$\begin{array}{ll}
\min  f(x)+h(x),\\
\st \mathcal{A}(x)=0
\end{array}$& \makecell[c]{$f$ is nonconvex\\ $h$ is convex, \\$\mathcal{A}$ is nonlinear} & $\begin{array}{ll}
f(x)+h(x)+\\
e_\rho\delta_{\{0\}}(\mathcal{A}(x)+\frac \lambda\rho)
\end{array}$&\makecell[c]{APGN \\ Trust Region$^*$ }
& A(\uppercase\expandafter{\romannumeral1},\uppercase\expandafter{\romannumeral3}) & $\makecell[c]{O(\epsilon^{-3})\\O(\epsilon^{-5})^*\\ \cite{sahin2019inexact}}$ \\\hline
$\begin{array}{ll}
\min  f(x)+h(x),\\
\st c(x)=0
\end{array}$ & \makecell[c]{$f$ is nonconvex\\ $h$ is convex, $c$ is \\linear/ nonlinear$^*$} &$\begin{array}{ll}
f(x)+h(x)+\\
e_\rho\delta_{\{0\}}(c(x)+\frac \lambda\rho)
\end{array}$ & IPPM & A(\uppercase\expandafter{\romannumeral1},\uppercase\expandafter{\romannumeral3}) & \makecell[c]{${O}(\epsilon^{-\frac{5}{2}})$\\$O(\epsilon^{-3})^*$\\ \cite{li2021rate}} \\\hline
  $\begin{array}{ll}
\min  f(x)+h(x)\\
\st  g(x) \preceq_\mathcal{Q} 0
\end{array}$  & \makecell[c]{$f$ is nonconvex\\ $h,g,\mathcal{Q}$ is convex} &$\begin{array}{ll}
f(x)+h(x)+\\
e_\rho\delta_{\mathcal{Q}}(g(x)+\frac \lambda \rho)
\end{array}$  &ACG  &\makecell[c]{A(\uppercase\expandafter{\romannumeral1},\uppercase\expandafter{\romannumeral4},\uppercase\expandafter{\romannumeral 5})} & \makecell[c]{$O(\epsilon^{-3})$ \\ \cite{kong2023iteration2}} \\\hline
\end{tabular}}
\label{table:comparison:complexity}
\end{table}

\section{Other variants of ALM}\label{sec:variants_ALM}
Several important variations of the ALM have been developed to address specific computational challenges or improve performance.  This section explores some of the key variations of ALM that have emerged in recent years. We begin by examining the linearized ALM, which simplifies the process of generating  subproblem solutions. Next, we discuss the proximal ALM, which combines ideas from proximal point methods with ALM to enhance convergence properties.  The section also covers the popular alternating direction method of multipliers (ADMM), a variant particularly well-suited for problems with separable objective functions. Finally, we examine fully primal-dual methods that treat primal and dual updates more symmetrically. Each of these variants provides unique advantages for specific problem, expanding the versatility and applicability of the core ALM framework.

\subsection{Linearized ALM}\label{sec:linearized-ALM}
The classic ALM is a double-loop algorithm that includes an inner loop that minimizes the AL function with respect to variable $x$, which may require large computational costs and make the implementation of ALM challenging. This inner minimization can be computationally expensive, posing challenges for practical implementation. To address this issue, the linearized ALM was introduced in \cite{yang2013linearized}, which simplifies the $x$-subproblem by linearizing the objective function.  Taking the convex composite problem \eqref{sec2:pro:composite1} as an example, the linearization strategy replaces the subproblem \eqref{composite-x-subprob} with its linear approximation:
\begin{equation*}
x^{k+1}=\argmin_x\ \langle\nabla_x \mathbb{L}_\rho(x^k,\nu^k,\lambda^k,\mu^k), x\rangle+\frac{1}{2\eta_k}\|x-x^k\|^2,
\end{equation*}
where $\eta_k$ is the stepsize. This update corresponds to a single gradient descent step on the AL subproblem. As a result, the method reduces to a single-loop algorithm, which is both simpler in structure and easier to implement in practice.

Since the variant of the AL function defined in \eqref{eqn:AL-h} is generally nondifferentiable, a full linearization of the AL function is not feasible in the linearized ALM framework. To overcome this, one can apply the linearization only to the smooth part of the objective, while retaining the nonsmooth component.  Specifically, the $x$-update is given by
\begin{equation}\label{sec3:linearALM-h}
\begin{aligned}
x^{k+1}=\argmin_x\ &h(x) +\frac{1}{2\eta_k}\|x-x^k\|^2\\
&+\left\langle \nabla f(x^k) + \rho \mathcal{A}^*(\hat{\Pi}_{\mathcal{Q}}(\mathcal{A}(x^k) + \lambda^k/\rho) ) + \rho \hat{\Pi}_{\mathcal{K}}(x^k + \mu^k/\rho), x\right\rangle.
\end{aligned}
\end{equation}
 This corresponds to applying one step of the proximal gradient method to the AL subproblem. Importantly, when $h$ is proximal-friendly—that is, when the proximal operator of $h$ can be evaluated efficiently—the subproblem \eqref{sec3:linearALM-h} admits a closed-form update.

{Many recent works have focused on the study and development of linearized ALM. In the convex case, Xu \cite{xu2017accelerated} considered affinely constrained composite convex programs and proposes a method that performs a single proximal gradient update to the primal variable at each iteration. The algorithm achieves an iteration complexity of $\mathcal{O}(\epsilon^{-1})$
 for obtaining a primal $\epsilon$-optimal solution. This framework was further extended in \cite{xu2021first} to handle more general problems involving both affine and smooth nonlinear constraints. In the nonconvex setting, several works \cite{lu2022single,bourkhissi2023complexity,bourkhissi2025complexity} proposed linearized and perturbed variants of ALM for addressing nonsmooth and nonconvex composite optimization problems with nonlinear constraints. In addition, recent extensions to nonconvex and stochastic optimization were developed, including momentum-based \cite{shi2025momentum} and Bregman-based \cite{shi2025bregman} linearized ALMs.}

\subsection{Proximal ALM}\label{sec3:PPA:dual}

The proximal ALM is a combination of the PPA and the classical ALM, which improves the convergence result and the numerical performance. For problem \eqref{prob}, Rockafellar \cite{rockafellar1976augmented} studied an early version of proximal ALM for the case where $f$ and $c_i (i \in \mathcal{I})$ are convex,
and $c_i (i \in \mathcal{E})$ are linear functions. The proximal ALM updates the primal and dual variables by:
\begin{equation*}
\left\{
\begin{aligned}
        x^{k+1}  & \in  \argmin_x \left\{ 
\mathbb{L}_{\rho_k}(x,\lambda^k,\mu^k) + \frac{\rho_k}{2}\|x  - x^k\|_\mathcal{M}^2\right\}, \\
    \lambda_i^{k+1}&=\lambda_i^k+\rho_k c_i(x^{k+1}),~i\in\mathcal{E}, \nonumber\\
     \mu_i^{k+1}&=\max\left\{\mu_i^k+\rho_k c_i(x^{k+1}),0\right\},~i\in\mathcal{I},\nonumber
\end{aligned}
\right.
\end{equation*}
where $\mathcal{M}$ is a self-adjoint (not necessarily positive definite) linear operator and $\|x\|_\mathcal{M}^2:= \langle x, \mathcal{M} x\rangle$. 
The motivation behind proximal ALM stems from the fact that it is more practical to relax the subproblem when an accurate solution to the subproblem is computationally expensive, we can select a proximal parameter $\rho_k$ which results in a strongly convex objective function for the $x$-subproblem of proximal ALM, and many methods can be employed to solve the subproblem efficiently. Moreover, the proximal term ensures that the primal subproblem is decomposable over the variables for certain applications \cite{hajinezhad2019perturbed}.

With the increasing difficulty of the problem, the proximal ALM is also widely used in composite optimization problems. For the convex composite optimization problem, 
Li and Qu \cite{li2021inexact} proposed an inexact proximal augmented Lagrangian framework for unconstrained composite convex optimization problems. 
Ding et al. \cite{ding2023proximal} designed inexact proximal augmented Lagrangian-based decomposition methods for dual block-angular convex composite programming problems. 

For the nonconvex composite optimization problem, Dhingra et al. \cite{dhingra2018proximal} utilized the Moreau envelope of the regularization term to derive the proximal AL function.
Hermans et al. \cite{hermans2022qpalm} proposed a nonconvex quadratic programming (QP) solver based on the proximal ALM, which was introduced in a previous paper for convex QPs \cite{hermans2022qpalm}. 
Arnesh and Renato \cite{sujanani2023adaptive} presented an adaptive superfast proximal ALM
for solving linearly-constrained smooth nonconvex composite optimization problems. Kong et al. \cite{kong2023iteration2} utilized a proximal ALM for solving smooth nonconvex composite optimization problems with nonlinear $\mathcal{K}$-convex constraints, i.e., the constraints are convex with respect to the order given by a closed convex cone $\mathcal{K}$. Melo et al. \cite{melo2023proximal} proposed an inexact proximal accelerated ALM for solving linearly constrained smooth nonconvex composite optimization problems with subproblems using an accelerated composite gradient method.

\subsection{Accelerated ALM}
The accelerated ALM (AALM) is a combination of the accelerated method and the ALM. It can be primarily classified into two categories. The first category stems from a dual perspective and focuses on accelerating the dual variables. The second category involves utilizing the accelerated method to solve the subproblems of the ALM.

Let us first review the first category. In convex problems, the ALM can be viewed as a gradient ascent method on the dual problem. Based on this perspective, we can apply the accelerated gradient algorithm to the dual problem. Specifically, consider the following convex problem with linear constraints:
\begin{equation}\label{prob:aalm}
    \min_{x\in\mathcal{X}}\ f(x)+h(x)\quad \mathrm{s. t. } \quad Ax = b, 
\end{equation}
 where $f,h$ are convex functions. The AL function is given by
 \begin{equation*}
     \widetilde{\mathbb{L}}_{\rho}(x,\lambda):=  f(x) + h(x) + \lambda^\top (Ax-b) + \frac{\rho}{2}\|Ax - b\|^2.
 \end{equation*}
Therefore, the augmented Lagrangian dual problem is
$$
\max_\lambda \ \widetilde{\Phi}(\lambda),
$$
where $\widetilde{\Phi}(\lambda)=\min_x \widetilde{\mathbb{L}}_{\rho}(x,\lambda)$. Applying the accelerated gradient ascent yields the scheme
$$
\left\{
\begin{aligned}
y^{k+1}&=\lambda^k+\eta\nabla \widetilde{\Phi}(\lambda^k), \\
t_{k+1}&=\frac{1+\sqrt{1+4 t_k^2}}{2}, \\
\lambda^{k+1}&=y^{k+1}+\frac{t_k-1}{t_{k+1}}\left(y^{k+1}-y^k\right),
\end{aligned}
\right.
$$
where $\eta>0$ is a stepsize. Similar with the derivations of supergradient in Section \ref{sec:supergrad}, we can also evaluate $\nabla \widetilde{\Phi}(\lambda^k)$ by two steps. Then the update rule becomes
$$
\left\{\begin{array}{l}
x^{k+1}=\argmin _{x \in \mathcal{X}}\widetilde{\mathbb{L}}_\rho(x,\lambda^k), \\
y^{k+1}=\lambda^k+\eta\left(A x^{k+1}-b\right), \\
t_{k+1}=\frac{1+\sqrt{1+4 t_k^2}}{2}, \\
\lambda^{k+1}=y^{k+1}+\frac{t_k-1}{t_{k+1}}\left(y^{k+1}-y^k\right),
\end{array}\right.
$$
where $t_0 \in (0,1]$. Note that the scheme can also be viewed as applying the accelerated proximal point method to the Lagrangian dual problem \cite{guler1992new}. Besides the accelerated scheme mentioned above, there are also some other variants, where the difference lies only in the final step of the iteration, which is the update of $\lambda$. For example, the authors of  \cite{kang2013accelerated,he2010acceleration,ke2017accelerated} propose the following update rule
$$
\lambda^{k+1}=y^{k+1}+\frac{t_k-1}{t_{k+1}}\left(y^{k+1}-y^{k}\right)+\frac{t_k}{t_{k+1}}\left(y^{k+1}-\lambda^k\right).
$$
Another accelerated ALM \cite{nedelcu2014computational} is to use Nesterov's accelerated dual average method \cite{devolder2014first} for the augmented Lagrangian dual problem:
$$
\lambda^{k+1}=\left(1-\frac{1}{t_{k+1}}\right) y^{k+1}+\frac{1}{t_{k+1}}\left(y^0+\eta \sum_{j=0}^k t_j\left(A x^{j+1}-b\right)\right).
$$

The second category of the accelerated ALM focuses on the primal variables and employs an acceleration algorithm to solve the \(x\)-subproblem, resulting in enhanced computational complexity. Specifically, we consider problem \eqref{prob:aalm}, where we assume that \(h\) is a convex and Lipschitz continuous function with a compact domain, and \(f\) is a (potentially) nonconvex differentiable function defined on the domain of \(h\) with a Lipschitz continuous gradient. An inner accelerated inexact proximal augmented Lagrangian (IAIPAL) method was proposed in \cite{kong2023iteration} for solving linearly-constrained smooth nonconvex composite optimization problems, based on the classical AL function. In the IAIPAL method, each iteration involves the approximate solution of a proximal AL subproblem using an Accelerated Composite Gradient (ACG) method, followed by a classical Lagrange multiplier update. Under certain assumptions, it has been shown that IAIPAL produces an approximate stationary solution within \(\mathcal{O}(\epsilon^{-5/2})\) ACG iterations. Recently, an accelerated inexact dampened ALM was proposed and analyzed in \cite{kong2023accelerated}. Furthermore, an extension to smooth nonconvex composite optimization problems with nonlinear \(\mathcal{K}\)-convex constraints was also studied in \cite{kong2023iteration2}.

\subsection{Stochastic ALM}
{Many modern applications—ranging from large‑scale empirical risk minimization to data‑driven resource allocation—give rise to constrained optimization problems in which the objective or the constraints (or both) take the form of a finite sum or an expectation.  This motivates
us to design stochastic ALM to efficiently solve those problems. In particular, we first introduce the constrained stochastic optimization problem:
\begin{equation}\label{stochastic-prob}
    \begin{split}
    \min_{x\in\mathbb{R}^n}\ & \mathbb{E}_{\xi }[f(x,\xi)],\\
    \st\ & c_i(x)=0, i\in\mathcal{E},\\
    &c_i(x)\leq0, i\in\mathcal{I},
    \end{split}
\end{equation}
where $\xi$ is a random variable in the probability space $\mathcal{D}$. We often assume the stochastic function $f(x,\xi)$ is continuously differentiable.  To apply the stochastic ALM, one needs construct an AL function as follows:
\begin{equation}\label{stochastic-AL}
\mathbb{L}_\rho(x,\lambda,\mu) = \mathbb{E}_{\xi }[f(x,\xi)] + \sum_{i\in\mathcal{E}} \lambda_i c_i(x) + \frac{\rho}{2} \sum_{i\in\mathcal{E}} c_i^2(x) + \frac{\rho}{2} \sum_{i\in\mathcal{I}} \left(\left[\frac{\mu_i}{\rho} + c_i(x)\right]_+^2 - \frac{\mu_i^2}{\rho^2}\right).
\end{equation}
The only distinction between the AL function and that in \eqref{AL} is that the former employs the stochastic objective $f(x,\xi)$. Furthermore, one may draw multiple independent samples $\{\xi_i\}_{i=1}^m$ and replace $f(x,\xi)$ by the mini‑batch approximation 
$
\frac{1}{m}\sum_{i=1}^m f(x,\xi_i)\,
$. Therefore, the stochastic ALM gives the following update
\begin{equation*}
\left\{
\begin{aligned}
        x^{k+1}  &  =\argmin_x \left\{ 
\mathbb{L}_{\rho_k}(x,\lambda^k,\mu^k)\right\}, \\
    \lambda_i^{k+1}&=\lambda_i^k+\rho_k c_i(x^{k+1}),~i\in\mathcal{E}, \nonumber\\
     \mu_i^{k+1}&=\max\left\{\mu_i^k+\rho_k c_i(x^{k+1}),0\right\},~i\in\mathcal{I},\nonumber
\end{aligned}
\right.
\end{equation*}
 Since the primal subproblem involves an expectation over a random variable, stochastic gradient methods are employed to approximately solve it. In the convergence analysis of the ALM, it is typically required that each subproblem be solved to a certain level of accuracy. In the stochastic setting, the inexactness condition for subproblem solutions is often formulated as  
\begin{equation} \label{eq:stoch_inexact}
\mathbb{E}[\| \nabla_x \mathbb{L}_{\rho_k}(x^{k+1},\lambda^k,\mu^k) \|] \leq \epsilon_k,
\end{equation}
where the tolerance parameter $\epsilon_k$ vanishes asymptotically. However, this condition is primarily of theoretical interest, as it involves expectations that are generally intractable to evaluate or verify in practice. The central difficulty in designing stochastic ALM algorithms lies in defining and enforcing practical inexactness criteria for the stochastic solvers employed to handle the primal subproblems \cite{li2024stochastic}. One approach is to draw a mini-batch of i.i.d. samples $\{\xi_t\}_{t=1}^m$ and solve the following subproblem:
$$
x^{k+1}   =  \argmin_x \left\{ 
\tilde{\mathbb{L}}_{\rho_k}(x,\lambda^k,\mu^k)\right\},
$$
where $$\tilde{\mathbb{L}}_\rho(x,\lambda,\mu) = \frac{1}{m} \sum_{t=1}^m f(x,\xi_t) + \sum_{i\in\mathcal{E}} \lambda_i c_i(x) + \frac{\rho}{2} \sum_{i\in\mathcal{E}} c_i^2(x) + \frac{\rho}{2} \sum_{i\in\mathcal{I}} \left(\left[\frac{\mu_i}{\rho} + c_i(x)\right]_+^2 - \frac{\mu_i^2}{\rho^2}\right).$$ 
To further enhance computational efficiency, stochastic linearized augmented Lagrangian methods \cite{shi2025momentum,shi2025bregman} and primal-dual type algorithms \cite{xu2020primal,zhang2022solving,jin2022stochastic,singh2025stochastic} have been proposed. These methods fall into the category of single-loop algorithms, where the $x$-subproblem can be solved without invoking inner subroutines. Recently, stochastic augmented Lagrangian methods have also been applied to nonsmooth manifold optimization problems \cite{geiersbach2024stochastic,deng2024oracle} and bilevel optimization problems \cite{lu2022stochastic}. Finally, stochastic ALM has also been proposed to  training neural networks with constraints \cite{dener2020training,wang2025augmented,franke2024improving}. 

}

\subsection{Manifold 
 ALM}\label{sec7:manifold-optimization}
Manifold constrained problem pertains to a distinctive category of constrained optimization problems, where the constraints typically define a manifold:
\begin{equation}\label{prob:manifold}
    \min_{x\in \mathcal{M}} \ f(x),
\end{equation}
where $\mathcal{M}$ is denoted as a Riemannian manifold. 
Manifold optimization methods \cite{absil2009optimization,boumal2023introduction} represent an main approach for tackling this problem. 
{ Nevertheless, when an optimization problem involves both manifold constraints and additional general constraints, or when the objective admits a composite form with nonsmooth terms, pure Riemannian approaches may not suffice.  In this section, we describe the manifold ALM. Its central idea is to retain the manifold constraint explicitly while embedding all other constraints and the difficult portions of the objective into an augmented Lagrangian function. Consequently, each ALM subproblem reduces to a “simple” optimization problem constrained solely by the manifold, which can then be solved efficiently by gradient‐based Riemannian algorithms.}   In particular, we first consider the optimization problem on manifold with extra constraints, which has the form:  
  \begin{equation} \label{liupro}
\begin{array}{ll}
  \min\limits_{x\in \mathcal{M}}   & f(x)   \\
  \st 
          &   c_i(x)\le 0,\quad i\in \mathcal{I} =\{1, \cdots, n\},\\
          &  c_j(x) =0,\quad j\in \mathcal{E}=\{n+1, \cdots, n+m\}.
\end{array}
\end{equation}
 The   AL function associated with problem \eqref{liupro} is given by
$$\mathbb{L}_\rho(x, \lambda, \mu) =  f(x) +\frac{\rho}{2}\left(\sum_{j\in \mathcal{E}}\Big(c_j(x)+\frac{\lambda_j}{\rho}\Big)^2+\sum_{i\in \mathcal{I}} \max\left\{0,  \frac{\mu_i}{\rho}+c_i(x)\right\}^2\right).$$
The corresponding manifold ALM \cite{liu2019simple} is given as follows:
\begin{equation}
    \left\{
\begin{aligned}
    x^{k+1} & = \arg\min_{x\in \mathcal{M}} \mathbb{L}_{\rho_k}(x, \lambda^k, \mu^k), \\
   \mu_i^{k+1} &= \max\{0, \mu_i^{k} + \rho_k c_i(x^{k+1})\},~ i\in \mathcal{I} \\
  \lambda_i^{k+1} &= \lambda_i^{k} + \rho_{k} c_i(x^{k+1}),~i\in \mathcal{E}.
\end{aligned}
    \right.
\end{equation}
The $x$-subproblem is a smooth manifold optimization problem and is solved inexactly.  In particular, denoted $\varphi(x): = \mathbb{L}_{\rho}(x, \lambda, \mu)$, given $x_0\in \mathcal{M}$,  a Riemannian gradient descent method applied to the $x$-subproblem at the $t$-th inner iteration  is
\begin{equation}
    x_{t+1} = \mathcal{R}_{x_t}(-\alpha_t \text{grad} \varphi(x_t)), 
\end{equation}
where $\mathcal{R}_x$ is a retraction operator at $x\in \mathcal{M}$ and $\text{grad}\varphi(x)$ is the Riemannian gradient. For more details, please refer to \cite{liu2019simple}.

We can also consider the following nonsmooth problem on the Riemannian manifold:
\begin{equation*}
    \min_{x\in \mathcal{M}}\ f(x) + h(\mathcal{A}x),
\end{equation*}
where $\mathcal{M}$ is a Riemannian submanifold embedding in a Euclidean space $\mathbb{E}$, $f: \mathcal{M}\rightarrow \mathbb{R}$ is a smooth but possibly nonconvex function and $h:\mathbb{R}^{m} \rightarrow \mathbb{R}$ is convex   but nonsmooth in usual Euclidean space, $\mathcal{A}:\mathbb{E} \rightarrow \mathbb{R}^{m}$ is a linear operator. By retaining the manifold constraint $x\in\mathcal{M}$, they construct the following AL function:
\begin{align*}
    \mathbb{L}_{\rho}(x,\nu)= & f(x) + h(\mathrm{prox}_{h/\rho}(\mathcal{A}x - \frac\nu\rho)) + \frac{\rho}{2}\left\| \mathcal{A}x - \frac\nu\rho - \mathrm{prox}_{h/\rho}(\mathcal{A}x - \frac\nu\rho) \right\|^2 - \frac{1}{2\rho}\|\nu\|^2\\
    = & f(x) + e_{\rho}h(\mathcal{A}x - \frac\nu\rho) - \frac{1}{2\rho}\|\nu\|^2. 
\end{align*}
Then the $k$-th iteration of the ALM \cite{deng2022manifold,deng2024oracle} can be written as 
$$
\left\{
\begin{aligned}
    x^{k+1} &= \argmin_{x\in \mathcal{M}} \mathbb{L}_{\rho}(x,\nu^{k}), \\
    \nu^{k+1} &= \nu^k - \rho(x^{k+1} - \mathrm{prox}_{h/\rho}(\mathcal{A}x^{k+1} - \nu^k/\rho)).
\end{aligned}
\right.
$$
Note that the $x$-subproblem is a smooth problem on manifold, and can be solved via the Riemannian optimization method.  Zhou et al. \cite{zhou2022semismooth} considered a more general nonsmooth and nonconvex manifold optimization problem:
\begin{equation}\label{sec5:manifold-nonsmooth}
\min_{x}\ f(x) + \psi(h_1(x)),\; \text{s.t. } \; x\in \mathcal{M},\ h_2(x)\leq 0,
\end{equation}
where $\mathcal{M}$ is a Riemannian manifold, $f:\mathcal{M}\rightarrow \mathbb{R},h_1:\mathcal{M}\rightarrow \mathbb{R}^m,h_2:\mathcal{M}\rightarrow \mathbb{R}^q$ are continuously differentiable, and $\psi:\mathbb{R}^m \rightarrow \mathbb{R}$ is convex. The corresponding AL function is given as follows:
\begin{equation*}
    \mathbb{L}_{\rho}(x;\nu,\mu): = f(x) + e_{\rho}\psi\left(h_1(x) + \frac{\nu}{\rho}\right) + \frac{\rho}{2} \| \max\{0,h_2(x) + \frac{\mu}{\rho}\} \|^2.
\end{equation*}
 Then the ALM can be applied. Moreover, they proposed a globalized semismooth Newton method to solve the augmented Lagrangian subproblem on manifolds efficiently.

\subsection{Alternating direction method of multipliers}\label{sec6:admm}

Known as ADMM, the alternating direction method of multipliers was originally proposed by Glowinski and Marrocco \cite{glowinski1975approximation}, and Gabay and Mercier \cite{gabay1976dual}. Recently, due to its great success in solving the optimization problems arising from machine learning, statistics, artificial intelligence, ADMM gains more and more attention, and there have several survey papers from different points of view \cite{boyd2011distributed,glowinski2014alternating,eckstein2015understanding}. 
When an optimization problem exhibits two blocks or multi-block structure, ADMM is considered more advantageous than ALM for solving such problems. It is particularly well-suited for optimization problems with separable objectives and linear constraints, where the decision variables can be naturally partitioned into blocks. One main reason for the renaissance of ADMM is that when applying to the modern application models, the subproblems are easy to solve, and in fact, in many cases, they possess closed-form solutions.

The classical ADMM is designed for solving the linearly constrained convex optimization problems with two blocks of variables and objective functions:
\begin{equation}\label{sec3:serap}
    \min_{x_1,x_2}\ f(x_1) + h(x_2), \quad \mathrm{s.t. } \ A_1 x_1 + A_2 x_2 = b,
\end{equation}
where $f$ and $h$ are closed proper convex functions. Let $\lambda$ be the Lagrange multiplier, and let $\rho>0$ be a penalty parameter. The AL function of \eqref{sec3:serap} is
$$
\mathcal{L}_\rho\left(x_1, x_2, \lambda\right)=f\left(x_1\right)+h\left(x_2\right)+\left\langle\lambda, A_1 x_1+A_2 x_2-b\right\rangle+\frac{\rho}{2}\left\|A_1 x_1+A_2 x_2-b\right\|^2.
$$

In ALM, the first step of iteration, which involves optimizing both $x_1$ and $x_2$ simultaneously, can sometimes be difficult. It might be simpler to fix one variable and solve for the minimal value with respect to the other variable. Therefore, we can consider alternating the minimization with respect to $x_1$ and $x_2$, which is the basic idea behind the ADMM and the iterative scheme of ADMM is
 
 \vspace{1.5em
 }
 
$$
\left\{\begin{array}{l}
x_1^{k+1}=\argmin _{x_1} \mathcal{L}_\rho\left(x_1, x_2^k, \lambda^k\right), \\
x_2^{k+1}=\argmin _{x_2} \mathcal{L}_\rho\left(x_1^{k+1}, x_2, \lambda^k\right), \\
\lambda^{k+1}=\lambda^k+\tau \rho\left(A_1 x_1^{k+1}+A_2 x_2^{k+1}-b\right),
\end{array}\right. 
$$
where $\tau \in\left(0, \frac{1+\sqrt{5}}{2}\right)$. Ideally, the subproblems in the ADMM update admit closed-form solutions.  When the problem is nonconvex, the implementation of the ADMM remains valid under properly selected penalty parameters, see e.g. \cite{hong2016convergence,zhang2020proximal,chartrand2013nonconvex}.

For the problem formulations we discussed in Section \ref{sec2:AL:sd}, the ADMM can still be applicable. Take the nonconvex composite problem \eqref{nonconvex-composite} as an example. As mentioned in \eqref{prob:nc-slack}, problem \eqref{nonconvex-composite} can be rewritten as an equality constrained problem by introducing additional slack variables. For convenience, we write it again in the following:
\begin{align*}
    \min_{x,u,v,w} &~ f(x) + h(u),\\
    \mathrm{s.t.}&~ x = u, \ x = v, \ c(x) = w, \ v \in \mathcal{K}, \ w\in \mathcal{Q}.
\end{align*}
In this problem, the primal variables are partitioned into multiple blocks and the multi-block ADMM minimizes the AL function $\mathcal{L}_{\rho}(x,u,v,w,\nu,\lambda,\mu)$ of \eqref{eqn:NC-AL} in an alternating block-coordinate manner, i.e.,
\begin{equation*}
\left\{
 \begin{aligned}
     x^{k+1} & = \argmin_x \mathcal{L}_{\rho}(x,u^k,v^k,w^k,\nu^k,\lambda^k,\mu^{k}),\\
     u^{k+1} & = \argmin_u \mathcal{L}_{\rho}(x^{k+1},u,v^k,w^k,\nu^k,\lambda^k,\mu^{k}),\\
     v^{k+1} & = \argmin_v \mathcal{L}_{\rho}(x^{k+1},u^{k+1},v,w^k,\nu^k,\lambda^k,\mu^{k}),\\
     w^{k+1} & = \argmin_w \mathcal{L}_{\rho}(x^{k+1},u^{k+1},v^{k+1},w,\nu^k,\lambda^k,\mu^{k}),\\
             \nu^{k+1} &= \nu^k + \rho(x^{k+1} - u^{k+1}), \\ 
             \lambda^{k+1} & = \lambda^k + \rho(c(x^{k+1}) - w^{k+1}),\\
             \mu^{k+1} & = \mu^k + \rho(x^{k+1} - v^{k+1}).
 \end{aligned}
 \right.
\end{equation*}
We have to mention that the function $c(x)$ is often required to be linear in literature of ADMM. Note that the $u$-subproblem, $v$-subproblem and the $w$-subproblem have the closed-form solutions. The $x$-subproblem may have no closed-form solutions but it is differentiable with respect to $x$ and we can apply the gradient-type method to solve it iteratively. To compare, the AL function after elimination \eqref{eqn:NC-comp-AL} may be nondifferentiable and $x$-subproblem of ALM can be difficult to solve.

The ADMM replaces the exact primal minimization of ALM with one cycle of block coordinate minimization, which can be viewed as an approximate dual gradient ascent. However, the approximation in the primal minimization step makes the convergence of ADMM subtle. When the number of blocks equals two, the convergence of ADMM was established in the context of operator splitting \cite{eckstein1992douglas}. When the number of blocks is greater than two, a counterexample was constructed in \cite{chen2016direct}, indicating that the direct extension of ADMM may not necessarily converge for general multi-block problems. To ensure convergence, one must make further modification to the algorithm \cite{deng2017parallel,gao2019randomized}, or assuming certain extra conditions on the problem \cite{cai2017convergence,chen2013convergence,li2015convergent,lin2015global,lin2015sublinear}. 

Compared with ADMM, the ALM is usually more efficient (less CPU time in finding an approximate solution) and robust (performance that does not heavily depend on parameters such as the initial point, the step size). However, the ADMM method has its advantage for structured problems where the AL function admits closed-form solutions for each variable block while being hard to optimize jointly. It also leads to a decentralized algorithm in the primal subproblems, e.g., the objective function is the sum of some component functions.

\subsection{Fully primal-dual method}\label{sec:fully-primal-dual}

The primal-dual algorithm is a major category of popular optimization algorithms. In a broad sense, primal-dual algorithms refer to those that simultaneously update primal and dual variables, in this sense, ALM and many of its variants fall into this category. In this section, we discuss the primal-dual algorithm in a narrow sense. Specifically, ALM solves a subproblem for the primal variables in order to take a gradient step for the dual variables, creating a noticeable difference in the primal and dual updates. In contrast, the fully primal-dual algorithm refers to the updates of primal and dual variables are treated equally and symmetrically.

To rigorously introduce the method, we consider the convex composite problem \eqref{sec2:pro:composite1}. The primal saddle point problems, and they can be applied to problem \eqref{sec2:prop:strong-duality} directly.
The update rule of primal-dual hybrid gradient (PDHG) method \cite{zhu2008efficient} is
$$
\left\{
\begin{aligned}
x^{k+1}&=x^k-\tau \nabla_x\mathbb{L}_\rho(x^k,\Lambda^k),\\
\Lambda^{k+1}&=\Lambda^k+\sigma \nabla_\Lambda\mathbb{L}_\rho(x^{k+1},\Lambda^k),
\end{aligned}
\right.
$$
where $\tau,\sigma>0$ are the primal and dual stepsizes, respectively. 

Extensive research has been conducted on such methods for saddle point problems, and they can be applied to problem \eqref{sec2:prop:strong-duality} directly. For example, the Chambolle-Pock method conducts an additional dual extrapolation:
$$
\left\{
\begin{aligned}
		x^{k+1} &= x^k-\tau \nabla_x\mathbb{L}_\rho(x^k,\Lambda^k),\\
		\Lambda^{k+1} &= \Lambda^k+\sigma\left(2 \nabla_\Lambda\mathbb{L}_\rho(x^{k+1},\Lambda^k)- \nabla_\Lambda\mathbb{L}_\rho(x^k,\Lambda^k)\right).
\end{aligned}
\right.
$$

The above two schemes are Gauss-Seidel iteration (utilizing the latest $x$ when updating $\Lambda$). 
We can also consider the algorithms of Jacobian iteration (utilizing $x$ of the last iteration when updating $\Lambda$). For example, the gradient descent ascent (GDA) method:
$$
\left\{
\begin{aligned}
x^{k+1} &= x^k-\tau \nabla_x\mathbb{L}_\rho(x^k,\Lambda^k),\\
\Lambda^{k+1} &=\Lambda^k+\sigma \nabla_\Lambda\mathbb{L}_\rho(x^k,\Lambda^k),
\end{aligned}
\right.
$$
and the optimistic gradient descent ascent (OGDA) method:
$$
\left\{
\begin{aligned}
x^{k+1} &= x^k-\tau \left(2 \nabla_x\mathbb{L}_\rho(x^k,\Lambda^k)-\nabla_x\mathbb{L}_\rho(x^{k-1},\Lambda^{k-1})\right),\\
\Lambda^{k+1} &= \Lambda^k+\sigma\left(2 \nabla_\Lambda\mathbb{L}_\rho(x^k,\Lambda^k)-\nabla_\Lambda\mathbb{L}_\rho(x^{k-1},\Lambda^{k-1})\right).
\end{aligned}
\right.
$$
Recently, all the algorithms mentioned above, including linearized ALM, PDHG, CP, OGDA and GDA, are unified into one algorithm framework in \cite{zhu2022unified}. Specifically, the proposed unified primal-dual framework is
\begin{equation}\label{generalalgo}
\left\{
    \begin{aligned}
    x^{k+1} &= x^k-\tau \left((1+\alpha)\nabla_x\mathbb{L}_\rho(x^{k},\Lambda^{k}) -\alpha \nabla_x\mathbb{L}_\rho(x^{k-1},\Lambda^{k-1})\right),\\
    \Lambda^{k+1} &= \Lambda^k+\sigma\kappa\left((1+\beta) \nabla_\Lambda\mathbb{L}_\rho(x^k,\Lambda^k)-\beta \nabla_\Lambda\mathbb{L}_\rho(x^{k-1},\Lambda^{k-1})\right)\\
    &\quad+\sigma(1-\kappa)\left((1+\beta) \nabla_\Lambda\mathbb{L}_\rho(x^{k+1},\Lambda^k)-\beta \nabla_\Lambda\mathbb{L}_\rho(x^k,\Lambda^k)\right),
    \end{aligned}
    \right.
\end{equation}
where $\tau, \sigma>0$ are primal and dual step sizes, $\alpha\in[0,1]$ and $\beta$ are gradient extrapolation coefficients, $\kappa\in[0,1]$ is the ratio of Gauss-Seidel iteration versus Jacobian iteration. When these parameters are set appropriately, the algorithm framework \eqref{generalalgo} can restore the corresponding existing algorithms. Furthermore, it also covers various unknown algorithms and hence provides a potential method to design new algorithms. A unified analysis for a special problem setting is established by analyzing the constraint violation and function value gap, and the obtained ergodic convergence rate is $\mathcal{O}(1/N)$. Furthermore, the non-ergodic convergence of the algorithm framework is also established and it proves to converge linearly under suitable conditions.

\section{Examples and applications}\label{sec:examples} 
In this section, we illustrate the key concepts of constructing ALM that have been discussed, using simple examples and showcasing success stories from various fields of application. As demonstrated in Section \ref{sec2:AL:sd}, we have explored the construction of the corresponding AL function for a given optimization problem. Notably, the form of the AL function is not unique. This section emphasizes the principles of constructing a correct and efficient AL function and focuses on solving the subproblem within the ALM framework.

\subsection{Nonconvex constrained composite optimization}
In this subsection, we discuss the application of the ALM to a class of nonconvex constrained composite
programs as \eqref{nonconvex-composite}:
\begin{equation}\label{sec7:nonconvex-composite}
\min_x\ f(x)+h(x)\quad \text { s.t. }\ c(x) \in \mathcal{Q},
\end{equation}
where $f$ and $c$ are smooth functions, $h$ is proper and
lower semicontinuous, and $\mathcal{Q}$ is a nonempty closed set. In particular, we consider the situation that the constraints are nonconvex
and possibly complicated, but allow for a fast computation of projections onto this nonconvex set. Typical problem classes which satisfy this requirement are optimization
problems with disjunctive constraints. 
Differ from the safeguarded augmented Lagrangian scheme proposed in \cite{de2023constrained}, we use the Algorithm \ref{alg: practical-ALM} to solve the problem. 
 
Since the projection onto the nonconvex set $\mathcal{Q}$ is relatively easy to obtain, we adopt the variant of the  AL function as shown in \eqref{eqn:nonconvex-AL2} that retains the slack variable $w$ for the constraint $c(x) \in \mathcal{Q}$:
\begin{equation}\label{sec7:constrained_pro}
     \mathbb{L}_{\rho}(x,w;\lambda)  
   =  f(x)+h(x) - \frac{1}{2\rho} \|\lambda\|^2 
    + \frac{\rho}{2}\left\|c(x) - w + \frac \lambda\rho \right\|^2+\delta_\mathcal{Q}(w),
\end{equation}
where $\lambda$ is the corresponding multiplier. 
Then the $k$-th iteration of the ALM framework is 
$$
\left\{
\begin{aligned}
(x^{k+1},w^{k+1}) &\in \argmin_{x\in \R^n}\mathbb{L}_{\rho_k}(x,w;\lambda^k),\\
  \lambda^{k+1} &= \lambda^k + \rho_k(c(x^{k+1}) - w^{k+1}).
 \end{aligned}
 \right.
 $$
The reason for this definition is to ensure that minimizing the AL function $ \mathbb{L}_{\rho}(x,w;\lambda)$ is straightforward. Keeping  the slack variable $w$ in this formulation is due to the potential complexity of the nonconvex set $Q$. It may be difficult to design an efficient high order method for the subproblem with respect to the $x$-variable after removing $w$ particularly when  $Q$ has a complicated structure.

Inspired by \cite{jia2023augmented,de2023constrained}, we can use various proximal gradient-type methods for tackling the above subproblem.
It is beneficial to use the structure of the nonsmooth term $h$ since $h$ is separable with regard to $x$ and $w$, making it easy to calculate the corresponding proximal mapping. To be precise, the computation of the proximal mapping is straightforward due to the separability of $h$.  Since the ALM for solving problem \eqref{sec7:nonconvex-composite} consists of outer and inner iterations, we let the subscript $k$ denote the outer iteration number, and the subscript $t$ denote the inner iteration number. Specifically, given $\lambda^k$ and $\rho^k$, the $t$-th iteration of the inner solver in the $k$-th outer iteration can be written as:
    \begin{align}
    x^{k,t+1} &\!= \mathrm{prox}_{\gamma_t h}\left(x^{k,t}-\gamma_t \left(\nabla f(x^{k,t})+\rho\nabla c(x^{k,t})^\top\left[c(x^{k,t})-w^{k,t}+\frac{\lambda^k}{\rho_k}\right]\right)\right),\nonumber\\
    w^{k,t+1} &\!= \Pi_\mathcal{Q}(c(x^{k,t})+\lambda^k/\rho_k),\label{sec7:nonsmmoth:wup}
    \end{align} 
where $\gamma_t > 0$ is the stepsize. 
Notice that the above update rule simultaneously updates 
$w$ and $x$. Alternatively, a Gauss-Seidel approach can be employed, where $w$ and $x$ are updated alternately. In this case, we  replace \eqref{sec7:nonsmmoth:wup}  with
    \begin{align*}
    w^{k,t+1} = \Pi_\mathcal{Q}(c(x^{k,t+1})+\lambda^k/\rho_k).
    \end{align*} 
 Both methods can be used as update strategies for solving subproblems. Typically, we can find a stationary point of the subproblem. The convergence analysis of the proximal gradient methods is presented in \cite{bolte2014proximal}. Therefore, we can derive convergence results of the ALM for this type of problem.

The benefit of accelerated proximal-gradient methods for solving the subproblems is demonstrated by \cite{de2023constrained} on some nonconvex constrained composite examples such as sparse switching time optimization and matrix completion problems. 
Certainly, we can also employ the ADMM method discussed in Section \ref{sec6:admm} to alternately solve for $x$ and $w$. To be specific, the $k$-th iteration is
\begin{equation}
\left\{
    \begin{aligned}
    x^{k+1} &\in \argmin_{x\in \R^n}\mathbb{L}_{\rho_k}(x,w^k;\lambda^k), \label{sec7:prox:subproblem}\\
    w^{k+1} &= \Pi_\mathcal{Q}(c(x^{k+1})+\lambda^k/\rho_k),\nonumber\\
 \lambda^{k+1}&=\lambda^k+\rho_k(c(x^{k+1})-w^{k+1}).\nonumber
    \end{aligned}
    \right.
    \end{equation}
The $x$-subproblem  can also be solved by using the proximal gradient methods. Since problem \eqref{sec7:nonconvex-composite} is nonconvex, the theoretical convergence guarantee of the ADMM method is
still an open problem.

\subsection{Nonlinear Programming with box constraints} If the composite term $h(x)$ is not included in the nonconvex optimization, then the proximal gradient methods will not be suitable for solving the subproblem. To demonstrate how to solve the subproblem of the ALM in such cases, we take general nonlinear programming with box constraints as an example of nonconvex constrained optimization problems without the composite term:
\begin{equation}\label{primal}
	\min_{x\in \mathbb{R}^n}\ f(x) \quad \text{ s.t. } \quad  c(x)\in \mathcal{Q}, \ x\in\mathcal{K}, \tag{P}
\end{equation}
where $f:\mathbb{R}^n\to \mathbb{R}$, $c(x)=[c_1(x),c_2(x),\dots,c_m(x)]^{\top}:\mathbb{R}^n\to \mathbb{R}^m$ are twice continuous and differentiable, and at least one of them is nonlinear. The set of constraints is
\begin{equation*}
	\mathcal{Q}:=\{w \in \mathbb{R}^m\ | \ \ell^c\leq w\leq u^c\},\ \mathcal{K}:=\{v \in \mathbb{R}^n\ | \ \ell^x\leq v\leq u^x\},
\end{equation*}
where $\ell^c,u^c\in\mathbb{R}^m$ and $\ell^x,u^x\in\mathbb{R}^n$ are constant vectors. The corresponding AL function is 
\begin{align}\label{sec7:lxys}
	\mathbb{L}_\rho(x;\lambda,\mu)=f(x)+\frac{1}{2\rho}\left\|\hat{\Pi}_{\rho \mathcal{Q}}\left(\rho c(x)+\lambda\right)\right\|^2+\frac{1}{2\rho}\left\|\hat{\Pi}_{\rho \mathcal{K}}\left(\rho x+\mu\right)\right\|^2.
\end{align}
Then the subproblem of the $k$-iteration for the ALM is 
\begin{equation}\label{sec7:nonlinear:sub}
x^{k+1} \in \argmin_x \mathbb{L}_\rho(x;\lambda^k,\mu^k).
\end{equation}
Since the AL function defined in \eqref{sec7:lxys} is continuous and differentiable, we can derive its gradient:
\begin{equation}\label{sec7:lx_gradient}
	\nabla_x \mathbb{L}_\rho(x;\lambda,\mu)= \nabla f(x)
	+ \nabla c(x) \hat{\Pi}_{\rho \mathcal{Q}}\left(\rho c(x)+\lambda\right)+\hat{\Pi}_{\rho \mathcal{K}}\left(\rho x+\mu\right).
\end{equation}
 Therefore, the above subproblem can be solved by gradient-type methods.

To accelerate the convergence of the ALM, we proceed to introduce a second-order algorithm, known as the semi-smooth Newton method (SSN, see Appendix \ref{app:seminewton:sub}), to solve \eqref{sec7:nonlinear:sub}, which is to solve the following nonlinear equation:
\begin{equation}\label{sec7:grad-zeros-psi}
    F(x):= \nabla_x \mathbb{L}_\rho(x;\lambda,\mu) = 0.
\end{equation}
One needs to utilize the second-order information of the AL function. Let $
    \psi_{\rho}(x;\lambda)=\hat{\Pi}_{\rho \mathcal{Q}}\left(\rho c(x)+\lambda\right)$.
If $f$ and $c$ are twice differentiable, then the following operator is well defined:
$$	\partial F(x):=\nabla^2 f(x)
	+ \nabla^2 c(x) [\psi_{\rho}(x;\lambda)]+ \nabla c(x) \partial \psi_{\rho}(x;\lambda)\nabla c(x)^{\top}+\partial\hat{\Pi}_{\rho \mathcal{K}}\left(\rho x+\mu\right),$$
where $\partial \hat{\Pi}_{\rho \mathcal{K}}\left(\rho x+\mu\right)$ is the Clarke subdifferential of the Lipschitz continuous mapping $\hat{\Pi}_{\rho \mathcal{K}}(\cdot)$ at the point $(\rho x+\mu)$ (see the definition of Clarke subdifferential in \cite{clarke1975generalized}). 
Subsequently, we can write
 the generalized Hessian matrix of the AL function in \eqref{sec7:lxys}:
\begin{equation}\label{lx_hessian}
	W(x) :=\nabla^2 f(x)
	+ \nabla^2 c(x) [\psi_{\rho}(x;\lambda)]+\rho \nabla c(x)D_c \nabla c(x)^{\top}+\rho D_x,
\end{equation}
where $\rho D_c \in \partial\psi_{\rho}(x)$ and $\rho D_x \in \partial\hat{\Pi}_{\rho \mathcal{K}}\left(\rho x+\mu\right)$. By the definition of the projection, we can take
\begin{equation}\label{UV}
	D_c = \text{Diag}(d^c)\ \text{and} \ D_x =   \text{Diag}(d^x),
\end{equation}
where the diagonal elements $d_i^c~(i=1,2,\cdots,m)$ and $d^x_j~(j=1,2,\cdots,n)$ satisfy
\begin{equation*}
	d^c_i=\left\{\begin{array}{ll}
		0, &\text{if}~ c_i(x)+\lambda_i/\rho \in [\ell^c_i,u^c_i],\\
		1, &\text{otherwise},
	\end{array}\right.
	\quad
	d^x_j=\left\{\begin{array}{ll}
		0, &\text{if}~ x_j+\mu_j/\rho \in [\ell^x_j,u^x_j],\\
		1, &\text{otherwise}.
	\end{array}\right.
\end{equation*}

Therefore, the Hessian matrix of the augmented Lagrangian function can be efficiently approximated, allowing the use of an inexact semismooth Newton method. If the approximated matrix is symmetric positive definite, it follows from \cite{qi1993nonsmooth} that the SSN method applied to the subproblem is Q-superlinearly convergent in a neighborhood of $x^*$, where $F(x^*) = 0$. Similarly, we can adopt the quasi-Newton method to solve the subproblem.

\subsection{Sparse optimization}\label{sec:nonsmooth}

The composite programming problem shown below has been widely studied in recent literature:
\be \label{pro:almssn_primal}
\min_{x\in \mathbb{R}^n} \ f(\mathcal{A}x) + h(x),
\ee
where $f$ is a convex, differentiable function with a Lipschitz continuous gradient, $h$ is a closed proper convex but possibly nonsmooth function, and $\mathcal{A}:\mathbb{R}^m \rightarrow \mathbb{R}^n$ is a linear operator. In many cases, the nonsmooth term is typically a sparse regularization function.  The dual of \eqref{pro:almssn_primal} can be written as:
\be \label{pro:almssn_dual}
\min_{\nu\in \mathbb{R}^m}\ f^*(\nu) + h^*(-\mathcal{A}^*\nu),
\ee
where $f^*$ is strongly convex and essentially smooth. The gradient of $f^*$ is locally Lipschitz continuous on the interior of its domain.
To guarantee the differentiability of $f^*$, we assume that $f$ is essentially locally strongly convex. This means that for any compact convex set $K\subset \operatorname{dom} \partial f$, there exists a positive constant $\beta_K$ such that for all $y,y^\prime \in K$ and $s\in [0,1]$, the following inequality holds:
\begin{equation*}
(1-s)f(y^\prime) + s f(y) \geq f((1-s)y^\prime + s y)+ \beta_Ks (1-s)\|y-y^\prime\|^2/2.
\end{equation*}
These assumptions are satisfied by many commonly used loss functions in machine learning literature. For instance, $f$ can be the loss function in linear, logistic, or Poisson regression models. The related applications include: Lasso \cite{li2018highly}; fused lasso \cite{li2018efficiently}; OSCAR and SLOPE models \cite{luo2019solving}; clustered Lasso \cite{lin2019efficient}; group graphical Lasso \cite{zhang2020proximal}; sparse group Lasso \cite{zhang2020efficient}; fused multiple graphical Lasso \cite{zhang2019efficient}, etc.

\subsubsection{ALM for primal and dual problem}
We first give the AL function of the primal problem \eqref{pro:almssn_primal}:
\begin{equation}
    \mathbb{L}_{\rho}^p(x,\nu): = f(\mathcal{A}x) + e_{\rho}h(x+\nu/\rho) - \frac{1}{2\rho}\|\nu\|^2. 
\end{equation}
Here, we use the notation $\mathbb{L}_{\rho}^p$ to denote the AL function for the primal problem, distinguishing it from the AL function for the dual problem, denoted by $\mathbb{L}_{\rho}^d$. 
A direct application of ALM onto \eqref{pro:almssn_primal} can be given as follows:
\begin{equation}
    \left\{
\begin{aligned}
    x^{k+1} & = \arg\min_{x\in \mathbb{R}^n}  \mathbb{L}^p_{\rho_k}(x,\nu^k), \\
    \nu^{k+1} &= \nu^k - \rho_k(x^{k+1} - \text{prox}_{h/\rho_k}(x^{k+1} - \nu^k/\rho_k)).
\end{aligned}
    \right.
\end{equation}
In the above algorithm, the main computational cost is from the update of $x^{k+1}$.  Given that the AL function is continuously differentiable due to the convexity of $h$, we can employ gradient-based methods to solve it. However, when the dimension of $x$ is very large, solving the 
$x$-subproblem becomes challenging.

In a recent study, the authors in \cite{li2018highly} discovered that applying the ALM to the dual problem \eqref{pro:almssn_dual} and employing a semismooth Newton algorithm to address the ALM subproblem can be  effective. The AL function of the dual problem \eqref{pro:almssn_dual} is defined as 
\begin{equation}
    \mathbb{L}^d_{\rho}(\nu,x) = f^*(\nu) + e_{\rho}h^*(-\mathcal{A}^*\nu + x/\rho) - \frac{1}{2\rho}\|x\|^2. 
\end{equation}
Therefore, the main iterative process of ALM is given as follows:
\begin{equation}
    \left\{
\begin{aligned}
    \nu^{k+1} & = \arg\min_{\nu\in \mathbb{R}^m}  \mathbb{L}^d_{\rho_k}(\nu,x^k), \\
    x^{k+1} &= x^k - \rho_k(-\mathcal{A}^*\nu^{k+1} + \text{prox}_{h^*/\rho_k}(-\mathcal{A}^*\nu^{k+1} + x^k/\rho_k)).
\end{aligned}
    \right.
\end{equation}

Let us focus on the $\nu$-subproblem.  For simplicity, we define $\varphi(\nu): = \mathbb{L}^d_{\rho_k}(\nu,\tilde{x})$ for any fixed $\rho$ and $\tilde{x}$.  It is important to note that $\varphi(\cdot)$ is strongly convex and continuously differentiable within $\operatorname{int}(\operatorname{dom} \left.f^*\right)$. The gradient is given by
$$
\nabla \varphi(\nu)=\nabla f^*(\nu)-\mathcal{A} \operatorname{prox}_{\rho h}(\tilde{x}-\rho\mathcal{A}^* \nu), \quad \forall \nu \in \operatorname{int}\left(\operatorname{dom} f^*\right).
$$
Consequently, gradient-based algorithms can be employed to tackle the $\nu$-subproblem. Nonetheless, such algorithms tend to have slow convergence rates and may not attain high precision.  An alternative is the application of second-order algorithms, which, despite their higher per-iteration costs, can substantially lower the overall computational burden and expedite convergence, especially when problem \eqref{pro:almssn_primal} has specific structures, such as when $h$ is sparse regularization. In particular, consider  solving the nonlinear equation:
\begin{equation}\label{grad-zeros-psi}
     \nabla \varphi(\nu) = 0.
\end{equation}
The SSN method is given as follows:
\begin{equation}\label{sec7:ssniter}
    \nu^{k+1} = \nu^k + d^k,
\end{equation}
where $d^k$ is determined by solving the linear equation:
\begin{equation}\label{eq:ssnal:Newton-equation}
    V_k d = - \nabla \psi(\nu^k).
\end{equation}
Here $V_k \in \hat{\partial} (\nabla \psi(\nu^k))$. 
The reason for using the semi-smooth Newton method rather than the classical Newton method is due to the semi-smooth nature of the gradient $\nabla \varphi(\nu)$, which will be explained in detail in the Appendix \ref{app:seminewton:sub}.

\subsubsection{Solving SSN subproblem efficiently}

In this subsection, we focus on an efficient implementation of \eqref{eq:ssnal:Newton-equation} in the case where the nonsmooth regularizer $h$ takes the form of the $\ell_1$ penalty, namely $h(x)=\mu\|x\|_1$.  
Given $\nu \in \mathbb{R}^m$, $\tilde{x}\in \mathbb{R}^n$, and a parameter $\rho>0$, we consider the following Newton system (with superscripts suppressed for clarity):
\begin{equation}\label{sec7:newton-system}
    (H+\rho A U A^\top)d = -\nabla \varphi(\nu),
\end{equation}
where $H \in \partial (\nabla f^*)(\nu)$, $U\in \partial \operatorname{prox}_{\rho\mu\|\cdot\|_1}(\tilde{x}-\rho\mathcal{A}^* \nu)$, and $A$ denotes the matrix representation of $\mathcal{A}$.

In many practical models, the matrix $H$ is sparse. For example, if $f$ corresponds to either squared loss or logistic loss, then $H$ is diagonal. This observation allows us to simplify \eqref{eq:ssnal:Newton-equation} to the following form without loss of generality:
\begin{equation}\label{lasso-newton-equation-2}
    \left(I_m+\rho A U A^\top\right) d = -\nabla \varphi(\nu).
\end{equation}

In sparse optimization, exploiting the sparsity pattern of $U$ can significantly reduce computational overhead, making it negligible compared with other costs.  
For $x=\tilde{x}-\rho A^\top \nu$, we employ the diagonal matrix representation $U=\operatorname{Diag}(u)$, where each $u_i$ is specified as
\[
u_i=\begin{cases}
0, & \text{if } |x_i|\le \rho \mu,\\[0.3em]
1, & \text{otherwise},
\end{cases}
\quad i=1,\ldots,n.
\]
Since the proximal operator satisfies
\[
\operatorname{prox}_{\rho\mu\|\cdot\|_1}(x)=\operatorname{sign}(x)\circ \max\{|x|-\rho\mu,0\},
\]
it follows that $U\in \partial \operatorname{prox}_{\rho\mu\|\cdot\|_1}(x)$. Let $\mathcal{J}=\{j:\,|x_j|>\rho \mu,\ j=1,\ldots,n\}$ and denote its cardinality by $r=|\mathcal{J}|$. Making use of the sparsity of $U$, we obtain
\begin{equation}\label{lasso-newton-equation-1}
    A U A^\top=(AU)(AU)^\top = A_{\mathcal{J}}A_{\mathcal{J}}^\top,
\end{equation}
where $A_{\mathcal{J}}\in \mathbb{R}^{m\times r}$ is the submatrix of $A$ formed by selecting the columns indexed by $\mathcal{J}$.  
This representation reduces the complexity of evaluating $AU A^\top$ and $AU A^\top d$ for any $d$ to $\mathcal{O}(m^2 r)$ and $\mathcal{O}(mr)$, respectively. Because the $\ell_1$ regularizer promotes sparsity, the value of $r$ is usually much smaller than $n$.

Moreover, when $r\ll m$---a scenario frequently encountered in large-scale problems with sparse solutions---the Sherman--Morrison--Woodbury formula can be applied to invert $I_m+\rho AU A^\top$ by instead inverting an $r\times r$ matrix:
\[
\left(I_m+\rho A U A^\top\right)^{-1}=\left(I_m+\rho A_{\mathcal{J}} A_{\mathcal{J}}^\top\right)^{-1}
=I_m - A_{\mathcal{J}}\left(\rho^{-1} I_r + A_{\mathcal{J}}^\top A_{\mathcal{J}}\right)^{-1}A_{\mathcal{J}}^\top .
\]
Consequently, the total computational complexity of solving the Newton system \eqref{lasso-newton-equation-2} can be reduced from $\mathcal{O}(m^2(m+r))$ to $\mathcal{O}(r^2(m+r))$. Further implementation details can be found in \cite{li2018highly}.

\subsection{Semidefinite programming}
SDP is one of the exciting development in mathematical
programming in the last thirty years. It has applications in such diverse fields as traditional convex constrained optimization, control theory, and combinatorial optimization. Consider the following standard SDP problem:
\begin{equation}\label{sec4:stard-sdp-problem}
\min_{X\in \mathbb{S}^n}\ \left<C,X\right>\quad\mathrm{ s.t. } \quad \mathcal{A}(X) = b, X\succeq 0,
\end{equation}
where $\mathcal{A}:\mathbb{S}^n\rightarrow \mathbb{R}^m$ is a linear map. The dual problem is given as follows
\begin{equation}\label{sec4:stard-sdp-dual}
    \max_{\lambda\in \mathbb{R}^m} \ \left<b,\lambda\right>\quad \mathrm{ s.t. }\quad \mathcal{A}^*(\lambda) \preceq C. 
\end{equation}
In this section, we will discuss how to use ALM to solve the SDP problem and its dual problem. 
\subsubsection{Primal problem}
We first consider applying the ALM to the primal problem \eqref{sec4:stard-sdp-problem}. We can construct the AL function by penalizing the linear constraint and retaining the positive definite constraint:
\begin{equation*}
    \mathbb{L}^p_{\rho}(X,\lambda): =\left<C,X\right> + \frac{\rho}{2}\|\mathcal{A}(X) - b + \lambda/\sigma\|^2 - \frac{1}{2\rho}\|\lambda\|^2.  
\end{equation*}
Then in the $k$-th iteration, the ALM has the following update:
\begin{equation*}
\left\{
    \begin{aligned}
        X^{k+1} &= \argmin_{X\succeq 0} \mathbb{L}^p_{\rho_k}(X,\lambda^k), \\
        \lambda^{k+1} & = \lambda^k + \rho_k (\mathcal{A}(X^{k+1}) - b). 
    \end{aligned}
    \right.
\end{equation*}
Note that the $X$-subproblem is a convex optimization problem on the positive semidefinite constraint. In \cite{burer2006solving,burer2010optimizing}, a coordinate descent method and an eigenvalue decomposition are used to solve the $X$-subproblem. In \cite{wen2009row}, by fixing any $(n-1)$-dimensional principal submatrix of $X$ and using its Schur complement, the positive semidefinite constraint is reduced to a simple second-order cone constraint and then a sequence of second-order cone programming problems constructed from the primal
AL function is minimized. 

 However, the constraint $X\succeq 0$ is the most challenging aspect of solving \eqref{sec4:stard-sdp-problem} since the objective function and constraints are only linear in $X$. To circumvent this difficult constraint, the Burer-Monteiro method considers the case that \eqref{sec4:stard-sdp-problem}  admits a low-rank solution $X^*$. This method introduces the change of variables $X = VV^\top$ where $V$ is a real, $n\times r$ matrix, and $r<n$. In terms of the new variable $V$, the SDP problem \eqref{sec4:stard-sdp-problem} is reduced to the resulting nonlinear program:
\begin{equation}\label{sec4:stard-sdp-problem-dual}
    \min_{V\in \mathbb{R}^{n\times r}}  \ \left<C,VV^\top\right>\quad \mathrm{ s.t. }\quad \mathcal{A}(VV^\top) = b.
\end{equation}
With the elimination of the positive semidefiniteness constraint, \eqref{sec4:stard-sdp-problem-dual} gains a notable advantage over \eqref{sec4:stard-sdp-problem}. However, this advantage comes with a trade-off: the objective function and constraints become quadratic and generally nonconvex, rather than linear. Despite this, the absence of the positive semidefiniteness constraint and the potential for lower dimensionality compared to $X$ prompted Burer and Monteiro \cite{burer2003nonlinear,burer2005local} to apply the ALM to solve \eqref{sec4:stard-sdp-problem-dual}, achieving unexpectedly positive results. Specifically, the AL function for \eqref{sec4:stard-sdp-problem-dual} is defined as follows:
\begin{equation}
    \mathbb{L}^p_{\rho}(V, \lambda): = \left<C,VV^\top\right> + \frac{\rho}{2}\|\mathcal{A}(VV^\top) - b + \lambda/\rho\|^2 - \frac{1}{2\rho}\|\lambda\|^2. 
\end{equation}
Then in the $k$-th iteration, the ALM has the following update:
\begin{equation*}
\left\{
    \begin{aligned}
        V^{k+1} &= \argmin_{V} \mathbb{L}^p_{\rho_k}(V,\lambda^k), \\
        \lambda^{k+1} & = \lambda^k + \rho_k (\mathcal{A}(V^{k+1} (V^{k+1})^\top ) - b). 
    \end{aligned}
    \right.
\end{equation*}

It is important to note that due to the nonconvex nature of \eqref{sec4:stard-sdp-problem-dual}, local optimization methods typically yield a stationary point rather than a global solution. A critical area of study is how to recover the optimal solution $X^*$ in \eqref{sec4:stard-sdp-problem} via the optimal solution $V^*$ in \eqref{sec4:stard-sdp-problem-dual}. Recently, Boumal et al. \cite{boumal2020deterministic} demonstrated that if the set of constraints on $V$ regularly defines a smooth manifold, then, despite nonconvexity, the first- and second-order necessary optimality conditions are also sufficient, provided $r$ is sufficiently large. Under these conditions, a global optimum $V$ corresponds to a global optimum $X = VV^\top$ of the SDP problem \eqref{sec4:stard-sdp-problem}.

\subsubsection{Dual problem}
The ALM can also be applied to the dual problem \eqref{sec4:stard-sdp-dual}. In particular,  the AL function associated with \eqref{sec4:stard-sdp-dual} is given as 
\begin{equation*}
    \mathbb{L}^d_{\rho}(\lambda,X): = \left<b,\lambda\right> + \frac{\rho}{2}\|\Pi_{\mathbb{S}_+^n} (C - \mathcal{A}^*(\lambda)  + X/ \rho)  \|^2 - \frac{1}{2\rho}\|X\|^2. 
\end{equation*}
Then the ALM can be written as 
\begin{equation*}
\left\{
    \begin{aligned}
        \lambda^{k+1} &= \argmin_{\lambda}  \mathbb{L}^d_{\rho}(\lambda,X^k),\\
       X^{k+1} & =\Pi_{\mathbb{S}^n_+} (X^k + \rho (C - \mathcal{A}^*(\lambda^{k+1}))).
    \end{aligned}
    \right.
\end{equation*}
For some fixed $X \in\mathbb{S}^n$ and $\rho>0$, we need to consider the following form of inner problems:
\begin{equation}\label{sec4:sdpna-y-subproblem}
    \min \left\{\varphi(\lambda):=\mathbb{L}^d_\rho(\lambda, X) \mid \lambda \in \mathbb{R}^m\right\}.
\end{equation}
Note that $\varphi$ is convex and continuously differentiable with 
$$
\nabla \varphi(\lambda)=b-\mathcal{A} \Pi_{\mathbb{S}^n_+}\left(X-\rho\left(\mathcal{A}^* \lambda-C\right)\right).
$$
In applying Newton's algorithm to solve the inner subproblem \eqref{sec4:sdpna-y-subproblem}, we find a direction $d$ by solving the following linear system:
\begin{equation}\label{sec4:sdpna-linear-system}
(V_\lambda^0+\epsilon I) d=-\nabla \varphi(\lambda),
\end{equation}
where $V_\lambda^0 \in \hat{\partial}^2 \varphi(\lambda)$, where $\hat{\partial}^2 \varphi(\lambda)$ is defined in \eqref{def:hat-jacobian-sdp}. Moreover,  a practical conjugate gradient (CG) method is then designed to solve the linear system \eqref{sec4:sdpna-linear-system}. For more details, please refer to \cite{zhao2010newton}.
A majorized semismooth Newton-CG augmented Lagrangian method is further developed in \cite{yang2015sdpnal+} for the general SDP problems with additional nonnegative constraints.  

\vspace{0.3cm}
\textbf{Computing the Jacobian matrix.}
 We next explain the explicit expression of $V_\lambda^0$ or $V_\lambda^0 d$ for any $d$. Let us first define:
\begin{equation}\label{def:hat-jacobian-sdp}
\hat{\partial}^2 \varphi(\lambda):=\rho \mathcal{A} \partial \Pi_{\mathbb{S}^n}\left(X-\rho\left(\mathcal{A}^* \lambda-C\right)\right) \mathcal{A}^* .
\end{equation}
 Since $X-\rho\left(\mathcal{A}^* \lambda-C\right)$ is a symmetric matrix in $\mathbb{R}^{n \times n}$, there exists an orthogonal matrix $Q \in \mathbb{R}^{n \times n}$ such that
$$
X-\rho\left(\mathcal{A}^* \lambda-C\right)=Q \Gamma_\lambda Q^{\mathrm{T}},
$$
where $\Gamma_\lambda$ is the diagonal matrix with diagonal entries consisting of the eigenvalues $\lambda_1 \geq \lambda_2 \geq \cdots \geq \lambda_n$ of $X-\rho\left(\mathcal{A}^* \lambda-C\right)$ being arranged in nonincreasing order. Define three index sets
$
\alpha:=\left\{i : \lambda_i>0\right\}, \ \beta:=\left\{i : \lambda_i=0\right\}, \ \text { and } \ \gamma:=\left\{i : \lambda_i<0\right\}. 
$
Define the operator $W_\lambda^0: \mathbb{S}^n\rightarrow \mathbb{S}^n$ by
$
W_\lambda^0(H):=Q\left(\Omega \circ\left(Q^{\mathrm{T}} H Q\right)\right) Q^{\mathrm{T}}$ with $ H \in \mathbb{S}^n,
$
and
$$
\Omega=\left[\begin{array}{cc}
E_{\alpha \alpha} & \nu_{\alpha \bar{\alpha}} \\
\nu_{\alpha \bar{\alpha}}^{\mathrm{T}} & 0
\end{array}\right], \quad \nu_{i j}:=\frac{\lambda_i}{\lambda_i-\lambda_j}, i \in \alpha, j \in \bar{\alpha}:=\{1, \ldots, n\} \backslash \alpha,
$$
where $E_{\alpha \alpha} \in \mathbb{S}^{|\alpha|}$ is the matrix of ones. Then $V_y^0: \mathbb{R}^m \rightarrow \mathbb{S}^n$ is obtained by
$$
V_\lambda^0 d:=\rho \mathcal{A}\left[Q\left(\Omega \circ\left(Q^{\mathrm{T}}\left(\mathcal{A}^* d\right) Q\right)\right) Q^{\mathrm{T}}\right], \quad d \in \mathbb{R}^m. 
$$
It follows from \cite[Lemma 11]{pang2003semismooth} that
$$
W_\lambda^0 \in \partial \Pi_{\mathbb{S}^n_+}\left(X-\rho\left(\mathcal{A}^* \lambda-C\right)\right),
$$
which further gives $V_\lambda^0=\rho \mathcal{A} W_\lambda^0 \mathcal{A}^* \in \hat{\partial}^2 \varphi(\lambda)$.  More details are referred to \cite{zhao2010newton,yang2015sdpnal+}.

\subsubsection{Nonsmooth SDP}\label{sec7:nonsmooth-sdp}
More recently, a class of more general nonsmooth and nonlinear semidefinite programming problems is considered in \cite{wang2023decomposition}:
\begin{equation}\label{sec4:sdp:general}
\min_{X\in \mathcal{D}} \  f(X)+h(X)\quad \text{ s.t. }  \ \mathcal{A}(X)=b,
\end{equation}
where 
$ \mathcal{A}(X): \mathbb{S}^n \rightarrow \mathbb{R}^m$.
The function $f(X)$ is smooth while $h(X)$ is possibly nonsmooth. The domain $\mathcal{D}=\{X\in \mathbb{S}^n: X\succeq 0, \mathcal{B}(X)=b_0\}$ with $ \mathcal{B}(X)\in \mathbb{S}^n \rightarrow \mathbb{R}^{m_0}$ defines a certain Riemannian structure. 
A decomposition method is proposed based on the augmented Lagrangian framework (SDPDAL). Denote the AL function associated with \eqref{sec4:sdp:general} by
\begin{equation}\label{aug_L:1}
\begin{aligned}
\mathbb{L}_\rho (X,\nu,\lambda) = &f(X)+e_{\rho} h\left(X - \frac{\nu}{\rho}\right) + \frac{\rho}{2}\left\|\mathcal{A}(X)-b - \frac{\lambda}{\rho}\right\|_2^2 - \frac{1}{2\rho} ( \|\nu\|^2 + \|\lambda\|^2 ).
\end{aligned}
\end{equation}
where $\nu\in \mathbb{S}^n,\lambda\in \mathbb{R}^m$ are corresponding Lagrange multipliers.  The domain of the primal variables $X$ is $ \mathcal{D}$. The $k$-th iteration of the ALM is given as follows:
\begin{align}
  X^{k+1} &= \mathop{\argmin}_{X\in \mathcal{D}} \mathbb{L}^p_{\rho_k}(X,\nu^{k},\lambda^{k}),\label{sub:xw} \\
 \nu^{k+1} &=\nu^k-\rho_k(X^{k+1} -\prox_{h/\rho_k}(X^{k+1}-\nu^k/{\rho_k})),\label{sdpdal-nu}\\
 \lambda^{k+1} &=\lambda^k-\rho_k(\mathcal{A} (X^{k+1}) -b).\label{sdpdal-lambda}
\end{align}
The detailed method can be seen in Algorithm \ref{alg:alm_ssn}. 

\begin{algorithm}[h]
\caption{The SDPDAL method}\label{alg:alm_ssn}
\begin{algorithmic}[1]
\REQUIRE Initial trial point $R^0\in \mathcal{M}$ %
, ALM step size $\alpha_k$, parameters $\rho_k>0$.
\STATE Set $k=0, \nu^k=0,\lambda^k=0$. 
\WHILE {not converge}
\STATE Obtain $R^{k+1}$ by solving \eqref{sub:rk} inexactly. Formulate $X^{k+1}=(R^{k+1})^\top R^{k+1}$ either explicitly or implicitly.
\STATE Update Lagrange multipliers $\nu^{k+1}$, $\lambda^{k+1}$ via \eqref{sdpdal-nu} and \eqref{sdpdal-lambda}.
\STATE Set $k=k+1$.
\ENDWHILE
\end{algorithmic}
\end{algorithm}

\textbf{Solving the subproblem.}
To solve the subproblem $X$-subproblem, one can factorize $X=R^\top R$, and consider the following Riemannian optimization problem:
\begin{equation}\label{sub:rk}
\min_{R\in \mathcal{M}} \Psi_k(R):=\Phi_k(R^\top R),
\end{equation}
where $\mathcal{M}: = \{R\in \mathbb{R}^{p\times n}:R^\top R \in \mathcal{D}\}$. As shown in 
\cite{wang2023decomposition},  if $\{B_iX\}_{i=1}^{m_0}$ are linear independent, $\mathcal{M}$ is a smooth manifold.   
 We apply  an adaptive regularized Riemannian semismooth Newton method in 
 \cite{hu2018adaptive} to solve \eqref{sub:rk}. At a point $R_l$, the following subproblem is considered:
  \begin{equation}\label{sub:mk}
\min_{U\in T_{R_l}\mathcal{M}}  m_l(U)=\langle\mathrm{grad} \Psi_k(R_l),U\rangle+\frac{1}{2} \langle \mathbf{H}_l[U],U \rangle+\frac{\nu_l}{2} \|U\|_F^2,
\end{equation}
where $T_{R_l}\mathcal{M}$ denote the tangent space of $\mathcal{M}$ at $R_l\in \mathcal{M}$, $\mathbf{H}_l\in \mathrm{Hess} \Psi_k(R_l)$ is a generalized Riemannian Hessian operator and $\nu_l>0$ is a regularization parameter. 
In each step, we inexactly solve the following linear equation \[\mathrm{grad} \Psi_k(R_l)+\mathbf{H}_l[U]+\nu_l U=0.
\]
For more details on manifold optimizations, please refer to the book \cite{absil2009optimization}.

\subsection{Multi-block convex composite optimization}\label{sec7:multi-block}

  This subsection focuses on a class of linearly constrained multi-block convex composite optimization problems of the following form. We begin with the equivalent saddle point problem and directly design a second-order algorithm for solving the saddle point problem, i.e., both the primal variable and dual variable are updated by the second-order step. The main idea follows from \cite{deng2023augmented}.  In particular, we consider the following problem:
\begin{equation}\label{sec7:prob:multiblock}
\min_{\bm{x}_1,\dots,\bm{x}_n} \sum_{i \in \mathcal{I}_1} h_i(\bm{x}_i) + \sum_{i \in \mathcal{I}_2} h_i(\bm{x}_i), \quad \mathrm{s.t.} \quad \sum_{i=1}^{n-1} \mathcal{A}_i \bm{x}_i + \bm{x}_n = \bm{b},
\end{equation}
where $\mathcal{I}_1 = [1,\dots,n_1]$, $\mathcal{I}_2 = [n_1+1,\dots,n]$, and each $h_i$ is either a convex differentiable function or a proper closed convex function. The linear operators $\mathcal{A}_i$ couple the variables via the equality constraint. This problem formulation allows for both smooth and non-smooth components, making it applicable to a wide range of optimization problems, such as those involving $\ell_1$ regularization or nuclear norm constraints. It also covers the general SDPs whose variable is both positive semidefinite and nonnegative. 

One commonly used model in image processing is the following three-term composite optimization problem:
\begin{equation}\label{pro:three}
    \min_{x} \;\; f(\mathcal{A}x) + g(\mathcal{B}x) + h(x),
\end{equation}
where $f$ is a continuously differentiable function,  $g$ and $h$ are two regularization terms, and $\mathcal{A}$ and $\mathcal{B}$ are linear maps. 
The dual formulation of \eqref{pro:three} can be written as:
\begin{equation}\label{pro:three:dual}
    \min_{\lambda,\mu,w} \;\; f^*(\lambda) + g^*(\mu) + h^*(w)\quad \mathrm{s.t.} \quad \mathcal{A}^*\lambda + \mathcal{B}^*\mu + w = 0,
\end{equation}
where $\mathcal{A}^*$ and $\mathcal{B}^*$ denote the adjoint operators of $\mathcal{A}$ and $\mathcal{B}$, respectively, and $f^*$, $g^*$, and $h^*$ are the Fenchel conjugate functions of $f$, $g$, and $h$. This dual formulation represents a special case of problem \eqref{sec7:prob:multiblock}.

\subsubsection{The AL function} 
Following the approach for composite optimization outlined in Section \ref{sec:AL-composite}, we derive the AL function for problem \eqref{sec7:prob:multiblock}:
\begin{equation}\label{eq:alfunc}
\begin{aligned}
&\mathbb{L}_\rho(\bm{x};\bm{z}) = h_n\left(\prox_{h_n/\rho}\left(\bm{b} - \frac{\bm{z}_n}{\rho} - \sum_{i=1}^{n-1} \mathcal{A}_i \bm{x}_i\right)\right) - \underbrace{\frac{1}{2\rho} \sum_{i \in \mathcal{I}_2} \|\bm{z}_i\|^2}_{\rm multipliers}\\
& + \sum_{i \in \mathcal{I}_2 \backslash n} \underbrace{\left(h_i\left(\prox_{h_i/\rho}\left(\bm{x}_i - \frac{\bm{z}_i}{\rho}\right)\right) + \frac{\rho}{2} \left\|\bm{x}_i - \frac{\bm{z}_i}{\rho} - \prox_{h_i/\rho}\left(\bm{x}_i - \frac{\bm{z}_i}{\rho}\right)\right\|^2\right)}_{\text{Moreau envelope of } h_i}\\
& + \sum_{i \in \mathcal{I}_1} h_i(\bm{x}_i) + \underbrace{\frac{\rho}{2} \left\|\sum_{i=1}^{n-1} \mathcal{A}_i \bm{x}_i + \prox_{h_n/\rho}\left(\bm{b} - \frac{\bm{z}_n}{\rho} - \sum_{i=1}^{n-1} \mathcal{A}_i \bm{x}_i\right) - \bm{b} + \frac{\bm{z}_n}{\rho}\right\|^2}_{\sum_{i=1}^{n-1} \mathcal{A}_i \bm{x}_i + \bm{x}_n = \bm{b}}.
\end{aligned}
\end{equation}
For simplicity, we write $\bm{w} = (\bm{x}, \bm{z})$. Thus, the AL function $\mathbb{L}_\rho$ can be expressed as:
\begin{equation*}\begin{aligned}
\mathbb{L}_\rho(\bm{w}) &= \sum_{i \in \mathcal{I}_1} h_i(\bm{x}_i) + \sum_{i \in \mathcal{I}_2 \backslash n} e_{\rho} h_i\left(\bm{x}_i - \frac{\bm{z}_i}{\rho}\right) \\
& + e_{\rho} h_n\left(\bm{b} - \frac{\bm{z}_n}{\rho} - \sum_{i=1}^{n-1} \mathcal{A}_i \bm{x}_i\right) - \frac{1}{2\rho} \sum_{i \in \mathcal{I}_2} \|\bm{z}_i\|^2.
\end{aligned}
\end{equation*}
The associated saddle point problem is then formulated as:
\begin{equation}\label{prob:minimax}
\min_{\bm{x}} \max_{\bm{z}} \mathbb{L}_\rho(\bm{x}; \bm{z}).
\end{equation}

\subsubsection{The primal-dual nonlinear system}

We now outline a semi-smooth Newton system of nonlinear equations designed to characterize the optimality conditions of the original multi-block problem. This system is defined as:
\begin{equation} \label{eq:F}
F(\bm{w}) = \begin{pmatrix} \nabla_{\bm{x}} \mathbb{L}_\rho(\bm{w}) \\ - \nabla_{\bm{z}} \mathbb{L}_\rho(\bm{w}) \end{pmatrix}.
\end{equation}
While this reformulation is relatively straightforward, methods discussed in \cite{xiao2018regularized,li2018semismooth} rely heavily on first-order techniques that leverage specific structural properties of the problem.

Next, we compute the generalized Jacobian of $F(\bm{w})$. Given that for a convex function $h$, its proximal operator $\prox_{th}$ is Lipschitz continuous, we define the following sets:
\begin{equation}
\begin{aligned}
 \hat{\mathcal{D}}_{h_i}  &:= \nabla^2 h_i(\bm{x}_i), \quad i \in \mathcal{I}_1, \\
 \mathcal{D}_{h_i}  &:= \partial  \prox_{\rho h_i^*}(\rho \bm{x}_i - \bm{z}_i), \quad i \in \mathcal{I}_2 \backslash n, \\
 \mathcal{D}_{h_n} &:= \partial \prox_{\rho h_n^*} \left(\bm{b} + \frac{\bm{z}_n}{\rho} - \sum_{i=1}^{n-1}\mathcal{A}_i\bm{x}_i\right),
\end{aligned}
\end{equation}
where $\mathcal{D}_{h_i}$ and $\mathcal{D}_{h_n}$ represent Clarke’s generalized Jacobian for their respective proximal operators.

We now define the following matrix operators:
\begin{equation} \label{def:H123}
\mathcal{H}_{\bm{xx}} :=
    \left( \begin{array}{ccc}
         \rho \hat{\mathcal{D}}_{h_1}  + \rho \mathcal{A}_1^* \hat{\mathcal{D}}_{h_n} \mathcal{A}_1   
         & \cdots &  \rho \mathcal{A}_1^* \hat{\mathcal{D}}_{h_n} \mathcal{A}_{n-1} \\
        \vdots &  \ddots & \vdots \\
         \rho \mathcal{A}_{n-1}^* \hat{\mathcal{D}}_{h_n} \mathcal{A}_1 &  \cdots &   \rho \hat{\mathcal{D}}_{h_{n-1}} + \rho \mathcal{A}_{n-1}^* \hat{\mathcal{D}}_{h_n} \mathcal{A}_{n-1}
    \end{array} \right),
\end{equation}
\[
\mathcal{H}_{\bm{xz}} := - \left[ \mathrm{blkdiag} \left( \left\{ \hat{\mathcal{D}}_{h_i} \right\}_{i=1}^{n-1} \right), \left( \mathcal{A}_1^*; \dots; \mathcal{A}_{n-1}^* \right) \hat{\mathcal{D}}_{h_n}^{\top} \right], 
\]
\[
\mathcal{H}_{\bm{zz}} := \mathrm{blkdiag}\left(\left\{\frac{1}{\rho} I - \frac{1}{\rho} \hat{\mathcal{D}}_{h_i}\right\}_{i=1}^{n}\right),
\]
where $\hat{\mathcal{D}}_{h_i} \in \mathcal{D}_{h_i}$ for $i \in \mathcal{I}_2$, and $\mathrm{blkdiag}$ denotes the block diagonal matrix operator.

For any $\bm{w}$, the generalized Jacobian of $F(\bm{w})$ is given by:
\begin{equation}\label{equ:jaco}
\hat{\partial} F(\bm{w}) := \left\{ \begin{pmatrix}
\mathcal{H}_{\bm{xx}} & \mathcal{H}_{\bm{xz}} \\
-\mathcal{H}_{\bm{xz}}^{\top} & \mathcal{H}_{\bm{zz}}
\end{pmatrix} : \hat{\mathcal{D}}_{h_i} \in \mathcal{D}_{h_i}, \forall i \in \mathcal{I}_2 \right\}.
\end{equation}
Since the operator $\mathcal{A}_i$, for $i \in \mathcal{I}_1 \cup \mathcal{I}_2 \backslash n$, is linear and differentiable, it holds that $\hat{\partial} F(\bm{w})[\bm{d}] = \partial F(\bm{w})[\bm{d}], \forall \bm{d}$. Moreover, any element of $\hat{\partial} F(\bm{w})$ is positive semidefinite, which is crucial for the design of the semi-smooth Newton method.

\subsubsection{Solving the semi-smooth Newton system}
Since $F(\bm{w})$ is locally Lipschitz continuous and monotone, we employ a semi-smooth Newton method to solve the nonlinear system $F(\bm{w}) = 0$. The key advantage of this approach lies in utilizing Clarke’s generalized Jacobian $\hat{\partial} F(\bm{w})$ to compute update directions. Each iteration solves a linear system of the form:
\begin{equation}\label{eq:ssn-pd}
(J^k + \tau_{k} \mathcal{I}) \bm{d}^{k} = -F(\bm{w}^k),
\end{equation}
where $\tau_{k}$ is a regularization parameter, $J^k$ is an element of the generalized Jacobian, and $\bm{d}^{k}$ is the semi-smooth Newton direction. The corresponding trial step is then defined as:
\[
  \bar{\bm{w}}^{k} = \bm{w}^k + \bm{d}^{k}.
\]
A critical step is computing the search direction $\bm{d}^{k}$ from \eqref{eq:ssn-pd}. For simplicity, we omit the superscripts or subscripts $k$, and denote the primal and dual components of the search direction as $\bm{d}_{\bm{x}} = (\bm{d}_{\bm{x}_1}, \dots, \bm{d}_{\bm{x}_{n-1}})^{\top}$ and $\bm{d}_{\bm{z}} = (\bm{d}_{\bm{z}_{n_1+1}}, \dots, \bm{d}_{\bm{z}_n})^{\top}$. The linear system for the semi-smooth Newton direction is given by:
\begin{equation}\label{eq:ssn-xz}
\left(
    \begin{array}{cc}
    \mathcal{H}_{\bm{xx}} + \tau \mathcal{I} & \mathcal{H}_{\bm{xz}} \\
    -\mathcal{H}_{\bm{xz}}^T & \mathcal{H}_{\bm{zz}} + \tau \mathcal{I}
    \end{array}
    \right)
    \left(
    \begin{array}{c}
    \bm{d}_{\bm{x}} \\
    \bm{d}_{\bm{z}}
    \end{array}
    \right) 
    = \left(
    \begin{array}{c}
    -F_{\bm{x}} \\
    -F_{\bm{z}}
    \end{array}
    \right),
\end{equation}
where $F_{\bm{x}} = (\nabla_{\bm{x}_1} \Phi(\bm{w}), \dots, \nabla_{\bm{x}_{n-1}} \Phi(\bm{w}))^{\top}$, $F_{\bm{z}} = (-\nabla_{\bm{z}_{n_1+1}} \Phi(\bm{w}), \dots, -\nabla_{\bm{z}_n} \Phi(\bm{w}))^{\top}$.

To simplify this system, we apply Gaussian elimination. First, for a given $\bm{d}_{\bm{x}}$, the direction $\bm{d}_{\bm{z}}$ can be computed as:
\begin{equation}\label{cor:z}
\bm{d}_{\bm{z}} = (\mathcal{H}_{\bm{zz}} + \tau \mathcal{I})^{-1} (\mathcal{H}_{\bm{xz}}^{\top} \bm{d}_{\bm{x}} - F_{\bm{z}}).
\end{equation}
Substituting this into \eqref{eq:ssn-xz} reduces the system to a linear equation in terms of $\bm{d}_{\bm{x}}$:
\begin{equation}\label{eqn:simp}
\mathcal{H}_{\bm{r}} \bm{d}_{\bm{x}} = F_{\bm{r}},
\end{equation}
where $F_{\bm{r}} := \mathcal{H}_{\bm{xz}} (\mathcal{H}_{\bm{zz}} + \tau \mathcal{I})^{-1} F_{\bm{z}} - F_{\bm{x}}$ and
\[
\mathcal{H}_{\bm{r}} := \mathcal{H}_{\bm{xx}} + \mathcal{H}_{\bm{xz}} (\mathcal{H}_{\bm{zz}} + \tau \mathcal{I})^{-1} \mathcal{H}_{\bm{xz}}^{\top} + \tau \mathcal{I}.
\]
The structure of $\mathcal{H}_{\bm{xz}}$, as defined in \eqref{def:H123}, allows us to write:
\begin{equation}\label{eqn:equation}
\mathcal{H}_{\bm{r}} =
\left(
\begin{array}{ccc}
\overline{\mathcal{D}}_{h_1} + \mathcal{A}_1^* \overline{\mathcal{D}}_{h_n} \mathcal{A}_1 & \cdots & \mathcal{A}_1^* \overline{\mathcal{D}}_{h_n} \mathcal{A}_{n-1} \\
\vdots & \ddots & \vdots \\
\mathcal{A}_{n-1}^* \overline{\mathcal{D}}_{h_n} \mathcal{A}_1 & \cdots & \overline{\mathcal{D}}_{h_{n-1}} + \mathcal{A}_{n-1}^* \overline{\mathcal{D}}_{h_n} \mathcal{A}_{n-1}
\end{array}
\right),
\end{equation}
where $\overline{\mathcal{D}}_{h_i} = \rho \hat{\mathcal{D}}_{h_i} + \tilde{\mathcal{D}}_{h_i}$, and $\tilde{\mathcal{D}}_{h_i} = \hat{\mathcal{D}}_{h_i} (\frac{1}{\rho} \mathcal{I} - \frac{1}{\rho} \hat{\mathcal{D}}_{h_i} + \tau \mathcal{I})^{-1} \hat{\mathcal{D}}_{h_i}$.
Thus, we only need to solve \eqref{eqn:simp} with respect to $n-1$ variables for problem \eqref{prob}. Depending on the specific structure of $\mathcal{H}_{\bm{r}}$, the linear system \eqref{eqn:simp} can be solved using various direct methods such as sparse Cholesky factorization or the Sherman-Morrison-Woodbury formula. Alternatively, iterative methods such as preconditioned conjugate gradient or QMR may be employed when appropriate.

This semi-smooth Newton method provides a robust framework for solving linearly constrained multi-block convex composite problems, achieving superlinear convergence by efficiently handling non-smooth terms through the use of second-order information from the augmented Lagrangian. Unlike the approaches in \cite{xiao2018regularized,li2018semismooth,milzarek2019stochastic}, which rely on additional first-order steps to guarantee convergence, this method updates both primal and dual variables simultaneously in a single semi-smooth Newton step. It contrasts with ALM-based methods, where separate outer and inner loops are employed \cite{li2018highly}.

\subsection{Integer programming}
Integer programming is studied extensively in computer vision, machine learning, and theoretical computer science. Here, we introduce a specific and widely applicable example: linear integer programming with a block structure. It is formulated as follows:
\begin{equation}\label{sec7:mip}
\min \ \mathbf{c}^{\top} \mathbf{x} \quad \mathrm{ s.t. } \ \mathbf{Ax} \leq \mathbf{b},~ \mathbf{x}_j \in \cX_j,\ j=1,2,\cdots,p,
\end{equation}
where $\mathbf{x}_j \in \R^{n_j}$ is the $j$-th block variable of $\mathbf{x} \in \R^{n}$, i.e., $\mathbf{x} = (\mathbf{x}_1;\cdots;\mathbf{x}_p)$ for $p \geq 1$ with $n=\sum_{j=1}^p n_j$.  $\bA \in \R^{m\times n}, \mathbf{b} \in  \R^{m}, \mathbf{c} \in  \R^{n}$ and the constraint $\mathcal{X}_j$ is the set of $0,1$ vectors in a polyhedron, i.e.,
$$\cX_j:=\{\mathbf{x}_j  \in \{0,1\}^{n_j}: \Bb_j\mathbf{x}_j \leq \mathbf{d}_j\},~j=1,2,\cdots,p.$$
The constraints can be reformulated as $\mathbf{x} \in \mathcal{X}:=\{\mathbf{x} \in \{0,1\}^n:~\Bb\mathbf{x} \leq \mathbf{d}\}$ where the block diagonal matrix $\Bb \in \R^{q\times n}$ is formed by the small submatrices $\Bb_j$ as the main diagonal and $\mathbf{d}=(\mathbf{d}_1;\cdots;\mathbf{d}_p)$. Correspondingly, $\mathbf{c}$ and $\bA$ can be rewritten as $\mathbf{c}=(\mathbf{c}_1;\mathbf{c}_2;\cdots;\mathbf{c}_p)$ with $\mathbf{c}_j \in \R^{n_j}$ and $\bA=(\bA_1~ \bA_2~\cdots~\bA_p)$ with $\bA_j \in \R^{m \times n_j}$.

We define an AL function by
  \begin{equation}\label{sec7:alf}
    L_{\rho}(\mathbf{x},\lambda)  =\sum\limits_{j=1}^p~ \mathbf{c}_j^{\top}\mathbf{x}_j+\lambda^{\top}\left(\sum\limits_{j=1}^p\bA_j\mathbf{x}_j - \mathbf{b}\right) +\frac{\rho}{2}\left\|\left(\sum\limits_{j=1}^p\bA_j\mathbf{x}_j - \mathbf{b}\right)_+\right\|^2,
  \end{equation}
The strong duality presented in Section \ref{IP-strong-duality} allows us to obtain a globally optimal solution to problem \eqref{sec7:mip} by solving the augmented Lagrangian dual problem. Therefore, it is reasonable to utilize ALM to solve this problem. Since the problem is block-structured, then a BCD method can be employed to solve the subproblem in the ALM framework by updating individual blocks iteratively \cite{rui2024}.  

 The BCD method minimizes the function $L$ by iterating cyclically in order $\mathbf{x}_1,\cdots,\mathbf{x}_p$, fixing the previous iteration during each iteration. Denote $\mathbf{x}^t = \left(\mathbf{x}_1^t; \mathbf{x}_2^t;\cdots;\mathbf{x}_p^t\right)$ where $\mathbf{x}^t_j$ is the value of $\mathbf{x}_j$ at its $t$-th update. Let
$$ L^t_{\rho}(\mathbf{x}_j,\lambda)= L(\mathbf{x}^{t+1}_1,\cdots,\mathbf{x}^{t+1}_{j-1},\mathbf{x}_j,\mathbf{x}^t_{j+1},\cdots,\mathbf{x}^t_{p},\lambda),$$
and
$$\mathbf{x}^t(j) = \left(\mathbf{x}_1^{t+1},\mathbf{x}_2^{t+1},\cdots,\mathbf{x}_{j-1}^{t+1},\mathbf{x}_{j}^{t},\mathbf{x}_{j+1}^{t},\cdots,\mathbf{x}_{p}^{t}\right).$$
Therefore $\mathbf{x}^t(1) = \left(\mathbf{x}_1^t; \mathbf{x}_2^t;\cdots;\mathbf{x}_p^t\right)=\mathbf{x}^t$ and $\mathbf{x}^t(p+1) = \left(\mathbf{x}_1^{t+1}; \mathbf{x}_2^{t+1};\cdots;\mathbf{x}_p^{t+1}\right)=\mathbf{x}^{t+1}.$ Given fixed parameters $\lambda\geq 0$ and $\rho>0$,
we can also calculate the gradient of $L_\rho(\mathbf{x}^t(j),\lambda)$ at $\mathbf{x}_j$ by
$$
  g_j(\mathbf{x}^t):=\nabla_{\mathbf{x}_j} L_\rho(\mathbf{x}^t(j),\lambda)=\mathbf{c}_j+\bA_j^{\top}\lambda+\rho\bA_j^{\top}\left(\bA\mathbf{x}^t(j) - \mathbf{b}\right)_+.
$$
At each step,  we consider two types of updates for every $\mathbf{x}_j \in \cX_j$:
\begin{subequations}
\begin{align}
  & \text{Classical:}~  \mathbf{x}^{t+1}_{j} \in \mathop{\argmin}_{\mathbf{x}_j \in \mathcal{X}_j}~ L^t_{\rho}(\mathbf{x}_j,\lambda),  \label{xjko}\\
  &\text{Proximal linear:}~
   \mathbf{x}_j^{ t + 1}  \in \mathop{\argmin}_{\mathbf{x}_j \in  \mathcal{X}_j} \left\{\langle \mathbf{x}_j-\mathbf{x}_j^t, g_j(\mathbf{x}^t)\rangle + \frac{1}{2\tau}\|\mathbf{x}_j-\mathbf{x}_j^t\|^2\right\},\label{xjkl}
  \end{align}
\end{subequations}
where $\tau>0$ is a step size.
We can obtain the following equivalent form of \eqref{xjkl}:
$$ \mathbf{x}_j^{ t + 1}=\Pi_{\cX_j}\left(\mathbf{x}_j^t-\tau g_j(\mathbf{x}^t)\right).$$

In general, the classical subproblem \eqref{xjko} is fundamentally harder to solve because of the quadratic term in the objective function. However,  we derive a simplified form of this subproblem under certain conditions. By contrast, the prox-linear subproblem \eqref{xjkl} is relatively easy to solve because the objective function is linear with respect to $\mathbf{x}_j$.
In addition, Jacobian iterative methods can be adapted to solve the subproblems in parallel. Consequently, combining the BCD method with the ALM yields the advantage of decomposing large-scale problems into lower-dimensional subproblems based on variable structures, resulting in reduced computation time.

\subsection{Reinforcement learning}\label{sec:RL}
With the recent development and empirical successes of neural networks, deep reinforcement learning has been resoundingly applied in many industrial fields. The focus of reinforcement learning (RL) is solving a Markov decision process (MDP) which models the framework for interactions between the agent and the environment \cite{sutton2018reinforcement}. Most of the work focuses on the Bellman equation, while a recent study \cite{li2023rl} directly solves the corresponding linear programming problem using ALM.

\subsubsection{ALM for LP}
The optimal Bellman equation can be interpreted as solving the LP problem \cite{bertsekas1995dynamic,puterman2014markov}:
\begin{equation}\label{eq:rllp-primal-eq}
\begin{split}
\min_{h\geq0,V}&\  \sum_{s} \rho_0(s)V(s), \\
\st &\ V(s)=  r(s,a)+  \gamma  \mathbb{E}_{s'|s,a}[ V(s')]+h(s,a), \forall s, a.
\end{split}
\end{equation}
If $a^*$ is the optimal action at state $s$, the relationship between the optimal policy and the optimal slack variable can be expressed as follows:
\be\label{handpi}
\begin{cases}
\pi^*(a|s)=1, h^*(s,a)=0, & a = a^*, \\
\pi^*(a|s)=0, h^*(s,a)>0, & a \neq a^*.
\end{cases}
\ee
Denote $x(s,a)$ as the multiplier associated with the constraints in \eqref{eq:rllp-primal-eq}. For simplicity, we introduce a $Z$-function as
\bee
Z_\mu(V,h,x,s,a) =  x(s,a)+  \mu \left(h(s,a)+r(s,a)+\gamma \mathbb{E}_{s'|s,a}[ V(s')] - V(s)\right),\label{zfunc}
\eee
and further define
\bee
z_\mu(V,h,x,s,a,s') =  x(s,a)+  \mu \left(h(s,a)+r(s,a)+\gamma V(s') - V(s)\right),\label{zfunc1}
\eee
where $\mu>0$ is the penalty parameter. Since 
$\sum_{s'} P(s'|s,a) = 1$,
we have
\bea\label{Z-z}
Z_\mu(V,h,x,s,a) = \mathbb{E}_{s'|s,a} [z_\mu(V,h,x,s,a,s')],
\eea
the $Z$-function can be interpreted as the conditional expectation over $s'$. For the LP problem \eqref{eq:rllp-primal-eq}, the weighted AL function is defined as 
\begin{equation}\label{randALM}
\mathbb{L}_{\mu}(V,h,x) = \sum_{s} \rho_0(s)V(s) +\frac{1}{2\mu}\sum_{s,a}w(s,a)[Z_\mu(V,h,x,s,a)]^2,
\end{equation}
where the weight function $w(s,a)$ is a distribution over the state and action space. At the $k$-th iteration, the update scheme of ALM can be written as
\begin{align}
(h^{k+1},V^{k+1}) &= \argmin_{h\geq0,V} \mathbb{L}_{\mu}(V,h,x^k), \label{eq:Vk}\\ 
x^{k+1}(s,a) &= Z_\mu(V^{k+1},h^{k+1},x^k,s,a) ,\forall s,a. \label{eq:xk}
\end{align}
However, when the state and action spaces are exceedingly large or even continuous, computing the elementwise update in \eqref{eq:xk} poses significant challenges. This difficulty arises from the limitations in storing and representing the Lagrange multipliers effectively. As a result, the general ALM becomes impractical due to these computational constraints.

\subsubsection{Parameterization of multipliers}
Let $ V $, $ h $, and $ x $ be parameterized by $ V_\phi $, $ h_\psi $, and $ x_\theta $, respectively. The non-negativity of $ h_\psi $ can be  ensured by the structure of the network. For simplicity, in the following discussion, we will refer to $ V_\phi $, $ h_\psi $, and $ x_\theta $ as $ \phi $, $ \psi $, and $ \theta $ without further clarification. Denote the network parameters at the $ k $-th iteration as $ \phi^k $, $ \psi^k $, and $ \theta^k $. The update rule for \eqref{eq:Vk} can then be expressed as

\begin{equation}\label{eqn:param-primal}
\min_{\phi, \psi} \hat{\mathbb{L}}_{\mu}(\phi, \psi, \theta^k) = \sum_{s} \rho_0(s) V_{\phi}(s) + \frac{1}{2\mu} \sum_{s, a} w(s, a) \left[Z_{\mu}(\phi, \psi,\theta^k, s, a) \right]^2.
\end{equation}
To address the difficulties associated with the element-wise update in \eqref{eq:xk}, we propose approximating the intractable function $Z_\mu(\phi^{k+1}, \psi^{k+1}, \theta^k, s, a)$ with a function $x_\theta(s, a)$ using a weighted least square regression:
\begin{equation}\label{eqn:param-dual}
\theta^{k+1} = \argmin_{\theta} \sum_{s, a} w(s, a) \left( Z_{\mu} (\phi^{k+1}, \psi^{k+1}, \theta^k, s, a) - x_{\theta}(s, a) \right)^2.
\end{equation}

We next present a  method to combine the two steps \eqref{eqn:param-dual} and \eqref{eqn:param-primal} into a single optimization model. To meet the requirement in \eqref{eqn:param-dual}, the term $x_\theta(s, a)$ should ideally approximate the $Z$-function well. Hence, one can replace $\left[ Z_{\mu} (\phi, \psi, \theta, s, a) \right]^2/2$ 
in \eqref{eqn:param-primal} by $x_{\theta}(s, a) Z_{\mu} (\phi, \psi, \theta, s, a)$, and enforce the matching between $x_{\theta}(s, a)$ and $Z_{\mu} (\phi, \psi,\theta, s, a)$ by adding a proximal term. This leads to the construction of a quadratic penalty function as follows:
\begin{equation}\label{eqn:rl-L}
\begin{split}
\hat{\mathbb{L}}_{\beta, \mu} (\phi, \psi, \theta) := &\sum_{s} \rho_0(s) V_{\phi}(s) + \frac{1}{\mu} \sum_{s, a} w(s, a) x_{\theta}(s, a) Z_{\mu}(\phi, \psi,\theta, s, a) \\
&+ \frac{\beta}{2} \sum_{s, a} w(s, a) \left[ x_{\theta}(s, a) - Z_{\mu}(\phi, \psi,\theta, s, a) \right]^2.
\end{split}
\end{equation}
However,  estimating the gradient with respect to $\phi$ is impractical, which is the well-known double-sampling obstacle.
In order to overcome the obstacle, we introduce a target value network $V_{\phi^\text{targ}}$ as the usage in DQN. Additionally, we also replace $x_{\theta^k}$ with a target multiplier network $x_{\theta^\text{targ}}$ which shares the same structure as that of $x_\theta$. We denote
\begin{align*}
z_{\mu}^{\text{targ}}(\phi, \psi, s, a, s') &= x_{\theta^{\text{targ}}}(s, a) + \mu \left( h_{\psi}(s, a) + r(s, a) + \gamma V_{\phi^{\text{targ}}}(s') - V_{\phi}(s) \right), \\
Z_{\mu}^{\text{targ}}(\phi, \psi, s, a) &= \mathbb{E}_{s'|s, a} \left[ z_{\mu}^{\text{targ}}(\phi, \psi, s, a, s') \right].
\end{align*}
By adding a term unrelated to $\phi$, $\theta$ and $\psi$, we can rearrange \eqref{eqn:rl-L} as
\begin{align*}
\hat{\mathbb{L}}_{\beta, \mu} (\phi, \psi, \theta) = \sum_{s} \rho_0(s) V_{\phi}(s) + \frac{1}{\mu} \sum_{s, a, s'} p_w(s, a, s') x_{\theta}(s, a) z_{\mu}^{\text{targ}}(\phi, \psi, s, a, s') \\
+ \frac{\beta}{2} \sum_{s, a, s'} p_w(s, a, s') \left( x_{\theta}(s, a) - z_{\mu}^{\text{targ}}(\phi, \psi, s, a, s') \right)^2,
\end{align*}
where $p_w(s,a,s')=w(s,a)P(s'|s,a)$. Then the gradient can be denoted as
\begin{align*}
e_i &= \beta \left( z_\mu^{\text{targ}} \left( \phi^l, \psi^l, s_i, a_i, s_i' \right) - x_{\theta^l}(s_i, a_i) \right), \\
g_i &= \nabla_\phi V_{\phi^l} (s_{0, i}) - \left( x_{\theta^l}(s_i, a_i) + \mu e_i \right) \nabla_\phi V_{\phi^l} (s_i), \\
q_i &= \left( x_{\theta^l}(s_i, a_i) + \mu e_i \right) \nabla_\psi h_{\psi^l}(s_i, a_i), \\
m_i &= \frac{1}{\mu} \left(z_\mu^{\text{targ}} \left(\phi^l, \psi^l, s_i, a_i, s_i' \right) - \mu e_i \right) \nabla_\theta x_{\theta^l}(s_i, a_i).
\end{align*}
The deep parameterized scheme for the ALM \eqref{eq:Vk}-\eqref{eq:xk} is to solve the optimization problem
\begin{equation}\label{eqn:rl-subprob}
\min_{\phi,\psi,\theta} \hat{\mathbb{L}}_{\beta, \mu} (\phi, \psi, \theta).
\end{equation}
Therefore, the stochastic gradient descent method can be applied to update the parameters as follows:
\begin{equation}\label{eqn:rl-alm-update}
\begin{split}
\phi^{l+1} &= \phi^l - \frac{\alpha_{\phi}}{b} \sum\nolimits_{i=1}^{b} g_i, \\
\psi^{l+1} &= \psi^l - \frac{\alpha_{\psi}}{b} \sum\nolimits_{i=1}^{b} q_i, \\
\theta^{l+1} &= \theta^l - \frac{\alpha_{\theta}}{b} \sum\nolimits_{i=1}^{b} m_i,
\end{split}
\end{equation}
where $\alpha_\phi , \alpha_\theta , \alpha_\psi > 0$ are step sizes.

\subsection{Distributed optimization}\label{sec:dis-opt}
Consider a network consisting of $N$ agents who collectively optimize the following problem
\begin{equation}\label{prob:disributed}
\min_{x \in \mathbb{R}^n}\ f(x):=\sum_{i=1}^N f_i(x),
\end{equation}
where $f_i(x): \mathbb{R}^n \rightarrow \mathbb{R}$ is a function local to agent $i$.  Suppose the agents are connected by a network defined by an undirected graph $\mathcal{G}=\{\mathcal{V}, \mathcal{E}\}$, with $|\mathcal{V}|=N$ vertices and $|\mathcal{E}| = E$ edges. The graph is associated with a $N\times N$ symmetric, doubly stochastic weight matrix $W$.  Each agent can only communicate with its immediate neighbors, and it is responsible for optimizing one component function $f_i$. This problem has found applications in various domains such as distributed consensus, distributed communication networking, distributed and parallel machine learning and distributed signal processing. The distributed version of problem \eqref{prob:disributed} is given as below
\begin{equation}\label{prob:disributed-1}
    \min_{x_1,\cdots,x_N}\ \sum_{i=1}^N f_i(x_i)\quad \mathrm{s.t.} \quad x_i = x_j, \forall (i,j) \in \mathcal{E}. 
\end{equation}
The standard distributed gradient method \cite{tsitsiklis1986distributed,nedic2009distributed} has the following update rule:
\begin{equation}\label{update:dgd}
    x_i^{k+1} = \sum_{j = 1}^N W_{i,j} x_{j}^k - \alpha \nabla f_i(x_{i}),~ i\in [N],
\end{equation}
where $\alpha$ is the stepsize. If we let $\mathbf{x} = [x_1;\cdots;x_N] \in \mathbb{R}^{nN \times 1}$, $f(\mathbf{x}): = \sum_{i=1}^N f_i(x)$ and $\mathbf{W} = W \otimes I_n$, \eqref{update:dgd} can be rewritten as  
\begin{equation}
    \mathbf{x}^{k+1} = \mathbf{W} \mathbf{x}^k - \alpha \nabla f(\mathbf{x}_k).
\end{equation}

In fact, \eqref{prob:disributed-1} is a constrained optimization problem, and can be solved by ALM. In particular, we rewrite \eqref{prob:disributed-1} as the following compact form:
\begin{equation}\label{pro:decentra}
    \min_{\mathbf{x} \in \mathbb{R}^{nN \times 1}} \ \sum_{i=1}^N f_i(x_i)\quad\mathrm{s.t.}\quad A\mathbf{x} = 0. 
\end{equation}
The matrix $A\in \mathbb{R}^{nN \times nN}$, and satisfy that the linear constraint  $A\mathbf{x} = 0$ is  equivalent to $x_1 = x_2= \cdots = x_N$. For example, $A = (I - \mathbf{W})$. 
A direct application of ALM for \eqref{pro:decentra} then has the following form:
\begin{equation}\label{eq:distributed-ALM}
    \left\{
\begin{aligned}
    \mathbf{x}^{k+1} & = \arg\min_{\mathbf{x}} \mathbb{L}_{\rho_k}(\mathbf{x},\lambda^k),\\
   \lambda^{k+1} &=\lambda^{k} + \rho_k A\mathbf{x}^{k+1}.
\end{aligned}
    \right.
\end{equation}
In the above algorithm, the quadratic term $\|A\mathbf{x}\|^2$ in the primal variable update couples all the agents, spoils the otherwise nice separable structure of the primal update and blocks a directed implementation of this method in a distributed setting \cite{jakovetic2020primal}. 

However, we can make some modifications for ALM to obtain distributed-friendly algorithms. The key is to decouple the quadratic term. In fact, there is a very high degree of flexibility of how one can perform the primal update and the dual update \cite{jakovetic2020primal}. 
An even more aggressive change to \eqref{eq:distributed-ALM} is to carry out but one gradient step from the current iterate, i.e., carrying out an Arrow-Hurwitz-Uzawa (AHU) algorithm. This also gives an advantage of avoiding ``double-looped'' methods, where primal and dual updates are carried out at different time scales. The AHU-type algorithm blueprint leaves us near the structure of efficient distributed first-order primal-dual methods such as EXTRA \cite{shi2015extra}, DIGing \cite{nedic2017achieving}, and their subsequent generalization in \cite{jakovetic2018unification}. More precisely, while EXTRA and DIGing have been developed from a perspective different than primal-dual, they are in fact instances of primal-dual methods as first shown in \cite{nedic2017achieving}. Moreover, \cite{hong2017prox} can also consider the proximal ALM framework to ensure that the primal problem is decomposable over different network nodes, hence distributionally implementable. 

To illustrate, we consider the case that the $\mathbf{x}$-subproblem is only updated by a single gradient step, %
i.e.,
$$
\left\{
\begin{aligned}
& \mathbf{x}^{k+1}=\mathbf{x}^{k}-\alpha \nabla_\mathbf{x} \mathbb{L}_{\rho_k}\left(\mathbf{x}^{k}, \lambda^k\right), \\
& \lambda^{k+1}=\lambda^k+\alpha \nabla_{\lambda} \mathbb{L}_{\rho_k}\left(\mathbf{x}^{k+1}, \lambda^k\right).
\end{aligned}
\right.
$$
where $\alpha$ is the stepsize. When $\rho_k = \rho = 1/\alpha$ and the matrix $A$ is chosen as $A = \rho\sqrt{I-\mathbf{W}}$,  the above iterative process can be rewritten as 
$$
\left\{
\begin{aligned}
\mathbf{x}^{0}= & W \mathbf{x}^{0}-\alpha \nabla f\left(\mathbf{x}^{0}\right), \\
\mathbf{x}^{k+1}= & 2 W \mathbf{x}^{k}-\alpha \nabla f\left(\mathbf{x}^{k}\right)-W \mathbf{x}^{k-1}  +\alpha \nabla f\left(\mathbf{x}^{k-1}\right), \quad k=1,2, \cdots.
\end{aligned}
\right.
$$
It corresponds to the EXTRA method proposed by \cite{shi2015extra}; see also \cite{jakovetic2018unification}, this is a method developed from the standpoint of correcting the fixed point equation of the standard distributed gradient method \cite{nedic2009distributed}. 
On the other hand, the methods in \cite{qu2017harnessing,nedic2017geometrically}, see also \cite{xu2015augmented,nedic2017geometrically}, utilize a gradient tracking principle and work as follows:
\begin{equation}\label{eq:distri:trcking}
\left\{
\begin{aligned}
\mathbf{x}^{k+1} & =W \mathbf{x}^{k}-\alpha \mathbf{s}^{k}, \\
\mathbf{s}^{k+1} & =W \mathbf{s}^{k}+\nabla f\left(\mathbf{x}^{k+1}\right)-\nabla f\left(\mathbf{x}^{k}\right),
\end{aligned}
\right.
\end{equation}
where $\mathbf{s}^{k}$ is an auxiliary variable that tracks the global average of the $f_i$ 's individual gradients along iterations. The method also accounts for a primal-dual interpretation, as shown in \cite{nedic2017achieving,nedic2017geometrically,jakovetic2018unification};  For example, following \cite{jakovetic2018unification}, the update rule \eqref{eq:distri:trcking} can be rewritten as follows (here, $\rho_k = 1/\alpha$):
$$
\left\{
\begin{aligned}
\mathbf{x}^{k+1} & =\mathbf{x}^{k}-\alpha \nabla_\mathbf{x} \mathbb{L}_{1/\alpha}\left(\mathbf{x}^{k}, \lambda^{k}\right), \\
\nu^{k+1} & =\nu^{k}+\alpha \nabla_\lambda \mathbb{L}_{1/\alpha}\left(\mathbf{x}^{k+1}, \lambda^{k}\right)  -\alpha W \nabla_\lambda \mathbb{L}_{1/\alpha}\left(\mathbf{x}^{k}, \lambda^{k}\right).
\end{aligned}
\right.
$$
More details are referred to \cite{jakovetic2018unification}.

\section{Numerical Experiments}
\label{sec-numerical-experiments} 
{We now illustrate the practical performance and numerical behavior of several representative methods, drawing on our own computational experience. This section serves only as a high-level overview; specific algorithmic settings, implementation details, and comprehensive results are provided in the corresponding references.}
\subsection{ALM for nonlinear programming}
{
We evaluate the performance of the ALM on a class of quadratic fractional programming problems given by:
\begin{equation}\label{sec8:eq:nlp}
\begin{aligned}
\min_{x \in \mathbb{R}^n} \quad &\sum_{i=1}^m \quad \left( \frac{x^\top a_i}{x^\top b_i} - \frac{x^\top c_i}{x^\top d_i} \right)^2, \\
\text{s.t.} \quad &  x^\top h_1 = 1,\ x^\top h_2 = 0, \
\ell \leq x \leq u,
\end{aligned}
\end{equation}
where $a_i, b_i, c_i, d_i \in \mathbb{R}^n$ for $i = 1, 2, \ldots, m$, and $\ell, u, h_1, h_2 \in \mathbb{R}^n$ are given constant vectors. This formulation can be equivalently cast into the general framework by setting $c(x) = [x^\top h_1,\ x^\top h_2]^\top$, with constraint set $\mathcal{K}_c = \{ [1, 0]^\top \}$ and variable bounds $\mathcal{K}_x = [\ell, u]$.

In the experiments, we generate 20 problem instances with randomly sampled data. The problem dimension is fixed at $n = 20$ and the number of fractional terms $m = 10000$.
For each instance, we apply the ALM-based methods where the AL function is minimized using the gradient method with the Barzilai–Borwein step sizes (ALM(GBB)), the limited-memory BFGS (ALM(L-BFGS)), the full BFGS (ALM(BFGS)), and the Newton-type updates (ALM(Newton)). These variants are designed to efficiently solve the constrained optimization problems. To benchmark the effectiveness of these ALM-based methods, we also include results from MATLAB’s built-in solver fmincon, using both the interior-point (IP) method and the sequential quadratic programming (SQP) method as baselines. These solvers are widely adopted in practice and serve as strong reference points in terms of solution quality and computational time. The performance results of the total running time and the number of function evaluations are summarized in Figure \ref{fig:ALM_nonlinear_perf}, where logarithmic scaling is applied to better visualize the differences across problem instances. The detailed numerical results can be found in \cite{zhou2024augmented}.

\begin{figure*}[!htb]
    \centering
      \subfigure[Running Time]{\includegraphics[scale = 0.42]{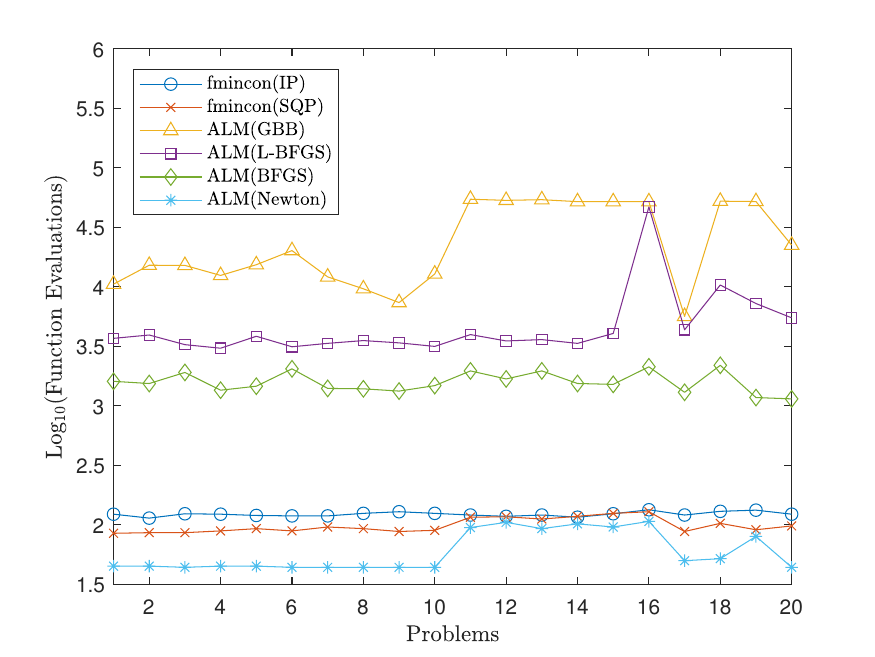}}
      \subfigure[Number of Function Evaluations]{\includegraphics[scale = 0.42]{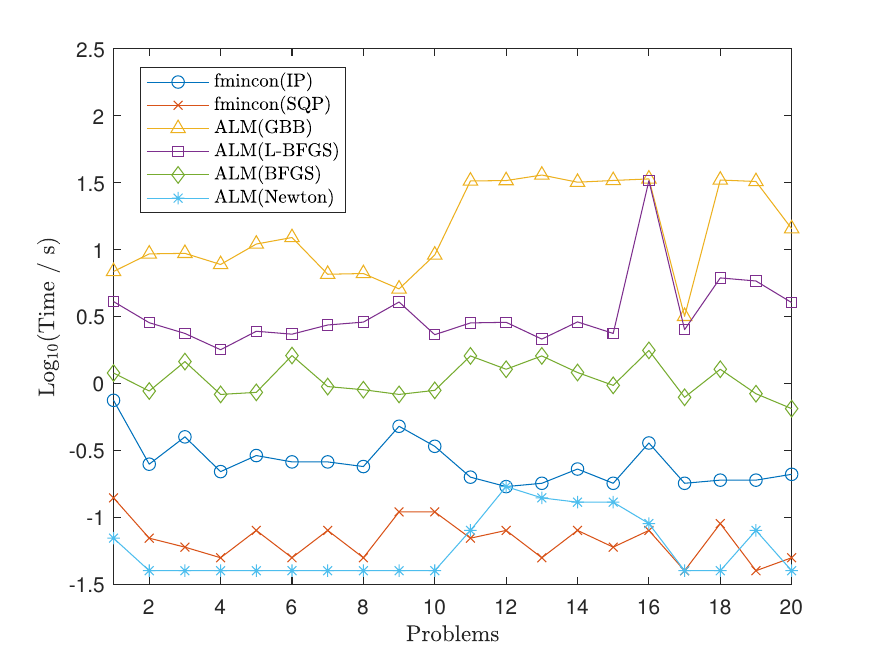}}
    \caption{Comparison of the performance of different algorithms on 20 problems in the form of \eqref{sec8:eq:nlp}}
      \label{fig:ALM_nonlinear_perf}
\end{figure*}

It is observed from these figures that the performance of ALM-based methods is competitive with that of fmincon. Among them, ALM(Newton) achieves the lowest overall running time across most problems, benefiting from the utilization of second-order information. ALM(L-BFGS) and ALM(BFGS) also show stable behavior and offer a good trade-off between computational cost and convergence behavior. In contrast, the ALM(GBB) requires more function evaluations in some cases, though it has limited memory requirements.  %
}

\subsection{An AL primal-dual first-order method}
To evaluate the performance of the primal dual methods discussed in Section \ref{sec:fully-primal-dual}, we consider the basis pursuit problem: 
\begin{equation}\label{bp}
        \min_x\ \|x\|_1,\quad \st\ Ax=b,
\end{equation}
where $A\in\mathbb{R}^{m\times n}$ is of full row rank and $b\in\mathbb{R}^m$.
By following the update rule given in \eqref{generalalgo}, we can easily get the primal-dual methods for solving \eqref{bp}.
Define the following metrics to describe the relative error between the iterate and the optimum:
\[
    \text{RelErr} = \frac{\|x-x^*\|_2}{\max(\|x^*\|_2,1)}.
\]

Our test problems \cite{milzarek2014semismooth} are constructed as follows.
Firstly, we create a sparse solution $x^* \in \mathbb{R}^{n}$ with $k$ nonzero entries, where $n = 512^2 = 262144$ and $k = [n/40] = 5553$. 
The $k$ different indices are sampled uniformly from $\{1,2,\cdots,n\}$.
The magnitude of each nonzero element is determined by $x^*_{i}=\eta_{1}(i) 10^{d \eta_{2}(i) / 20}$, 
where $d$ is a dynamic range, $\eta_{1}(i)$ and $\eta_{2}(i)$ are uniformly randomly chosen from $\{-1, 1\}$ and $[0, 1]$, respectively. 
The linear operator $A$ is defined as $m = n/8 = 32768$ random cosine measurements, i.e., $A x=(\operatorname{dct}(x))_{J}$, 
where $\operatorname{dct}$ is the discrete cosine transform and the set $J$ is a subset of $\{1, 2, \cdots, n\}$ with $m$ elements.

The relative error with respect to the total number of iterations for different problems are shown in Figure \ref{fig:bp_05_25}. 
To ensure a fair performance comparison among different algorithms, we select the step sizes $\tau$ and $\sigma$, along with the penalty factor 
$\rho$, from specific ranges and then choose the optimal parameter set for comparison. To investigate the impact of the penalty term in AL, we set the parameter $\rho=0$ in both CP and OGDA to contrast CP-AL and OGDA-AL. Furthermore, SOGDA-AL is a newly proposed method \cite{zhu2022unified} by setting $\mu=1, \alpha=0, \beta=1$ and $\rho>0$.
ADMM is implemented in the software YALL1 \cite{yang2011alternating}.
It is worth noticing that the subproblem of ADMM for the dual problem can be solved exactly since $AA^T=I$.

The penalty term potentially accelerates the primal-dual method, especially for OGDA and high accuracy solution.
SOGDA-AL outperforms other algorithms in most of the problems.
By contrast, although ADMM solves the subproblem exactly and adopts the strategy of updating the penalty parameters, it has no obvious advantage over the other algorithms.
In addition, the linear convergence rate of the primal-dual methods can be observed, as we have proved.

\begin{figure*}[!htb]
    \centering
      \subfigure[20dB]{\includegraphics[scale = 0.42]{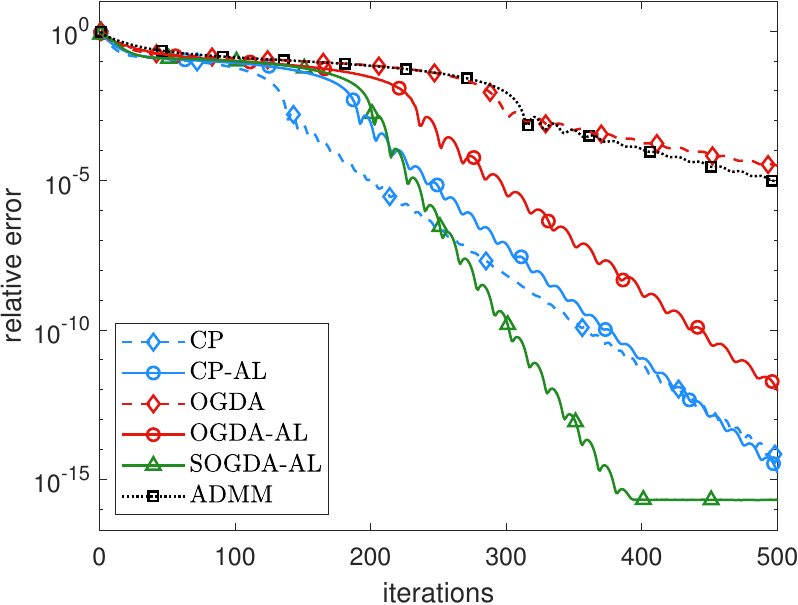}}
      \subfigure[40dB]{\includegraphics[scale = 0.42]{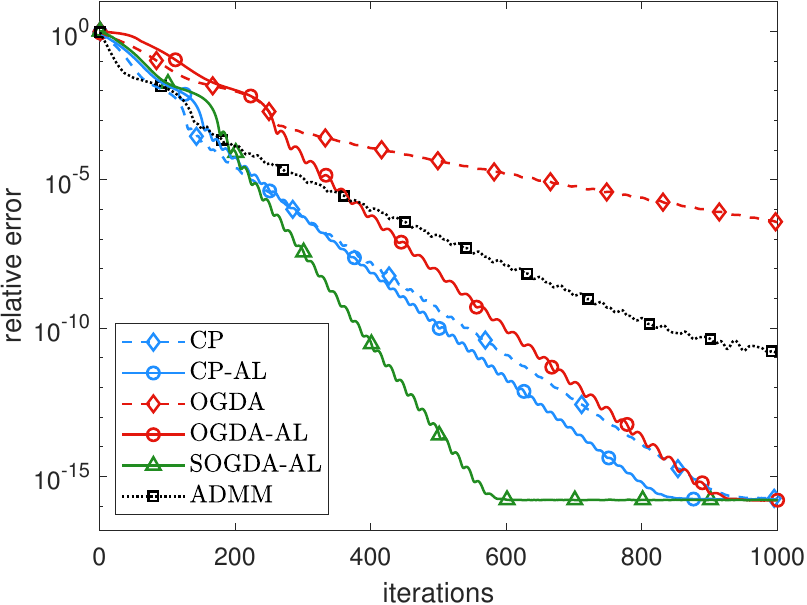}}
    \caption{Comparisons among fully primal-dual methods on basis pursuit problems.}
      \label{fig:bp_05_25}
\end{figure*}

\subsection{An AL decomposition method for SDP}
{ In this subsection, we evaluate the performance of ALM for solving  SDP, as discussed in Section \ref{sec7:nonsmooth-sdp}. Specifically, we focus on the semidefinite relaxation of the Max-Cut problem augmented with a cutting-plane procedure.
Given an undirected graph with $n$ nodes, the SDP relaxation of the Max-Cut problem can be formulated as
\begin{equation}\label{eqn:mc-cut}
    \min  -\frac{1}{4}\operatorname{Tr}(CX), \;
        \operatorname{ s.t. }  \; \diag(X) = e,\; \mathcal{A}(X) \geq -e, \; X\succeq 0,
\end{equation}
where $C$ is the graph Laplacian matrix, and $\mathcal{A}(X) \geq -e$ stands for
a subset of the following cutting planes: $\forall~ 1 \leq i < j < k \leq n,$
\[
\begin{aligned}
    X_{ij} + X_{ik} + X_{jk} &\geq -1,
    X_{ij} - X_{ik} - X_{jk} &\geq -1, \\
    -X_{ij} + X_{ik} - X_{jk} &\geq -1,
    -X_{ij} - X_{ik} + X_{jk} &\geq -1.
\end{aligned}
\]
In order to compute $\mathcal{A}(X)$, we only need to collect the related components $X_{ij}$ that appear in $\mathcal{A}$ instead of
forming $X = R^TR$ explicitly.
A complete \texttt{Gset} %
dataset is tested. We compare the accuracy and efficiency of SDPDAL with that of SDPNAL+ using the performance profiling method proposed in \cite{dolan2002benchmarking}.
Let $t_{p,s}$ be some performance quantity (e.g. time or accuracy, lower is better) associated with the $s$-th solver on problem $p$.
Then one computes the ratio $r_{p,s}$ between $t_{p,s}$ over the smallest value obtained by $n_s$ solvers on problem $p$, i.e., $r_{p,s} :=\frac{t_{p,s}}{\min\{t_{p,s}: 1\leq s \leq n_s\}}$. For $\tau >0$, the value
$
\pi_{s}(\tau) : = \frac{\text{number of problems where } \log_2(r_{p,s}) \leq \tau}{\text{total number of problems}}
$
indicates that solver $s$ is within a factor $2^\tau \geq 1$ of the performance obtained by the best solver. Then the performance plot is a curve $\pi_s(\tau)$ for each solver $s$ as a function of $\tau$. In Figure \ref{fig:mc-perf}, we show the performance profiles of two criteria: error induced by the KKT condition and CPU time.
In particular, the intercept point of the axis ``ratio of problems'' and the curve in each subfigure is the percentage of the slower/faster one between the two solvers. These figures show that the accuracy and the CPU time of SDPDAL are better than SDPNAL+ on most problems.

\begin{figure}[!htb]
\centering
\subfigure[error]{
\includegraphics[width=0.45\textwidth]{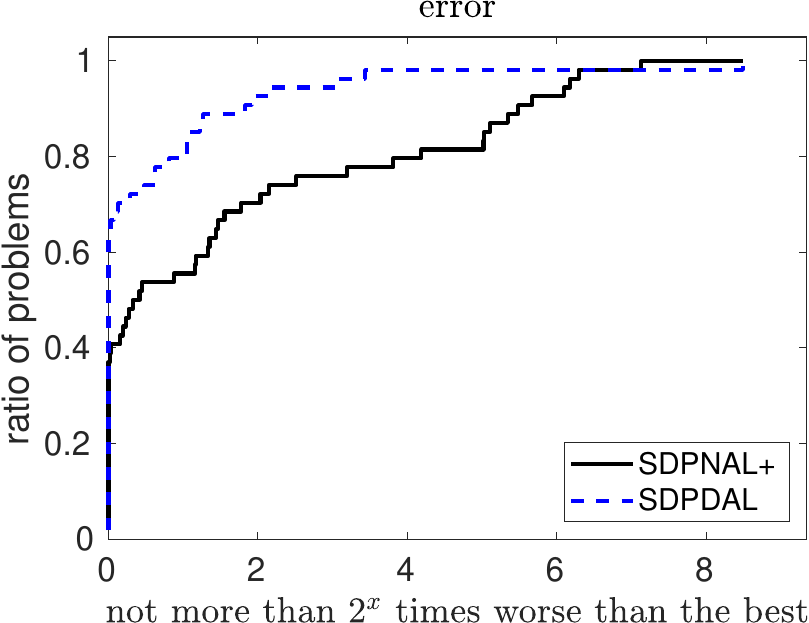}}
\subfigure[CPU]{
\includegraphics[width=0.45\textwidth]{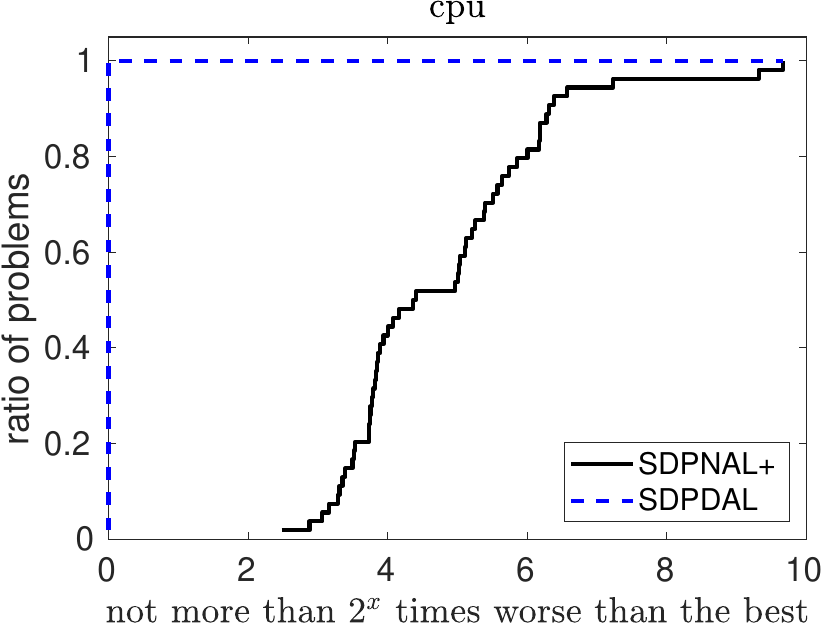}}
\caption{The performance profiles of SDPDAL and SDPNAL+ for Max-Cut problems \emph{(\texttt{G01--G54})}}\label{fig:mc-perf}
\end{figure}
}

\subsection{An AL primal-dual semismooth Newton method}
{In this subsection, we evaluate the efficiency of the AL primal-dual semismooth Newton method for solving the multi-block convex composite optimization problem in Section \ref{sec7:multi-block}. In particular, we consider a specific example of problem \eqref{sec7:prob:multiblock}:
\begin{equation} \label{image}
\min_{\bm{u}\in \mathbb{R}^n} \quad \frac{1}{2}\|\bm{A}\bm{u}-\bm{b}\|^2 + \delta_{\mathcal{Q}}(\bm{u}) + \lambda \|\bm{D}\bm{u}\|_1,    
\end{equation}
where $\bm{b}\in \mathbb{R}^m$ is the observed term, $\mathcal{Q} = \{ \bm{u}\in \mathbb{R}^n:0 \le \bm{u} \le 255\}   $ is the pixel range,  $\bm{A} \in \mathbb{R}^{m \times n}$ is a given linear operator, $\bm{D}$ is a given image transform operator, such as the TV operator or Wavelet transform, and $\| \cdot \|_1$ represents the $\ell_1$ norm. When $\bm{A}$ is the identity operator, then the corresponding problem is the image denoising problem.   When $\bm{A}$ is the blurring operator, it leads to the image deblurring problem.  When $\bm{A}$ is the Radon or Fourier transform, the corresponding problem is usually the CT/MRI reconstruction problem. As a consequence, problem \eqref{image} contains various kinds of image restoration problems. The dual problem of \eqref{image} is
\begin{equation} \label{dual:image2}
\begin{aligned}
\min_{\bm{y},\bm{\mu},\bm{\nu}} \quad & \frac{1}{2}\|\bm{y}\|^2+\iprod{\bm{y}}{\bm{b}} + \delta^*_{\mathcal{Q} }(\bm{\nu}) + \delta_{\|\cdot\|_{\infty}<\lambda}(\bm{\mu}),\\
\st \quad & \bm{A}^{\top}\bm{y}+\bm{D}^{\top}\bm{s}+\bm{\nu} =0, \quad \bm{s}=\bm{\mu},
\end{aligned}
\end{equation}
where $\|\cdot \|_{\infty}$ denotes the $\ell_{\infty}$ norm.

In this example, we test the CT image restoration problem. We  generate the data from 180 angles at
$1^{\circ}$ increments from  $1^{\circ}$  to  $180^{\circ}$ with equidistant parallel X-ray beams and then choose 50 angles randomly.  
The sizes of tested classic images Phantom\footnote{\href{https://www.kaggle.com/datasets/kmader/siim-medical-images?resource=download}{ https://www.kaggle.com/datasets/kmader/siim-medical-images}} are 128 $\times$ 128. 
Then the corresponding operator matrix   
$\bm{A} \in \mathbb{R}^{9250 \times 16384}$ represents a sampled radon transform, while $\bm{D}: \mathbb{R}^{128 \times 128} \rightarrow \mathbb{R}^{256 \times 128}$ corresponds to the anisotropic total variation regularization. The sampling process and the images recovered by the filter-backprojection (FBP) method are visualized in Figure \ref{fig:FBP1}, which demonstrate the effectiveness of our model.  In numerical experiments, we choose $\lambda = 0.1,0.05$.   We
define the relative function error, $\mbox{ferror}:= \frac{f-f^*}{f}$  between the current objective function value $f$ and the optimal function value $f^*$, where $f^*$ is obtained by stopping PD3O for 10000 iterations.  We compare our algorithm with {first-order methods}, PD3O \cite{yan2018new},   PDFP \cite{chen2016primal},  and Condat-Vu  \cite{condat2013primal}. 
The numerical results
are shown in Table \ref{tab:image1}, where  ``iter'' denotes the number of iterations, ``num'' represents the number of matrix-vector multiplications of $\bm{Au}$, and ``time'' is the total wall-clock time. We present the iterative trajectories of all algorithms in
Figure \ref{fig:image1}. %
We observe that ALPDSN requires significantly less time than the other three algorithms to achieve a satisfactory {peak signal-to-noise ratio (PSNR)}. Additionally, ALPDSN converges the fastest in terms of error.
} %
\begin{figure}[htbp]   
\centering
\begin{minipage}{0.48\textwidth}
    \centering
\subfigure[Radon transforms of Phantom]{\includegraphics[width=0.45\textwidth]{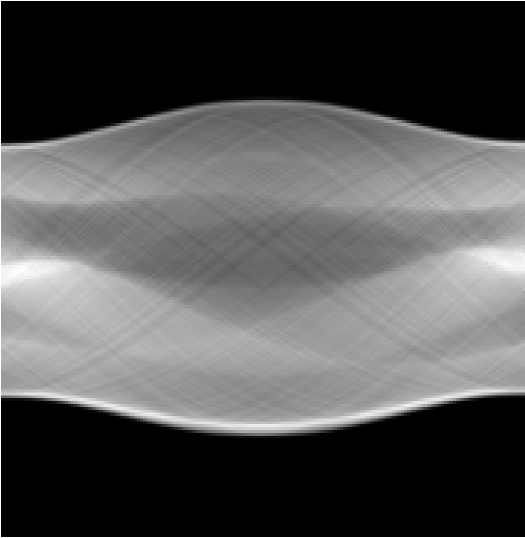}}
    \hspace{0.1in} %
\subfigure[ Sampled
Radon transforms]{\includegraphics[width=0.45\textwidth]{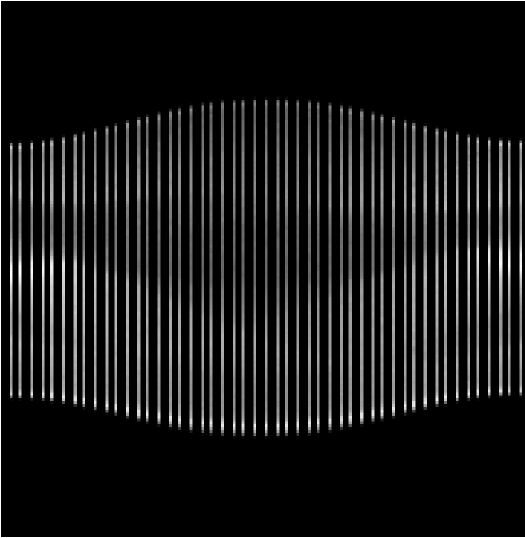}}
\end{minipage}
\hfill
\begin{minipage}{0.48\textwidth}
    \centering
\subfigure[Restored image by FBP ]{\includegraphics[width=0.45\textwidth]{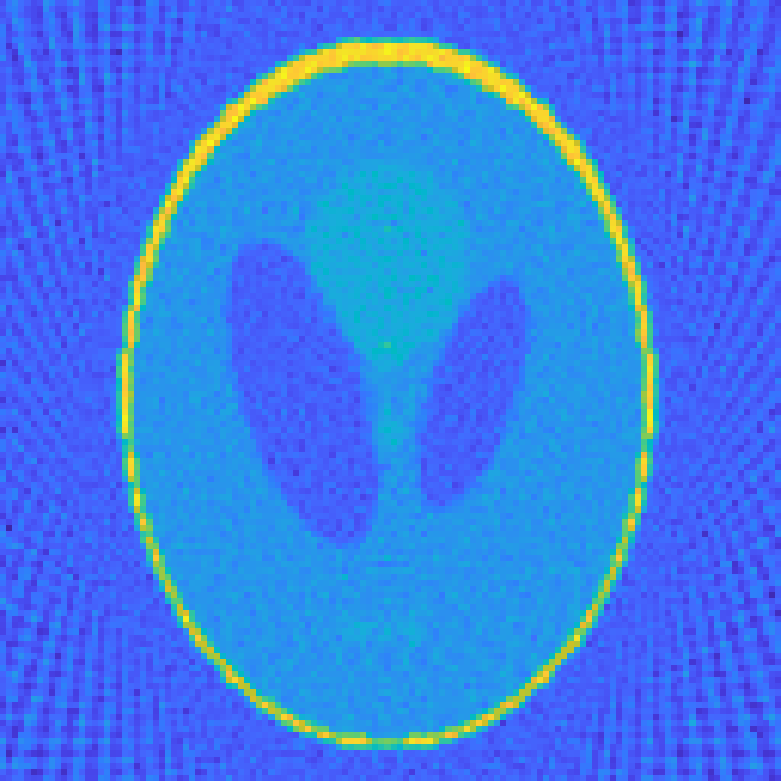}}
    \hspace{0.1in} %
\subfigure[Restored image by ALPDSN]{\includegraphics[width=0.45\textwidth]{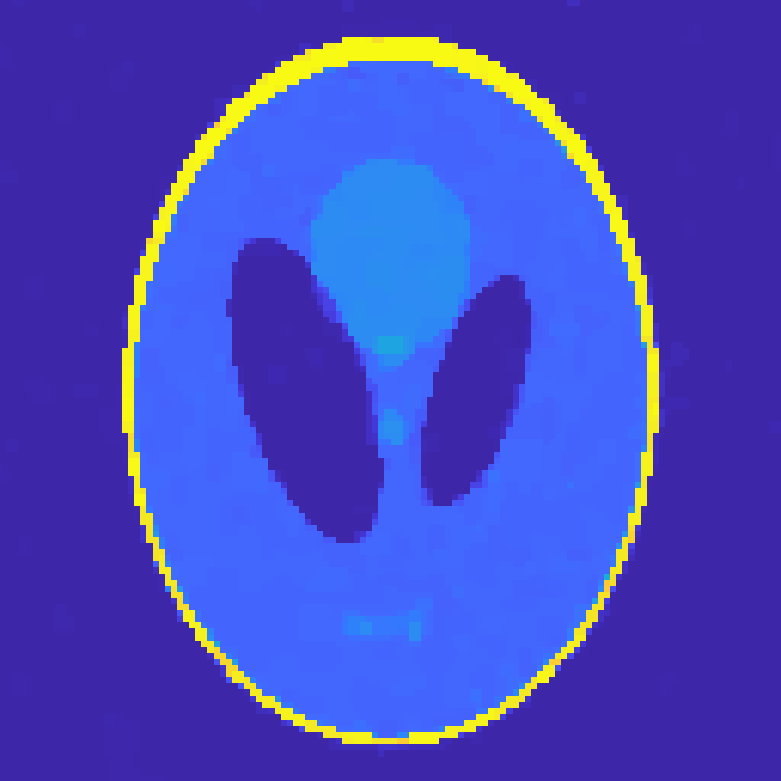}}
\end{minipage}
\caption{Phantom: the PSNR of FBP and  ALPDSN are $19.26$, $35.37$, respectively.} 
\label{fig:FBP1}
\end{figure}

{\renewcommand{\arraystretch}{1}{
\begin{table}[t]
\caption{
Summary on the CT image restoration problem}\label{tab:image1}
  \centering
\begin{tabular}{|c|c|c|c|c|c|}
\cline{1-6}
Image Name &  &ALPDSN & PD3O & PDFP & Condat-Vu \\ \cline{1-6}
\multirow{4}{*}{Phantom($\lambda = $0.1)}& iter & 346 &2689&3196 &4678 \\ \cline{2-6}
 & num &2506 & 2689 & 3185 & 4686 \\\cline{2-6}
 & ferror & 9.45e-8 & 9.98e-8 & 9.97e-8 & 9.99e-07  \\ \cline{2-6}
&time &\textbf{14.42}&19.80 & 22.85 & 35.19\\        \cline{1-6}

\multirow{4}{*}{Phantom($\lambda = $0.05)}& iter & 404 &2992&3571 &5312 \\ \cline{2-6}
 & num &2125 & 2992 & 3571 & 5312 \\\cline{2-6}
 & ferror & 7.39e-8 & 9.98e-8 & 9.97e-8 & 9.98e-08  \\ \cline{2-6}
&time &\textbf{14.05}&22.23 & 26.35 & 37.58\\        \cline{1-6}

\end{tabular}
\end{table}
}

\begin{figure}[htbp]
    \centering
      \subfigure[ferror vs time]{\includegraphics[scale = 0.32]{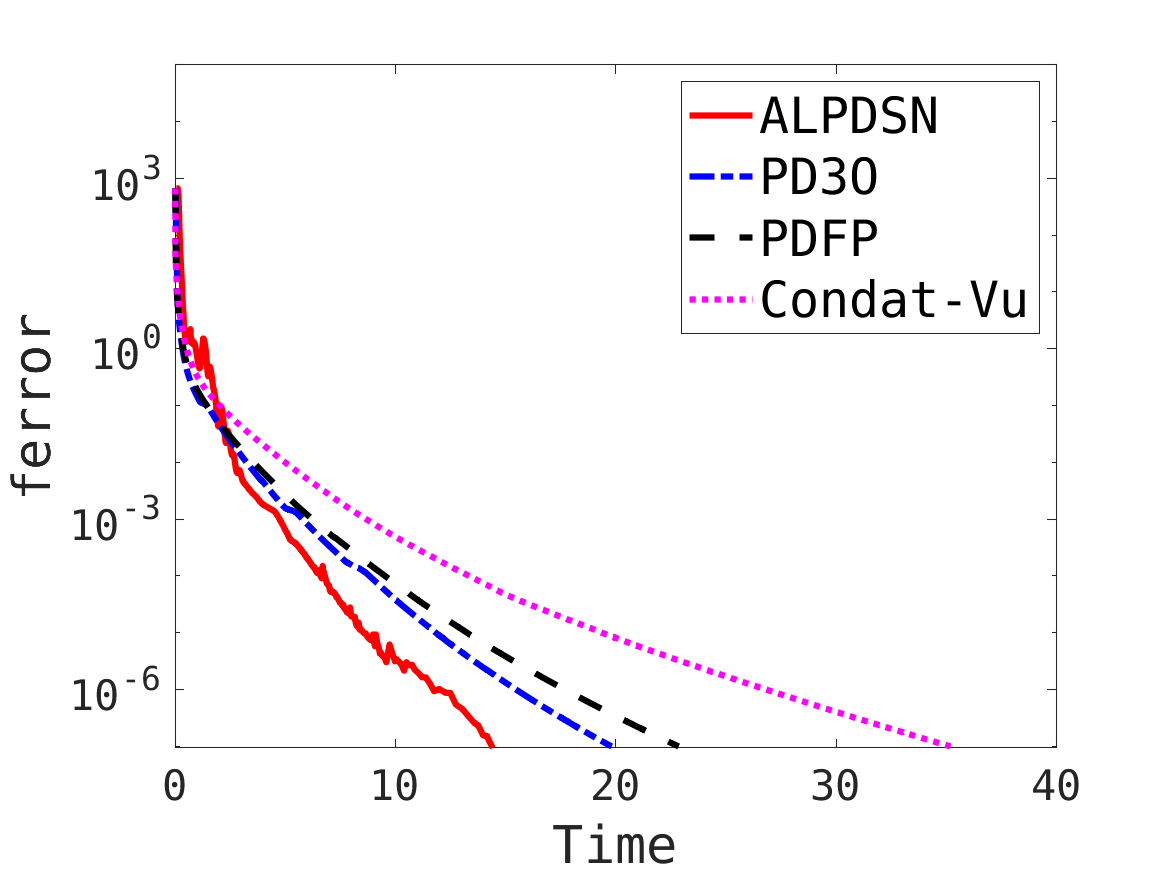}} 
      \subfigure[PSNR vs time]{\includegraphics[scale = 0.32]{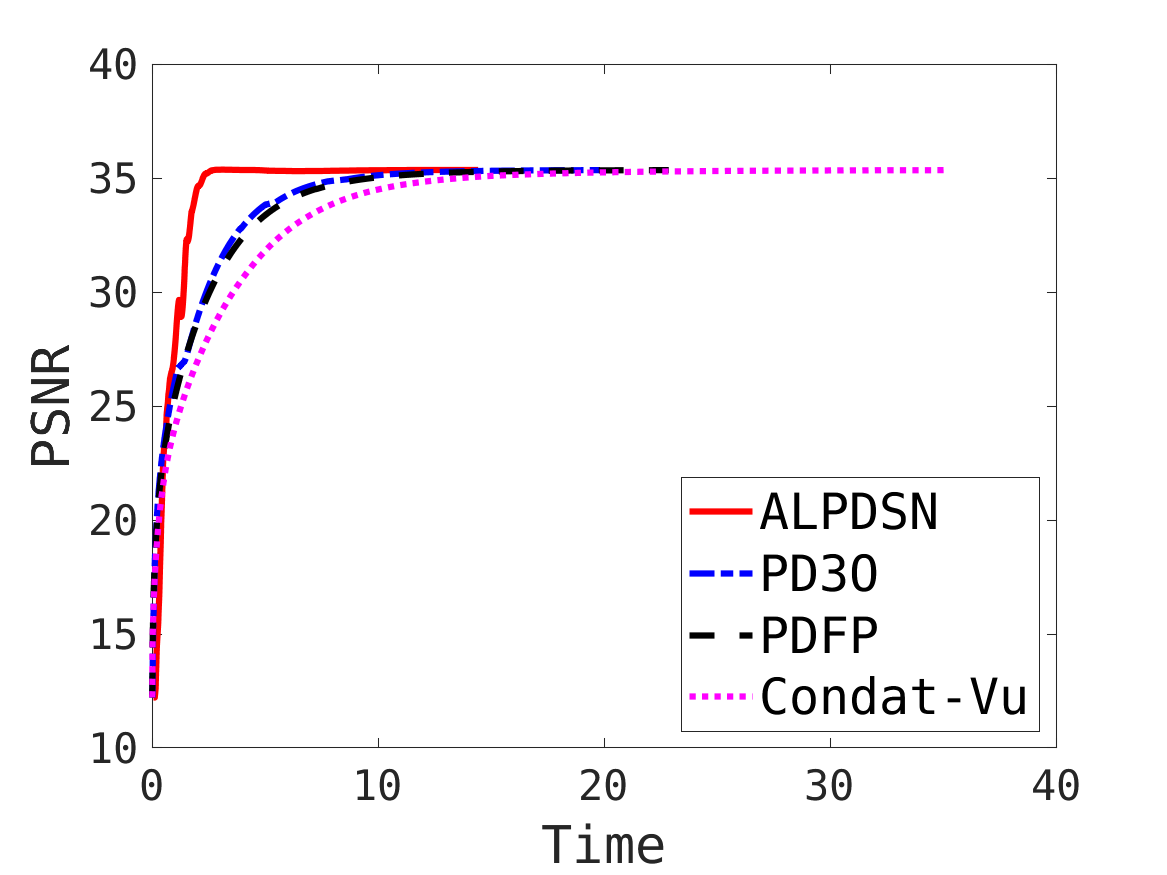}}
    \caption{Comparison between ALPDSN and other first-order methods on Phantom. %
    }
      \label{fig:image1}      
\end{figure}

\subsection{ALM for reinforcement learning}
We assess the stability and efficiency of the composite augmented Lagrangian algorithm (SCAL), introduced in Section~\ref{sec:RL}, on a suite of continuous control tasks from the OpenAI Gym benchmark~\cite{brockman2016openai}, simulated in MuJoCo~\cite{todorov2012mujoco}. These environments cover a range of dynamic behaviors and state–action dimensionalities, thereby serving as a rigorous testbed for reinforcement learning methods. As DQN is not applicable in continuous settings, we compare SCAL against two state-of-the-art baselines: trust region policy optimization (TRPO) and proximal policy optimization (PPO).

For the SCAL method, the update rule follows \eqref{eqn:rl-alm-update}. We parameterize the value network \( V_\phi \), the slack network \( h_\psi \), and the multiplier network \( x_\theta \) using fully connected neural networks. To enhance computational efficiency and stability, we employ several well-established techniques, including proportional sampling, Adam-based learning rate annealing, and local gradient clipping. All algorithms are implemented using the OpenAI Baselines toolkit~\cite{baselines}, which provides standardized environment wrappers to ensure fair comparisons. As noted in~\cite{henderson2017deep}, such implementation choices significantly influence performance. Additional experimental details are provided in \cite{li2023rl}.  

The results are summarized in Figure~\ref{fig:comparisons}, where solid lines denote mean performance and shaded regions represent the standard deviation across five random seeds. The horizontal axis tracks the number of environment interactions. SCAL consistently outperforms TRPO and PPO, achieving higher average rewards for the same number of interactions.

\begin{figure}[tbhp]
    \centering
\includegraphics[scale = 0.38]{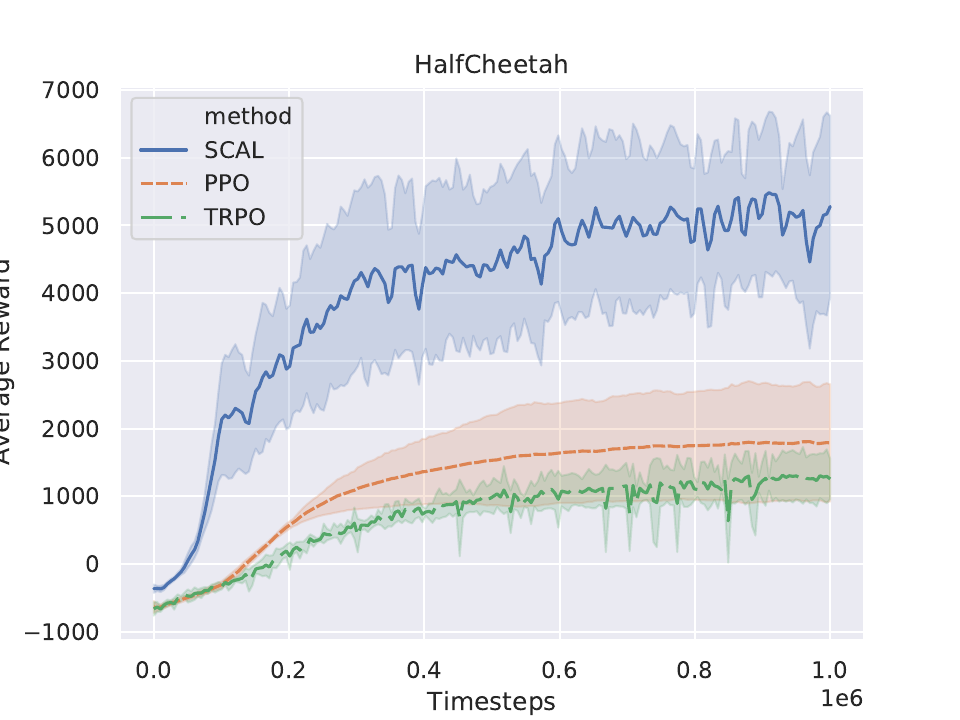}
\includegraphics[scale = 0.38]{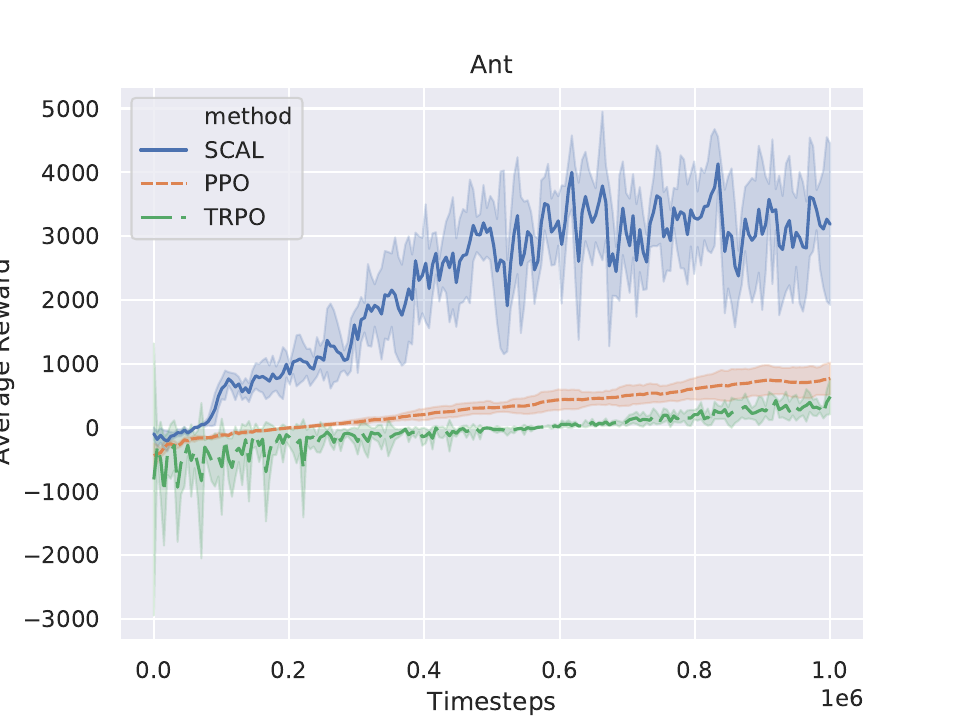}
\caption{Comparisons among several RL algorithms.}
\label{fig:comparisons}
\end{figure}

\subsection{ALM for block-structured integer programming}
{To show the practical performance of the ALM in discrete optimization, we present  numerical experiments on the following space-time network model for the train timetabling problem (TTP) \cite{caprara2002modeling}: 
\begin{subequations}\label{sec8:STMN}
  \begin{align}
    \label{eq.g.obj}    \max   \        & \sum\nolimits_{j \in N_p} \sum\nolimits_{e \in E^j} p_e x_e         &                                                                       \\
    \label{eq.g.start}     \text{ s.t. }\ & \sum\nolimits_{e \in \delta_{j}^{+}(\sigma)} x_e \le 1, & j \in N_p                                                          \\
    \label{eq.g.flow}                  &  \sum\nolimits_{e \in \delta_{j}^{-}(v)} x_e=\sum\nolimits_{e \in \delta_{j}^{+}(v)} x_e, & j \in N_p,   v \in V \backslash\{\sigma, \tau\} \\
    \label{eq.g.end}                   &  \sum\nolimits_{e \in \delta_{j}^{-}(\tau)} x_e \le 1, & j \in N_p                                                             \\
    \label{eq.g.interval}              &  \sum\nolimits_{v' \in \mathcal N(v)} \sum\nolimits_{j \in \mathcal{T}(v')} \sum\nolimits_{e \in \delta_{j}^{-}(v')} x_e \le 1, & v \in V         \\
    \label{eq.g.clique}                &  \sum\nolimits_{e \in C} x_e \le 1, & C \in \mathcal{C}                                                                         \\
    \label{eq.g.binary}                &  x_e \in \{0, 1\}, & e \in E.
  \end{align}
\end{subequations}
In this model,
$V$ and $E$  denote the set of all nodes and the set of all arcs. For each train $j\in N_p$, the sets or parameters indexed by $j$ correspond specifically to the route or variables associated with train $j$.  
A binary decision variable  $x_e$ is introduced for each arc $e \in E_j$, indicating whether arc $e$ is included in the routing plan for train $j$. 
 For each node $v \in V$, let $\delta^+_j(v)$  and  $\delta^-_j(v)$ be the sets of arcs in $E_j$ leaving and entering node $v$, respectively.
 $p_e$ is the ``profit'' of using a certain arc $e$. $\sigma$ and $\tau$ denote artificial origin and destination nodes, respectively.   $\mathcal{T}(v)$ and $\mathcal{N}(v)$ denote the set of trains may passing through node $v$ and the set of nodes conflicted with node $v$, respectively. $\mathcal{C}$ denotes the (exponentially large) family of maximal subsets $C$ of pairwise incompatible arcs. 

We can rewrite this space-time network model in the general form as (\ref{ip_inequality}).  
We aim to demonstrate that the ALM can generate feasible and high-quality timetables for real-world railway systems. Our numerical experiments focus on the Jinghu (Beijing–Shanghai) high-speed railway, one of the busiest high-speed lines globally, which served over 210 million passengers in 2019\footnote{Source: https://en.wikipedia.org/wiki/Beijing-Shanghai\_high-speed\_railway.}. The case study involves 292 trains in both directions over 29 stations, with two primary speed levels (300 km/h and 350 km/h). To evaluate performance, we compare the ALM-based methods \cite{rui2024} with the commercial solver Gurobi and the ADMM  \cite{ZHANG2020102823} on real-world instances, as presented in Table \ref{mpnsize}. The problem parameters include the number of trains ($|F|$), stations ($|S|$), and time windows ($|T|$). For large instances where Gurobi fails to return optimal solutions within two hours, we set this as the maximum computation time. We let ALM-GC and ALM-GP denote the ALM's subproblem using the
classical update \eqref{xjko} and proximal linear update \eqref{xjkl}.

 The model \eqref{sec8:STMN} contains tens of millions of variables and constraints, which highlights the scalability challenges of this application. 
Table \ref{tab:performance_BCD_LU} shows the performance results of our proposed methods and the Gurobi solver. In this table, UB corresponds to the objective values of feasible solutions generated by these compared methods, OV denotes the optimal value.  gap$=|\text{UB-OV}|/|\text{OV}|$. The detailed numerical results can be found in \cite{rui2024}.   

\begin{table}[h]
  \caption{Five examples of the large practical railway network}
  \label{mpnsize}
  \centering
  \begin{tabular}{ccccccc}
    \toprule
    \multirow{1}{*}{No.} & \multirow{1}{*}{$|F|$} & \multirow{1}{*}{$|S|$} & \multirow{1}{*}{$|T|$} & \multirow{1}{*}{$m$} & \multirow{1}{*}{$q$} & \multirow{1}{*}{$n$}\cr
    \midrule
    1                    & 50                     & 29                     & 300                    & 49,837               & 231,722              & 308,177 \cr
    2                    & 50                     & 29                     & 600                    & 113,737              & 1,418,182            & 1,396,572 \cr
    3                    & 100                    & 29                     & 720                    & 145,135              & 1,788,088            & 4,393,230 \cr
    4                    & 150                    & 29                     & 960                    & 199,211              & 3,655,151            & 9,753,590 \cr
    5                    & 292                    & 29                     & 1080                   & 228,584              & 13,625,558           & 26,891,567 \cr
    \bottomrule
  \end{tabular}
\end{table}

\renewcommand{\arraystretch}{1} %
\begin{table*}[h]
  \caption{Performance comparison between the Gurobi solver, ADMM, ALM-GP and ALM-GC on the large networks for maximizing the number of trains in the timetable in Table \ref{mpnsize}}
  \setlength{\tabcolsep}{3pt}
  \label{tab:performance_BCD_LU}
    \centering
  \begin{threeparttable}
    \centering
    \begin{tabular}{ccccccccccccccc} 
      \toprule
      \multirow{2}{*}{No.} & \multirow{2}{*}{OV} & \multicolumn{3}{c}{ Gurobi} & \multicolumn{3}{c}{ADMM} & \multicolumn{3}{c}{ ALM-GP} & \multicolumn{3}{c}{ ALM-GC}\cr
      \cmidrule(lr){3-5} \cmidrule(lr){6-8} \cmidrule(lr){9-11} \cmidrule(lr){12-14}
                           &                     & UB                          & gap                      & Time                        & UB                             & gap  & Time    & UB            & gap & Time          & UB            & gap & Time \cr
      \midrule
      1                    & --                  & \textbf{27}                & --                       & 3836.0                     & \textbf{27}                   & --   & 3.4    & \textbf{27}  & --  & \textbf{3.4} & \textbf{27}  & --  & 3.5 \cr
      2                    & 50                 & \textbf{50}                & \textbf{0}                        & 2965.8                     & \textbf{50}                   & \textbf{0}    & 8.2    & \textbf{50}  & \textbf{0}   & \textbf{8.1} & \textbf{50}  & \textbf{0}   & 8.2   \cr
      3                    & 100                & 11                         & 89.0\%                    & 7200.0                     & 89                            & 11.0\%   & 3001.4 & \textbf{100} & \textbf{0}   & 127.4        & \textbf{100} & \textbf{0}   & \textbf{96.4} \cr
      4                    & 150                & 14                         & 90.7\%                    & 7200.0                     & 148                           & 1.3\% & 3005.3 & \textbf{150} & \textbf{0}   & 99.5         & \textbf{150} & \textbf{0}   & \textbf{96.4} \cr
      5                    & 292                & 30                         & 89.7\%                    & 7200.0                     & 286                           & 2.1\% & 3003.8 & \textbf{292} & \textbf{0}   & 203.1        & \textbf{292} & \textbf{0}   & \textbf{172.1} \cr
      \bottomrule
    \end{tabular}
  \end{threeparttable}
\end{table*}

The results show that the proposed methods achieve comparable or better objective values than Gurobi, while significantly reducing computation time. These gaps further indicate the solution quality and convergence behavior. Notably, when using the simplified objective function, the ALM achieves near-optimal timetables with much less computational effort, suggesting its practicality for real-time or large-scale deployment. 
}

\subsection{ALM for nonsmooth manifold optimization}
{In this section, we demonstrate the performance of the ALM in nonsmooth manifold optimization, as discussed in Section \ref{sec7:manifold-optimization}.  In particular, we consider the sparse principal component analysis problem.   Given a data set $\{b_1,\ldots, b_m\}$ where $b_i\in\mathbb{R}^{n\times 1}$, the sparse principal component analysis (SPCA) problem is
\begin{equation}\label{spca}
\begin{aligned}
  \min_{X\in\mathbb{R}^{n\times r}}   \sum_{i=1}^{m}\|b_i - XX^Tb_i\|_2^2 + \mu \|X\|_1, ~ ~ \mbox{s.t. }~~  X^TX = I_r,
  \end{aligned}
\end{equation}
where $\mu > 0$ is a regularization parameter. Let $B = [b_1,\cdots, b_m ]^{T}\in\mathbb{R}^{m\times n}$, problem \eqref{spca} can be rewritten as: %
\begin{equation}\label{spcaM}
  \begin{aligned}
     \min_{X\in\mathbb{R}^{n\times r}}   -\mathrm{tr}(X^TB^TBX) + \mu \|X\|_1,     ~~  \mbox{s.t. }~~  X^TX = I_r.
  \end{aligned}
\end{equation}
Here, the constraint consists of the Stiefel manifold $\texttt{St}(n,r):=\{X\in\mathbb{R}^{n\times r}~:~X^\top X = I_r\}$, which is compact. Specifically, the closedness is deduced from the continuity of the map \( X^\top X - I \) and the fact that \( \{O\} \) is closed in \( \mathbb{R}^{r \times r} \). The boundedness directly follows from the norm constraint \( \|X\| \leq \sqrt{r} \).
The tangent space of $\texttt{St}(n,r)$ is defined by $T_{X}\texttt{St}(n,r) = \{\eta\in \mathbb{R}^{n\times r}~:~X^\top \eta + \eta^\top X = 0\}$. Given any $U\in\mathbb{R}^{n\times r}$, the projection of $U$ onto $T_{X}\texttt{St}(n,r)$ is $\mathcal{P}_{T_{X}\texttt{St}(n,r)}(U) = U - X \frac{U^\top X + X^\top U}{2}$ \cite{absil2009optimization}. In our experiment, the data matrix $B\in\mathbb{R}^{m\times n}$ is produced by MATLAB function $\texttt{randn}(m, n)$, in which all entries of $B$ follow the standard Gaussian distribution. We shift the columns of $B$ such that they have zero mean, and finally the column vectors are normalized. We use the polar decomposition as the retraction mapping. 

We compare the ALM proposed in \cite{deng2024oracle}, including the deterministic variants {ManIAL-I} and {ManIAL-II}, as well as the stochastic version {stoManIAL},  with  the subgradient method (we call it Rsub) in \cite{li2021weakly} that achieves the state-of-the-art oracle complexity for solving \eqref{spcaM} and the proximal gradient algorithm ManPG \cite{chen2020proximal}.  We compare the performance of those algorithms on the SPCA problem with random data, i.e.,  we set $m=5000$. For the {StoManIAL}, we partition the 50,000 samples into 100 subsets, and in each iteration, we sample one subset. The tolerance is set $\mathrm{tol} = 10^{-8}\times n\times r$.   Figure \ref{fig:2} presents the results of the four algorithms for fixed $n=1000$ and varying $r=10,20$ and $\mu=0.4,0.6,0.8$. The horizontal axis represents CPU time, while the vertical axis represents the objective function value gap: $F(x^k) - F_M$, where $F_M$ is given by the {ManIAL-I}. The results indicate that {StoManIAL} outperforms the deterministic version. Moreover, among the deterministic versions, {ManIAL-I} and {ManIAL-II} have similar performance. In conclusion, compared with the other algorithms, our algorithms achieve better performance. %

\begin{figure}[htpb]
\centering
\setlength{\abovecaptionskip}{0.cm}
\subfigure[$r = 10, \mu = 0.4$]{
\includegraphics[width=0.45\textwidth]{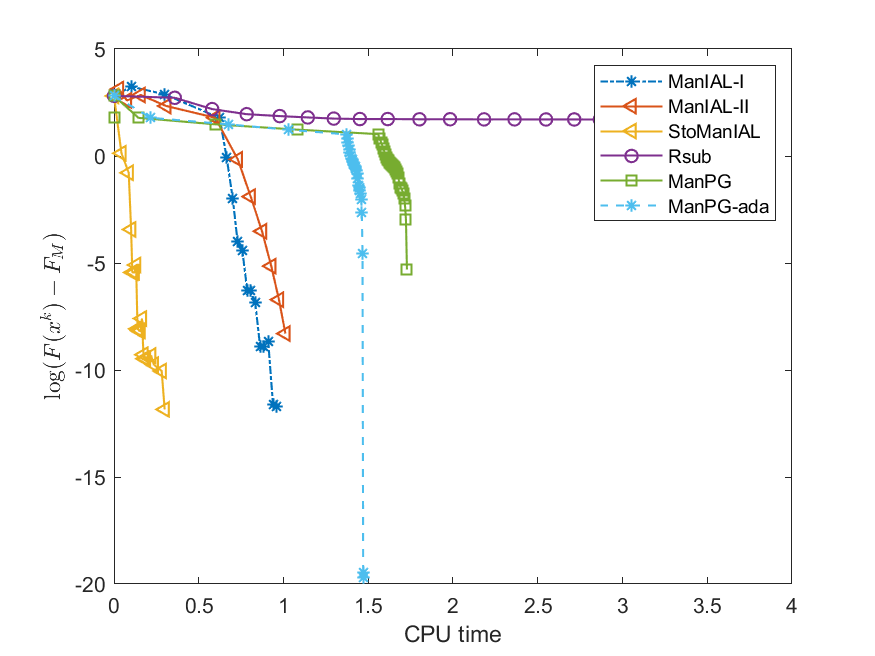}}
\subfigure[$r = 20, \mu = 0.4$]{
\includegraphics[width=0.45\textwidth]{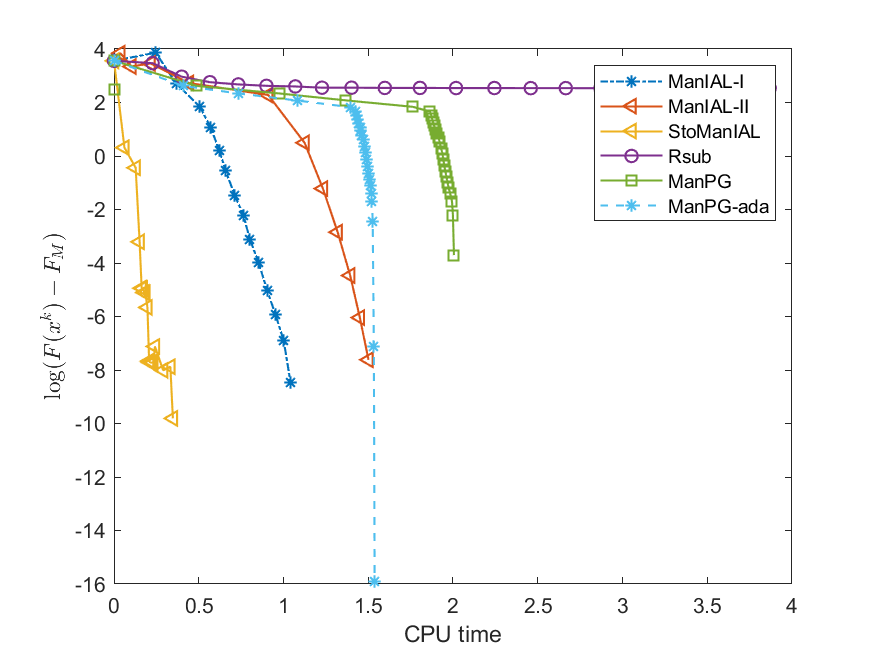}} %
\caption{The performance on SPCA with random data $(n = 1000)$}
\label{fig:2}
\end{figure}

}

\section{Summary and Perspectives}\label{sec:summary}
The augmented Lagrangian function forms the core foundation of many constrained optimization algorithms in the field of continuous optimization. In this review, we provide a detailed examination and summary of its applications and importance. First, we utilized the Moreau envelope to provide a unified and clear construction method for the different variants of augmented Lagrangian functions. We also extended the strong duality principle to this function, allowing us to reintroduce and understand the augmented Lagrangian function method and other related algorithms from the perspective of the dual ascent method. Then we summarize the latest convergence analyses for both convex and non-convex problems separately, facilitating comparisons for the readers. Finally, we present several practical examples and demonstrate how the ALM can be applied to solve these examples,  and how algorithms for the corresponding subproblem are designed. 

Overall, the ALM continues to be a robust and adaptable tool in the optimization toolkit, with substantial theoretical and practical advancements. It is still an active topic for many interesting applications, such as, bilevel optimization \cite{xu2014smoothing, tsaknakis2022implicit, lu2022stochastic}, minimax optimization \cite{dai2022rate, lu2023first} and some specific applications like training neural networks under constraints \cite{dener2020training,kotary2024learning,wang2025augmented,basir2023adaptive,son2023enhanced,CiCP-31-3,zhang2025physics,franke2024improving}. Moreover, in the context of large-scale data applications, single-loop ALM \cite{kim2024fast,liao2025bundle,lu2022single,shi2025momentum,alacaoglu2024complexity} and stochastic ALM \cite{li2024stochastic, zhang2022solving} become prominent directions of future research.  We believe that there are many unexplored and valuable research directions concerning the ALM, including but not limited to the following points:
 convergence analysis with constant penalty parameter in nonconvex case \cite{alacaoglu2024complexity,xie2021complexity,birgin2025global}, 
   trade off between  operation complexity and iteration complexity (reasonable approximation of Hessian matrix), 
  better combination between second-order and structural information / faster subproblem solver.

\section*{Acknowledgments}
 The authors are grateful to the associate editor Prof. Francesco Tudisco and the anonymous referees for their detailed and valuable comments and suggestions. K. Deng was supported by the National Natural Science Foundation of China under the grant number 12401419. R. Wang was supported by the National Natural Science Foundation of China under the grant number 12401414. 
 Z. Wen was supported in part by the National Natural Science Foundation of China under the grant numbers 12331010 and 12288101, and National Key Research and Development Program of China under the grant number 2024YFA1012903.

\appendix
\section{Proofs}\label{appen:proof}

\subsection{Proofs in Section \ref{sec2:AL:sd}}\label{appen:proof-sec2}
\begin{proof}[Proof of Theorem \ref{theorem:convex-al-duality}]
First of all, the min-max inequality { %
\cite[lemma 36.1]{rockafellar1997convex}} guarantees that $$\max_{\lambda,\mu}\min_x\ \mathbb{L}_\rho(x,\lambda,\mu)\leq \min_x\max_{\lambda,\mu}\ \mathbb{L}_\rho(x,\lambda,\mu).$$ 
One only needs to prove the opposite of inequality. Consider an equivalent formulation of \eqref{prob2}:
    \begin{equation}\label{prob:NLP*}
        \begin{split}
        \min_{x,s}\ & f(x)+\frac\rho 2 \sum_{i\in\mathcal{E}}c_i^2(x)+\frac\rho2\sum_{i\in\mathcal{I}}\left(c_i(x)+s_i\right)^2,\\
        \st\ & c_i(x)=0,\hspace{8mm} i\in\mathcal{E},\\
        &c_i(x)+s_i=0, \ i\in\mathcal{I},\\
        &s_i\geq 0, \hspace{1.3cm} i\in\mathcal{I},
        \end{split}
    \end{equation}
    whose Lagrangian function is equal to $\mathcal{L}_\rho(x,s,\lambda,\mu)$ defined in \eqref{AL0}. If problem \eqref{prob2} is convex and Slater's condition holds, then \eqref{prob:NLP*} is also a convex problem and satisfies Slater's condition, hence Theorem  \ref{sec2:general:sd} implies the strong duality for problem \eqref{prob:NLP*}:
    \begin{equation}\label{strong-duality0}
    \max_{\lambda,\mu}\min_{s\geq0,x}\ \mathcal{L}_\rho(x,s,\lambda,\mu)=\min_{s\geq0,x}\max_{\lambda,\mu}\ \mathcal{L}_\rho(x,s,\lambda,\mu).
    \end{equation}
    By the definition of $\mathbb{L}_\rho$ in \eqref{AL}, we have the left-hand side (LHS) of \eqref{strong-duality0} equals the LHS of \eqref{strong-duality}. It remains to show that the right-hand side (RHS) of \eqref{strong-duality0} equals the RHS of \eqref{strong-duality}:
    \begin{equation}\label{remains}
    \min_{s \geq0,x}\max_{\lambda,\mu}\ \mathcal{L}_\rho(x,s,\lambda,\mu)= \min_x\max_{\lambda,\mu}\ \mathbb{L}_\rho(x,\lambda,\mu).
    \end{equation}
    Suppose $\tilde{x}$ is feasible to \eqref{prob}, that is, $c_i(\tilde{x})=0,i\in\mathcal{E}$ and $c_i(\tilde{x})\leq 0,i\in\mathcal{I}$. Note that for any fixed $\tilde{x}$, setting $s_i=-c_i(\tilde{x})$ and take any $\lambda,\mu$ (e.g., $\lambda=0,\mu=0$) gives a closed form solution to the LHS of \eqref{remains}, and hence
    $\min_{s}\max_{\lambda,\mu}\ \mathcal{L}_\rho(\tilde{x},s,\lambda,\mu)=f(\tilde{x})$.
    For the RHS of \eqref{remains}, $\mu=0$ and any $\lambda$ (e.g., $\lambda=0$) are the optimal solutions, and hence $\max_{\lambda,\mu}\ \mathbb{L}_\rho(\tilde{x},\lambda,\mu)=f(\tilde{x})$.
    On the other hand, if $\tilde{x}$ is not a feasible point, that is, $\exists \ i\in\mathcal{E}$ such that $c_i(\tilde{x})\neq0$ or $\exists \ i\in\mathcal{I}$ such that $c_i(\tilde{x})>0$, it is straightforward to see that
    \begin{align*}
        \max_{\lambda,\mu}\mathbb{L}_\rho(\tilde{x},\lambda,\mu)=&\max_{\lambda,\mu}\min_{s\geq0}\mathcal{L}_\rho(\tilde{x},s,\lambda,\mu)\\
        \geq &\max_{\lambda,\mu}\min_{s\geq0} f(\tilde{x})+\sum_{i\in\mathcal{E}}\lambda_i c_i(\tilde{x})+\sum_{i\in\mathcal{I}}\mu_i (c_i(\tilde{x})+s_i)\\
        \geq & \max_{\mu\geq 0,\lambda}\min_{s\geq0} f(\tilde{x})+\sum_{i\in\mathcal{E}}\lambda_i c_i(\tilde{x})+\sum_{i\in\mathcal{I}}\mu_i (c_i(\tilde{x})+s_i)\\
        = & \max_{\mu\geq 0,\lambda} f(\tilde{x})+\sum_{i\in\mathcal{E}}\lambda_i c_i(\tilde{x})+\sum_{i\in\mathcal{I}}\mu_i c_i(\tilde{x})\\
        =& +\infty = \max_{\lambda,\mu}\mathcal{L}_\rho(\tilde{x},s,\lambda,\mu).
    \end{align*}
    Combining the above two cases, we prove the equality \eqref{remains}. Hence we complete the proof.
    \end{proof}

\begin{proof}[Proof of Theorem \ref{sec1:nonconvex:strong_duality}]
 First, due to min-max inequality, it is straightforward to see that  
 \begin{equation}
    \label{sec1:prop:right0}
    \mathop{\max}_{ \rho>0,\lambda,\mu}\min_x\ \mathbb{L}_\rho(x,\lambda,\mu) \leq \min_x\max_{ \rho>0,\lambda,\mu}\ \mathbb{L}_\rho(x,\lambda,\mu).
 \end{equation} 
 Now it remains to show the reverse inequality. For the right-hand side of \eqref{npnconvex_strong_duality}, we have
\begin{equation}\label{sec1:prop:right} 
    \min_{x}\max_{ \rho>0,\lambda,\mu}\mathbb{L}_\rho(x,\lambda,\mu) = f^* := \min_x \bar{f}(x)>-\infty,  
\end{equation}
{where $f^*$ exists due to the lower bounded assumption on $f$, and the function $\bar{f}$ is defined by 
$$\bar{f}(x) = f(x) + \sum_{i\in\mathcal{E}}\delta_{\{0\}}(c_i(x)) + \sum_{i\in\mathcal{I}}\delta_{-\R_+}(c_i(x)).$$
Also note that by taking $\lambda=0$ and $\mu=0$, the AL function reduces to the quadratically penalized objective function $\Phi_\rho(x)$ defined in \eqref{penalty_inequality}. 
It can be observed that $\mathbb{L}_\rho(x,0,0)=\Phi_\rho(x)$ increases pointwise as $\rho \rightarrow +\infty$ to the function $\bar{f}$. Note that $\Phi_\rho(x)$ is l.s.c. and is level-bounded for sufficiently large $\rho$, applying Theorem 7.33 in \cite{rockafellar2009variational} yields that
\begin{equation}\label{sec1:prop:left} 
    \mathop{\max}_{\rho>0,\lambda,\mu}\min_x\ \mathbb{L}_\rho(x,\lambda,\mu)\geq \lim_{\rho\to+\infty}\min_x\ \mathbb{L}_\rho(x,0,0) = \min_x\bar{f}(x) = f^*.  
\end{equation}
Combining \eqref{sec1:prop:right0}, \eqref{sec1:prop:right}, and \eqref{sec1:prop:left} proves the theorem. }
\end{proof}

\begin{proof}[Proof of Proposition \ref{sec2:composite:propo:duality}]
It follows from the well-known min-max inequality that
\begin{equation*}
    \max_{\nu,\lambda,\mu}\min_{x}\mathbb{L}_{\rho}(x,\nu,\lambda,\mu) \leq \min_{x}\max_{\nu,\lambda,\mu}\mathbb{L}_{\rho}(x,\nu,\lambda,\mu).
\end{equation*}
Thus, it remains to prove that
\begin{equation*}
     \max_{\nu,\lambda,\mu}\min_{x}\mathbb{L}_{\rho}(x,\nu,\lambda,\mu) \geq \min_{x}\max_{\nu,\lambda,\mu}\mathbb{L}_{\rho}(x,\nu,\lambda,\mu).
\end{equation*}
    The original problem \eqref{sec2:pro:composite} is equivalent to 
\begin{align*}
      \min_x &~ f(x) + h(u)  + \delta_{\mathcal{Q}}(w) + \delta_{\mathcal{K}}(v) + \frac{\rho}{2}\left(   \|x  -u\|^2 + \|  \mathcal{A}(x) - w\|^2 + \|x - v\|^2\right)\\
\mathrm{s.t. }&~ x = u, \mathcal{A}(x) = w, x = v.
\end{align*}
    Let $\psi^*$ be the  optimal value of \eqref{sec2:pro:composite}. Then we have that
\begin{equation*}
    \min_{x}\max_{\nu,\lambda,\mu}\mathbb{L}_{\rho}(x,\nu,\lambda,\mu) = \psi^*.
\end{equation*}
Let $x^*$ be a solution of \eqref{sec2:pro:composite}. As the objective function of the original composite problem has non-empty relative interior, Slater's condition holds for problem \eqref{sec2:pro:composite:auxilary-new}. Then due to the assumption of a non-empty solution set, the KKT condition of problem \eqref{sec2:pro:composite:auxilary-new} can then be simplified to the existence of $(\nu^*,\lambda^*,\mu^*)$ such that
\begin{equation}\label{sec:composite:auxilary:kktstar}
\left\{
    \begin{aligned}
    0 &= \nabla f(x^*) + \nu^* + \mathcal{A}^*(\lambda^*) + \mu^*,\\
0 & \in \partial h(x^*) - \nu^*, ~ 0 \in \partial \delta_{\mathcal{K}}(x^*) - \mu^*, 0 \in \partial \delta_{\mathcal{Q}}(\mathcal{A}(x^*)) - \lambda^*.    \end{aligned}
    \right.
\end{equation}
Based on the definition of the proximal operator, we have for any $\rho>0$ that
\begin{equation*}
    x^* = \mathrm{prox}_{h/\rho}\left(x^* + \frac{\nu^*}{\rho}\right), \;  x^* = \Pi_{\mathcal{K}}\left(x^* + \frac{\mu^*}{\rho}\right), \; \mathcal{A}(x^*) = \Pi_{\mathcal{Q}}\left( \mathcal{A}(x^*) + \frac{\lambda^*}{\rho} \right).
\end{equation*}
Plugging the above equalities into \eqref{sec2:al:eliminate} yields
\begin{align}
    \mathbb{L}_{\rho}(x^*,\nu^*,\lambda^*,\mu^*) &= f(x^*) + h(x^*) + \delta_{\mathcal{Q}}(\mathcal{A}(x^*)) + \delta_{\mathcal{K}}(x^*)\nonumber\\
    &= \psi^* = \min_{x}\max_{\nu,\lambda,\mu}\mathbb{L}_{\rho}(x,\nu,\lambda,\mu).
\end{align}
It follows from \eqref{sec:composite:auxilary:kktstar} that
\begin{align*}
    0 = &\nabla f(x^*) + \nu^* + \mathcal{A}^*(\lambda^*) + \mu^* \\
     =& \nabla f(x^*) + \rho \left(x^* + \frac{\nu^*}{\rho} -  \mathrm{prox}_{h/\rho}\left(x^* + \frac{\nu^*}{\rho}\right) \right) + \rho \left(x^* + \frac{\mu^*}{\rho} -  \Pi_{\mathcal{K}}\left(x^* + \frac{\mu^*}{\rho}\right) \right)\\
    & + \rho \mathcal{A}^* \left( \mathcal{A}(x^*) + \frac{\lambda^*}{\rho} - \Pi_{\mathcal{Q}}\left( \mathcal{A}(x^*) + \frac{\lambda^*}{\rho} \right)  \right) \\
     = & \nabla_x  \mathbb{L}_{\rho}(x^*,\nu^*,\lambda^*,\mu^*),
\end{align*}
which implies that
\begin{equation}
\min_x \mathbb{L}_{\rho}(x,\nu^*,\lambda^*,\mu^*) =  \mathbb{L}_{\rho}(x^*,\nu^*,\lambda^*,\mu^*).\nonumber
\end{equation}
Combining the above results yields that
\begin{align*}
    \max_{\nu,\lambda,\mu}\min_{x}\mathbb{L}_{\rho}(x,\nu,\lambda,\mu) & \geq   \min_x \mathbb{L}_{\rho}(x,\nu^*,\lambda^*,\mu^*) \\
     & =  \mathbb{L}_{\rho}(x^*,\nu^*,\lambda^*,\mu^*) \\
     & = \min_{x}\max_{\nu,\lambda,\mu}\mathbb{L}_{\rho}(x,\nu,\lambda,\mu).
\end{align*}
This proof is completed. 
\end{proof}

 \begin{proof}[Proof of Lemma \ref{IP_strongd}]
 For any $\mu\geq0$ and $\rho > 0$, since $\{x \in \bar{\mathcal{K}} : Ax - b \leq 0\} \subseteq \bar{\mathcal{K}}$, we have
\begin{equation}\label{fip}
z_\rho^{\mathrm{LR}+}(\mu) \leq  \mathop{\min}_{x \in \bar{\mathcal{K}} \atop Ax - b\leq 0} ~L_{\rho}(x,\mu)
 \leq \mathop{\min}_{x \in \bar{\mathcal{K}}\atop Ax - b \leq 0} c^{\top}x =f^{\mathrm{IP}},
\end{equation}
and hence $z_\rho^{\mathrm{LD}+} \leq f^{\mathrm{IP}}$.  Therefore, it suffices to find a finite $\rho^*>0$ such that $z_{\rho^*}^{\mathrm{LD}+} \geq f^{\mathrm{IP}}$.

Let $x^0$ be an arbitrary feasible solution to problem \eqref{ip_inequality} and denote by $f^{\mathrm{LP}}$ the optimal value of the LP relaxation of problem \eqref{ip_inequality}. 
By \cite{blair1977value}, the LP relaxation of \cite{blair1977value} has a bounded optimal value, namely,
$$-\infty < f^{\mathrm{LP}}\leq c^{\top}x^0 < +\infty.$$ Let us set $\rho^*=2( c^{\top} x^0-f^{\mathrm{LP}})/\delta$, then  $0 < \rho^* < +\infty$ and it holds that 
\begin{equation}\label{fld}
z_{\rho^*}^{\mathrm{LD}+} = \max_{\mu\in \R_+^m} z_{\rho^*}^{\mathrm{LR}+}(\mu)\geq z_{\rho^*}^{\mathrm{LR}+}(\mu^*)\ =\min_{x \in\bar{\mathcal{K}}} \mathcal{L}_{\rho^*}(x, \mu^*),
\end{equation}
where $\mu^* \in \R_+^m$ is a given parameter.
Denote $\mathbb{I}:=\{i \in [m]: a_ix-b_i > 0, x \in \bar{\mathcal{K}}\}$, we consider following two cases:\vspace{0.2cm}

\noindent Case 1: $\mathbb{I}= \emptyset$. In this case, we have $Ax \leq b$ for all $x \in \bar{\mathcal{K}}$. Setting $\bar{\mu}=0$, we  obtain
\begin{align}\label{fip1}
L_{\rho^*}(x, \bar{\mu})&=c^{\top}x+\bar{\mu}^{\top}(Ax - b)+\frac{\rho^*}{2}\|\left(Ax - b\right)_+\|^2 = c^{\top}x \geq f^{\mathrm{IP}}.
\end{align}

\noindent Case 2: $\mathbb{I}\neq \emptyset$. Denote by $\mu^{\mathrm{LP}}$  a positive optimal vector of dual variables for $Ax \leq b$ in the LP relaxation of (\ref{ip_inequality}). In this case, we get
\begin{align*}
L_{\rho^*}(x, \mu^{\mathrm{LP}})
&=c^{\top}x+(\mu^{\mathrm{LP}})^{\top}(Ax - b)+\frac{\rho^*}{2}\sum_{i \in \mathbb{I}}(a_{i}x - b_{i})^2+\frac{\rho^*}{2}\sum_{i \notin \mathbb{I}}((a_{i}x - b_{i})_+)^2\nonumber\\
&=c^{\top}x+(\mu^{\mathrm{LP}})^{\top}(Ax - b)+\frac{\rho^*}{2}\sum_{i \in \mathbb{I}}(a_{i}x - b_{i})^2. 
\end{align*}
Since 
$$\frac{\rho^*}{2}\sum_{i \in \mathbb{I}}(a_{i}x - b_{i})^2 \geq \frac{\rho^*}{2}\min_{i \in \mathbb{I}}(a_{i}x - b_{i})^2 \overset{\eqref{delta}}{\geq} \frac{\rho^*}{2} \delta = c^{\top}x^0-f^{\mathrm{LP}},$$
it yields
\begin{align}\label{fip2}
L_{\rho^*}(x, \mu^{\mathrm{LP}})
&\geq c^{\top}x+(\mu^{\mathrm{LP}})^{\top}(Ax - b)+( c^{\top}x^0-f^{\mathrm{LP}}) \nonumber\\
&\geq f^{\mathrm{LP}}+( c^{\top}x^0-f^{\mathrm{LP}})= c^{\top}x^0 \geq f^{\mathrm{IP}}, 
\end{align}
where the second inequality holds due to the definition of $\mu^{\mathrm{LP}}$.

By letting $\mu^*$ be $\bar{\mu}$ and $\mu^{\mathrm{LP}}$ in the inequalities \eqref{fip1} and \eqref{fip2}, we arrive at
$$z_{\rho^*}^{\mathrm{LR}+}(\mu^*) = \min_{x \in \bar{\mathcal{K}}} ~L_{\rho^*}(x,\mu^*) \geq f^{\mathrm{IP}}. $$
Together with \eqref{fld} and \eqref{fip}, we prove
$z_{\rho^*}^{\mathrm{LD}+}   = z_{\rho^*}^{\mathrm{LR}+}(\mu^*) = f^{\mathrm{IP}}.$ 
\end{proof}

\subsection{Proofs in Section \ref{sec:nonconvex_case}}\label{appen:sec5}

\begin{proof}[Proof of Proposition \ref{prop:to0}]
If $\rho_k\to\infty$,  it follows that line 10 of Algorithm  \ref{alg: practical-ALM} is executed an infinite number of times. We select the sequence $\{a_N\}_{N=0}^{\infty}$ such that for $k=a_N$,  we have 
$$\rho_{a_N}=\kappa^N\rho_0, \ \rho_{a_{N}+1}=\kappa^{N+1}\rho_0, \ \epsilon_{a_N+1}=1/\rho_{a_N+1}^{\alpha}.$$ 
For $a_N+1\leq k\leq a_{N+1}-1$, the Algorithm  \ref{alg: practical-ALM} executes a series of steps in line 8, then we have
\begin{equation*}
\rho_k =\rho_{a_N+1}, \  \epsilon_k=\rho_{a_N+1}^{-\beta(k-a_N-1)}\epsilon_{a_N+1},\
 \Lambda^{k+1} \in \Lambda^k+\rho_k \partial_\Lambda \mathbb{L}_{\rho_k}(x^{k+1},\Lambda^k).
\end{equation*}
Since $\|\partial_\Lambda \mathbb{L}_{\rho_k}(x^{k+1},\Lambda^k)\| \leq \vartheta(x^{k+1},\Lambda^k,\rho_k)\leq \epsilon_k$, then $\|\Lambda^{k+1}\|\leq \|\Lambda^k\|+\epsilon_k\rho_k$. Therefore, we utilize the increasing property of $\rho_k$ to obtain
\begin{align}
	\|\Lambda^{k+1}\|&\leq \|\Lambda^k\|+\rho_{a_N+1}^{1-\alpha}\rho_{a_N+1}^{-\beta(k-a_N-1)}\leq\cdots \notag\\
	&\leq\|\Lambda^{a_N+1}\|+\rho_{a_N+1}^{1-\alpha}\sum\nolimits_{i=a_N+1}^{k}\rho_{a_N+1}^{-\beta(i-a_N-1)}\notag \\
	&\leq \|\Lambda^{a_N+1}\|+\rho_{a_N+1}^{1-\alpha}\sum\nolimits_{i=0}^{\infty}\rho_{a_N+1}^{-\beta i} \notag \\
	&=\|\Lambda^{a_N+1}\|+\rho_{a_N+1}^{1-\alpha}\frac{1}{1-\rho_{a_N+1}^{-\beta}}\notag\\
	&\leq \|\Lambda^{a_N+1}\|+\rho_{a_N+1}^{1-\alpha}\frac{1}{1-\rho_{0}^{-\beta}},\quad a_N+1\leq k\leq a_{N+1}-1. \label{lambda_change}
\end{align}
Taking $k=a_{N+1}-1$ and using $\Lambda^{a_N+1}=\lambda^{a_N}$, we have
\begin{equation}
	\|\Lambda^{a_{N+1}}\|\leq \|\Lambda^{a_N}\|+\frac{\rho_{a_N+1}^{1-\alpha}}{1-\rho_{0}^{-\beta}}.\label{lambda_K}
\end{equation}
Dividing both sides of equation \eqref{lambda_K} by $\rho_{a_N}^{\alpha_0}$ with $\alpha_0 \in (1-\alpha,1)$, then we have
\begin{equation*}
	\frac{\|\Lambda^{a_{N+1}}\|}{\rho_{a_{N+1}}^{\alpha_0}}\leq
	\frac{\|\Lambda^{a_{N+1}}\|}{\rho_{a_{N}}^{\alpha_0}}\leq
	\frac{\|\Lambda^{a_N}\|}{\rho_{a_{N}}^{\alpha_0}}+
	\frac{\kappa^{N(1-\alpha-\alpha_0)}\kappa^{1-\alpha_2}\rho_0^{1-\alpha-\alpha_0}}{1-\rho_{0}^{-\beta}}.
\end{equation*}
Thus,
\begin{equation}
	\frac{\|\Lambda^{a_{N+1}}\|}{\rho_{a_{N+1}}^{\alpha_0}}\leq
	\frac{\|\Lambda^{a_0}\|}{\rho_0^{\alpha_0}}+\frac{\kappa^{1-\alpha}\rho_0^{1-\alpha-\alpha_0}}{1-\rho_{0}^{-\beta}}\sum_{i=0}^N
	\kappa^{i(1-\alpha-\alpha_0)},
\end{equation}
Since $1-\alpha-\alpha_0<0$, then $$\sum\limits_{i=0}^N
\kappa^{i(1-\alpha-\alpha_0)}<\sum\limits_{i=0}^{\infty}
\kappa^{i(1-\alpha-\alpha_0)}<\infty,$$ which yields that $\left\{	\|\Lambda^{a_{N}}\|/\rho_{a_{N}}^{\alpha_0}\right\}_{N=0}^{\infty}$ is bounded, i.e., there exists $M>0$ such that $$\|\Lambda^{a_{N}}\|/\rho_{a_{N}}^{\alpha_0} \leq M, \quad \text{for~ all}~ N.$$
If $k=a_N+1$, then $\frac{\|\Lambda^{a_{K}+1}\|}{\rho_{a_{N}+1}^{\alpha_0}}\leq\frac{\|\Lambda^{a_{N}}\|}{\rho_{a_{N}}^{\alpha_0}}\leq M$ is bounded; If $a_N+2\leq k\leq a_{N+1}$, by \eqref{lambda_change} and $\rho_k=\rho_{a_N+1}$, $$\frac{\|\Lambda^{k}\|}{\rho_{k}^{\alpha_0}}\leq\frac{\|\Lambda^{a_{N}}\|}{\rho_{k}^{\alpha_0}}+\frac{\rho_{a_N+1}^{1-\alpha-\alpha_0}}{1-\rho_{0}^{-\beta}}\leq \frac{\|\lambda^{a_{N}}\|}{\rho_{a_N}^{\alpha_0}}+\frac{\rho_{0}^{1-\alpha-\alpha_0}}{1-\rho_{0}^{-\beta}}\leq M+\frac{\rho_{0}^{1-\alpha-\alpha_0}}{1-\rho_{0}^{-\beta}}$$ is also bounded. This implies that for all $k$, $\left\{\Lambda^{k}/\rho_{k}^{\alpha_0}\right\}$ is bounded.

    Since $\frac{\|\Lambda^{k}\|}{\rho_{k}}=\frac{\|\Lambda^{k}\|}{\rho_{k}^{\alpha_0}}\rho_k^{\alpha_0-1}$ and   $\rho_k^{-1}\to 0$ as $k\to\infty$, then we can obtain that $\frac{\Lambda^{k}}{\rho_{k}}\to 0$. 
\end{proof}

\begin{proof}[Proof of Theorem \ref{nonconvex_convergence}]
Let $x^*$ be an arbitrary accumulation point of the sequence 
$\{x_k\}$, with $\{x_k\}_K$ as a subsequence converging to 
$x^*$. To prove the feasibility of $x^*$, we consider the following two cases:

(i) If $\{\rho_k \}$ is bounded, it follows that line 10 of Algorithm \ref{alg: practical-ALM} is executed a finite number of times. Consequently, according to the instruction in line 8 of Algorithm \ref{alg: practical-ALM}, $\epsilon_k \rightarrow 0$ as 
$k \rightarrow \infty$. Given the stopping criterion \eqref{nc-subopt2} and the supergradient expression \eqref{eq:nonconvex-diff} of dual variables, taking the limit $k \rightarrow_K \infty$, we have
 $$\left\|c(x^{k+1})-\Pi_{\mathcal{Q}}\left(c(x^{k+1})+\frac{\lambda^k}{\rho_k}\right) \right\|\to 0, \ \left\| x^{k+1}-\Pi_{\mathcal{K}}\left(x^{k+1}+\frac{\mu^k}{\rho_k}\right)\right\|\to 0,$$ then
 \begin{equation*}
     \begin{aligned}
	\|c(x^{k+1})-\Pi_{\mathcal{Q}}(c(x^{k+1}))\|&\leq \left\|c(x^{k+1})-\Pi_{\mathcal{Q}}\left(c(x^{k+1})+\frac{\lambda^k}{\rho_k}\right) \right\|\to 0, \\
		\|x^{k+1}-\Pi_{\mathcal{K}}(x^{k+1})\|&\leq \left\| x^{k+1}-\Pi_{\mathcal{K}}\left(x^{k+1}+\frac{\mu^k}{\rho_k}\right)\right\|\to 0. 
	\end{aligned}
 \end{equation*}
	By the continuity, we let $x^{k}\to_K x^*$, then we can find that  $c(x^*)=\Pi_{\mathcal{Q}}(c(x^*))$, $x^*=\Pi_{\mathcal{K}}(x^*)$, which imply that $c(x^*)\in\mathcal{Q}$, $x^*\in\mathcal{K}$, $x^*$ is a feasible point.

 (ii) If $\{\rho_k\}$ is unbounded, we let $\rho_k\to\infty$. By assumption, we have
\begin{equation*}
 \begin{aligned}
  C  \geq& \mathbb{L}_{\rho_k}(x^{k+1};\Lambda^k) \\
   =&    f(x^{k+1}) - \frac{1}{2\rho_k}(\|\nu^k\|^2 + \|\lambda^k\|^2  + \|\mu^k\|^2)+   e_{\rho_k}h\left(x^{k+1} + \frac{\nu^k}{\rho_k}\right)\\
    &+\frac{\rho_k}{2}\left\|\hat\Pi_{\mathcal{Q}}\left(c(x^{k+1})+\frac {\lambda^k}{\rho_k}\right)\right\|^2+ \frac{\rho_k}{2}\left\|\hat\Pi_\mathcal{K}\left(x^{k+1}+\frac {\mu^k}{\rho_k}\right)\right\|^2.  
 \end{aligned}
\end{equation*}
	Rearranging terms yields the inequality
	\begin{equation*} \left\|\hat\Pi_{\mathcal{Q}}\left(c(x^{k+1})+\frac {\lambda^k}{\rho_k}\right)\right\|^2+ \left\|\hat\Pi_\mathcal{K}\left(x^{k+1}+\frac {\mu^k}{\rho_k}\right)\right\|^2 \leq \frac{2(C-F(x^{k+1},\Lambda^k))}{\rho_k},\quad \forall k\in\mathbb{N},
	\end{equation*}
where $F(x^{k+1},\Lambda^k)=f(x^{k+1}) - \frac{1}{2\rho_k}(\|\nu^k\|^2 + \|\lambda^k\|^2  + \|\mu^k\|^2)+   e_{\rho_k}h\left(x^{k+1} + \frac{\nu^k}{\rho_k}\right)$.	Taking $k\to_K\infty$, by the continuity $f$ and Proposition \ref{prop:to0}, $F(x^{k+1},\Lambda^k)\to_K f(x^*)+h(x^*)$ is bounded, which together with $\rho_k\to_K\infty$ yields that 
	\begin{align*}
		\hat\Pi_{\mathcal{Q}}(c(x^*))&=\lim_{k\to_K\infty}\hat\Pi_{\mathcal{Q}}\left(c(x^{k+1})+\frac{\lambda^k}{\rho_k}\right)=0,\\
		\hat\Pi_\mathcal{K}(x^*)&=\lim_{k\to_K\infty}\hat\Pi_\mathcal{K}\left(x^{k+1}+\frac{\mu^k}{\rho_k}\right)=0.
	\end{align*}
	Then $x^*$ is a feasible point.  

 To prove the final part of the theorem, we can slightly modify the proof of Theorem 3.3  in \cite{de2023constrained} to derive our result. A detailed explanation is omitted here for brevity.
\end{proof}

\begin{proof}[Proof of Proposition \ref{prop:infeas}]
    We consider the following two cases.
   
   (i) If $\{\rho_k\}$ is bounded, we have
   \begin{equation*} 
	\|\hat{\Pi}_{\mathcal{Q}}(c(x^*))\|^2=\|\hat{\Pi}_{\mathcal{K}}(x^*)\|^2=0
\end{equation*}
by Proposition \ref{nonconvex_convergence}. It is obvious to see that $x^*$ is a stationary point of \eqref{feas}.

(ii) If $\{\rho_k\}$ is unbounded, i.e., $\rho_k\to\infty$. Let $\{x^{k+1}\}_K$ be the subsequence satisfying $x^{k+1}\to_K x^*$. We have
\begin{equation*}
	\left\|\nabla_x \mathbb{L}_{\rho_k}(x^{k+1};\Lambda^k) \right\|\leq \eta_k.
\end{equation*}
Divide both sides by $\rho_k$ to obtain
\begin{align*}
\left\|\frac{\nabla\! f(x^{k+1})}{\rho_k}
\!+\!\nabla\! e_{\!\rho_k}h\!\!\left(\!x^{k+1} \!+\! \frac{\nu^k}{\rho_k}\right) \!\!+\! \nabla\! c(x^{k+1})\hat{\Pi}_{\mathcal{Q}}\!\!\left(\! c(x^{k+1})\!+\!\frac{\lambda^k}{\rho_k}\right)\!\!+\!\hat{\Pi}_{\mathcal{K}}\!\!\left(\! x^{k+1}\!+\!\frac{\mu^k}{\rho_k}\right)\!\right\| \!\leq\! \frac{\eta_k}{\rho_k}.
\end{align*}

Let $k\to_K \infty$, since $\rho_k\to \infty$, $x^{k+1}\to_K x^*$, $\eta_k\to 0$, $\frac{\Lambda^k}{\rho_k} \to 0$, and $f$ is continuous, $\nabla f(x^*)$ is bounded, then we have
\begin{equation*}
	\nabla c(x^*) \hat{\Pi}_{\mathcal{Q}}(c(x^*)) + \hat{\Pi}_{\mathcal{K}}(x^*) = 0.
\end{equation*}
 which implies that $x^*$ is a stationary point of \eqref{feas}.
\end{proof}

\section{More examples for constructing AL function}

\subsection{Examples in convex case}\label{sec2:convex:motivation}

The AL function presented in  \eqref{sec2:al:eliminate} is noteworthy for its ability to encompass various significant examples when considering different forms of $h$ and $\mathcal{A}$. To illustrate this point, we provide a few examples below. 
\begin{example}[Linear programming]\label{sec2:motivate:LP}
Consider a linear programming (LP) problem:
\begin{equation}\label{eq:lp}
    \min_{x\in\mathbb{R}^n}\ c^\top x,\quad\mathrm{ s.t. }\ Ax \in \mathcal{Q},~ x\in \mathcal{K},
\end{equation}
 where $c\in\mathbb{R}^n$,  $A\in\mathbb{R}^{m\times n}$, and $\mathcal{Q} = \{y\in\mathbb{R}^m : l^q\leq y \leq  u^q\}, \mathcal{K} = \{x\in\mathbb{R}^n : l^\kappa \leq  x\leq u^\kappa\}$ are two convex sets defined by known constant vectors $l^q,u^q\in \mathbb{R}^m$ and $l^\kappa,u^\kappa\in \mathbb{R}^n$.  %
 \end{example}
In this example, we have $$ \Pi_{\mathcal{Q}}(x) = \min\{u^q,\max\{l^q,x\}\},\quad \Pi_{\mathcal{K}}(x)=\min\{u^\kappa,\max\{l^\kappa,x\}\}.$$ The corresponding AL function is
\begin{equation*}
    \mathbb{L}_{\rho}(x,\lambda,\mu) = c^\top x+ \frac{\rho}{2}\|\hat{\Pi}_{\mathcal{Q}}( Ax + \lambda/\rho)\|^2 +\frac{\rho}{2}\| \hat{\Pi}_{\mathcal{K}}( x + \mu/\rho)\|^2- \frac{1}{2\rho}( \|\lambda\|^2 + \|\mu\|^2 ),
\end{equation*}
where $\rho>0$ is a penalty parameter, and $\lambda,\mu$ are Lagrange multipliers.

We can also consider the dual problem of \eqref{eq:lp}:
\begin{equation}\label{eq:lp:dual}
\begin{aligned}\max &\ \lambda_1^\top l^q - \lambda_2^\top u^q + \mu_1^\top l^\kappa - \mu_2^\top u^\kappa  \\
 \text{s.t.} & \  c - A^\top(\lambda_1 - \lambda_2) - \mu_1 + \mu_2 =0, \\
  & \ \lambda_1,\lambda_2,\mu_1,\mu_2 \geq 0.
  \end{aligned}
\end{equation}
The AL function of \eqref{eq:lp:dual} can be written as
\begin{align*}
     & \mathbb{L}_{\rho}(\lambda_1,\lambda_2,\mu_1,\mu_2,x,s_1,s_2,w_1.w_2) \\
     = & ~\lambda_1^\top l^q-\lambda_2^\top  u^q + \mu_1^\top l^\kappa - \mu_2^\top u^\kappa+ \frac{\rho}{2}\left\| c-A^\top (\lambda_1- \lambda_2)-\mu_1 + \mu_2 + x/\rho\right\|^2 \\
     & +\frac{\rho}{2}\left(\| [s_1/\rho-\lambda_1]_+ \|^2 +\|[s_2/\rho-\lambda_2]_+\|^2  +\|[w_1/\rho-\mu_1]_+\|^2 +\|[w_2/\rho-\mu_2]_+\|^2 \right) \\
     & - \frac{1}{2\rho}( \|x\|^2 + \|s_1\|^2 +\|s_2\|^2 + \|w_1\|^2 +\|w_2\|^2 ).
\end{align*}
When $l^q = u^q = b$ in $\mathcal{Q}$ with a constant vector $b\in \mathbb{R}^m$ and $l^\kappa = 0, u^\kappa = +\infty$ in $\mathcal{K}$, \eqref{eq:lp} reduced to the classical LP:
\begin{equation*}
    \min_{x\in \mathbb{R}^n} c^\top x, \;\; \mathrm{ s.t. }\;\; Ax = b, x\geq 0. 
\end{equation*}
The corresponding AL function can be constructed as follows:
\begin{equation*}
    \mathbb{L}_{\rho}(x,\lambda,\mu) = c^\top x + \frac{\rho}{2}\| Ax -b + \lambda/\rho \|^2 + \frac{\rho}{2} \| [-x + \mu/\rho]_+ \|^2 - \frac{1}{2\rho}( \|\lambda\|^2 + \|\mu\|^2 ).
\end{equation*}

 \begin{example}[Semidefinite programming]\label{sec2:motivate:SP}
Consider the box-constrained semidefinite programming (SDP) of the form 
\begin{equation}\label{sdp:dnn}
    \min_{X\in\mathbb{S}^{n}} \mathrm{Tr}(C^\top X)+\delta_{\mathcal{U}}(X),\quad \mathrm{s.t.}\quad\mathcal{A}(X)\in \mathcal{Q}, X\in \mathcal{K},
\end{equation}
where $C\in\mathbb{S}^{n} = \{X\in\mathbb{R}^{n\times n}:X^\top =X\}$, $\mathcal{A}(\cdot): \mathbb{S}^{n} \rightarrow \mathbb{R}^m$ is a linear mapping, and $\mathcal{U}=\{X\in\mathbb{S}^n : L\leq X\leq U\},\ \mathcal{Q} = \{y\in\mathbb{R}^m : l \leq y \leq u\}$, $\mathcal{K} = \{X\in \mathbb{S}^n : X\succeq 0\}$. 
\end{example}
For any matrix $X\in \mathbb{S}^n$,  let $X = Q\Lambda Q^\top $ be its eigenvalue decomposition, where $\Lambda$ is a diagonal matrix and $Q$ is an orthogonal matrix. The the projection operator of $X$ onto $\mathcal{K}$ can be explicitly computed as:
$$
\Pi_{\mathcal{K}}(X) = Q 
[\Lambda]_+ Q^\top .
$$
 We also have $\Pi_{\mathcal{Q}}(y) = \max\{l,\min\{u,y\}\}$ and $\Pi_{\mathcal{U}}(Y) = \max\{L,\min\{U,Y\}\}$. 
The corresponding AL function of \eqref{sdp:dnn} is given as  
\begin{align*}
       \mathbb{L}_{\rho}(X,y,Z,S) =& ~ \mathrm{Tr}(C^\top X)+\frac{\rho}{2} \| \hat{\Pi}_{\mathcal{Q}} (\mathcal{A}(X) + y/\rho)\|^2  +\frac{\rho}{2}\| \hat{\Pi}_{\mathcal{K}}( X+Z/\rho)  \|^2
       \\&+\frac{\rho}{2}\|  \hat{\Pi}_{\mathcal{U}}(X+S/\rho)\|^2  - \frac{1}{2\rho}( \|y\|^2 + \|Z\|^2 + \|S\|^2  ).
\end{align*}
Moreover, we also consider the dual problem of \eqref{sdp:dnn}:
\begin{equation}\label{eqn:dual-SDP}
    \begin{aligned}
        \min_{y,Z,S} &\ \delta_{\mathcal{Q}}^*(-y) + \delta_{\mathcal{K}}^*(-Z) +\delta_{\mathcal{U}}^*(-S)\\
        \mathrm{s.t.} &\ \mathcal{A}^*(y)+S+Z = C.
    \end{aligned}
\end{equation}
where $\delta^*_{\mathcal{Q}}(-y)=\max_{x\in \mathcal{Q}}\left<x,-y\right>$  and $\mathcal{A}^*$ is the dual linear map  of $\mathcal{A}$ that satisfies $$\langle\mathcal{A}^*(w),v\rangle=\langle w,\mathcal{A}v\rangle, \quad \text{for\ all} \  w,v.$$
The AL function of \eqref{eqn:dual-SDP} is 
\begin{align*}
       \mathbb{L}_{\rho}(y,Z,S,X,z,W,V) = &\frac{\rho}{2} \|C-\mathcal{A}^*(y) - S -Z + X/\rho\|^2 +\frac{\rho}{2} \| \Pi_{\mathcal{Q}}(-y+z/\rho)  \|^2\\& + \frac{\rho}{2} \| \Pi_{\mathcal{K}}(-Z+W/\rho)  \|^2 + \frac{\rho}{2} \| \Pi_{\mathcal{U}}(-S+V/\rho)  \|^2 \\
       & - \frac{1}{2\rho}(\|X\|^2 + \|z\|^2 + \|W\|^2 + \|V\|^2 ).
\end{align*}
Note that \eqref{eqn:dual-SDP} can be rewritten as 
\begin{equation}\label{eqn:dual-SDP1}
    \begin{aligned}
        \min_{y,S} & \ \delta_{\mathcal{Q}}^*(-y) + \delta_{\mathcal{K}}^*(\mathcal{A}^*(y)+S - C) +\delta_{\mathcal{U}}^*(-S).
    \end{aligned}
\end{equation}
The corresponding AL function of \eqref{eqn:dual-SDP1} is given as  
\begin{align*}
    \mathbb{L}_{\rho}(y,S,z,W,X) =& ~ \frac{\rho}{2} \| \Pi_{\mathcal{Q}}(-y+z/\rho)  \|^2 + \frac{\rho}{2} \| \Pi_{\mathcal{U}}(-S+W/\rho)  \|^2\\
    & + \frac{\rho}{2} \|\Pi_{\mathcal{K}}(\mathcal{A}^*(y) + S -C+ X/\rho) \|^2  - \frac{1}{2\rho} ( \|z\|^2 + \|W\|^2 + \|X\|^2 ).
\end{align*}

\begin{example}[SDP relaxation of sparse PCA] The sparse principal component analysis (PCA) problem for a single component has an SDP relaxation:
\begin{equation*}
\min_{X\in \mathbb{R}^{n\times n}} - \mathrm{Tr}(C^\top X)+\gamma \|X\|_1,\quad \mathrm{ s.t. }\ \operatorname{Tr}(X)=1,X\in \mathcal{K},
\end{equation*}
where $C\in \mathbb{S}^n$ is a given matrix, $\|X\|_1=\sum_{i,j}|X_{i,j}|$ and $\mathcal{K} = \{X\in \mathbb{S}^{n\times n} : X\succeq 0\}$ is a positive semidefinite cone.
\end{example}
Define $\|X\|_{\infty} = \max_{i,j}|X_{i,j}|$, and let $\mathcal{B}:=\{X\in \mathbb{R}^{n\times n}:\|X\|_{\infty} \leq \gamma\}$ be a matrix  infinity norm ball with radius $\gamma$. Denote $h(\cdot) = \gamma \|\cdot\|_1$, then the  conjugate function of $h$ is  $h^*(\cdot)=\delta_{\mathcal{B}} (\cdot)$. The corresponding AL function is given by
\begin{align*}
    \mathbb{L}_{\rho}(X,y,Z,S) =  
    & - \mathrm{Tr}(C^\top X)+\gamma\|\mathrm{prox}_{\gamma\rho \|\cdot\|_1}(X+Z/\rho) \|_1 + \frac{\rho}{2}\| \Pi_{\mathcal{B}} (X+Z/\rho)  \|^2 \\
     &+\frac{\rho}{2} \| \mathrm{Tr}(X) - 1 + y/\rho\|^2
        +\frac{\rho}{2}\| \hat{\Pi}_{\mathcal{K}}(X+S/\rho)\|^2 \\
        & - \frac{1}{2\rho}( \|y\|^2 + \|Z\|^2 + \|S\|^2),
\end{align*}
where $\Pi_{\mathcal{B}}(X) = \mathrm{sign}(X)\min\{|X|,\gamma\}$  and  $\mathrm{prox}_{\gamma\rho \|\cdot\|_1}(X): = \mathrm{sign}(X)[|X| - \gamma \rho]_+$. Here, $\min\{|X|,\gamma\}$ and $[|X| - \gamma \rho]_+$ are component-wise operators.

\begin{example}[LASSO problem]
    Given $A\in \mathbb{R}^{m\times n}$ and $b\in \mathbb{R}^m$, the LASSO problem is formulated as an $\ell_1$ regularized optimization problem:
    \begin{equation}\label{eq:lasso}
        \min_x \frac{1}{2}\|Ax - b\|^2 + \gamma \|x\|_1,
    \end{equation}
    \end{example} 
    
 where $\gamma$ is the regularization parameter. Its AL function can be written as 
    \begin{align*}
         \mathbb{L}_{\rho}^p(x,\nu) =& \frac{1}{2}\|Ax - b\|^2+ \frac{\rho}{2} \left\| \Pi_{\|x\|_{\infty} \leq \gamma} (x+\nu/\rho) \right\|^2 - \frac{1}{2\rho}\|\nu\|^2.
    \end{align*}
    Here, we use the notation $\mathbb{L}_{\rho}^p$ to denote the AL function for the primal problem, distinguishing it from the AL function for the dual problem, denoted by $\mathbb{L}_{\rho}^d$. 
     In particular, we consider the dual problem of \eqref{eq:lasso}:
    \[
        \min_{\nu} \frac{1}{2}\|\nu - b\|^2, ~\mathrm{ s.t. }~ \|A^\top \nu\|_{\infty} \leq \gamma.
    \]
    The corresponding AL function is 
    \begin{align*}
         \mathbb{L}_{\rho}^d(\nu,x) =& \frac{1}{2}\|\nu - b\|^2+ \frac{\rho}{2}\|  \mathrm{prox}_{\gamma /\rho\|x\|_1}(A^\top \nu + x/\rho) \|^2 - \frac{1}{2\rho}\|x\|^2.
    \end{align*}

\begin{example}[$\ell_1$ least square problem] Given $A\in \mathbb{R}^{m\times n}$ and $b\in \mathbb{R}^m$, the $\ell_1$ regularization model is given as follows: 
\begin{equation}\label{eq:l1lasso}
    \min_x \|Ax - b\|_1 + \gamma \|x\|_1.
\end{equation}
\end{example}
This problem falls into \eqref{sec2:pro:composite-multiblock} by letting $h_1(y) = \|y-b\|_1, h_2(x) = \gamma\|x\|_1$ and $\mathcal{B}_1(x) = Ax, \mathcal{B}_2(x) = x$.  In this case, the corresponding AL function is given as 
\begin{align*}
     &\mathbb{L}_{\rho}^p(x,\nu_1,\nu_2) \\
     = &\, \frac{1}{2}\|\mathrm{prox}_{\rho \|\cdot\|_1}\!(Ax - b - \nu_1/\rho)\|_1 \!+\frac{\rho}{2}\|Ax - b + \nu_1/\rho -\mathrm{prox}_{\rho \|\cdot\|_1}\!(Ax - b - \nu_1/\rho) \|^2 \\
     &+\gamma\left\|\mathrm{prox}_{\gamma\rho \|\cdot\|_1}(x+\nu_2/\rho)\right\|_1+ \frac{\rho}{2} \left\| \Pi_{B} (x+\nu_2/\rho) \right\|^2 - \frac{1}{2\rho}(\|\nu_1\| + \|\nu_2\|^2),
\end{align*}
where $B=\{x: \|x\|_{\infty} \leq \gamma \}$. The dual problem of \eqref{eq:l1lasso} has the form:
\begin{equation*}
    \min_{\nu}\ b^\top \nu, \quad \mathrm{ s.t. } \ \|A^\top \nu\|_{\infty} \leq \gamma, \|\nu\|_{\infty} \leq 1.
\end{equation*}
The corresponding AL function is  
\begin{align*}
     \mathbb{L}_{\rho}(\nu,x_1, x_2) =& b^\top \nu+ \frac{\rho}{2}\|  \mathrm{prox}_{\gamma /\rho\|\cdot\|_1}(A^\top \nu + x_1/\rho) \|^2 \\
     &+\frac{\rho}{2}\|  \mathrm{prox}_{\gamma /\rho\|\cdot\|_1}(\nu + x_2/\rho) \|^2- \frac{1}{2\rho}(\|x_1\| + \|x_2\|^2).
\end{align*}

\subsection{Examples in nonconvex case}\label{sec2:nonconvex:motivation}

By adapting the forms of the functions 
$h$ and $c$ as well as the constrained sets $\mathcal{Q}$ and $\mathcal{K}$, ALM can be applied to a broad range of significant nonconvex applications. Below, we present several examples to demonstrate the applicability and effectiveness of ALM in nonconvex optimization scenarios.

\begin{example}[Nonlinear programming]\label{sec2:nonconvex:NP}     
Consider the following general constrained nonlinear programming (NLP) problem:
\begin{equation}\label{eq:nlp}
\min_{x\in\mathbb{R}^n}\ f(x) \quad \mathrm{s.t.}\quad c(x) \in \mathcal{Q},~ x\in \mathcal{K},
\end{equation}
where $f$ is nonlinear and/or the feasible region is characterized by nonlinear constraints. 
\end{example}
Let $\mathcal{Q} = \{\lambda\in \R^m: l^c\leq \lambda \leq  u^c\}$ and $\mathcal{K} = \{x\in \R^n: l^x\leq x \leq  u^x\}$, with $l^c,u^c,l^x,u^x$ being some given data.  In this case, we have 
$$ \Pi_{\mathcal{Q}}(\lambda) = \min\{u^c,\max\{l^c,\lambda\}\}, \quad \Pi_{\mathcal{K}}(x)=\min\{u^x,\max\{l^x,x\}\}.$$ 
The corresponding AL function can be obtained as:
\begin{equation*}
    \mathbb{L}_{\rho}(x,\lambda,\mu) = f(x)+ \frac{\rho}{2}\left\|\hat{\Pi}_{\mathcal{Q}}\left( c(x) + \frac\lambda\rho\right)\right\|^2 +\frac{\rho}{2}\left\| \hat{\Pi}_{\mathcal{K}}\left(x + \frac\mu\rho\right)\right\|^2 -\frac{1}{2\rho} ( \|\lambda\|^2 + \|\mu\|^2 ),
\end{equation*}
where $\rho>0$ is a penalty parameter, and $\lambda, \mu$ are Lagrange multipliers.

\begin{example}[Clustering problem \cite{sahin2019inexact}] \label{sec2:nonconvex:CP}       
Given data points $\{z_i\}_{i=1}^n$, the entries of the corresponding Euclidean distance matrix $D \in \mathbb{R}^{n \times n}$ are defined as $D_{i,j} = \|\mathbf{z}_i - \mathbf{z}_j\|^2$.
The clustering problem can be formulated as 
$$
\min_{X \in \mathbb{R}^{n \times r} }\  \operatorname{Tr}(DXX^\top) 
\quad \st\  XX^\top \mathbf{1}_n = \mathbf{1}_n, \ \|X\|_F^2 \leq s, \ X \geq  0.  
$$
\end{example}
In order to cast the above problem as an instance of problem \eqref{nonconvex-composite}, we introduce a new variable $x \in \mathbb{R}^{nr}$ by vectorizing  $X$, defined as 
$x:= [x_1^\top; \ldots; x_n^\top]^\top \in \mathbb{R}^{nr}$,
where $x_i \in \mathbb{R}^r$ represents the $i$-th row of $X$. 
Next, we specify the objective function $f(x)+h(x)$ and the constraint function $c(x)$ as 
\[
f(x) =\sum_{i,j = 1}^n D_{i,j} \langle x_i, x_j \rangle, \
h(x) = 0, \
c(x) = \Big[x_1^\top\sum_{j = 1}^n x_j-1, \ldots, x_n^\top \sum_{j = 1}^n x_j-1\Big]^\top,
\]
and $\mathcal{Q}=\{\mathbf{0}_n\}$,  $\mathcal{K}=\mathbb{R}^{nr}_+\cap B(0,\sqrt{s})$, where $B(0,\sqrt{s})$ denotes a Euclidean ball with radius $\sqrt{s}$ . The corresponding AL function can be written as:
\begin{align*}
    \mathbb{L}_{\rho}(x,\lambda,\mu) = f(x)+\frac{\rho}{2} \left\| c(x) + \frac\lambda\rho\right\|^2
    +\frac{\rho}{2}\left\| \hat{\Pi}_{\mathcal{K}}\left(x+\frac\mu \rho\right)\right\|^2-\frac{1}{2\rho} ( \|\lambda\|^2 + \|\mu\|^2 ).
\end{align*}

\begin{example}[Low-rank SDP \cite{burer2003nonlinear}]
Consider the nonlinear reformulation of an SDP obtained by replacing the positive semidefinite matrix with a factorization:
$$
\min_{X \in \mathbb{R}^{n \times r} }\  \operatorname{Tr}(CXX^\top) 
\quad \st\  \operatorname{Tr}(A_i XX^\top)=b_i, \ i = 1, \ldots, m,$$ 
where the data matrices $ C $ and $ \{A_i\}_{i=1}^m $ are $ n \times n $ real symmetric matrices, and $ b $ is an $ m $-dimensional vector, and $r \leq n$.
\end{example}
The corresponding AL function is 
$$ \mathbb{L}_{\rho}(X, \lambda) = \operatorname{Tr}(CXX^\top) + \sum_{i=1}^{m} \lambda_i (\operatorname{Tr}(A_i XX^\top) - b_i) + \frac{\rho}{2} \sum_{i=1}^{m} (\operatorname{Tr}(A_i XX^\top) - b_i)^2-\frac{1}{2\rho}  \|\lambda\|^2. $$

\begin{example}[Sparse portfolio optimization problem \cite{de2023constrained}] 
\begin{equation}\label{eqn:portfolio}
\min_{x \in \R^n}\ \frac{1}{2} x^{\top} Q x+ h(x) \quad \st\ \gamma^\top x \geq \varrho,\ \mathbf{1}_n^{\top} x \leq 1,\ 0 \leq x \leq u.
\end{equation}
where $Q$ and $\gamma$ denote the covariance matrix and the mean of $n$ possible assets, respectively. $h(x)$ is a sparse regularization term including $\ell_0$ norm, $\ell_p (0 < p < 1)$ norm, smoothly clipped absolute deviation (SCAD), log-sum penalty (LSP), minimax concave penalty (MCP), and capped-$\ell_1$ penalty.  $\varrho$ is some lower bound for the expected return and $u$ provides an upper bound for the individual assets within the portfolio. \end{example}
Denote $ \mathcal{K}=\{z \in \R^n: 0 \leq z \leq u\}$, $\mathcal{Q}=\{y \in \R^2: y \leq 0\}$ and 
$$f(x)=\frac{1}{2} x^{\top} Q x,\qquad c(x)=\left[\begin{array}{ll}
 \varrho -\gamma^\top x \\
\mathbf{1}_n^{\top} x - 1
\end{array}\right].$$
Then the problem is in the form of \eqref{nonconvex-composite} and the corresponding AL function is:
\begin{align}\label{eqn:portfolio:AL}
    \mathbb{L}_{\rho}(x,\lambda,\mu) =& \frac{1}{2} x^{\top} Q x + h(x)+\frac{\rho}{2}\left\|\max\left( c(x) + \frac{\lambda}{\rho},0\right)\right\|^2\\\nonumber
    &
    + \frac{\rho}{2}\left\| \min\left(\max\left(0, x + \frac{\mu}{\rho} -u\right),x\right)\right\|^2-\frac{1}{2\rho} ( \|\lambda\|^2 + \|\mu\|^2 ).
\end{align}
In case $h(x)=\alpha\|x\|_0$, the proximal operator of $h(x)$ has a closed-form expression:
\begin{equation}\label{nco:l0prox}
(\prox_{h}(x))_i=\begin{cases}x_i, & \text { if }\ |x_i|>\sqrt{2\alpha}, \\ 0, & \text { if }\ |x_i|\leq \sqrt{2\alpha},\end{cases} \quad \forall i=1,2,\cdots,n.
\end{equation}
The remaining sparse regularization terms mentioned earlier can also be associated with their respective proximal operators, we will not go into details here. Therefore, we can utilize proximal methods to solve the subproblems of ALM effectively.

\begin{example}[Nonnegative sparse PCA \cite{asteris2014nonnegative}]
The nonnegative sparse principal component analysis (PCA) problem can be modeled as follows:
\begin{eqnarray*}
   & \min\limits_{x \in \R^p}& \sum_{i=1}^{N} \big(-x_i^{\top} \Sigma_i x_i + h_i(x_i)\big) \\
   &\st& \ \|x_i\|_2 \leq 1, \quad i = 1, \ldots, N, \\&& Ax =0, \quad x \geq 0, 
\end{eqnarray*}
where $x_i \in \mathbb{R}^d$ for each $i$, $x:=[x_1^\top; \ldots; x_N^\top]^\top$ stacks all $x_i$'s, $\Sigma_i \in \mathbb{R}^{d \times d}$ is the covariance matrix for the mini-batch data in node $i$, $h_i(\cdot)$ is a regularizer that promotes  sparsity.
\end{example}
Define the constraint sets $\mathcal{Q}=\{\mathbf{0}_m\}$,  $\mathcal{K}=\{z\in \R^{p}: \|z_i\|_2 \leq 1, \ z_i \geq 0,  i = 1, \ldots,N\}$ and 
\[
f(x)=-\sum_{i=1}^{N} x_i^{\top} \Sigma_i x_i,\quad h(x) = \sum_{i=1}^{N} h_i(x_i),\quad c(x)= Ax,
\]
then the problem reduces to an instance of problem \eqref{nonconvex-composite}. The projection to the set $\mathcal{K}$ can be explicitly written as
$$\Pi_{\mathcal{K}}(x)=\max\left\{0, \frac{x}{\max\{1,\|x\|\}}\right\}.$$
 The corresponding AL function can be obtained as:
    \begin{align*}\label{sec2:SPCA:AL}
        \mathbb{L}_{\rho}(x,\nu,\lambda,\mu) =& -\sum_{i=1}^{N} x_i^{\top} \Sigma_i x_i +\frac{\rho}{2} \| Ax + \lambda/\rho\|^2
        +\frac{\rho}{2}\| \hat{\Pi}_{\mathcal{K}}(x+\mu/\rho)\|^2   
    \\\nonumber
        &+h\left(\prox_{ h/\rho}\left(x+\frac{\nu}{\rho}\right)\right) +\frac{\rho}{2} \left\|x + \frac \nu \rho-  \prox_{h/\rho}\left(x + \frac \nu \rho\right)  \right\|^2 \\
        & -\frac{1}{2\rho} ( \|\nu\|^2 + \|\lambda\|^2 + \|\mu\|^2  ) 
    \end{align*}

\begin{example}[Matrix completion problem \cite{sujanani2023adaptive}]       
Given a dimension pair $(p, q) \in \mathbb{N}^2$, a positive scalar triple $(\nu, \tau \theta) \in \mathbb{R}^3_{++}$, a scalar pair $(l, u) \in \mathbb{R}^2$, a matrix $Q \in \mathbb{R}^{p \times q}$ and $\Omega \in \{0, 1\}^{p \times q}$ contains indices of the observed entries in $Q$. The bounded matrix completion problem can be formulated as
\begin{eqnarray*}
&\min\limits_{X \in \R^{p \times q}}&~  \frac{1}{2}\|\Pi_\Omega(X - Q)\|^2 + \tau \sum_{i=1}^{\min\{p,q\}} \Big( g(\sigma_i(X)) - \kappa_0 \sigma_i(X) \Big) + \tau \kappa_0 \|X\|_*, \\
&\st &~ l \leq X_{ij} \leq u,\quad \forall~ (i, j) \in [p] \times [q],
\end{eqnarray*}
where $[n]:=\{1,2,\cdots,n\}$ given any $n\in\mathbb{Z}_+$, the function $\sigma_i(X)$ represents the $i$-th largest singular value of the matrix $X$,  and $\|X\|_*$ is the nuclear norm of $X$ defined as the sum of its singular values. The coefficient $\kappa_0$ and the function $g$ are defined as  $\kappa_0 = \nu/\theta$, $g(t) = \nu \log(1 + |t|/\theta)$, respectively.
\end{example}
Define $\mathcal{K} = \{Z \in \mathbb{R}^{p \times q}: l \leq Z_{ij} \leq u, (i,j) \in [p] \times [q]\}$, $\mathcal{Q}=\{0\}$, $c(X)= 0$, and 
\[
f(X)=\frac{1}{2}\|\Pi_\Omega(X - Q)\|^2 + \tau \sum_{i=1}^{\min\{p,q\}} \left[ \kappa(\sigma_i(X)) - \kappa_0 \sigma_i(X) \right],\ \ h(X)=\tau \kappa_0 \|X\|_*.
\]
Then the problem can be expressed in the form of  \eqref{nonconvex-composite}. The corresponding AL function can be obtained as:
    \begin{align}\label{sec2:completion:AL}
        \mathbb{L}_{\rho}(X,Y,S) = f(X)+h(X)
        +\frac{\rho}{2}\| \hat{\Pi}_{\mathcal{K}}(X+S/\rho)\|^2 - \frac{1}{2\rho}\|S\|^2.
    \end{align}
Note that $h(X)$ is retained in the AL function due to the availability of the following closed-form proximal operator for the nuclear norm:
$$
\text{prox}_{\tau \kappa_0 \|\cdot\|_*}(X) = U  \text{diag}(\mathcal{S}_{\tau \kappa_0}(\sigma(X))  V^\top,
$$
where $X = U \text{diag}(\sigma(X))V^\top$ is the singular decomposition of $X$ and $\text{diag}(\mathcal{S}_{\tau \kappa_0}(\sigma(X))$ is a diagonal matrix obtained by thresholding the singular values of $X$:
$$
\mathcal{S}_{\tau \kappa_0}(\sigma_i(X)) =\mathrm{sign}(\sigma_i(X))[\sigma_i(X) - \lambda]_+
$$
where $\sigma_i$ denotes the $i$-th singular value of $X$.

\begin{example}[Maxcut SDP \cite{jia2023augmented}]
The SDP relaxation of the maxcut problem is
$$
\max_{X\in \mathbb{S}^n}\ \operatorname{Tr}(LX) \quad \st\ \operatorname{diag} (X)=\mathbf{1}_n,\ X \succeq 0, \ \operatorname{rank} (X) \leq 1.
$$
\end{example}
Let us set $\mathcal{Q}=\{\mathbf{1}_n\}, \ \mathcal{K} =\{Z \in \mathbb{S}^n \mid Z \succeq 0, \text { rank }(Z) \leq 1\}$ and 
\[
f(X)=\operatorname{Tr}(LX), \quad h(X)=0, \quad c(X)= \operatorname{diag} (X),
\]
then the problem can fit into \eqref{nonconvex-composite}. 
Let $X\in\mathbb{S}^n$ denote an arbitrary symmetric matrix with maximum eigenvalue $\lambda_{\max}$ and corresponding normalized eigenvector $v$ (note that $\lambda_{\max}$ and $v$ are not necessarily unique), then $\max (\lambda_{\max}, 0) v v^\top$ is a
projection of $X$ to $\mathcal{K}$. The corresponding AL function can be written as:
\begin{equation*}\label{sec2:maxcut:AL}
     \begin{aligned}
        \mathbb{L}_{\rho}(X,y,S) = &\operatorname{Tr}(LX)+\frac{\rho}{2} \| \mathrm{diag}(X) - \mathbf{1}_n + y/\rho\|^2\\
       & +\frac{\rho}{2}\| \hat{\Pi}_{\mathcal{K}}(X+S/\rho)\|^2 - \frac{1}{2\rho}(\|y\|^2 + \|S\|^2).
    \end{aligned}   
 \end{equation*}

\section{Semismooth Newton method}\label{app:seminewton:sub} 
In many practical applications, one has to find a zero of a system of
nonlinear equations:
\begin{equation}\label{appen:eq:nonlineareq}
    F(x) = 0,
\end{equation}
where $F:\mathbb{R}^n \rightarrow \mathbb{R}^n$ is locally Lipschitz but not necessarily continuously differentiable. Since \eqref{appen:eq:nonlineareq} is nonsmooth,  the classical Newton method can not be applied directly. Some extensions of the Newton and quasi-Newton methods have been developed for nonsmooth equations.  In this section,  we focus on the case when $F$ is semismooth.

Let us first recall several objects from nonsmooth analysis which
provide generalizations of the classical differentiability concept. Since $F$ is locally Lipschitz, it follows from Rademacher’s Theorem \cite{rademacher1919partielle} that $F$ is almost everywhere differentiable. Denote 
$$
D_F: = \{  x \mid F ~\text{is differentiable at} ~x \}.
$$
The $B$-subdifferential (here ``$B$'' stands for ``Bouligand'') of $F$ at $x$ is defined as 
$$
\partial_B F: = \{G\in \mathbb{R}^{n\times n}: \exists 
 \{x_k\} \subset D_F ~\text{with}~ x_k \rightarrow x, \nabla F(x_k) \rightarrow G\}. 
$$
Then the Clarke's generalized Jacobian of $F$ at $x$ can be defined by
$$
\partial F(x) = co(\partial_B F(x)),
$$
where $co$ denotes the convex hull. Now we give the formal definition of the semismooth property as follows.
\begin{definition}
   Let $\Omega \subset \mathbb{R}^n$ be nonempty and open.  For function $F:\Omega\rightarrow\mathbb{R}^n$ which is locally Lipschitz continuous,  we call $F$ semismooth at $x$ if it satisfies
    \begin{itemize}
        \item[(i)] $F$ is directional differentiable at $x
        $;
        \item[(ii)] For any $d$ and $J\in\partial F(x+d)$,  the following relationship holds:
        $$
            \|F(x+d)-F(x)-Jd\|=o(\|d\|),\quad \mathrm{as}\ d\rightarrow 0.
        $$
    \end{itemize}
\end{definition} 
Moreover, $F$ is called strongly semismooth at $x$ if  $F$ is directional differentiable and 
         $$
            \|F(x+d)-F(x)-Jd\|=\mathcal{O}(\|d\|^2),\quad \mathrm{as}\ d\rightarrow 0.
        $$

Since $\partial F$ is a generalization of $\nabla F$ in the case where $F$ is not (continuously) differentiable, the semismooth Newton can be designed similar to the classical Newton method. 
The main iterative process of the semismooth Newton method is given as follows:
\begin{equation}\label{appen:ssn-iter}
    x^{k+1} = x^k + d^k,
\end{equation}
where $d^k$ is the Newton direction from the following linear equation:
\begin{equation} \label{appen:ssn-iter2}
    J(x^k) d^k = -F(x^k),~~ J(x^k) \in \partial F(x^k).
\end{equation}
  Note that  \eqref{appen:ssn-iter} reduces to the classical Newton method for a system of equations if $F$ is continuously differentiable. Using semismoothness, Qi and Sun \cite{qi1993nonsmooth} give the following convergence theorem for  the scheme \eqref{appen:ssn-iter}-\eqref{appen:ssn-iter2}.
\begin{theorem}
    Suppose that $F$ is semismooth at $x^*$, where $x^*$ satisfies $F\left(x^*\right)=0$.  If all $J \in \partial F\left(x^*\right)$ are nonsingular, then the scheme \eqref{appen:ssn-iter}-\eqref{appen:ssn-iter2} 
 is $Q$-superlinearly convergent in a neighborhood of $x^*$. Furthermore, if $F$ is strongly semismooth at $x^*$, it
 is quadratically convergent.
\end{theorem}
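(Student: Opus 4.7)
The plan is to establish local $Q$-superlinear (respectively quadratic) convergence by bounding $\|x^{k+1}-x^*\|$ in terms of $\|x^k-x^*\|$ using the defining equation of the Newton step together with the semismoothness estimate. The starting point is the identity
\begin{equation*}
x^{k+1}-x^* \;=\; x^k-x^* - J(x^k)^{-1}F(x^k) \;=\; J(x^k)^{-1}\bigl[\,J(x^k)(x^k-x^*)-F(x^k)+F(x^*)\bigr],
\end{equation*}
which holds whenever $J(x^k)\in\partial F(x^k)$ is nonsingular. Thus the proof reduces to two tasks: (i) ensuring invertibility of every $J\in\partial F(x)$ for $x$ near $x^*$, with a uniform bound on $\|J^{-1}\|$; and (ii) invoking the (strong) semismoothness inequality at $x^*$ to bound the bracketed residual.

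For task (i), I would first recall that the set-valued map $\partial F$ is upper semicontinuous and compact-valued on a neighborhood of $x^*$ (by local Lipschitz continuity of $F$). Combining this with the hypothesis that every $J\in\partial F(x^*)$ is nonsingular, a standard argument shows that there exist a neighborhood $\mathcal{N}$ of $x^*$ and a constant $M>0$ such that every $J\in\partial F(x)$ for $x\in\mathcal{N}$ is nonsingular and satisfies $\|J^{-1}\|\le M$. This step is the main technical obstacle, since invertibility at a single point must be ``propagated'' to a whole neighborhood via the generalized Jacobian. The argument is by contradiction: a sequence $x_k\to x^*$ with $J_k\in\partial F(x_k)$ having $\|J_k^{-1}\|\to\infty$ would, by upper semicontinuity and compactness, yield a limit $J_\infty\in\partial F(x^*)$ that is singular, contradicting the hypothesis.

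For task (ii), I would use the semismoothness of $F$ at $x^*$: writing $d=x^k-x^*$ and picking $J(x^k)\in\partial F(x^k)=\partial F(x^*+d)$, semismoothness gives
\begin{equation*}
\|F(x^*+d)-F(x^*)-J(x^k)d\| \;=\; o(\|d\|) \qquad \text{as } d\to 0,
\end{equation*}
and the $O(\|d\|^2)$ strengthening under strong semismoothness. Substituting this bound into the identity above together with the uniform bound $\|J(x^k)^{-1}\|\le M$ yields
\begin{equation*}
\|x^{k+1}-x^*\| \;\le\; M\cdot o(\|x^k-x^*\|),
\end{equation*}
i.e.\ $Q$-superlinear convergence, and $\|x^{k+1}-x^*\|=O(\|x^k-x^*\|^2)$ in the strongly semismooth case.

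Finally, I would close the loop by verifying that the iteration stays inside the neighborhood $\mathcal{N}$: choose $x^0$ close enough to $x^*$ so that the superlinear (or quadratic) contraction estimate forces $\|x^{k+1}-x^*\|\le \tfrac12\|x^k-x^*\|$, which both keeps $\{x^k\}\subset\mathcal{N}$ and proves $x^k\to x^*$. No additional machinery is required beyond upper semicontinuity of $\partial F$ and the definition of (strong) semismoothness; the real conceptual work is entirely in task (i).
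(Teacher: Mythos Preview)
Your proposal is correct and is essentially the classical argument of Qi and Sun (1993). Note, however, that the paper does not supply its own proof of this theorem: it merely states the result and attributes it to \cite{qi1993nonsmooth}, so there is no in-paper proof to compare against beyond observing that your outline matches the cited source.
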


For many convex composite optimization problems, the system $F(x)$ is often constructed from first-order type methods \cite{xiao2018generalized,li2018semismooth,milzarek2019stochastic,hu2022local,deng2023augmented}. The corresponding generalized Jacobian $J$ is only positive semi-definite and singular. Then a regularized Newton step $d^k$ is obtained by solving
\begin{equation}\label{appen:eq:new}
    (J(x^k) + \mu_k I_n) d^k = - F(x^k),
\end{equation}
where $\mu_k>0$ is a regularized parameter.  A prototype method is outlined in Algorithm \ref{alg:ssn}. For more detailed  on the choice of the regularized parameter $\mu_k$ and the  convergence analysis, the readers are referred to \cite{xiao2018generalized,li2018semismooth,milzarek2019stochastic,hu2022local,deng2023augmented}. 

\begin{algorithm}[htbp]
\caption{The semismooth Newton method for nonsmooth equations \eqref{appen:eq:nonlineareq}}\label{alg:ssn}
\begin{algorithmic}[1]
\REQUIRE Initial point $x^0\in \mathcal{M}, k=0$. 
\WHILE {not converge}
\STATE Obtain the Newton direction $d^k$ by solving equation \eqref{appen:eq:new}. 

\STATE Set $x^{k+1} = x^k + d^k$.
\STATE Set $k=k+1$.
\ENDWHILE
\end{algorithmic}
\end{algorithm}

In fact, the semismooth Newton method can also be applied to solve the convex optimization problem:
\begin{equation}\label{appen:prob:gene}
    \min_x \ f(x),
\end{equation}
where $f:\mathbb{R}^n \rightarrow \mathbb{R}$ is a convex and continuously differentiable with gradient $\nabla f$. Here, we only assume $\nabla f$ is locally Lipschitz. Solving problem \eqref{appen:prob:gene} is equivalent to finding a root of the following nonlinear system:
\begin{equation}\label{appen:prob:gene-newton}
    F(x):=\nabla f(x) = 0.
\end{equation}
 Therefore, we can directly apply the semismooth Newton method to solve problem \eqref{appen:prob:gene-newton}.  Moreover, one can design a damped semismooth Newton method by utilizing the function value information. In particular,  we first obtain a Newton direction $d^k$ by solving the following linear equation:
\begin{equation*}
   ( J(x^k) + \mu_k I_n) d = - \nabla f(x^k),~~~ J(x^k) \in \partial (\nabla f(x^k)).
\end{equation*}
Then the next iterate $x^{k+1}$ is given by 
\begin{equation*}
    x^{k+1} =x^k +  \alpha_k d^k,
\end{equation*}
where $\alpha_k$ is a stepsize  satisfies the Armijo's line search condition with a constant $\sigma>0$ as
\begin{equation*}
    f(x^k + \alpha_k d^k) \leq f(x^k) + \sigma \alpha_k \langle \nabla f(x^k), d^k \rangle.  
\end{equation*}
{The related convergence analysis} can be found in \cite{zhao2010newton}.

\newpage

\small

\normalem
\bibliographystyle{plain}

\bibliography{reference}

\end{document}